\def\input@path{{figures/}}
\title[Ornamentation lattices and intreeval hypergraphic lattices]{Ornamentation lattices and \\ intreeval hypergraphic lattices}
\author{Antoine Abram}
\address[A.~Abram]{LACIM, Université du Québec à Montréal, Montréal}
\email{abram.antoine@courrier.uqam.ca}
\urladdr{https://antoineabram.codeberg.page}
\author{Jose Bastidas}
\address[J.~Bastidas]{LACIM, Université du Québec à Montréal, Montréal}
\email{bastidas.math@proton.me}
\urladdr{https://bastidas-jose.codeberg.page}
\author{F\'elix G\'elinas}
\address[F.~G\'elinas]{Department of Mathematics and Statistics, York University, Toronto}
\email{felixgel@yorku.ca}
\urladdr{https://felixgelinas.github.io/}
\author{Vincent Pilaud}
\address[V.~Pilaud]{Universitat de Barcelona \& Centre de Recerca Matemàtica, Barcelona}
\email{vincent.pilaud@ub.edu}
\urladdr{https://www.ub.edu/comb/vincentpilaud/}
\author{Andrew Sack}
\address[A.~Sack]{Department of Mathematics, University of Michigan, Ann Arbor}
\email{asack@umich.edu}
\urladdr{https://andrewsack.com/}
\thanks{
VP was partially supported by the Spanish project PID2022-137283NB-C21 of MCIN/AEI/10.13039/501100011033 / FEDER, UE, by the Spanish--German project COMPOTE (AEI PCI2024-155081-2 \& DFG 541393733), by the Severo Ochoa and María de Maeztu Program for Centers and Units of Excellence in R\&D (CEX2020-001084-M), by the Departament de Recerca i Universitats de la Generalitat de Catalunya (2021 SGR 00697), and by the French--Austrian project PAGCAP (ANR-21-CE48-0020 \& FWF I 5788).
}
\newtheorem{theorem}{Theorem}[section]
\crefname{theoremA}{Theorem}{Theorems}
\newtheorem{corollary}[theorem]{Corollary}
\newtheorem{proposition}[theorem]{Proposition}
\newtheorem{lemma}[theorem]{Lemma}
\newtheorem{conjecture}[theorem]{Conjecture}
\crefname{conjecture}{Conjecture}{Conjectures}
\crefname{conjectureA}{Conjecture}{Conjectures}
\crefname{conjectureA}{Conjecture}{Conjectures}
\theoremstyle{definition}
\newtheorem{definition}[theorem]{Definition}
\newtheorem{example}[theorem]{Example}
\newtheorem{remark}[theorem]{Remark}
\crefname{notation}{Notation}{Notations}
\newcommand{\R}{\mathbb{R}} 
\newcommand{\N}{\mathbb{N}} 
\renewcommand{\b}[1]{\boldsymbol{#1}} 
\renewcommand{\c}[1]{\mathcal{#1}} 
\newcommand{\set}[2]{\left\{ #1 \;\middle|\; #2 \right\}} 
\newcommand{\ssm}{\smallsetminus} 
\newcommand{\eqdef}{\mbox{\,\raisebox{0.2ex}{\scriptsize\ensuremath{\mathrm:}}\ensuremath{=}\,}} 
\newcommand{\defeq}{\mbox{~\ensuremath{=}\raisebox{0.2ex}{\scriptsize\ensuremath{\mathrm:}} }} 
\newcommand{\simplex}{\triangle} 
\newcommand{\surjection}{\longrightarrow\mathrel{\mkern-22mu}\rightarrow\,}     
\newcommand{\bijection}{\longleftrightarrow}     
\DeclareMathOperator{\conv}{conv} 
\DeclareMathOperator{\inv}{inv} 
\DeclareMathOperator{\tc}{tc} 
\newcommand{\ie}{\textit{i.e.}~} 
\newcommand{\eg}{\textit{e.g.}~} 
\newcommand{\para}[1]{\bigskip\noindent\textbf{#1}} 
\definecolor{blue}{RGB}{0,0,255} 
\definecolor{red}{RGB}{255,0,0} 
\definecolor{green}{RGB}{57,181,74} 
\definecolor{orange}{RGB}{247,147,30} 
\definecolor{purple}{RGB}{147,39,143} 
\definecolor{darkblue}{rgb}{0,0,0.7} 
\newcommand{\blue}[1]{{\color{blue} #1}} 
\newcommand{\red}[1]{{\color{red} #1}} 
\newcommand{\green}[1]{{\color{green} #1}} 
\newcommand{\orange}[1]{{\color{orange} #1}} 
\newcommand{\darkblue}{\color{darkblue}} 
\newcommand{\defn}[1]{\textsl{\darkblue #1}} 
\newcommand{\OEIS}[1]{{\rm \href{http://oeis.org/#1}{\texttt{#1}}}}
\newcommand{\fS}{\mathfrak{S}} 
\newcommand{\meet}{\wedge} 
\newcommand{\join}{\vee} 
\newcommand{\bigMeet}{\bigwedge} 
\newcommand{\bigJoin}{\bigvee} 
\newcommand{\projDown}[1]{\smash{\pi^\downarrow_{#1}}} 
\newcommand{\projUp}[1]{\smash{\pi^\uparrow_{#1}}} 
\newcommandx{\JI}[1][1=L]{\mathcal{JI}(#1)} 
\newcommandx{\MI}[1][1=L]{\mathcal{MI}(#1)} 
\newcommand{\CJR}{\mathbf{cjr}} 
\newcommand{\CMR}{\mathbf{cmr}} 
\newcommand{\lessin}[2]{#1_{\le#2}} 
\newcommand{\mymap}[2]{\mathsf{#1}_{\hspace{-.7pt}#2}}
\DeclareMathOperator{\Orn}{\c{O}}  
\newcommand{\orn}[1]{\mymap{O}{#1}}  
\newcommand{\minorn}[2]{\mymap{O}{#1}^{\downarrow#2}}  
\newcommand{\maxorn}[2]{\mymap{O}{#1}^{\uparrow#2}}  
\DeclareMathOperator{\AOrn}{\c{AO}}  
\newcommand{\aorn}[1]{\mymap{AO}{#1}}  
\DeclareMathOperator{\Reori}{\c{R}}  
\newcommand{\reori}[1]{\mymap{R}{#1}}  
\newcommand{\maxreori}[1]{\smash{\mymap{R}{#1}^{\uparrow}}}  
\DeclareMathOperator{\AReori}{\c{AR}}  
\newcommand{\areori}[1]{\mymap{AR}{#1}}  
\DeclareMathOperator{\Rcl}{\c{R}^{cl}}  
\DeclareMathOperator{\Rco}{\c{R}^{co}}  
\DeclareMathOperator{\Rbi}{\c{R}^{bi}}  
\DeclareMathOperator{\rev}{rev} 
\DeclareMathOperator{\Sour}{\mathcal{S}}  
\newcommand{\sour}[1]{\mymap{S}{#1}}  
\DeclareMathOperator{\ASour}{\mathcal{AS}}  
\newcommand{\asour}[1]{\mymap{AS}{#1}}  
\DeclareMathOperator{\arr}{arr} 
\newcommand{\HH}{\mathbb H}  
\newcommand{\II}{\mathbb I} 
\newcommand{\PP}{\mathbb P} 
\newcommand{\Igraph}{\sf I} 
\newcommand{\Ngraph}{\sf N} 
\newcommand{\Xgraph}{\sf X} 
\newcommand{\Tgraph}{\sf I\!X\!I} 
\newcommand{\Ygraph}{\sf Y} 
\newcommand{\Dgraph}{\boldsymbol{\Diamond}} 
\newcommand{\Kgraph}{\rotatebox[origin=c]{90}{$\boldsymbol{\triangle}$}} 
\newcommand{\Agraph}{\rotatebox[origin=c]{180}{\sf Y}} 
\begin{document}

\begin{abstract}
Given a directed graph~$D$ with transitive closure~$\tc(D)$ and path hypergraph~$\PP(D)$, we study the connections between the (acyclic) reorientation poset of~$\tc(D)$, the (acyclic) sourcing poset of~$\PP(D)$, and the (acyclic) ornamentation poset of~$D$.
Geometrically, the acyclic reorientation poset of~$\tc(D)$ (resp.~the acyclic sourcing poset of~$\PP(D)$) is the transitive closure of the skeleton of the graphical zonotope of~$\tc(D)$ (resp.~of the hypergraphic polytope of~$\PP(D)$) oriented in a linear direction.
When~$D$ is a rooted (or even unstarred) increasing tree, we show that the acyclic sourcing poset of~$\PP(D)$ is isomorphic to the ornamentation lattice of~$D$, and that they form a lattice quotient of the acyclic reorientation lattice of~$\tc(D)$.
As a consequence, we obtain polytopal realizations of the ornamentation lattices of rooted (or even unstarred) increasing trees, answering an open question of C.~Defant and A.~Sack.
When~$D$ is an increasing tree, we show that the ornamentation lattice of~$D$ is the MacNeille completion of the acyclic sourcing poset of~$\PP(D)$.
Finally, still when~$D$ is an increasing tree, we use the ornamentation lattice of~$D$ to characterize the subhypergraphs of the path hypergraph~$\PP(D)$ whose acyclic sourcing poset is a lattice.
\end{abstract}

\vspace*{-1cm}
\maketitle

\tableofcontents
\vspace*{-1cm}


\section{Introduction}
\label{sec:introduction}


Tamari lattices and associahedra are classical objects of algebraic and geometric combinatorics with various applications to topology, algebra, statistical physics, etc.
We refer to~\cite{TamariFestschrift, CeballosSantosZiegler, PilaudSantosZiegler} and the references therein for an overview of the myriad of  perspectives on these objects, and we focus here on three specific aspects.
First, the Hasse diagram of the Tamari lattice is isomorphic to a linear orientation of the graph of the associahedron of~\cite{ShniderSternberg, Loday}, which is a Minkowski sum of standard simplices~\cite{Postnikov}. 
Second, the Tamari lattice (on binary trees) is a lattice quotient of the weak order (on permutations)~\cite{Tonks, Reading-latticeCongruences}, and the associahedron is obtained by deleting inequalities in the facet description of the permutahedron~\cite{ShniderSternberg, Loday}.
Third, the Tamari lattice is realized by componentwise comparisons of the bracket vectors of binary trees~\cite{HuangTamari}.
The objective of this paper is to study the interconnections between natural extensions of these three perspectives, already considered separately in the literature.


\para{Hypergraphic posets and hypergraphic polytopes.}
Consider a hypergraph~$\HH$ on~$[n]$.
A \defn{sourcing} of~$\HH$ is a map~$S$ associating to each hyperedge~$H \in \HH$ a source~$S(H) \in H$.
Note that sourcings of~$\HH$ are often called orientations of~$\HH$, we changed the name to avoid confusion with reorientations of directed graphs also manipulated in this paper.
A sourcing~$S$ of~$\HH$ is \defn{acyclic} if there is no directed cycle in the directed graph with edges~$(h,S(H))$ for all~$h \in H \in \HH$.
The \defn{(acyclic) sourcing poset} of~$\HH$ is the poset of (acyclic) sourcings of~$\HH$ ordered by componentwise comparison, meaning~$S \le S'$ if and only if~$S(H) \le S'(H)$ for all~$H \in \HH$.
While the sourcing poset~$\Sour(\HH)$ is clearly a lattice (as a product of intervals), the acyclic sourcing poset~$\ASour(\HH)$ is not always a lattice.
The question to characterize the hypergraphs~$\HH$ whose acyclic sourcing posets~$\ASour(\HH)$ are lattices has been studied for specific families of hypergraphs, notably for graphs~\cite{Pilaud-acyclicReorientationLattices}, for interval hypergraphs~\cite{BergeronPilaud}, and (partially for) graph associahedra~\cite{BarnardMcConville}.
In this paper, we will consider the family of hypergraphs arising as the collection of paths in a directed graph, and study their acyclic sourcing posets from a lattice theoretic perspective.

Let~$(\b{e}_v)_{v \in [n]}$ denote the standard basis of~$\R^n$.
The \defn{hypergraphic polytope} of~$\HH$ is the Minkowski sum~$\simplex_\HH \eqdef \sum_{H \in \HH} \simplex_H$, where~$\simplex_H \eqdef \conv\set{\b{e}_h}{h \in H}$ denotes the face of the standard simplex indexed by the hyperedge~$H$.
The face lattice of~$\simplex_\HH$ was described combinatorially in terms of acyclic orientations of~$\HH$ in~\cite{BenedettiBergeronMachacek}.
In particular, it is proved in~\cite{Gelinas} that the acyclic sourcing poset~$\ASour(\HH)$ is the transitive closure of the graph of the hypergraphic polytope~$\simplex_\HH$, oriented in the standard direction~$\b{\omega} \eqdef (n-1, n-3, \dots, 3-n, 1-n)$.

For instance, 
\begin{itemize}
\item if~$\HH = \binom{[n]}{2}$ is the complete graph, $\simplex_\HH$ is the permutahedron and $\ASour(\HH)$ is the weak order.
\item if~$\HH = \set{[i,j]}{1 \le i \le j \le n}$ is the complete interval hypergraph, $\simplex_\HH$ is the associahedron of~\cite{ShniderSternberg,Loday} and~$\ASour(\HH)$ is the Tamari lattice~\cite{Tamari}.
\end{itemize}


\para{Digraph Tamari lattices and digraph associahedra.}
Consider a directed graph~$E$ on~$[n]$.
A \defn{reorientation} of~$E$ is a directed graph~$R$ obtained from~$E$ by reversing a subset~$\rev(R)$ of edges of~$E$.
A reorientation~$R$ of~$E$ is \defn{acyclic} if it contains no directed cycle.
The \defn{(acyclic) reorientation poset} of~$E$ is the set of (acyclic) reorientations of~$E$ ordered by inclusion of reversion sets, meaning~$R \le R'$ if and only if~$\rev(R) \subseteq \rev(R')$.
While the reorientation poset~$\Reori(E)$ is clearly a boolean lattice, the acyclic reorientation poset~$\AReori(E)$ is not always a lattice.
See~\cite{Pilaud-acyclicReorientationLattices} for a characterization of the directed graphs~$E$ for which~$\Reori(E)$ is a lattice (or even a semidistributive lattice).

The \defn{graphical zonotope} of~$E$ is the Minkowski sum~$\simplex_E \eqdef \sum_{(u,v) \in E} \simplex_{uv}$, where~$\simplex_{uv}$ is the segment with endpoints~$\b{e}_u$ and~$\b{e}_v$.
The face lattice of~$\simplex_E$ is described by ordered partitions of~$E$.
In particular, whenever the edges of $E$ are increasing, it turns out that the acyclic reorientation poset~$\AReori(E)$ is isomorphic to the transitive closure of the graph of~$\simplex_E$, oriented in the direction~$\b{\omega}$.

When~$\AReori(E)$ is a lattice, one can consider its lattice congruences and quotients.
They have been largely investigated in~\cite{Pilaud-acyclicReorientationLattices}.
Among all these congruences, the \defn{sylvester congruence} yields the \defn{Tamari lattice} of~$E$, which is isomorphic to the transitive closure of the graph of the \defn{associahedron} of~$E$ oriented in the direction~$\b{w}$.
The later is obtained by deleting inequalities in the facet description of the graphical zonotope of~$E$.

For instance, when~$E = K_n$ is the complete graph,
\begin{itemize}
\item $\simplex_E$ is the permutahedron and~$\AReori(E)$ is the weak order,
\item the associahedron of~$E$ is the associahedron of~\cite{ShniderSternberg,Loday} and the Tamari lattice of~$E$ is the Tamari lattice.
\end{itemize}


\para{Ornamentation lattices.}
Consider a directed graph~$D$ on~$V$ and a vertex~$v \in V$.
An \defn{ornament} of~$D$ at~$v$ is a subset~$U$ of~$V$ such that any vertex of~$U$ admits a directed path to~$v$ in~$U$.
An \defn{ornamentation} of~$D$ is an assignment~$O$ of an ornament at each vertex of~$V$ with the nesting condition that $u \in O(v)$ implies~$O(u) \subseteq O(v)$.
Ornamentations of rooted trees were introduced in~\cite{DefantSack} and further studied in~\cite{ajran2025pop}.
Ornamentations of directed graphs and beyond will also appear in~\cite{Sack}.
An ornamentation~$O$ of~$D$ is \defn{acyclic} if a certain reorientation of the transitive closure~$\tc(D)$ of~$D$ constructed from~$O$ is acyclic (see precise definition later).
The \defn{(acyclic) ornamentation poset} of~$D$ is the poset of (acyclic) ornamentations of~$D$ ordered by componentwise inclusion, meaning~$O \le O'$ if and only if~${O(v) \subseteq O'(v)}$ for all~$v \in V$.
The ornamentation poset~$\Orn(D)$ is always a lattice, but the acyclic ornamentation poset~$\Orn(D)$ is not always a lattice.
Interestingly, we will see that all ornamentations turn out to be acyclic when~$D$ is a rooted (or even unstarred, see below) tree, and the ornamentation lattice is the MacNeille completion of the acyclic ornamentation lattice when~$D$ is a directed tree.

The geometry of ornamentation lattices is not that clear.
One of the open questions in~\cite{DefantSack} was to find polytopal realizations of ornamentation lattices.
We will answer this question for rooted (or even unstarred) trees and discuss it for arbitrary directed graphs.

For instance, when~$D$ is a path, the (acyclic) ornamentation lattice~$\Orn(D)$ is the Tamari lattice which is indeed realized by the associahedron, and the ornamentations can be seen as bracket vectors of binary trees.


\medskip
\para{Main results and overview.}
The general purpose of the present paper is to connect these three extensions of the Tamari lattice and associahedron, in particular for directed trees.

In \cref{sec:general}, we first present in more detail the reorientation lattice~$\Reori(E)$, the sourcing lattice~$\Sour(\HH)$, and the ornamentation lattice~$\Orn(D)$ mentioned above, and we connect them as \linebreak follows.

\begin{proposition}
Given a directed graph~$D$, denote by~$\tc(D)$ its transitive closure and by~$\PP(D)$ its path hypergraph, whose hyperedges are (the vertex sets of) the directed paths in~$D$.
There are natural order-preserving surjections
\[
\begin{array}{rcl}
	\Reori(\tc(D)) & \surjection & \Orn(D) \\
	R & \longmapsto & \orn{R}
\end{array}
\qquad\text{and}\qquad
\begin{array}{rcl}
	\Sour(\PP(D)) & \surjection & \Orn(D) \\
	S & \longmapsto & \orn{S}
\end{array}
.
\]
Moreover, $R \mapsto \orn{R}$ restricts to a meet semilattice morphism from the transitively closed reorientation lattice of~$\tc(D)$ to the ornamentation lattice~of~$D$.
\end{proposition}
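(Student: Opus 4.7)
The plan is to (i) define the two maps $R \mapsto \orn{R}$ and $S \mapsto \orn{S}$ and check that they land in the ornamentation lattice $\Orn(D)$, (ii) establish that both are order-preserving and surjective, and (iii) verify the meet-semilattice property on the restriction to transitively closed reorientations. For (i), I would define $\orn{R}(v)$ as the set of vertices $u \in V$ for which there exists a directed $D$-path $u = w_0, w_1, \ldots, w_k = v$ whose every non-final vertex $w_i$ has the $\tc(D)$-edge between $w_i$ and $v$ oriented as $v \to w_i$ in $R$. The ornament condition at $v$ then holds automatically, since the suffix of any witnessing $D$-path is again a witnessing $D$-path, and the nesting condition $u \in \orn{R}(v) \Rightarrow \orn{R}(u) \subseteq \orn{R}(v)$ follows by concatenating witnessing $D$-paths and checking the $R$-orientation condition on the concatenation. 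For the sourcing map, I would mirror this construction by letting $u \in \orn{S}(v)$ exactly when some $D$-path $H$ from $u$ to $v$ has source $S(H)$ equal to the appropriate endpoint, with an analogous closure argument.

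For (ii), order-preservation is direct: enlarging $\rev(R)$ (resp.\ raising $S$ componentwise) monotonically enlarges the pool of paths witnessing membership, so each $\orn{R}(v)$ and $\orn{S}(v)$ grows. Surjectivity is obtained by producing explicit preimages: given $O \in \Orn(D)$, I would orient each $\tc(D)$-edge $\{u,v\}$ as $v \to u$ whenever $u \in O(v)$, with a canonical tie-breaker when both $u \in O(v)$ and $v \in O(u)$ hold, and I would set $S(H)$ to be the appropriate $O$-extremal vertex of each path $H$; re-running the definition of $\orn{R}$ or $\orn{S}$ and using the nesting condition on $O$ then recovers $O$.

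For (iii), the first step is to identify the meet operation on the transitively closed reorientation lattice: it is not simply the Boolean meet (intersection of reversion sets), since a short three-vertex example shows that an intersection of transitively closed reversion sets need not stay transitively closed; rather, one takes the largest transitively closed reorientation that is Boolean-below both. The inclusion $\orn{R_1 \meet R_2} \le \orn{R_1} \meet \orn{R_2}$ follows from order-preservation. The reverse inclusion is the content of the claim and is the main obstacle: given $u \in \orn{R_1}(v) \cap \orn{R_2}(v)$ with witnessing $D$-paths in $R_1$ and $R_2$, one must construct a common witnessing path in $R_1 \meet R_2$, and this is exactly where transitivity of $R_1$ and $R_2$ is used. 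Without transitivity, the closure-like construction behind $\orn{R}$ does not commute with intersections of reorientations, as small examples already show, so the meet-semilattice property genuinely requires the restriction.
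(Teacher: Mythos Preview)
Your proposal has two genuine errors.

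\textbf{Your definition of $\orn{R}$ fails nesting.} Take $D$ the path $1 \to 2 \to 3$ and $\rev(R) = \{(1,2),(2,3)\}$. With your definition, $\orn{R}(2)=\{1,2\}$ (the $D$-path $1,2$ witnesses $1$) while $\orn{R}(3)=\{2,3\}$ (the $D$-path $2,3$ witnesses $2$, but $1,2,3$ does not witness $1$ since $(1,3)\notin\rev(R)$). Then $2\in\orn{R}(3)$ yet $\orn{R}(2)\not\subseteq\orn{R}(3)$. Your concatenation argument breaks exactly here: knowing $(z_j,u)\in\rev(R)$ along a witnessing path to $u$ tells you nothing about $(z_j,v)$. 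The paper's definition avoids this by setting $\orn{R}(v)$ to be the inclusion-maximal ornament at $v$ contained in the set of vertices reaching $v$ by a path in $\rev(R)$; nesting then comes from a maximality argument (if $u\in\orn{R}(v)$ then $\orn{R}(u)\cup\orn{R}(v)$ is still an ornament at $v$ contained in that set, forcing $\orn{R}(u)\subseteq\orn{R}(v)$). Your description happens to match the paper's when $R$ is acyclic and $D$ is a tree, but not in the generality the proposition requires.

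\textbf{The meet in $\Rcl(\tc(D))$ \emph{is} the Boolean meet.} Intersections of transitively closed relations are transitively closed: if $(u,v),(v,w)\in\rev(R_1)\cap\rev(R_2)$ then $(u,w)\in\rev(R_1)$ and $(u,w)\in\rev(R_2)$ separately. So no three-vertex counterexample exists, and $\rev(R_1\meet R_2)=\rev(R_1)\cap\rev(R_2)$ inside $\Rcl(\tc(D))$. The paper's proof of the nontrivial inclusion $\orn{R_1}\meet\orn{R_2}\le\orn{R_1\meet R_2}$ then runs as follows: for $u\in(\orn{R_1}\meet\orn{R_2})(v)$, take a $D$-path $u=w_0,\dots,w_p=v$ inside $\orn{R_1}(v)\cap\orn{R_2}(v)$; each $w_i$ reaches $v$ by a path in $\rev(R_j)$, and \emph{transitive closedness of $R_j$} collapses that path to the single edge $(w_i,v)\in\rev(R_j)$; hence $(w_i,v)\in\rev(R_1)\cap\rev(R_2)=\rev(R_1\meet R_2)$ for every $i$, so the ornament $\{w_0,\dots,w_p\}$ lies in the defining set for $\orn{R_1\meet R_2}(v)$, and maximality finishes. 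Transitivity is thus used once, to collapse paths to edges, not to repair a nonexistent failure of closure under intersection.
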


In \cref{sec:acyclic}, we focus on acyclicity.
We consider the acyclic reorientation poset~$\AReori(E)$, the acyclic sourcing poset~$\ASour(\HH)$, and define the acyclic ornamentation poset~$\AOrn(D)$. We then connect them as follows.

\begin{proposition}
For a directed graph~$D$, there are natural order-preserving surjections
\[
\begin{array}{rcccccl}
	\fS_n & \surjection & \AReori(\tc(D)) & \surjection & \ASour(\PP(D)) & \bijection & \AOrn(D) \\
	\pi & \longmapsto & \areori{\pi} \qquad R & \longmapsto & \asour{R} \qquad S & \longmapsto & \aorn{S}
\end{array}
.
\]
In particular, the acyclic sourcing poset~$\ASour(\PP(D))$ can be seen as a subposet of the ornamentation lattice~$\Orn(D)$.
\end{proposition}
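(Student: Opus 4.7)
I would handle the three maps in sequence, verifying for each that it sends acyclic objects to acyclic objects, is order preserving, and is surjective (bijective for the last). Composition then yields the displayed chain, and the bijection at the end identifies~$\ASour(\PP(D))$ with~$\AOrn(D)$, which sits inside~$\Orn(D)$ as the subposet of acyclic ornamentations.

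\textbf{Maps 1 and 2.} For a permutation~$\pi \in \fS_n$, I reorient each edge of~$\tc(D)$ to agree with the linear order~$\pi$. This reorientation refines a linear order, hence is acyclic; with respect to a fixed reference orientation, its reversal set is precisely~$\inv(\pi) \cap \tc(D)$, so~$\pi \le \pi'$ in the weak order yields~$\areori{\pi} \le \areori{\pi'}$, and topological sorting an acyclic reorientation of~$\tc(D)$ provides surjectivity. For an acyclic reorientation~$R$ of~$\tc(D)$ and a directed path~$P$ of~$D$, the vertex set of~$P$ is a chain in~$\tc(D)$, so~$R$ restricts to an acyclic tournament on it: I define~$\asour{R}(P)$ to be its~$R$-minimum. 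A cycle in the sourcing graph would assemble into a directed cycle in~$R$, so~$\asour{R}$ is acyclic. Order preservation is a direct translation between the reversal-set order and the componentwise source order. For surjectivity, given an acyclic sourcing~$S$, taking the transitive closure of the arrows~$(h,S(H))$ with $h\in H \in \PP(D)$ produces an acyclic reorientation of~$\tc(D)$ that maps back to~$S$.

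\textbf{Map 3 (the main obstacle).} For an acyclic sourcing~$S$, I define~$\aorn{S}(v)$ as the set of vertices~$u$ that admit a directed $D$-path to~$v$ every initial segment of which is sourced toward~$v$ by~$S$ (the exact recipe is forced by compatibility with the reorientation of~$\tc(D)$ that defines acyclicity of an ornamentation). Checking that~$\aorn{S}(v)$ is an ornament at~$v$ and that the nesting condition $u\in\aorn{S}(v)\Rightarrow \aorn{S}(u)\subseteq\aorn{S}(v)$ holds reduces to concatenating the witnessing paths. Acyclicity of~$\aorn{S}$ is handled by observing that the auxiliary reorientation of~$\tc(D)$ associated to~$\aorn{S}$ coincides with the one obtained from~$S$ via map~2, and is therefore acyclic by construction. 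The inverse bijection sends an acyclic ornamentation~$O$ to the sourcing that selects, on each path~$P$, the distinguished vertex dictated by~$O$; order preservation and mutual inversion are verified by unfolding definitions. The key difficulty lies precisely in aligning the two notions of acyclicity—the direct one on sourcings and the indirect one on ornamentations defined through a reorientation of~$\tc(D)$—but once this dictionary is established, all remaining verifications are routine, and composing the three maps realizes~$\AOrn(D)$ as an order-preserving image of the weak order on~$\fS_n$.
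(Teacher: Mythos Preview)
Your architecture matches the paper's: maps~1 and~2 are handled exactly as you describe, and map~3 is indeed the restriction of $S\mapsto\orn{S}$ to acyclic sourcings, with $\AOrn(D)$ defined as its image.

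The gap is in your treatment of the bijection. You describe the inverse as sending an acyclic ornamentation~$O$ to ``the sourcing that selects, on each path~$P$, the distinguished vertex dictated by~$O$''. Read naturally this is the map~$\sour{O}$, and that map does \emph{not} land in acyclic sourcings: for the diamond graph the paper exhibits an acyclic ornamentation~$O=\orn{\areori{3412}}$ with $\sour{O}(\{3,4\})=3$ and $\sour{O}(\{1,3,4\})=4$, a $2$-cycle. The paper therefore does not construct an explicit inverse. Instead it proves injectivity of $S\mapsto\orn{S}$ on acyclic sourcings directly (if $\orn{S_1}=\orn{S_2}$ but $S_1(P)\ne S_2(P)$, one argues that the smaller of the two sources lies in the ornament at the larger, which forces a cycle in one of the~$S_i$), takes surjectivity onto~$\AOrn(D)$ for free from the definition of acyclic ornamentation, and proves order-reflection by a separate argument again using acyclicity. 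None of these steps is a mere unfolding of definitions. Relatedly, your acyclicity check for~$\aorn{S}$ asserts that ``the auxiliary reorientation associated to~$\aorn{S}$ coincides with the one obtained from~$S$ via map~2''; if ``associated to'' means~$\reori{\aorn{S}}$ this is false in general, and~$\reori{O}$ can be cyclic even for acyclic~$O$. The correct statement, which the paper establishes via $\orn{\areori{S}}=\orn{\asour{\areori{S}}}=\orn{S}$, is that the acyclic reorientation~$\areori{S}$ witnesses the \emph{existential} definition of acyclicity for~$\aorn{S}$.
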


See \cref{fig:allMaps} for an overview of these connections.

\begin{figure}
	\centerline{\begin{tikzpicture}[scale=1.1]

	\node(R) at (-5,0) {$\Reori(\tc(D))$};
	\node(DR) at (-5,-0.35) {\tiny{Def.~\ref{def:Reori}}};
	\node(S) at (0,0) {$\Sour(\PP(D))$};
	\node(DS) at (0,-0.35) {\tiny{Def.~\ref{def:Sour}}};
	\node(O) at (5,0) {$\Orn(D)$};
	\node(DO) at (5,-0.35) {\tiny{Def.~\ref{def:Orn}}};

	\draw[thick,green] (R.north) edge [>={Stealth[scale=0.65,width=6pt]},-{>>},bend left=35] node [above]{\textcolor{black}{\small{$\orn{R}$} }} node [below]{\textcolor{black}{\tiny{\begin{tabular}{c} Def.~\ref{def:Reori2Orn} \\ \ref{lem:Reori2Orn1}, \ref{lem:Reori2Orn2}, \ref{prop:Reori2Orn}, \ref{prop:ReoriTC2Orn}, \ref{prop:Reori2OrnMeetSemilaticeTC} \end{tabular}}}}  (O.north);
	\draw[thick,green] (DO.south) edge [{Hooks[left]}-stealth,bend left=35] node [below]{\textcolor{black}{ \tiny{\begin{tabular}{c}Def.~\ref{def:Orn2Reori} \\ \ref{lem:Orn2Reori1}, \ref{prop:ReoriTC2Orn}, \ref{prop:Reori2OrnMinTC} \end{tabular}}}} node [above]{\textcolor{black}{\small{$\reori{O}$} }}  (DR.south);
	\draw[thick,green] (S.west) edge [-stealth] node [above]{\textcolor{black}{\small{$\reori{S}$} }} node [below]{\textcolor{black}{ \tiny{\begin{tabular}{c} Def.~\ref{def:Sour2Reori} \\ \ref{lem:Sour2Reori} \end{tabular}}}}  (R.east);
	\draw[thick,green] (S.north east) edge [>={Stealth[scale=0.65,width=6pt]},-{>>},bend left=33] node [above]{\textcolor{black}{\small{$\orn{S}$} }} node [below]{\textcolor{black}{ \tiny{\begin{tabular}{c} Def.~\ref{def:Sour2Orn} \\ \ref{lem:Sour2Orn2}, \ref{lem:Sour2Orn3}, \ref{prop:Sour2Orn} \end{tabular}}}}  (O.north west);	
	\draw[thick,green] (O.south west) edge [{Hooks[left]}-stealth,bend left=33] node [below]{\textcolor{black}{ \tiny{\begin{tabular}{c} Def.~\ref{def:Orn2Sour} \\ \ref{lem:Orn2Sour1}, \ref{lem:Orn2Sour2} \end{tabular}}}} node [above]{\textcolor{black}{\small{$\sour{O}$} }}  (S.south east);

	\node at (0,.8) {\textbf{\rotatebox{180}{\huge{$\circlearrowright$}}}}; \node at (-.5,.8) {\tiny{\ref{lem:Sour2Orn1}}};
	\node at (0,-1) {\textbf{\huge{$\circlearrowleft$}}}; \node at (-.5,-1) {\tiny{\ref{lem:Orn2Sour4}}};
	\node at (5.8,-.1) {\textbf{\rotatebox{90}{{\Large$\curvearrowright$}}}}; \node at (6.2,-.1) {\tiny{\ref{lem:Orn2Reori2}}};
	\node at (2.5,-.1) {\textbf{\rotatebox{90}{{\Large$\curvearrowright$}}}}; \node at (2.1,-.1) {\tiny{\ref{lem:Orn2Sour3}}};
	

	\node(Sym) at (-8,-6.5) {$\fS_n$};
	\node(AR) at (-5,-6.5) {$\AReori(\tc(D))$};
	\node(DAR) at (-5,-6.85) {\tiny{Def.~\ref{def:AReori}}};
	\node(AS) at (0,-6.5) {$\ASour(\PP(D))$};
	\node(DAS) at (0,-6.85) {\tiny{Def.~\ref{def:ASour}}};
	\node(AO) at (5,-6.5) {$\AOrn(D)$};
	\node(DAO) at (5,-6.85) {\tiny{Def.~\ref{def:AOrn}}};
	\node(Phantom) at (8,-6.5) {\phantom{$\fS_n$}};
		
	\draw[thick,green] (Sym.east) edge [>={Stealth[scale=0.65,width=6pt]},-{>>}] node [below]{\textcolor{black}{ \tiny{\begin{tabular}{c} Def.~\ref{def:Perm2AReori} \\ \ref{prop:Perm2AReori} \end{tabular}}}} node [above]{\small{\textcolor{black}{$\areori{\pi}$}}} (AR.west);	
	\draw[thick,green] (AR.north) edge [>={Stealth[scale=0.65,width=6pt]},-{>>},bend left=35] node [below]{\textcolor{black}{ \tiny{Def.~\ref{def:moreNotations}}}} node [above]{\small{\textcolor{black}{$\aorn{R}=\orn{R}$}}} (AO.north);	
	\draw[thick,red] (DAO.south) edge [{Hooks[left]}-stealth,bend left=35] node [below]{\textcolor{black}{ \tiny{Def.~\ref{def:moreNotations}}}} node [above]{\textcolor{black}{\small{$\areori{O}$}}} (DAR.south);
	\draw[thick,green] (AR.north east) edge [>={Stealth[scale=0.65,width=6pt]},-{>>},bend left=33] node [below]{\textcolor{black}{ \tiny{\begin{tabular}{c} Def.~\ref{def:AReori2ASour} \\ \ref{lem:AReori2ASour1}, \ref{lem:AReori2ASour2}, \ref{lem:AReori2ASour3}, \ref{prop:AReori2ASour} \end{tabular}}}} node [above]{\textcolor{black}{\small{$\asour{R}$} }}  (AS.north west);
	\draw[thick, red] (AS.south west) edge [{Hooks[left]}-stealth,bend left=33] node [above]{\textcolor{black}{\small{$\areori{S}$ }}} node [below]{\textcolor{black}{ \tiny{\begin{tabular}{c} Def.~\ref{def:ASour2AReori} \\ \ref{lem:ASour2AReori1}, \ref{rem:ASour2AReori} \end{tabular}}}} (AR.south east);
	\draw[thick,green] (AS.north east) edge [>={Stealth[scale=0.65,width=6pt]},{Hooks[left]}-{>>},bend left=33] node [below] {\textcolor{black}{\tiny{\begin{tabular}{c} Def.~\ref{def:moreNotations} \\ \ref{lem:ASour2AOrn1}, \ref{prop:ASour2AOrn} \end{tabular}}}} node [above]{\textcolor{black}{\small{$\aorn{S}=\orn{S}$}}} (AO.north west);
	\draw[thick,green] (AO.south west) edge [>={Stealth[scale=0.65,width=6pt]},{Hooks[left]}-{>>},bend left=33] node [below]{\textcolor{black}{ \tiny{Def.~\ref{def:moreNotations}}}} node [above]{\textcolor{black}{\small{$\asour{O}$}}} (AS.south east);

	\node at (0,-5.5) {\textbf{\rotatebox{180}{\huge{$\circlearrowright$}}}}; \node at (-.5,-5.5) {\tiny{\ref{lem:AReori2ASour2AOrn}}};
	\node at (0,-7.5) {\textbf{\huge{$\circlearrowleft$}}}; \node at (-.5,-7.5) {\tiny{\ref{def:moreNotations}}};
	\node at (-2.5,-6.6) {\textbf{\rotatebox{90}{\Large{$\curvearrowright$}}}}; \node at (-2.1,-6.6) {\tiny{\ref{lem:ASour2AReori2}}};
	\node at (2.5,-6.6) {\textbf{\rotatebox{270}{\huge{$\circlearrowright$}}}}; \node at (2,-6.6) {\tiny{\ref{def:moreNotations}}};

	\draw[gray,thick] (-5.1,-6.2) edge [{Hooks[right]}-stealth,dashed] node [left]{\rotatebox{90}{\LARGE$\subseteq$}} (-5.1,-.53);
	\draw[gray,thick] (.5,-6.2) edge [{Hooks[right]}-stealth,dashed,bend right=33] node [left]{\rotatebox{90}{\LARGE$\subseteq$}} (.5,-.53);
	\draw[gray,thick] (5.3,-6.2) edge [{Hooks[right]}-stealth,dashed] node [left]{\rotatebox{90}{\LARGE$\subseteq$}} (5.3,-.53);

\end{tikzpicture}}
	\caption{Connections between the posets studied in this paper. Below each poset or map appears a pointer to the corresponding definition. Below each map, we also point to the main statements concerning it (in the general case of a directed graph). The maps in green are order preserving while those in red are not. The symbol~\smash{\Large$\circlearrowleft$} means a commuting diagram, the symbol~\smash{\raisebox{-.05cm}{\rotatebox{90}{{$\curvearrowright$}}}} means that one map is a section of the other. Dashed arrows indicate inclusions.}
	\label{fig:allMaps}
\end{figure}
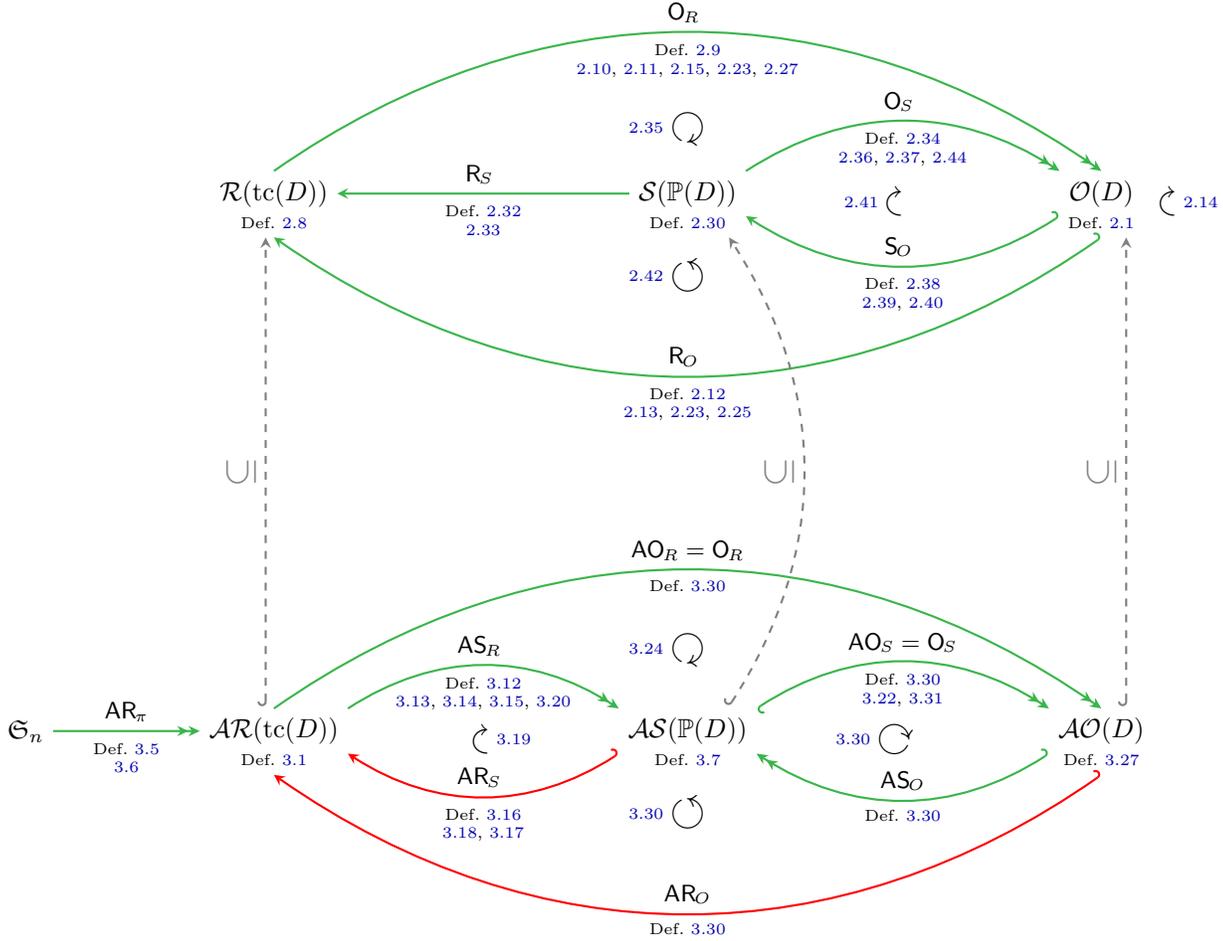

\medskip
These connections become particularly interesting when the underlying directed graph~$D$ is an increasing tree~$T$, and even more if it is rooted (or unstarred).
In \cref{sec:trees}, we first consider a directed tree.
We describe the join irreducible ornamentations of~$T$ and meet irreducible ornamentations of~$T$, and prove that the ornamentation lattice~$\Orn(T)$ is semidistributive.
Observing that all join or meet irreducible ornamentations of~$T$ (resp.~transitively biclosed reorientations of~$\tc(T)$) are acyclic, we obtain the following statement.
Recall that the MacNeille completion of a poset~$P$ is the smallest lattice which admits an embedding of~$P$.

\begin{theorem}
\label{thm:main1}
For any increasing tree~$T$ on~$[n]$, 
\begin{itemize}
\item the transitively biclosed reorientation lattice~$\Rbi(\tc(T))$ is precisely the MacNeille completion of the acyclic reorientation poset~$\AReori(\tc(T))$,
\item the ornamentation lattice~$\Orn(T)$ is precisely the MacNeille completion of the acyclic sourcing poset~$\ASour(\PP(T))$.
\end{itemize}
\end{theorem}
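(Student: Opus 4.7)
The plan is to invoke the standard finite criterion for MacNeille completions: if $L$ is a finite lattice and $P$ is a subposet of $L$ containing both $\JI(L)$ and $\MI(L)$, then $L$ is the MacNeille completion of $P$. Indeed, in any finite lattice every element is a join of its subset of join-irreducibles and a meet of its superset of meet-irreducibles, so such a $P$ is automatically join-dense and meet-dense in $L$, which is the defining property of the MacNeille completion.

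With this criterion in hand, both assertions of the theorem reduce to the same template. First, I would invoke from the earlier parts of this section that $\AReori(\tc(T))$ is a subposet of the finite lattice $\Rbi(\tc(T))$, and that the map $S \mapsto \orn{S}$ of \cref{sec:acyclic} order-embeds $\ASour(\PP(T))$ as a subposet of the finite lattice $\Orn(T)$. Second, I would combine this with the structural observation announced just above the theorem: every join-irreducible and every meet-irreducible of $\Rbi(\tc(T))$ is an acyclic reorientation of $\tc(T)$, and every join-irreducible and every meet-irreducible of $\Orn(T)$ is an acyclic ornamentation of $T$, hence of the form $\orn{S}$ for some acyclic sourcing $S \in \ASour(\PP(T))$. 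These two facts say exactly that the relevant subposet contains all join- and meet-irreducibles of its ambient lattice, so the MacNeille criterion applies verbatim to conclude the proof.

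The substantive content, and hence the main obstacle, is not the final packaging but the two ingredients (ii) and (iii): one must have at hand an explicit description of the join- and meet-irreducibles of $\Orn(T)$ (and of $\Rbi(\tc(T))$), and then verify acyclicity for each such irreducible. These are the "observations" referenced in the paragraph preceding the theorem, and they depend crucially on the combinatorics of the increasing tree $T$, in particular on how its rooted subtrees govern which ornaments can be freely added or removed one vertex at a time. Once these explicit descriptions and the acyclicity verifications are established in Section 4, \cref{thm:main1} is an immediate corollary of the finite MacNeille criterion.
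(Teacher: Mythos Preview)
Your proposal is correct and matches the paper's own proof essentially verbatim. The paper's argument is a one-line invocation of \cref{prop:MacNeilleCompletion} (every finite lattice is the MacNeille completion of the subposet induced by its join- and meet-irreducibles) together with \cref{coro:irreduciblesAcyclicT} (all irreducibles of~$\Rbi(\tc(T))$ and of~$\Orn(T)$ are acyclic); your write-up makes the implicit sandwich $\JI\cup\MI\subseteq P\subseteq L$ explicit, which is if anything a slight improvement in clarity.
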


We then consider, in \cref{sec:unstarredTrees}, a special class of increasing trees containing, in particular, all rooted trees.
A tree~$T$ is \defn{unstarred} if there are no vertices~$u,v$ of~$T$ such that~$u$ has at least two incoming edges in~$T$, $v$ has at least two outgoing edges in~$T$, and $T$ has a directed path from~$u$ to~$v$.
For these trees, the relation between its acyclic sourcing poset and ornamentation lattice is even stronger.

\begin{theorem}
\label{thm:main2}
For an unstarred increasing tree~$T$,
\begin{enumerate}
\item the acyclic reorientation poset~$\AReori(\tc(T))$, the acyclic sourcing poset~$\ASour(\PP(T))$, and the acyclic ornamentation poset~$\AOrn(T)$ are all lattices,
\item all ornamentations of~$T$ are in fact acyclic, and so~$\ASour(\PP(T)) \simeq \AOrn(T) = \Orn(T)$,
\item the map~$R \mapsto \orn{R}$ is a surjective lattice map, and so the ornamentation lattice~$\Orn(T)$ is a lattice quotient of the acyclic reorientation lattice~$\AReori(\tc(T))$,
\item the ornamentation lattice~$\Orn(T)$ is isomorphic to the transitive closure of the graph of the path hypergraphic polytope~$\simplex_{\PP(T)}$ oriented in the direction~$\b{w}$. \label{item:polytopalRealization}
\end{enumerate}
\end{theorem}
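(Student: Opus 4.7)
The plan is to prove~(2) first, as it is the main structural fact specific to unstarred increasing trees; parts~(1),~(3), and~(4) then follow with comparatively light work by combining~(2) with results either already established in the paper or recalled in the introduction.

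First I would establish~(2), that every ornamentation $O \in \Orn(T)$ is acyclic. By definition this requires showing that the reorientation $\reori{O}$ of $\tc(T)$ is acyclic. My approach would be by contradiction: supposing $\reori{O}$ contains a directed cycle $C$, lift each arc of $C$ to the underlying directed path in $T$, and analyse how~$C$ turns around relative to the natural orientation of~$T$. Every such turn-around forces either a vertex $u$ at which two edges of $T$ meet (in-degree $\geq 2$) or a vertex~$v$ from which two edges of~$T$ emanate (out-degree $\geq 2$), and the nesting condition $u \in O(v) \Rightarrow O(u) \subseteq O(v)$ should chain these together to produce a directed path in~$T$ from such a $u$ to such a~$v$, contradicting the unstarred hypothesis. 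The hard part of the argument will be the bookkeeping to keep the directions of reversals in $\reori{O}$ consistent with the ornaments $O(v_i)$ along the cycle; I expect this case analysis to be the most technical step.

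Once~(2) is in hand, the bijection $\ASour(\PP(T)) \bijection \AOrn(T)$ from~\cref{fig:allMaps} yields isomorphisms $\ASour(\PP(T)) \simeq \AOrn(T) = \Orn(T)$. Since $\Orn(T)$ is already known to be a lattice for increasing trees (see~\cref{sec:trees}), both $\ASour(\PP(T))$ and $\AOrn(T)$ inherit the lattice structure, settling two-thirds of~(1). Part~(4) also follows immediately: by the result of~\cite{Gelinas} recalled in the introduction, $\ASour(\PP(T))$ is isomorphic to the transitive closure of the skeleton of $\simplex_{\PP(T)}$ oriented in the direction~$\b{w}$, and composing with the isomorphism from~(2) gives the desired polytopal realisation of~$\Orn(T)$. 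For the remaining claim of~(1), that $\AReori(\tc(T))$ is a lattice, I would invoke the combinatorial criterion from~\cite{Pilaud-acyclicReorientationLattices} and verify that the configurations in $\tc(T)$ obstructing the lattice property correspond exactly to starred pairs of vertices in~$T$, which are ruled out by hypothesis.

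Finally, for~(3), the map $R \mapsto \orn{R}$ factors through $R \mapsto \asour{R}$ via the isomorphism of~(2), so it suffices to prove that $R \mapsto \asour{R}$ is a surjective lattice map $\AReori(\tc(T)) \surjection \ASour(\PP(T))$. Surjectivity and order-preservation are already in~\cref{fig:allMaps}; meet preservation should follow by restricting the meet-semilattice morphism of~\cref{prop:Reori2OrnMeetSemilaticeTC}, using that on the transitively closed side the acyclic and full meets coincide once~$\AReori(\tc(T))$ is known to be a lattice. Join preservation can then be obtained either by a dual argument on join-irreducibles or, more cleanly, by showing that the fibres of $\asour{\cdot}$ are intervals of~$\AReori(\tc(T))$---a standard criterion upgrading a surjective order-preserving meet morphism to a lattice quotient.
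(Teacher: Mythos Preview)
Your overall decomposition is sound, and parts~(1),~(2),~(4) are handled in a way that either matches or parallels the paper. Two comments are worth making.

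For~(2), the paper takes a cleaner route than the direct cycle analysis you sketch. It has already shown (\cref{lem:Orn2ReoriT}) that for any directed tree, $\reori{O}$ is transitively \emph{biclosed}. It then proves (\cref{prop:AReoriLatticeUT}) that for an unstarred tree every transitively biclosed reorientation of~$\tc(T)$ is acyclic: a cyclic biclosed reorientation would contain an induced oriented cycle, which by biclosedness is an induced alternating cycle in~$\tc(T)$, and that is exactly condition~(i) in the characterization of starred trees (\cref{lem:starredCharacterization}). Chaining these two facts gives acyclicity of~$\reori{O}$ in one line. Your direct approach would eventually rediscover the alternating-cycle structure, but factoring through biclosedness isolates the combinatorics neatly and simultaneously yields that~$\AReori(\tc(T))$ is a lattice.

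For~(3), your plan has a genuine gap. You propose to obtain meet preservation by ``restricting'' \cref{prop:Reori2OrnMeetSemilaticeTC}, but that proposition concerns the map from~$\Rcl(\tc(T))$, the lattice of \emph{transitively closed} reorientations, whereas here the domain is~$\AReori(\tc(T))$. Acyclic reorientations are not in general transitively closed, and the meet in~$\AReori(\tc(T))$ is \emph{not} the restriction of the meet in~$\Rcl$ or in~$\Reori$: it is given by the formula of \cref{prop:AReoriLatJoinMeet}, which involves a transitive closure step. So there is no ``restriction'' available. Likewise, the ``standard criterion'' you invoke for join preservation---that interval fibres plus meet preservation upgrade an order-preserving surjection to a lattice quotient---is not valid without further hypotheses (it would make every quasi-lattice map a lattice map, which the paper explicitly distinguishes). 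The paper instead proves~(3) by a direct computation (\cref{prop:AReori2AOrnLatticeMapUT}): it uses the explicit meet/join formulas in~$\AReori(\tc(T))$ from \cref{prop:AReoriLatJoinMeet} and in~$\Orn(T)$ from \cref{thm:OrnMeetJoin}, together with \cref{lem:Reori2OrnT}, to verify $\orn{R_1 \meet R_2} = \orn{R_1} \meet \orn{R_2}$ and $\orn{R_1 \join R_2} = \orn{R_1} \join \orn{R_2}$ by hand.
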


Note that \cref{thm:main2}\,\eqref{item:polytopalRealization} answers a question posed in~\cite{DefantSack} for rooted trees.
This question remains open beyond the case of unstarred trees.
We state it as a conjecture.

\begin{conjecture}
For any directed graph~$D$, the ornamentation lattice~$\Orn(D)$ is isomorphic to the transitive closure of the graph of a polytope oriented in a linear direction.
\end{conjecture}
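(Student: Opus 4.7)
The strategy is to generalize the realization from \cref{thm:main2}\,\eqref{item:polytopalRealization}, which handles the unstarred tree case via the hypergraphic polytope $\simplex_{\PP(T)}$. For an arbitrary directed graph~$D$, this polytope realizes only the acyclic sourcing poset~$\ASour(\PP(D))$, which embeds into $\Orn(D)$ but is typically strictly smaller (already for general increasing trees, where $\Orn(T)$ is merely the MacNeille completion of $\ASour(\PP(T))$). The plan is therefore to construct an enlarged polytope $\Pi(D)$ whose vertex set is in bijection with $\Orn(D)$ and whose graph, linearly oriented by $\b{\omega}$, has transitive closure equal to the ornamentation lattice.

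Concretely, I would attempt a Minkowski sum $\Pi(D) \eqdef \sum_{U \in \c{U}(D)} \simplex_U$ over an enlarged family $\c{U}(D)$ of subsets of the vertex set, strictly containing the path hyperedges $\PP(D)$. Two natural candidates are (a) the family of all ornaments of $D$ (subsets $U$ from which every vertex has a directed path to a common sink inside $U$), and (b) the family of subsets that actually arise as $O(v)$ for some ornamentation $O$ of $D$ at some vertex $v$. Using the description of faces of hypergraphic polytopes from~\cite{BenedettiBergeronMachacek} in terms of acyclic orientations of the defining hypergraph, one would then try to match vertices of $\Pi(D)$ with ornamentations of $D$ and the cover relations of $\Pi(D)$ with those of $\Orn(D)$. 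A first experimental step is to compute examples where $D$ has branching in both directions (violating the unstarred condition) or contains directed cycles, in order to identify which family yields the correct vertex count and edge structure.

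A complementary route is to construct $\Pi(D)$ by deleting a carefully chosen subfamily of facet inequalities from the graphical zonotope of $\tc(D)$, mirroring the way the classical associahedron arises from the permutahedron. In the unstarred tree case this corresponds to the lattice quotient $\AReori(\tc(T)) \surjection \Orn(T)$ of \cref{thm:main2}. For general $D$, however, the map $\Reori(\tc(D)) \surjection \Orn(D)$ is only a meet semilattice morphism on the transitively biclosed sublattice, so the family of inequalities to remove must be guessed directly from the combinatorics of ornamentations rather than produced as a lattice congruence.

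The main obstacle is the apparent lack of semidistributivity of $\Orn(D)$ for general $D$, as \cref{thm:main1} only establishes this property for trees. Without semidistributivity, the canonical-join-complex machinery that underlies many polytopal realizations of lattice-theoretic constructions is not directly available, and checking Reading's polygonality condition—that every two-generated sublattice of $\Orn(D)$ is a polygon—would be the essential combinatorial input and, in my view, the hardest step. If polygonality fails outright, a polytopal realization could still exist, but identifying it for arbitrary $D$ will likely require genuinely new ideas, perhaps leveraging the meet semilattice morphism from $\Reori(\tc(D))$ together with the enlarged Minkowski sum sketched above.
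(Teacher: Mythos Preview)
The statement you are addressing is a \emph{conjecture}, not a theorem: the paper explicitly says ``This question remains open beyond the case of unstarred trees. We state it as a conjecture.'' There is no proof in the paper to compare against, and your write-up is, accordingly, a research outline rather than a proof. That is the appropriate response to an open problem, but you should not present it as a proof proposal.

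A few factual corrections on your outline. First, you cite \cref{thm:main1} for semidistributivity of~$\Orn(T)$ in the tree case; that theorem is about MacNeille completions. Semidistributivity for trees is \cref{thm:OrnSemidistributiveT}, and the paper in fact exhibits the diamond graph~$\Dgraph$ as an example where~$\Orn(D)$ is \emph{neither} join nor meet semidistributive, so this is a confirmed obstruction rather than an ``apparent'' one. Second, you say the map~$\Reori(\tc(D)) \surjection \Orn(D)$ is a meet semilattice morphism ``on the transitively biclosed sublattice''; the paper proves this on the transitively \emph{closed} reorientation lattice~$\Rcl(\tc(D))$ (\cref{prop:Reori2OrnMeetSemilaticeTC}), and in fact~$\Rbi(\tc(D))$ need not even be a lattice for general~$D$ (\cref{rem:biclosedNotLattice}). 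Third, your Minkowski-sum candidates (all ornaments, or all sets arising as~$O(v)$) would again yield hypergraphic polytopes whose vertices are acyclic sourcings of the enlarged hypergraph, and there is no reason offered for why those acyclic sourcings should biject with ornamentations of~$D$; this is precisely the gap that made the conjecture open.
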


Finally, in \cref{sec:intreevalHypergraphicPosets}, we exploit the connection between sourcing posets and ornamentation lattices to extend our understanding of the hypergraphs~$\HH$ whose acyclic sourcing poset~$\ASour(\HH)$ is a lattice.
Our results largely extend the characterization of~\cite{BergeronPilaud} for interval hypergraphic~lattices.

An \defn{interval hypergraph} is a hypergraph whose hyperedges all are intervals of~$[n]$.
For an interval hypergraph~$\II$ containing all singletons, it was proved in~\cite[Thm.~A]{BergeronPilaud} that the acyclic sourcing poset~$\ASour(\II)$ is a lattice if and only if~$\II$ is closed under intersection, meaning that~$I,J \in \II$ implies~$I \cap J \in \II$.
(As adding or removing singletons to~$\II$ preserves~$\ASour(\II)$, we impose~$\II$ to contain all singletons just to simplify the expression of intersection closedness.)
The approach of~\cite{BergeronPilaud} is to show that the map~$\pi \mapsto \asour{\pi}$ sending a permutation of~$[n]$ to an acyclic sourcing of~$\II$ is a \defn{quasi-lattice map}, meaning that the fiber of each acyclic sourcing~$S$ of~$\II$ forms an interval~$[\projDown{S}, \projUp{S}]$ of the weak order and that~$S \le S'$ in~$\ASour(\II)$ implies that~$\projDown{S} \le \projUp{S'}$.
While much weaker than the lattice map condition (which requires instead that~$S \le S'$ implies~$\projDown{S} \le \projDown{S'}$ and~$\projUp{S} \le \projUp{S'}$), the quasi-lattice map condition still ensures that the image forms a lattice.

In \cref{sec:intreevalHypergraphicPosets}, we extend this characterization to intreeval hypergraphs.
An \defn{intreeval hypergraph} of an increasing tree~$T$ on~$[n]$ is any subhypergraph~$\II$ of the path hypergraph~$\PP(T)$, meaning any collection of (vertex sets of) directed paths in~$T$.
Unfortunately, even when $\ASour(\II)$ is a lattice, the map~$\pi \mapsto \asour{\pi}$ from permutations of~$[n]$ to acyclic sourcings of~$\II$ is not necessarily a quasi-lattice map anymore (its fibers are not intervals in general).
Moreover, it is also not possible to use the map~$R \mapsto \asour{R}$ from acyclic reorientations of~$\tc(T)$ to acyclic sourcings of~$\II$, as the acyclic reorientation poset~$\AReori(\tc(T))$ is not a lattice in general, in contrast to the situation of~\cref{thm:main2}.
We use instead a natural quasi-lattice map from the ornamentation lattice~$\Orn(T)$ to the acyclic sourcing poset~$\ASour(\II)$ to obtain the following complete characterization.

\begin{theorem}
\label{thm:main3}
Let~$\II$ be an intreeval hypergraph of an increasing tree~$T$.
The acyclic sourcing poset~$\ASour(\II)$ is a lattice if and only if $\II$ is path intersection closed (\cref{def:pathIntersectionClose}) and star sparse (\cref{def:pathStarSparse}).
\end{theorem}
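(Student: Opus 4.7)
The plan is to mirror the strategy used for interval hypergraphs in~\cite{BergeronPilaud}, with the ornamentation lattice~$\Orn(T)$ (which is a lattice for any increasing tree) playing the role that the weak order plays there. For the forward direction I would argue by contrapositive, producing in each case a small explicit obstruction. If $\II$ is not path intersection closed, I would locate two paths~$I, J \in \II$ whose vertex intersection is a path not in~$\II$, and exhibit two acyclic sourcings in~$\ASour(\II)$ that admit two incomparable minimal common upper bounds, the ambiguity arising from the freedom to choose consistent acyclic sources along the missing intersection path. If $\II$ is not star sparse, I would construct a similar obstruction anchored at the offending branching vertex. Such small forbidden subconfigurations should directly witness the failure of the lattice property.

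For the backward direction, assuming both conditions hold, I would define the natural map
\[
\Phi \colon \Orn(T) \twoheadrightarrow \ASour(\II), \qquad O \longmapsto \asour{O}|_{\II},
\]
sending an ornamentation to the restriction of its associated acyclic sourcing of~$\PP(T)$ to the hyperedges of~$\II$, and prove that it is a \emph{quasi-lattice map}. Concretely, I would verify: $\Phi$ is order-preserving and surjective; the fiber~$\Phi^{-1}(S)$ of each $S \in \ASour(\II)$ is an interval $[\projDown{S}, \projUp{S}]$ of~$\Orn(T)$, built from canonical minimal and maximal ornamentations compatible with~$S$; and $S \le S'$ in $\ASour(\II)$ implies $\projDown{S} \le \projUp{S'}$ in $\Orn(T)$. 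Since $\Orn(T)$ is a lattice, the general lifting principle of~\cite{BergeronPilaud} then yields that $\ASour(\II)$ is a lattice, with meets and joins computed by $S \meet S' = \Phi(\projDown{S} \meet \projDown{S'})$ and $S \join S' = \Phi(\projUp{S} \join \projUp{S'})$.

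The main technical obstacle is the fiber-is-an-interval condition. It requires producing canonical extensions $\projDown{S}$ and $\projUp{S}$ of a sourcing~$S \in \ASour(\II)$ to an ornamentation of~$T$, and checking that every preimage lies between them. The construction must propagate the source choices from~$\II$ into coherent ornaments at every vertex of~$T$, respecting the nesting condition built into the definition of an ornamentation. The well-definedness of this propagation is precisely where the two hypotheses enter: path intersection closedness ensures that when two hyperedges of~$\II$ share a sub-path, the constraints they impose on shared vertices are compatible, while star sparseness precludes the branching configurations where independent source choices on two hyperedges would create incompatible requirements at a common descendant. Both conditions should also enter the verification of the third point, which I expect to reduce to a componentwise inequality check along each path of~$\II$, exploiting order-preservation of the associated sourcing map together with the extremal character of~$\projDown{S}$ and~$\projUp{S}$.
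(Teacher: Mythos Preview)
Your overall strategy matches the paper's exactly: necessity by explicit obstructions, sufficiency via a quasi-lattice map from~$\Orn(T)$ to~$\ASour(\II)$. A few points need correction, though.

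First, the map~$\Phi$. You write~$\asour{O}|_{\II}$, ``the restriction of its associated acyclic sourcing of~$\PP(T)$''. But for a general (possibly starred) increasing tree~$T$, not every ornamentation is acyclic, so~$\asour{O}$ is not defined on all of~$\Orn(T)$; and even when it is, its domain~$\AOrn(T)$ need not be a lattice. The correct map is~$O \mapsto \sour{O}^\II$, the restriction to~$\II$ of the (possibly cyclic) sourcing~$\sour{O}$ of~$\PP(T)$. You must then prove that~$\sour{O}^\II$ is always acyclic. The paper does this in two steps: for any ornamentation~$O$, the sourcing~$\sour{O}^\II$ has no cycle of length~$2$ (because~$O$ is nested), and star sparseness guarantees that every minimal cycle in any sourcing of~$\II$ has length~$2$. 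So star sparseness enters to show~$\Phi$ lands in~$\ASour(\II)$, not to build the fiber endpoints.

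Second, the fiber-is-an-interval property requires \emph{neither} hypothesis: the canonical~$\projDown{S}$ and~$\projUp{S}$ exist and bound the fiber for any acyclic sourcing~$S$, purely because~$\Orn(T)$ has explicit joins and meets. Path intersection closedness is used only for the cross-fiber inequality~$S_1 \le S_2 \Rightarrow \projDown{S_1} \le \projUp{S_2}$.

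Third, your join/meet formulas are swapped: the quasi-lattice principle gives~$S \join S' = \Phi(\projDown{S} \join \projDown{S'})$ and~$S \meet S' = \Phi(\projUp{S} \meet \projUp{S'})$.

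Finally, for the forward direction: the negation of path intersection closedness is not that~$I \cap J \notin \II$, but that no~$K \in \II$ satisfies~$I \cap J \subseteq K \subseteq [\min(J), \max(I)]_T$ (with the asymmetric endpoint conditions on~$I,J$); and the paper's obstruction is a configuration~$S_A, S_B \le S_C, S_D$ with no common interpolant, rather than two incomparable minimal upper bounds.
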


Path intersection closedness and star sparsity are slightly technical conditions whose details are deferred to \cref{sec:intreevalHypergraphicPosets}.
When~$T$ is a path, path intersection closedness boils down to intersection closedness, and star sparsity always holds, hence \cref{thm:main3} generalizes~\cite[Thm.~A]{BergeronPilaud}.
Note that intersection closedness implies path intersection closedness (but is stronger in general, except when~$T$ is a path), and that star sparsity always holds for rooted trees.
In particular, the acyclic sourcing poset~$\ASour(\II)$ of any intersection closed intreeval hypergraph~$\II$ of a rooted~tree~is~a~lattice.


\clearpage
\section{Reorientations, sourcings, and ornamentations}
\label{sec:general}

Throughout this section, we fix a directed graph~$D$ on a vertex set~$V$, and denote by~$\tc(D)$ its transitive closure and by~$\PP(D)$ its hypergraph of directed paths.
It is a priori not required that~$D$ is acyclic, although this is the case that interest us most in this paper.
In particular, in all our pictures, $V = [n]$ and the edges are increasing (so we can omit their orientations).
We define the ornamentation lattice~$\Orn(D)$ (\cref{subsec:ornamentations}), the reorientation lattice~$\Reori(\tc(D))$ (\cref{subsec:reorientations}), and the sourcing lattice~$\Sour(\PP(D))$ (\cref{subsec:sourcings}), and define natural surjective poset morphisms:
\[
\begin{array}{rcl}
	\Reori(\tc(D)) & \surjection & \Orn(D) \\
	R & \longmapsto & \orn{R}
\end{array}
\qquad\text{and}\qquad
\begin{array}{rcl}
	\Sour(\PP(D)) & \surjection & \Orn(D) \\
	S & \longmapsto & \orn{S}
\end{array}
.
\]


\subsection{Ornamentations}
\label{subsec:ornamentations}

\begin{figure}[b]
	\centerline{\includegraphics[scale=.68]{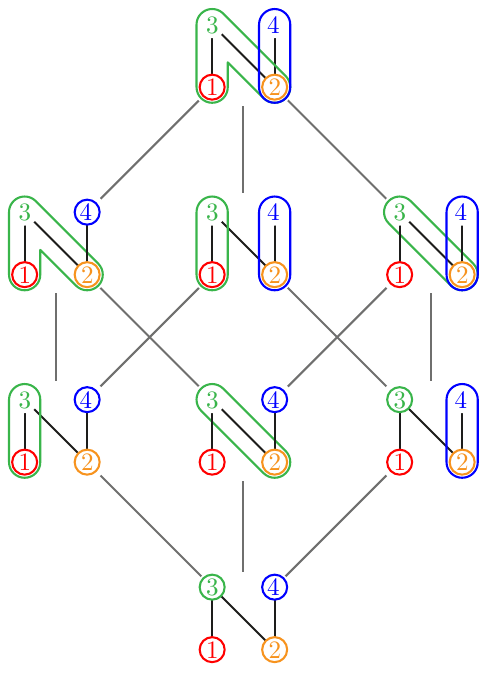} \qquad \raisebox{.5cm}{\includegraphics[scale=.68]{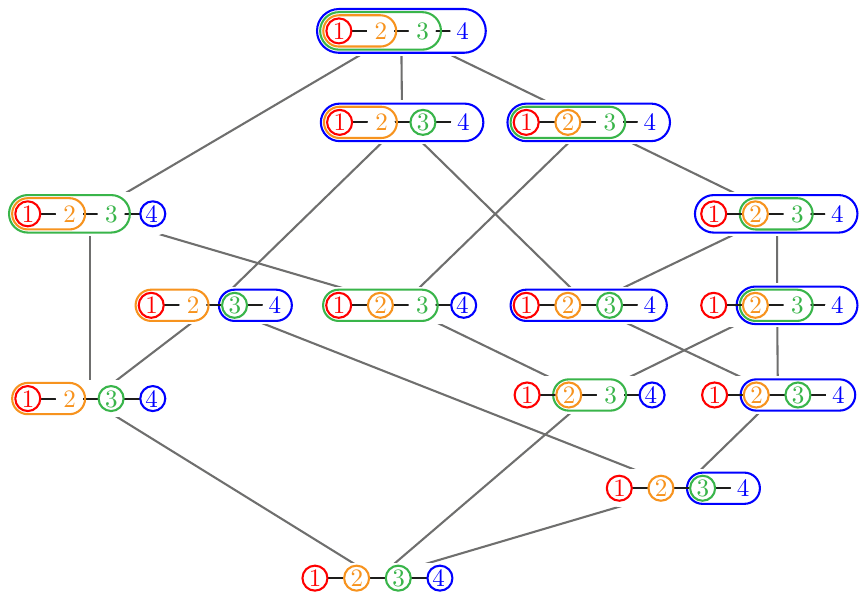}}}
	\caption{The ornamentation lattices~$\Orn(\Ngraph)$ and~$\Orn(\Igraph)$.}
	\label{fig:ornamentationsNI}
\end{figure}

\begin{figure}
	\centerline{\includegraphics[scale=.68]{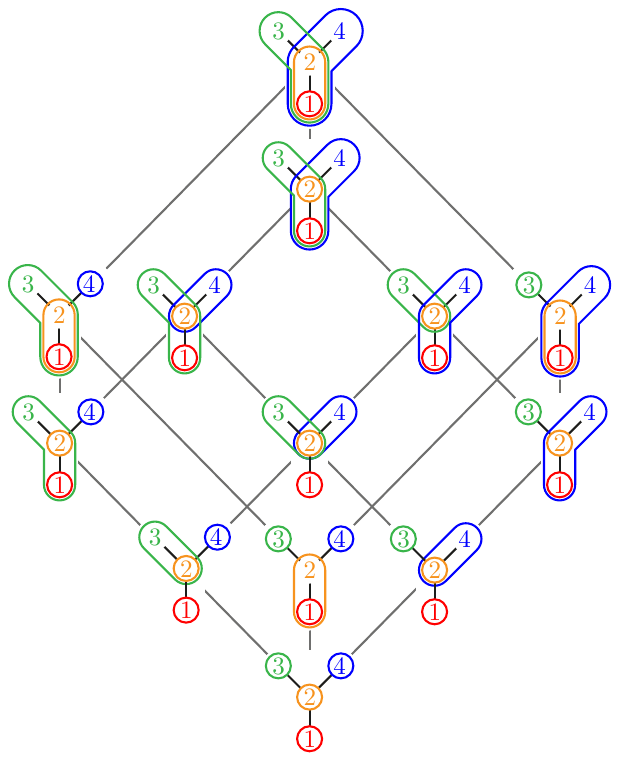} \qquad \includegraphics[scale=.68]{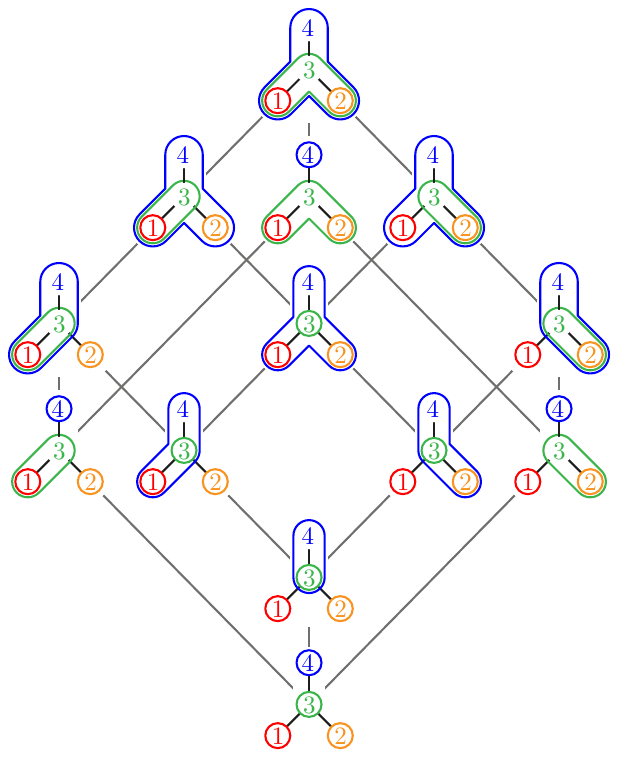}}
	\caption{The ornamentation lattices~$\Orn(\Ygraph)$ and~$\Orn(\Agraph)$.}
	\label{fig:ornamentationsAY}
\end{figure}

\begin{figure}
	\centerline{\includegraphics[scale=.68]{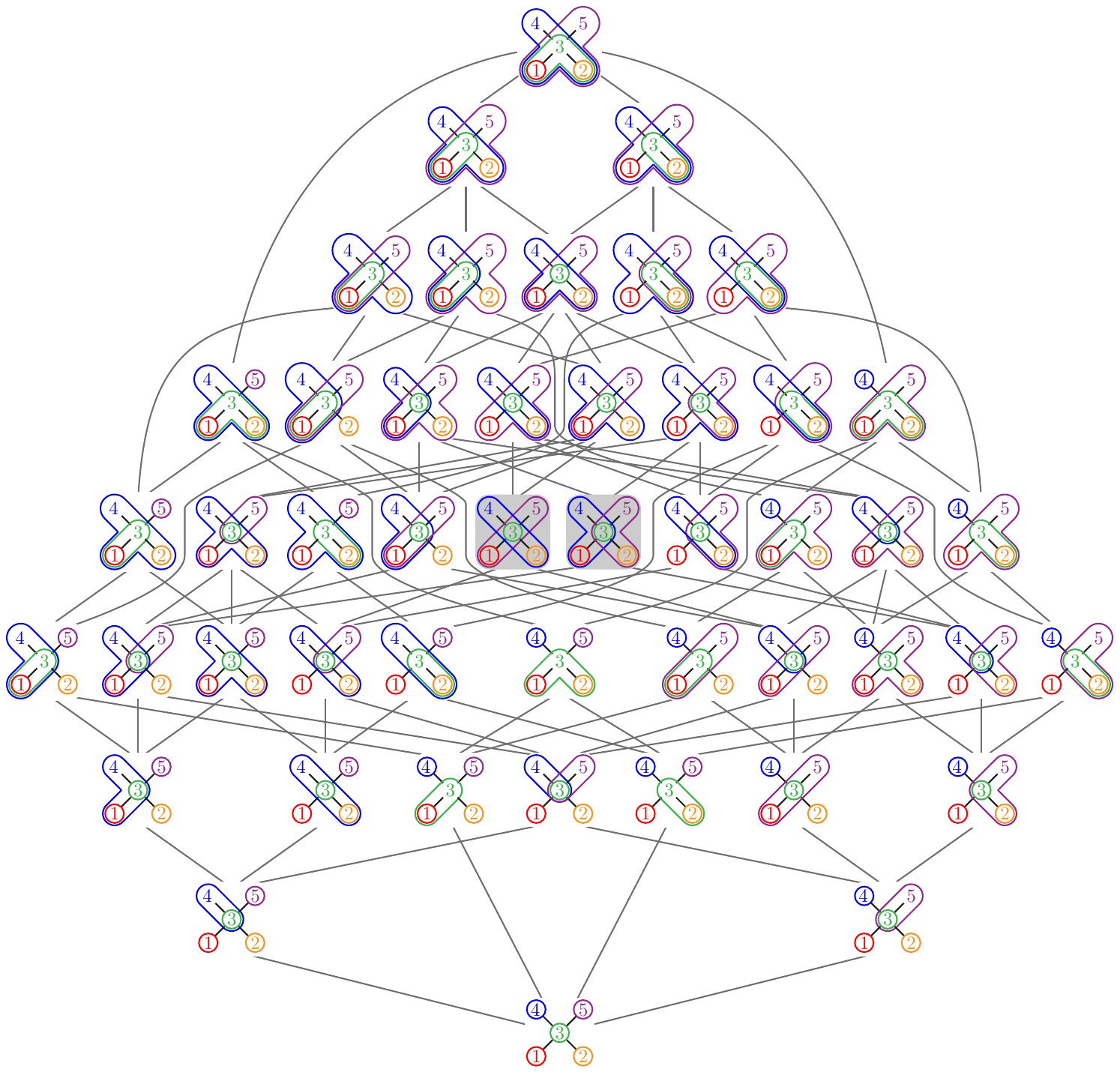}}
	\caption{The ornamentation lattice~$\Orn(\Xgraph)$. (Gray = cyclic).}
	\label{fig:ornamentationsX}
\end{figure}

\begin{figure}
	\centerline{\includegraphics[scale=.68]{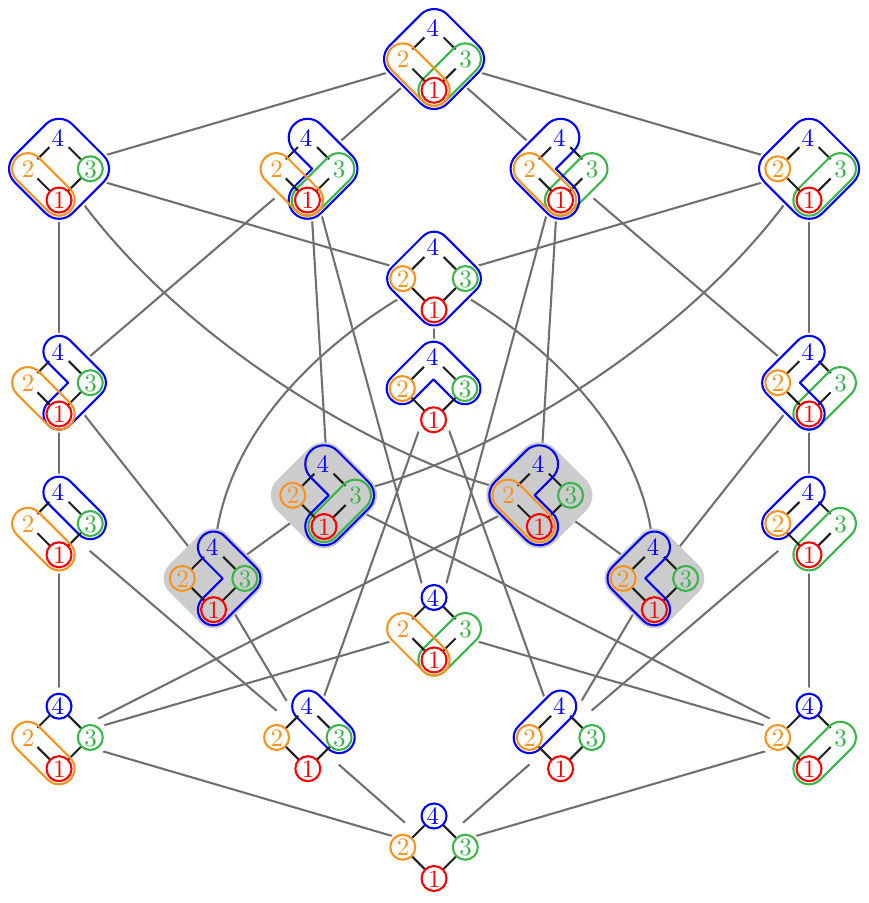}}
	\caption{The ornamentation lattice~$\Orn(\Dgraph)$. (Gray = cyclic).}
	\label{fig:ornamentationsD}
\end{figure}

\begin{figure}
	\centerline{\includegraphics[scale=.68,valign=c]{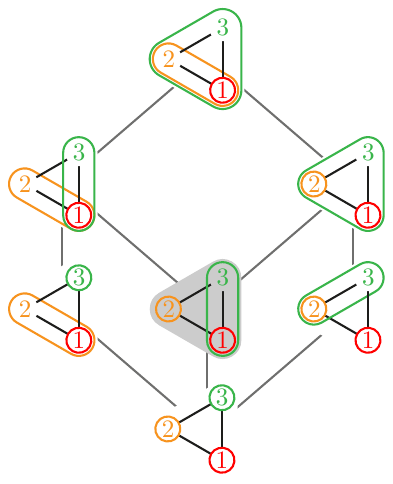}\qquad\includegraphics[scale=.68,valign=c]{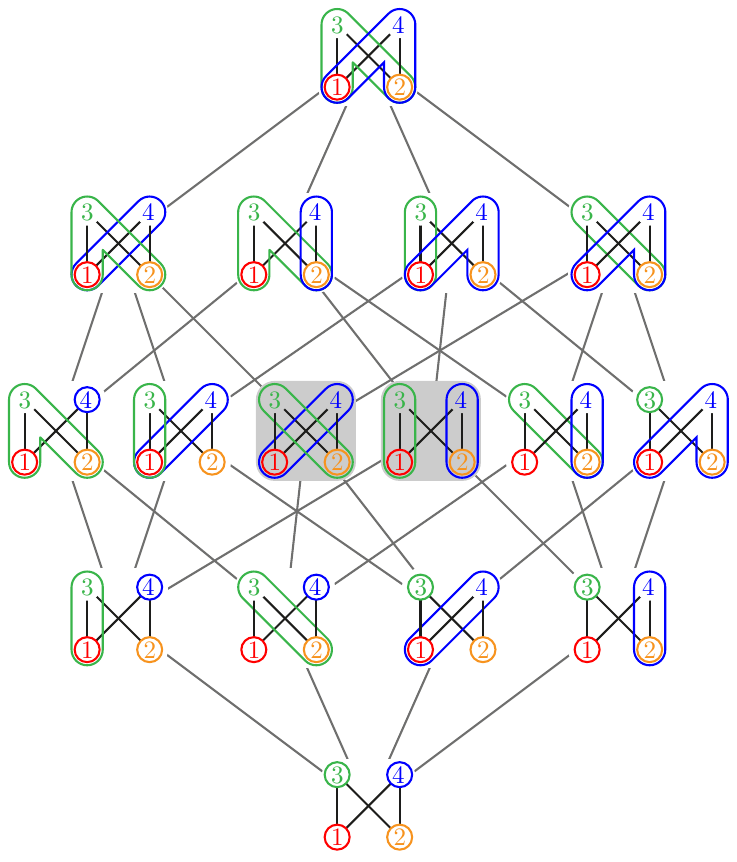}}
	\caption{The ornamentation lattices~$\Orn(\Kgraph)$ and~$\Orn(\Tgraph)$. (Gray = cyclic).}
	\label{fig:ornamentationsKT} 
\end{figure}

The following definition, first introduced for rooted trees in~\cite{DefantSack}, is illustrated in \cref{fig:ornamentationsNI,fig:ornamentationsAY,fig:ornamentationsX,fig:ornamentationsD,fig:ornamentationsKT}.
It will also be extended beyond directed graphs in~\cite{Sack}.

\begin{definition}
\label{def:Orn}
Let~$D$ be a directed graph on~$V$.
An \defn{ornament} at a vertex $v$ of~$D$ is a subset~$U \subseteq V$ such that there is a directed path from any $u \in U$ to~$v$ in the subgraph of~$D$ induced by~$U$.
An \defn{ornamentation} of~$D$ is a map $O$ on $V$ which assigns an ornament~$O(v)$ at each vertex~$v \in V$ such that~$u \in O(v) \implies O(u) \subseteq O(v)$ for all~$u,v \in V$.
We denote by~$\Orn(v \in D)$ the set of ornaments at a vertex~$v$ of~$D$ and by~$\Orn(D)$ the set of ornamentations of~$D$.
\end{definition}

In our pictures, we represent an ornamentation~$O$ of~$D$ by drawing a bubble around~$O(v)$ of the same color as~$v$ for each~$v \in V$.
Alternatively, each bubble is the ornament at its largest vertex.
See \cref{fig:ornamentationsNI,fig:ornamentationsAY,fig:ornamentationsX,fig:ornamentationsD,fig:ornamentationsKT}.

\begin{example}
\label{exm:ornamentations}
We can clearly define different ornamentations by
\begin{enumerate}[(i)]
\item sending each vertex~$v \in V$ to the singleton~$\{v\}$, 
\item sending each vertex~$v \in V$ to the set~$\lessin{D}{v}$ of vertices with a path to~$v$ in~$D$,
\item given a vertex~$v \in V$ and an ornament~$U \in \Orn(v \in D)$, sending~$v$ to~$U$ and any other vertex~${u \in V \ssm \{v\}}$ to the singleton~$\{u\}$.
\end{enumerate}
\end{example}

\begin{lemma}
\label{lem:unionOrnaments}
\begin{enumerate}[(i)]
\item If~$U \in \Orn(u \in D)$ and~$U' \in \Orn(u' \in D)$ with~$u \in U'$, then~${U \cup U' \in \Orn(u' \in D)}$.
\item If~$U,U' \in \Orn(v \in D)$, then~$U \cup U' \in \Orn(v \in D)$, while $U \cap U'$ is not necessarily~in~${\Orn(v \in D)}$.
\end{enumerate}
\end{lemma}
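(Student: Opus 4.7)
The plan is to deduce part (ii) from part (i) after observing that $v$ itself always belongs to any ornament at~$v$: indeed, \cref{def:Orn} requires the existence of a directed path from~$v$ to~$v$ inside the subgraph induced by~$U$, and such a path (even the trivial one) can only exist if $v \in U$. Once this is recorded, part (ii) follows by applying part (i) with~$u = u' = v$, since $v \in U'$ by the preceding observation, and the union $U \cup U'$ is then an ornament at~$v$.

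For part (i), the strategy is simply to concatenate paths. Fix an arbitrary $w \in U \cup U'$; I must produce a directed path from~$w$ to~$u'$ inside the subgraph of~$D$ induced by $U \cup U'$. Split into two cases. If $w \in U'$, the ornament property of~$U'$ furnishes a directed path from~$w$ to~$u'$ in the subgraph induced by~$U' \subseteq U \cup U'$, which is already what we need. If instead $w \in U$, then the ornament property of~$U$ yields a directed path from~$w$ to~$u$ inside~$U$; now the hypothesis $u \in U'$ together with the ornament property of~$U'$ gives a directed path from~$u$ to~$u'$ inside~$U'$; concatenating these two paths produces a directed walk from~$w$ to~$u'$ whose vertices lie in $U \cup U'$, and extracting a simple path from this walk yields the desired directed path in the induced subgraph.

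For the negative part of (ii), I would exhibit the counterexample $D = \Dgraph$ with vertices $\{1,2,3,4\}$ and edges $1 \to 2$, $1 \to 3$, $2 \to 4$, $3 \to 4$. Then $U \eqdef \{1,2,4\}$ and $U' \eqdef \{1,3,4\}$ are both ornaments at vertex~$4$, witnessed by the paths $1 \to 2 \to 4$ and $1 \to 3 \to 4$ respectively, but $U \cap U' = \{1,4\}$ is not an ornament at~$4$ because the subgraph of~$D$ induced by $\{1,4\}$ contains no edge, hence no directed path from~$1$ to~$4$. (This example is already visible in \cref{fig:ornamentationsD}.)

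I don't anticipate any real obstacle: the argument rests entirely on path concatenation in an induced subgraph and on the trivial remark that $v \in U$ for every $U \in \Orn(v \in D)$. The only mild subtlety is ensuring that the concatenated walk in part (i) can be trimmed to a simple directed path, which is a standard fact about directed graphs.
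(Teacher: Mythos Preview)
Your proposal is correct and follows essentially the same approach as the paper: path concatenation for part~(i), specialization to $u = u' = v$ for the positive half of~(ii), and the diamond~$\Dgraph$ for the counterexample. The paper phrases the counterexample slightly more generally (any two internally disjoint directed paths from~$u$ to~$v$ with $(u,v) \notin D$), but your concrete instance is precisely the one suggested by \cref{fig:ornamentationsD}.
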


\begin{proof}
For Part~(i), let~$w \in U \cup U'$. If~$w \in U'$, then there is a path from~$w$ to~$u'$ in~$U'$, hence in~$U \cup U'$.
If~$w \in U$, then there is a path from~$w$ to~$u$ in~$U$ and a path from~$u$ to~$u'$ in~$U'$, hence a path from~$w$ to~$u'$ in~$U \cup U'$.
Part~(ii) is a specialization of Part~(i) to~$u = v$ and~$u' = v$.
Finally, if~$D$ has two disjoint directed paths~$P_1$ and~$P_2$ from a vertex~$u$ to a vertex~$v$ such that~$(u,v) \notin D$, then~$P_1, P_2 \in \Orn(v \in D)$ while~$P_1 \cap P_2 = \{u,v\} \notin \Orn(v \in D)$.
See \cref{fig:ornamentationsD,lem:meetOrnT}.
\end{proof}

\begin{theorem}
\label{thm:OrnMeetJoin}
The set~$\Orn(D)$ of ornamentations of~$D$ is a lattice under componentwise inclusion, meaning~$O_1 \le O_2$ if and only if~$O_1(v) \subseteq O_2(v)$ for all~$v \in V$.
For any two ornamentations~$O_1$ and~$O_2$ of~$D$,
\begin{itemize}
\item $(O_1 \meet O_2)(v)$ is the inclusion maximal ornament at~$v$ contained in~$O_1(v) \cap O_2(v)$,
\item $(O_1 \join O_2)(v)$ is the inclusion minimal subset~$U$ of~$V$ containing~$v$ and such that~$u \in U$ implies~$O_1(u) \cup O_2(u) \subseteq U$.
\end{itemize}
\end{theorem}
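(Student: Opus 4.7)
The plan is to exhibit explicit candidates for $O_1 \meet O_2$ and $O_1 \join O_2$, and to verify in each case that the candidate is indeed an ornamentation and has the required universal property, with \cref{lem:unionOrnaments} as the main tool.

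\textbf{Meet.} For each $v \in V$, note that $\{v\}$ is an ornament at $v$ contained in $O_1(v) \cap O_2(v)$, so the collection of ornaments at $v$ contained in $O_1(v) \cap O_2(v)$ is nonempty. By \cref{lem:unionOrnaments}\,(ii), this collection is closed under pairwise unions, so since $V$ is finite it admits a unique inclusion-maximal element, which I define to be $M(v)$. The delicate step is to check that $M$ is an ornamentation, \ie that $u \in M(v)$ implies $M(u) \subseteq M(v)$. For this, I would observe first that $M(v) \subseteq O_i(v)$ and $u \in O_i(v)$ force $M(u) \subseteq O_i(u) \subseteq O_i(v)$ for $i=1,2$, so $M(u) \cup M(v) \subseteq O_1(v) \cap O_2(v)$; then \cref{lem:unionOrnaments}\,(i) applied to $U = M(u)$ and $U' = M(v)$ (with distinguished vertex $u \in U'$) shows that $M(u) \cup M(v)$ is itself an ornament at $v$, and the maximality of $M(v)$ gives $M(u) \subseteq M(v)$. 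Finally, $M$ is the meet: by construction $M \le O_1, O_2$, and any ornamentation $O \le O_1, O_2$ has each $O(v)$ an ornament at $v$ contained in $O_1(v) \cap O_2(v)$, hence $O(v) \subseteq M(v)$ by maximality.

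\textbf{Join.} For each $v \in V$, I would define $J(v)$ as the intersection of all subsets $U \subseteq V$ containing $v$ with the closure property $u \in U \implies O_1(u) \cup O_2(u) \subseteq U$. Since $V$ itself has this property and the property is preserved under intersection, $J(v)$ is well-defined and is the smallest such subset. There are three things to verify. First, $J(v)$ is an ornament at $v$: for each $u \in J(v)$, proceed by induction on the minimum length of a chain $v = u_0, u_1, \dots, u_k = u$ with $u_{i+1} \in O_1(u_i) \cup O_2(u_i)$ (such a chain exists since the set of vertices reachable this way from $v$ satisfies the closure property, so contains $J(v)$); if $u \in O_j(u_{k-1})$ then there is a path from $u$ to $u_{k-1}$ inside $O_j(u_{k-1}) \subseteq J(v)$, which one concatenates with the inductively-constructed path from $u_{k-1}$ to $v$. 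Second, $J$ is an ornamentation: if $u \in J(v)$, then $J(v)$ is a subset containing $u$ and closed under $w \mapsto O_1(w) \cup O_2(w)$, so by minimality of $J(u)$ we get $J(u) \subseteq J(v)$. Third, $J$ is the join: the closure property applied at $v$ itself gives $O_i(v) \subseteq J(v)$ so $J \ge O_1, O_2$; conversely, if $O \ge O_1, O_2$, then $O(v)$ contains $v$ and for every $u \in O(v)$ the nesting axiom yields $O_1(u) \cup O_2(u) \subseteq O(u) \subseteq O(v)$, so $O(v)$ satisfies the closure property, and minimality of $J(v)$ gives $J(v) \subseteq O(v)$.

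The main technical obstacle is verifying the nesting condition $u \in M(v) \implies M(u) \subseteq M(v)$ for the meet: this is where one must crucially combine the maximality characterization of $M(v)$ with \cref{lem:unionOrnaments}\,(i), which allows ornaments at different vertices to be glued along a containment. Everything else is either a direct application of maximality/minimality or a straightforward induction on distance from $v$.
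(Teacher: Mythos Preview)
Your proof is correct and follows essentially the same approach as the paper's: the meet argument is nearly identical (use \cref{lem:unionOrnaments}\,(i) to show $M(u)\cup M(v)$ is an ornament at $v$ inside $O_1(v)\cap O_2(v)$, then invoke maximality), and for the join you give the top-down intersection description where the paper uses the equivalent bottom-up inductive construction, with the same chain argument to show $J(v)$ is an ornament. Your treatment is in fact slightly more explicit about the universal properties, which the paper dismisses as ``clearly''.
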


\begin{proof}
Given two ornamentations~$O_1$ and~$O_2$ of~$D$, consider the map~$O_\meet$ on~$V$ where $O_\meet(v)$ is the inclusion maximal ornament at~$v$ contained in~$O_1(v) \cap O_2(v)$.
Note that $O_\meet(v)$ is well-defined by \cref{lem:unionOrnaments}, and that~$O_\meet(v) \in \Orn(v \in D)$ for any~$v \in V$ by definition.
Consider now~$u,v \in V$ such that~$u \in O_\meet(v)$.
Since~$O_\meet(u) \in \Orn(u \in D)$ and~$O_\meet(v) \in \Orn(v \in D)$ with~${u \in O_\meet(v)}$, \cref{lem:unionOrnaments} gives~$O_\meet(u) \cup O_\meet(v) \in \Orn(v \in D)$.
As~$u \in O_\meet(v) \subseteq O_1(v) \cap O_2(v)$ and~$O_1$ and~$O_2$ are ornamentations of~$D$, we have~$O_1(u) \subseteq O_1(v)$ and~$O_2(u) \subseteq O_2(v)$, so~${O_\meet(u) \subseteq O_1(u) \cap O_2(u) \subseteq O_1(v) \cap O_2(v)}$, hence~$O_\meet(u) \cup O_\meet(v) \subseteq O_1(v) \cap O_2(v)$.
By maximality of~$O_\meet(v)$, we conclude that ${O_\meet(u) \cup O_\meet(v) \subseteq O_\meet(v)}$, and therefore~$O_\meet(u) \subseteq O_\meet(v)$.
We conclude that~$O_\meet$ is an ornamentation of~$D$.
Moreover, $O_\meet$ is clearly the meet of~$O_1$ and~$O_2$ in componentwise inclusion.

Given two ornamentations~$O_1$ and~$O_2$ of~$D$, consider now the map~$O_\join$ on~$V$ where~$O_\join(v)$ is the inclusion minimal subset~$U$ of~$V$ containing~$v$ and such that~$u \in U$ implies~$O_1(u) \cup O_2(u) \subseteq U$.
Note that~$O_\join(v)$ is well-defined as it can be constructed by induction, starting from~$U = \{v\}$ and adding inductively~$O_1(u) \cup O_2(u)$ for all~$u \in U$ until it stabilizes.
Since~$O_1(u) \cup O_2(u) \in \Orn(u \in D)$ by \cref{lem:unionOrnaments}, this inductive construction maintains the invariant that there is a path from~$u$ to~$v$ in~$U$ for any~$u \in U$.
Hence, we obtain that~$O_\join(v) \in \Orn(v \in D)$ for any~$v \in V$.
Consider now~$u,v \in V$ such that~$u \in O_\join(v)$.
Since~$u \in O_\join(v)$ and~$w \in O_\join(v)$ implies~$O_1(w) \cup O_2(w) \subseteq O_\join(v)$, we obtain that~$O_\join(u) \subseteq O_\join(v)$ by minimality of~$O_\join(u)$.
We conclude that~$O_\join$ is an ornamentation of~$D$.
Moreover, $O_\join$ is clearly the join of~$O_1$ and~$O_2$ in componentwise inclusion.
\end{proof}

\begin{remark}
The minimal ornamentation of~$D$ sends each vertex~$v \in V$ to the singleton~$\{v\}$, and the maximal ornamentation sends each vertex~$v \in V$ to the set~$\lessin{D}{v}$ of vertices with a path to~$v$ in~$D$.
\end{remark}

\begin{example}
The ornamentation lattice of the graph with a single edge is a $2$-element chain.
More generally, the ornamentation lattice of a graph with no path of length $2$ is a boolean lattice (see \eg \cref{fig:ornamentationsNI}\,(left) and \ref{fig:ornamentationsKT}\,(right)).
In fact, the boolean lattices are the only distributive ornamentation lattices.
\end{example}

\begin{example}
The ornamentation lattice of the $n$-element path is isomorphic to the Tamari lattice on binary trees with $n$ nodes (see \eg \cref{fig:ornamentationsNI}\,(right)).
The isomorphism is identical to the construction of bracket vectors in~\cite{HuangTamari}.
\end{example}


\subsection{Reorientations}
\label{subsec:reorientations}

\begin{figure}[b]
	\centerline{\begin{tabular}{c} \includegraphics[scale=.68,valign=c]{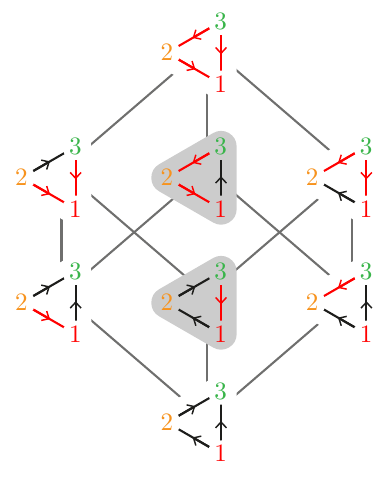} \\ \includegraphics[scale=.68,valign=c]{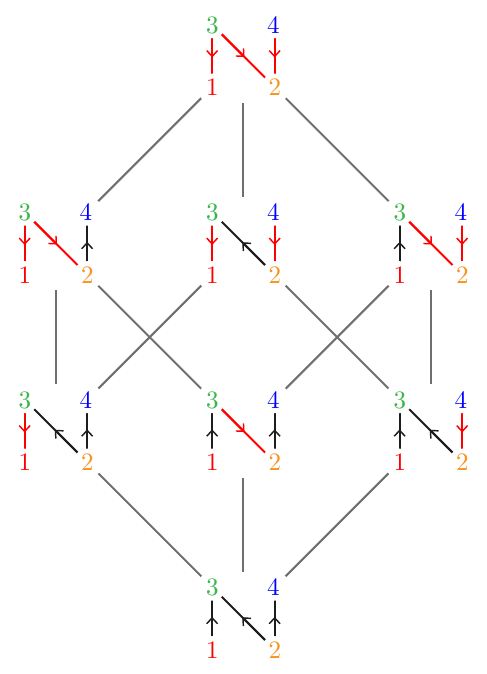} \end{tabular}\qquad\includegraphics[scale=.68,valign=c]{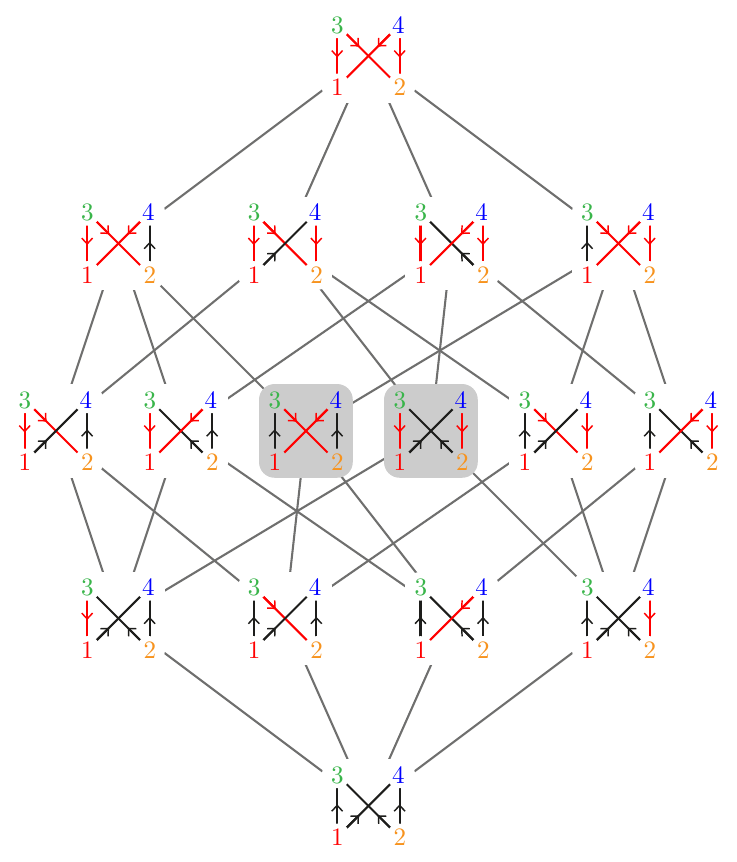}}
	\caption{The reorientation lattices~$\Reori(\tc(\Kgraph))$, $\Reori(\tc(\Ngraph))$ and~$\Reori(\tc(\Tgraph))$. (Gray = cyclic). See also \cref{fig:acyclicReorientationsKT}.}
	\label{fig:reorientationsKNT}
\end{figure}

We now consider reorientations of graphs, as illustrated in \cref{fig:reorientationsKNT}.

\begin{definition}
\label{def:Reori}
A \defn{reorientation}~$R$ of a directed graph~$E$ is a directed graph obtained from~$E$ by reversing a subset~$\rev(R)$ of arrows of~$E$.
The \defn{reorientation lattice}~$\Reori(E)$ of~$E$ is the boolean lattice of reorientations of~$E$, ordered by inclusion of subsets of reversed arrows, meaning $R_1 \le R_2$ if and only if~${\rev(R_1) \subseteq \rev(R_2)}$.
\end{definition}

In this paper, we actually need to consider the reorientation lattice~$\Reori(\tc(D))$ of the transitive closure~$\tc(D)$ of the directed graph~$D$.
Recall that, for all~$u,v \in V$, the transitive closure~$\tc(D)$ has an edge~$(u,v)$ if and only if~$D$ has a directed path from~$u$ to~$v$.
There is a natural surjection from the reorientation lattice~$\Reori(\tc(D))$ to the ornamentation lattice~$\Orn(D)$.

\begin{definition}
\label{def:Reori2Orn}
For a reorientation~$R$ of~$\tc(D)$, we define a map~$\orn{R}$ on~$V$ which associates to each vertex~$v \in V$ the inclusion maximal ornament of~$D$ at~$v$ contained in the subset of vertices~$u \in V$ with a directed path to~$v$ in $\rev(R)$ (meaning in the subgraph of~$D$ consisting of the arrows which are reversed in~$R$).
\end{definition}

\begin{lemma}
\label{lem:Reori2Orn1}
For any reorientation~$R$ of~$\tc(D)$, the map~$\orn{R}$ is an ornamentation of~$D$.
\end{lemma}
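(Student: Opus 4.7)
The plan is to verify the two defining conditions of an ornamentation in Definition~\ref{def:Orn}: that $\orn{R}(v)$ is a well-defined ornament at $v$ in $D$ for every $v \in V$, and that the nesting $u \in \orn{R}(v) \implies \orn{R}(u) \subseteq \orn{R}(v)$ holds. Throughout, I will write $A_v$ for the candidate set appearing in Definition~\ref{def:Reori2Orn}, i.e.\ the set of vertices admitting a directed path to $v$ using only arrows of $D$ that are reversed in $R$.

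For well-definedness, I would note that the trivial (length zero) path places $v \in A_v$, so the singleton $\{v\}$ is an ornament at $v$ sitting inside $A_v$, witnessing that the poset of ornaments at $v$ contained in $A_v$ is nonempty. Lemma~\ref{lem:unionOrnaments}(ii) says that any union of ornaments at $v$ is again an ornament at $v$, so the union of all ornaments at $v$ contained in $A_v$ is itself such an ornament, and is manifestly the inclusion maximum. This gives both existence and uniqueness of $\orn{R}(v)$, and by construction $\orn{R}(v) \in \Orn(v \in D)$.

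For the nesting condition, the key observation I would isolate is that the candidate sets are themselves transitive along reversed-arrow paths: if $u \in A_v$, then $A_u \subseteq A_v$, since concatenating a path from $w$ to $u$ with a path from $u$ to $v$ in the subgraph of reversed arrows yields a path from $w$ to $v$ in the same subgraph. Given $u \in \orn{R}(v)$, this yields $\orn{R}(u) \subseteq A_u \subseteq A_v$. Then I would apply Lemma~\ref{lem:unionOrnaments}(i) to $\orn{R}(v) \in \Orn(v \in D)$ and $\orn{R}(u) \in \Orn(u \in D)$ with $u \in \orn{R}(v)$ to conclude that $\orn{R}(u) \cup \orn{R}(v) \in \Orn(v \in D)$. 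Since this union still lies in $A_v$, maximality of $\orn{R}(v)$ forces $\orn{R}(u) \cup \orn{R}(v) = \orn{R}(v)$, which is exactly $\orn{R}(u) \subseteq \orn{R}(v)$. There is no real obstacle here; both steps reduce mechanically to Lemma~\ref{lem:unionOrnaments} once the transitivity of the candidate sets along reversed-arrow paths is noted, and the only care needed is bookkeeping of the orientation convention fixed by the parenthetical in Definition~\ref{def:Reori2Orn}.
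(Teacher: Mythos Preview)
Your argument is correct and follows essentially the same approach as the paper's proof: both use \cref{lem:unionOrnaments} to show that $\orn{R}(u) \cup \orn{R}(v)$ is an ornament at $v$, observe via path concatenation that it lies in the candidate set $A_v$, and conclude by maximality. Your treatment is slightly more detailed on the well-definedness side (which the paper dispatches with ``by definition''), but the core reasoning is identical.
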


\begin{proof}
By definition, $\orn{R}(v) \in \Orn(v \in D)$ for all~$v \in V$.
Consider now~$u,v \in V$ such that~${u \in \orn{R}(v)}$.
Since~$\orn{R}(u) \in \Orn(u \in D)$ and~$\orn{R}(v) \in \Orn(v \in D)$ with~${u \in \orn{R}(v)}$, \cref{lem:unionOrnaments} ensures that $\orn{R}(u) \cup \orn{R}(v) \in \Orn(v \in D)$.
Moreover, by path concatenation, there is a path from any vertex of~$\orn{R}(u) \cup \orn{R}(v)$ to~$v$ in~$\rev(R)$.
By maximality of~$\orn{R}(v)$, we conclude that~$\orn{R}(u) \cup \orn{R}(v) \subseteq \orn{R}(v)$, and so~${\orn{R}(u) \subseteq \orn{R}(v)}$.
\end{proof}

\begin{lemma}
\label{lem:Reori2Orn2}
The map~$R \mapsto \orn{R}$ is order-preserving (meaning that~$R_1 \le R_2 \implies \orn{R_1} \le \orn{R_2}$).
\end{lemma}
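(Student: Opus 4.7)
The plan is to unfold the definitions and rely on monotonicity of the ``set of vertices with a directed path to $v$'' under inclusion of edge sets, together with the maximality condition defining $\orn{R}(v)$.

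More precisely, suppose $R_1 \le R_2$ in $\Reori(\tc(D))$, that is, $\rev(R_1) \subseteq \rev(R_2)$. First, I would fix a vertex $v \in V$ and let $A_i \eqdef \set{u \in V}{\text{there is a directed path from } u \text{ to } v \text{ in } \rev(R_i)}$ for $i \in \{1,2\}$. Since every arrow of $\rev(R_1)$ is also an arrow of $\rev(R_2)$, any directed path in $\rev(R_1)$ is a directed path in $\rev(R_2)$, hence $A_1 \subseteq A_2$.

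Next, by \cref{def:Reori2Orn}, $\orn{R_1}(v)$ is an ornament of $D$ at $v$ that is contained in $A_1$, and therefore contained in $A_2$. By \cref{lem:Reori2Orn1} (or just the definition), $\orn{R_2}(v)$ is the inclusion maximal ornament of $D$ at $v$ contained in $A_2$. Hence $\orn{R_1}(v) \subseteq \orn{R_2}(v)$.

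Since this holds for every $v \in V$, we conclude $\orn{R_1} \le \orn{R_2}$ in $\Orn(D)$, as desired. There is no real obstacle: the argument is essentially a one-line monotonicity check, and the only mild subtlety is remembering that $\orn{R_i}(v)$ is defined by a maximality condition, so we only need to exhibit $\orn{R_1}(v)$ as a \emph{candidate} ornament inside $A_2$ rather than compare the two maximal ornaments directly.
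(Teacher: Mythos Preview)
Your proposal is correct and follows essentially the same approach as the paper's proof: both use that $\rev(R_1) \subseteq \rev(R_2)$ implies any path in $\rev(R_1)$ is a path in $\rev(R_2)$, then invoke the maximality in \cref{def:Reori2Orn} to conclude $\orn{R_1}(v) \subseteq \orn{R_2}(v)$. You have simply made the intermediate sets $A_1 \subseteq A_2$ explicit, whereas the paper leaves this implicit.
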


\begin{proof}
If~$R_1 \le R_2$, then $\rev(R_1) \subseteq \rev(R_2)$, hence any path in~$\rev(R_1)$ is a path in~$\rev(R_2)$, and so~$\orn{R_1}(v) \subseteq \orn{R_2}(v)$ for any~$v \in V$, hence~$\orn{R_1} \le \orn{R_2}$.
\end{proof}

\begin{definition}
\label{def:Orn2Reori}
For an ornamentation~$O$ of~$D$, we define a reorientation~$\reori{O}$ of~$\tc(D)$ where for each~$(u,v) \in \tc(D)$, we have~$(u,v) \in \rev(\reori{O})$ if and only if~$u \in O(v)$.
\end{definition}

\begin{lemma}
\label{lem:Orn2Reori1}
The map~$O \mapsto \reori{O}$ is order-preserving.
\end{lemma}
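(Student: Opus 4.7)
The statement is essentially immediate from unwinding the two definitions involved, since both orders are defined by set inclusion (of ornament values on the $\Orn(D)$ side and of reversed arrows on the $\Reori(\tc(D))$ side), and the construction of $\reori{O}$ translates vertex-level inclusions directly into edge-level inclusions. The plan is simply a direct verification.

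Concretely, I fix two ornamentations $O_1, O_2$ of $D$ with $O_1 \le O_2$, which by definition means $O_1(v) \subseteq O_2(v)$ for every $v \in V$. I want to show $\reori{O_1} \le \reori{O_2}$ in $\Reori(\tc(D))$, i.e.\ $\rev(\reori{O_1}) \subseteq \rev(\reori{O_2})$. So let $(u,v) \in \rev(\reori{O_1})$. By \cref{def:Orn2Reori}, this means $(u,v) \in \tc(D)$ and $u \in O_1(v)$. The hypothesis $O_1(v) \subseteq O_2(v)$ then forces $u \in O_2(v)$, and a second application of \cref{def:Orn2Reori} (to the arrow $(u,v) \in \tc(D)$ we already have) yields $(u,v) \in \rev(\reori{O_2})$. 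This gives the desired inclusion of reversion sets, hence $\reori{O_1} \le \reori{O_2}$ by \cref{def:Reori}.

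I do not expect any real obstacle here: neither the nesting condition on ornamentations nor the maximality clause from \cref{def:Reori2Orn} plays any role. The only sanity check worth making explicit is that $(u,v)$ genuinely is an arrow of $\tc(D)$ whenever $u \in O_1(v)$; this is automatic from \cref{def:Orn}, since an ornament at $v$ contains a directed path from each of its elements to $v$ in $D$, so $u \in O_1(v)$ already implies $(u,v) \in \tc(D)$ and thus validates speaking of this arrow being reversed.
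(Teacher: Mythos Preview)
Your proof is correct and follows exactly the same approach as the paper's: unwind the definitions to see that $O_1(v)\subseteq O_2(v)$ for all $v$ gives $\rev(\reori{O_1})\subseteq\rev(\reori{O_2})$. Your extra sanity check that $u\in O_1(v)$ forces $(u,v)\in\tc(D)$ is harmless but unnecessary, since \cref{def:Orn2Reori} already quantifies only over arrows $(u,v)\in\tc(D)$.
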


\begin{proof}
If~$O_1 \le O_2$, then $O_1(v) \subseteq O_2(v)$ for all~$v \in V$, thus~$\rev(\reori{O_1}) \subseteq \rev(\reori{O_2})$, hence~${\reori{O_1} \le \reori{O_2}}$.
\end{proof}

\begin{lemma}
\label{lem:Orn2Reori2}
For any ornamentation~$O$ of~$D$, we have~$\orn{\reori{O}} = O$. In other words, the map~${O \mapsto \reori{O}}$ is a section of the map~$R \mapsto \orn{R}$.
\end{lemma}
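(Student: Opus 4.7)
The plan is to establish the equality $\orn{\reori{O}}(v) = O(v)$ at every vertex $v \in V$ by proving both inclusions directly from the definitions of $\reori{\cdot}$ and $\orn{\cdot}$.

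First I would prove $O(v) \subseteq \orn{\reori{O}}(v)$. Since $O$ is an ornamentation, $O(v)$ is already an ornament at $v$ in $D$. Moreover, $O(v)$ is contained in the set of vertices admitting a directed path to $v$ in $\rev(\reori{O})$: for every $u \in O(v) \ssm \{v\}$, the ornament $O(v)$ itself contains a directed path in $D$ from $u$ to $v$, so $(u,v) \in \tc(D)$, and by \cref{def:Orn2Reori} combined with $u \in O(v)$ we get $(u,v) \in \rev(\reori{O})$, supplying a one-edge path from $u$ to $v$; the vertex $v$ itself reaches $v$ trivially. The maximality clause in \cref{def:Reori2Orn} then forces $O(v) \subseteq \orn{\reori{O}}(v)$.

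For the reverse inclusion, I would take $u \in \orn{\reori{O}}(v)$ and exploit the directed path $u = x_0 \to x_1 \to \cdots \to x_\ell = v$ witnessing that $u$ reaches $v$ in $\rev(\reori{O})$. By \cref{def:Orn2Reori}, each arrow $(x_i,x_{i+1}) \in \rev(\reori{O})$ unpacks to $x_i \in O(x_{i+1})$, and the nesting condition on the ornamentation $O$ then yields
\[
O(x_0) \subseteq O(x_1) \subseteq \cdots \subseteq O(x_\ell) = O(v).
\]
When $\ell \geq 1$, this gives $u = x_0 \in O(x_1) \subseteq O(v)$ as desired; when $\ell = 0$, we have $u = v \in O(v)$ by the standing convention that every vertex belongs to its own ornament.

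I do not expect any significant obstacle: the argument is essentially an unfolding of the two definitions together with a single iterated application of the nesting axiom $w \in O(v) \Rightarrow O(w) \subseteq O(v)$. The one point worth flagging is the length-zero boundary case $u = v$, handled by the convention that $v$ belongs to every ornament at~$v$; everything else reduces to chasing one-edge reversed arrows back and forth between $O$ and~$\reori{O}$.
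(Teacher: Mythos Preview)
Your proof is correct and follows essentially the same approach as the paper's. Both arguments hinge on the same two observations: for $u \in O(v)$ the single reversed edge $(u,v)$ witnesses reachability in $\rev(\reori{O})$, and conversely any path $u = x_0, x_1, \dots, x_\ell = v$ in $\rev(\reori{O})$ yields $x_{i-1} \in O(x_i)$, so the nesting axiom cascades to give $u \in O(v)$. The paper packages this as the single equality ``$u \in O(v)$ if and only if there is a path from $u$ to $v$ in $\rev(\reori{O})$'' and then invokes maximality once, whereas you split it into two explicit inclusions; the content is the same.
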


\begin{proof}
Let~$u,v \in V$ such that there is a path from~$u$ to~$v$ in~$\rev(\reori{O})$.
Let~$u = w_0, w_1, \dots, w_p = v$ denote the vertices along this path.
We have~$(w_{i-1},w_i) \in \rev(\reori{O})$ hence~$w_{i-1} \in O(w_i)$ for each~$1 \le i \le p$.
Since~$O$ is an ornamentation, an immediate induction shows that~$w_i \in O(v)$ for all~$0 \le i \le p$.
We conclude that $u \in O(v)$ if and only if there is a path from~$u$ to~$v$ in~$\rev(\reori{O})$.
Since~$O(v)$ is an ornament of~$D$ at~$v$, we conclude that~$\orn{\reori{O}}(v) = O(v)$.
\end{proof}

To sum up, we obtained the following statement.

\begin{proposition}
\label{prop:Reori2Orn}
The map~$R \mapsto \orn{R}$ is an order-preserving surjection from the reorientation lattice~$\Reori(\tc(D))$ to the ornamentation lattice~$\Orn(D)$.
\end{proposition}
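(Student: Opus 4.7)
The plan is to assemble the three preceding lemmas into the claimed statement, since each ingredient of the proposition has already been isolated.

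First, I would note that the map $R \mapsto \orn{R}$ does land in $\Orn(D)$: this is precisely \cref{lem:Reori2Orn1}, which verifies both that $\orn{R}(v)$ is an ornament at $v$ (by the maximality clause in \cref{def:Reori2Orn}) and that the nesting condition $u \in \orn{R}(v) \implies \orn{R}(u) \subseteq \orn{R}(v)$ holds (using \cref{lem:unionOrnaments}\,(i) and path concatenation in $\rev(R)$). Second, order-preservation from $\Reori(\tc(D))$ to $\Orn(D)$ is exactly \cref{lem:Reori2Orn2}: enlarging $\rev(R)$ can only enlarge the set of vertices with a path to $v$ in $\rev(R)$, hence can only enlarge the maximal ornament contained therein.

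The only remaining point is surjectivity. For this I would invoke \cref{lem:Orn2Reori2}: given any ornamentation $O \in \Orn(D)$, the reorientation $\reori{O} \in \Reori(\tc(D))$ constructed in \cref{def:Orn2Reori} satisfies $\orn{\reori{O}} = O$. Thus $O$ lies in the image of $R \mapsto \orn{R}$, so the map is surjective. (Equivalently, $O \mapsto \reori{O}$ is a one-sided inverse, forcing surjectivity of $R \mapsto \orn{R}$.)

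I do not expect a genuine obstacle here, as the work has been distributed across the preceding lemmas; the proposition is essentially a packaging statement. If anything subtle arises, it would be in verifying that \cref{lem:Orn2Reori2} really gives a section in both directions of the commuting-triangle conventions set up in \cref{fig:allMaps}, but this is already spelled out explicitly in the lemma's proof and requires no further argument.
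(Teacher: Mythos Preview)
Your proposal is correct and matches the paper's approach exactly: the proposition is stated as a summary (``To sum up, we obtained the following statement'') of \cref{lem:Reori2Orn1}, \cref{lem:Reori2Orn2}, and \cref{lem:Orn2Reori2}, with no further argument given. Your assembly of well-definedness, order-preservation, and surjectivity via the section $O \mapsto \reori{O}$ is precisely what the paper intends.
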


\begin{remark}
\label{rem:Orn2ReoriMiddle}
In general, the reorientation~$\reori{O}$ is neither minimal nor maximal among the reorientations~$R$ of~$\tc(D)$ such that~$\orn{R} = O$.
For instance, the following three reorientations belong to the fiber of the ornamentation $O = \!\!\raisebox{-.25cm}{\includegraphics[scale=.8]{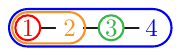}}$:
\[
\raisebox{-.23cm}{\includegraphics[scale=.8]{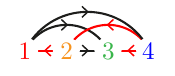}}\!\! < \reori{O} = \!\!\!\!\raisebox{-.23cm}{\includegraphics[scale=.8]{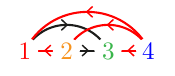}}\!\! < \!\!\raisebox{-.23cm}{\includegraphics[scale=.8]{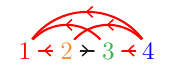}}
\]
\end{remark}

\begin{remark}
\label{rem:Reori2OrnIntervals1}
The fibers of the surjection~$R \mapsto \orn{R}$ do not all admit a minimum nor a maximum.
For instance, 
\begin{itemize}
\item the fiber of \raisebox{-.25cm}{\includegraphics[scale=.8]{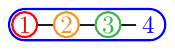}} has two minima \!\!\raisebox{-.23cm}{\includegraphics[scale=.8]{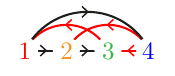}}\!\! and \!\!\raisebox{-.23cm}{\includegraphics[scale=.8]{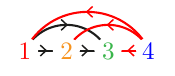}}\!\!\!\!,
\item the fiber of \raisebox{-.25cm}{\includegraphics[scale=.8]{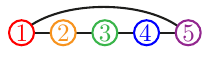}} has two maxima \!\!\raisebox{-.23cm}{\includegraphics[scale=.8]{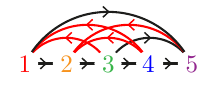}}\!\! and \!\!\raisebox{-.23cm}{\includegraphics[scale=.8]{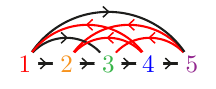}}\!\!\!\!.
\end{itemize}
We will see later in \cref{prop:Reori2OrnMaxTree} that the fibers always admit a maximum when~$D$ is a directed tree.
\end{remark}

\begin{remark}
\label{rem:Reori2OrnSemilatticeMap}
The surjection~$R \mapsto \orn{R}$ does not preserve neither the meet nor the join semilattice structure, even when~$D$ is a path.
For instance,

\centerline{
\begin{tabular}{c@{$\;\join\;$}c@{\;}c@{\;}c@{\qquad\text{and}\qquad}c@{$\;\meet\;$}c@{\;}c@{\;}c@{}c}
	\raisebox{-.2cm}{\includegraphics[scale=.8]{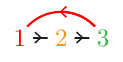}} &
	\raisebox{-.2cm}{\includegraphics[scale=.8]{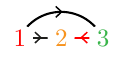}} &
	$=$ &
	\raisebox{-.2cm}{\includegraphics[scale=.8]{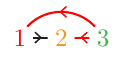}} &
	\raisebox{-.2cm}{\includegraphics[scale=.8]{reorientationI3}} &
	\raisebox{-.2cm}{\includegraphics[scale=.8]{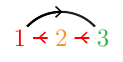}} &
	$=$ &
	\raisebox{-.2cm}{\includegraphics[scale=.8]{reorientationI2}} &
	,
	\\
	\raisebox{-.2cm}{\includegraphics[scale=.8]{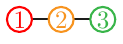}} &
	\raisebox{-.2cm}{\includegraphics[scale=.8]{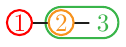}} &
	$\ne$ &
	\raisebox{-.2cm}{\includegraphics[scale=.8]{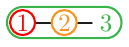}} &
	\raisebox{-.2cm}{\includegraphics[scale=.8]{ornamentationI3}} &
	\raisebox{-.2cm}{\includegraphics[scale=.8]{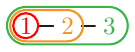}} &
	$\ne$ &
	\raisebox{-.2cm}{\includegraphics[scale=.8]{ornamentationI2}} &
	.
\end{tabular}
}

\medskip\noindent
To correct this defect, we can restrict our attention to transitively closed reorientations of~$\tc(D)$.
\end{remark}

\begin{definition}
\label{def:transitivelyClosed}
A reorientation~$R$ of~$\tc(D)$ is 
\begin{itemize}
\item \defn{transitively closed} if~$(u,v), (v,w) \in \rev(R)$ implies $(u,w) \in \rev(R)$,
\item \defn{transitively coclosed} if~$(u,v), (v,w) \in \tc(T) \ssm \rev(R)$ implies $(u,w) \in \tc(T) \ssm \rev(R)$,
\item \defn{transitively biclosed} if it is both transitively closed and transitively coclosed. 
\end{itemize}
We denote by~$\Rcl(\tc(D))$, $\Rco(\tc(D))$, and~$\Rbi(\tc(D))$ the sets of transitively closed, transitively coclosed, and transitively biclosed reorientations of $\tc(D)$ respectively.
\end{definition}

\begin{remark}
\label{rem:biclosedCyclic}
A transitively biclosed reorientation of~$\tc(D)$ is not necessarily acyclic, even when~$D$ is a tree.
For instance,
\[
\includegraphics[scale=.8,valign=c]{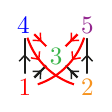}
\qquad\text{and}\qquad
\includegraphics[scale=.8,valign=c]{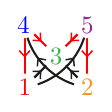}
\]
are cyclic biclosed reorientations of $\tc(\Xgraph)$, where $\Xgraph$ is the graph in \cref{fig:ornamentationsX}.
We will see in \cref{prop:AReoriLatticeUT} that all transitively biclosed reorientations are acyclic when~$T$ is an unstarred tree.
\end{remark}

\begin{lemma}
\label{lem:rcl-subsemilattice}
The set~$\Rcl(\tc(D))$ (resp.~$\Rco(\tc(D))$) of transitively closed (resp.~coclosed) reorientations induces a meet (resp.~join) subsemilattice of~$\Reori(\tc(D))$, which is a lattice.
\end{lemma}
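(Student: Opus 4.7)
The plan is to verify the semilattice closure directly from the definition, and then upgrade these semilattices to lattices by exhibiting a maximum (respectively minimum) element.

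\medskip
Recall that in the boolean lattice $\Reori(\tc(D))$, the meet of reorientations $R_1, R_2$ corresponds to $\rev(R_1) \cap \rev(R_2)$ and the join corresponds to $\rev(R_1) \cup \rev(R_2)$. I would first check that $\Rcl(\tc(D))$ is closed under meet. Let $R_1, R_2 \in \Rcl(\tc(D))$, and let $R$ be their meet, so $\rev(R) = \rev(R_1) \cap \rev(R_2)$. If $(u,v), (v,w) \in \rev(R)$, then $(u,v), (v,w) \in \rev(R_i)$ for each $i \in \{1,2\}$, hence $(u,w) \in \rev(R_i)$ by transitive closedness of $R_i$, and so $(u,w) \in \rev(R)$. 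Thus $R \in \Rcl(\tc(D))$.

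\medskip
Next, I would verify dually that $\Rco(\tc(D))$ is closed under join. Let $R_1, R_2 \in \Rco(\tc(D))$ and let $R$ be their join, so $\rev(R) = \rev(R_1) \cup \rev(R_2)$. Consider $(u,v), (v,w) \in \tc(D) \ssm \rev(R)$. Then $(u,v), (v,w) \notin \rev(R_i)$ for each $i$; since $(u,v),(v,w) \in \tc(D)$, transitive coclosedness of $R_i$ yields $(u,w) \in \tc(D) \ssm \rev(R_i)$. In particular $(u,w) \in \tc(D)$ (which also follows directly from transitivity of $\tc(D)$) and $(u,w) \notin \rev(R_1) \cup \rev(R_2) = \rev(R)$, so $(u,w) \in \tc(D) \ssm \rev(R)$, proving $R \in \Rco(\tc(D))$.

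\medskip
To upgrade the meet subsemilattice $\Rcl(\tc(D))$ to a lattice, I would observe that it contains the maximum reorientation $R_{\max}$ of $\Reori(\tc(D))$, namely the one with $\rev(R_{\max}) = \tc(D)$: since $\tc(D)$ is transitive, whenever $(u,v), (v,w) \in \tc(D)$ we have $(u,w) \in \tc(D) = \rev(R_{\max})$, so $R_{\max} \in \Rcl(\tc(D))$. A finite meet-semilattice with a maximum is automatically a lattice, the join of a finite subset being the meet of its (nonempty) set of upper bounds. A symmetric argument gives the lattice structure on $\Rco(\tc(D))$: it contains the minimum reorientation $R_{\min}$ of $\Reori(\tc(D))$ (the trivial one with $\rev(R_{\min}) = \emptyset$), which is trivially transitively coclosed, so $\Rco(\tc(D))$ is a finite join-semilattice with a minimum and hence a lattice.

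\medskip
No step poses a real obstacle: everything reduces to routine manipulation of the defining implications and the transitivity of $\tc(D)$. The only conceptual point worth emphasizing is that $\Rcl$ (resp.~$\Rco$) is \emph{only} a meet (resp.~join) subsemilattice, and that its lattice structure is obtained by closing under the universal construction of joins (resp.~meets) from the top (resp.~bottom); these joins in $\Rcl$ need not coincide with the joins in $\Reori(\tc(D))$ (as illustrated by \cref{rem:Reori2OrnSemilatticeMap}).
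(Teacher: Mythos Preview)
Your proof is correct and follows essentially the same approach as the paper: verify meet-closure of $\Rcl(\tc(D))$ via intersection of transitively closed sets, note that the maximal reorientation lies in $\Rcl(\tc(D))$ because $\tc(D)$ is transitive, and invoke the standard fact that a finite bounded meet-semilattice is a lattice (with the dual argument for $\Rco(\tc(D))$). The only nitpick is that calling $R_{\min}$ ``trivially'' coclosed slightly understates that this again uses the transitivity of $\tc(D)$, but the claim is of course correct.
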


\begin{proof}
As the intersection of transitively closed relations is transitively closed, $R_1, R_2 \in \Rcl(\tc(D))$ implies $R_1 \meet R_2 \in \Rcl(\tc(D))$.
As~$\tc(D)$ is transitively closed, the maximal reorientation of~$\tc(D)$ is transitively closed, and so $\Rcl(\tc(D))$ is a bounded meet semilattice, hence a lattice.
The proof for~$\Rco(\tc(D))$ is symmetric.
\end{proof}

\begin{remark}
\label{rem:biclosedNotLattice}
Observe however that the subposet of~$\Reori(\tc(D))$ induced by the set~$\Rbi(\tc(D))$ of transitively biclosed reorientations may fail to be a lattice.
For instance, consider the following four transitively biclosed reorientations of~$\tc(\Dgraph)$:
\[
R_1 = \!\! \includegraphics[scale=.8,valign=c]{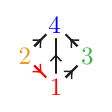}
\qquad
R_2 = \!\! \includegraphics[scale=.8,valign=c]{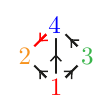}
\qquad
R_3 = \!\! \includegraphics[scale=.8,valign=c]{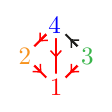}
\qquad
R_4 = \!\! \includegraphics[scale=.8,valign=c]{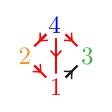}
\]
Then~$R_1 < R_3$, $R_1 < R_4$, $R_2 < R_3$, and~$R_2 < R_4$ and there is no transitively biclosed reorientation~$R$ such that~$R_1 < R$, $R_2 < R$, $R < R_3$, and~$R < R_4$.
We will see in \cref{prop:biclosedLatticeT} that~$\Rbi(\tc(T))$ is a lattice when~$T$ is a directed tree.
\end{remark}

\begin{proposition}
\label{prop:ReoriTC2Orn}
For any ornamentation~$O$ of~$D$, the reorientation~$\reori{O}$ is transitively closed.
Hence, the map~$R \mapsto \orn{R}$ restricts to a surjection from the transitively closed reorientation lattice~$\Rcl(\tc(D))$ to the ornamentation lattice~$\Orn(D)$.
\end{proposition}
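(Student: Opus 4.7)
The plan is to split the proposition into two parts: first verify that $\reori{O}$ is always transitively closed, and then derive the surjection statement by combining this with earlier lemmas.

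For the first part, I would argue directly from \cref{def:Orn2Reori}. Suppose $(u,v), (v,w) \in \rev(\reori{O})$; by definition this means $u \in O(v)$ and $v \in O(w)$. The nesting condition in the definition of an ornamentation (\cref{def:Orn}) gives $O(v) \subseteq O(w)$, hence $u \in O(w)$. To apply \cref{def:Orn2Reori} and conclude $(u,w) \in \rev(\reori{O})$, I still need to know that $(u,w)$ is an edge of $\tc(D)$. This follows from transitivity of $\tc(D)$: since $u \in O(v) \in \Orn(v \in D)$ there is a directed path in $D$ from $u$ to $v$, and similarly from $v$ to $w$, so $(u,w) \in \tc(D)$. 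Therefore $(u,w) \in \rev(\reori{O})$, proving that $\reori{O}$ is transitively closed.

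For the surjection statement, I would simply combine the previous step with \cref{lem:Orn2Reori2}. The first part shows that $O \mapsto \reori{O}$ takes values in $\Rcl(\tc(D))$, and \cref{lem:Orn2Reori2} shows that $\orn{\reori{O}} = O$. Hence every ornamentation $O \in \Orn(D)$ is the image of the transitively closed reorientation $\reori{O}$, which exhibits the restricted map $\Rcl(\tc(D)) \surjection \Orn(D)$ as surjective. Order-preservation of the restriction is inherited from \cref{lem:Reori2Orn2}.

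I do not expect a genuine obstacle here; the main content is really the one-line deduction $u \in O(v)$ and $v \in O(w)$ $\Rightarrow$ $O(v) \subseteq O(w)$ $\Rightarrow$ $u \in O(w)$, together with the observation that transitive closedness of $\tc(D)$ itself is exactly what is needed to ensure $(u,w)$ is an edge to be reversed. The only subtlety worth flagging explicitly is that, in contrast to \cref{rem:Orn2ReoriMiddle}, the specific section $O \mapsto \reori{O}$ lands in $\Rcl(\tc(D))$, even though other reorientations in the fiber of $O$ under $R \mapsto \orn{R}$ need not be transitively closed.
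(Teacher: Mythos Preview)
Your proposal is correct and follows essentially the same argument as the paper: the paper's proof is the one-line deduction $u \in O(v)$ and $v \in O(w)$ $\Rightarrow$ $u \in O(w)$ $\Rightarrow$ $(u,w) \in \rev(\reori{O})$, and it leaves the surjection claim implicit. Your additional verification that $(u,w) \in \tc(D)$ and your explicit appeal to \cref{lem:Orn2Reori2} for the surjection are correct elaborations that the paper simply omits.
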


\begin{proof}
Let~$u,v,w \in V$ such that~$(u,v), (v,w) \in \rev(\reori{O})$.
Then~$u \in O(v)$ and~$v \in O(w)$, and so~$u \in O(w)$ since~$O$ is an ornamentation.
Hence~$(u,w) \in \rev(\reori{O})$.
\end{proof}

\begin{remark}
\label{rem:Orn2ReoriNotTransitivelyCoclosed}
Note that~$\reori{O}$ is not always transitively coclosed.
For instance,
\[
O = \!\!\includegraphics[scale=.8,valign=c]{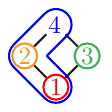} \qquad \reori{O} = \!\!\includegraphics[scale=.8,valign=c]{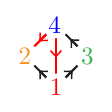}
.
\]
We will see in \cref{lem:Orn2ReoriT} that $\reori{O}$ is transitively biclosed when~$D$ is a directed tree.
\end{remark}

When restricted to transitively closed reorientations of~$\tc(D)$, the surjection~$R \mapsto \orn{R}$ behaves much nicer with respect to \cref{rem:Orn2ReoriMiddle,rem:Reori2OrnIntervals1,rem:Reori2OrnSemilatticeMap}.

\begin{proposition}
\label{prop:Reori2OrnMinTC}
For any ornamentation~$O$ of~$D$, the reorientation~$\reori{O}$ is the minimum transitively closed reorientation~$R$ of~$\tc(D)$ with~$\orn{R} = O$.
\end{proposition}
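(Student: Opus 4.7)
The plan is to combine three facts, two of which are already established in the excerpt. By \cref{lem:Orn2Reori2}, the reorientation~$\reori{O}$ of~$\tc(D)$ satisfies~$\orn{\reori{O}} = O$, and by \cref{prop:ReoriTC2Orn} it is transitively closed. Hence~$\reori{O}$ belongs to the fiber of~$O$ inside~$\Rcl(\tc(D))$, and the only remaining claim is the minimality one: for any transitively closed reorientation~$R$ of~$\tc(D)$ with~$\orn{R} = O$, one must have~$\reori{O} \le R$, i.e., $\rev(\reori{O}) \subseteq \rev(R)$.

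To establish this inclusion, I would take an arbitrary arrow~$(u,v) \in \rev(\reori{O})$ and show that $(u,v) \in \rev(R)$. By \cref{def:Orn2Reori}, $(u,v) \in \rev(\reori{O})$ means exactly that~$u \in O(v)$. Since~$O = \orn{R}$, \cref{def:Reori2Orn} then tells us that~$u$ belongs to the (maximal) ornament of~$D$ at~$v$ contained in the set of vertices admitting a directed path to~$v$ in~$\rev(R)$. In particular, there is such a path~$u = w_0, w_1, \dots, w_p = v$ with~$(w_{i-1}, w_i) \in \rev(R)$ for every~$1 \le i \le p$.

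A short induction on~$p$ finishes the argument: the base case~$p = 1$ is immediate, and the inductive step uses that~$(w_0, w_{p-1}) \in \rev(R)$ and~$(w_{p-1}, w_p) \in \rev(R)$ together with the transitive closedness of~$R$ to conclude~$(u,v) = (w_0, w_p) \in \rev(R)$. Thus~$\rev(\reori{O}) \subseteq \rev(R)$, which gives minimality.

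There is no real obstacle here: the argument is essentially an unwinding of definitions, and the role of transitive closedness is precisely to collapse a path in~$\rev(R)$ into a single arrow of~$\rev(R)$. This also clarifies why the restriction to~$\Rcl(\tc(D))$ is necessary, in contrast with \cref{rem:Orn2ReoriMiddle}, where~$\reori{O}$ was seen to be neither minimal nor maximal inside the full reorientation lattice~$\Reori(\tc(D))$.
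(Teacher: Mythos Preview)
Your proof is correct and follows essentially the same route as the paper's: both argue that $(u,v)\in\rev(\reori{O})$ means $u\in O(v)=\orn{R}(v)$, hence there is a directed path from~$u$ to~$v$ in~$\rev(R)$, which transitive closedness of~$R$ collapses to~$(u,v)\in\rev(R)$. You are slightly more explicit than the paper in citing \cref{lem:Orn2Reori2} and \cref{prop:ReoriTC2Orn} up front and in writing out the induction along the path, but the substance is identical.
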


\begin{proof}
Consider any transitively closed reorientation~$R$ of~$\tc(D)$ such that~$\orn{R} = O$.
Let~$u,v \in V$ be such that~$(u,v) \in \rev(\reori{O})$, that is, such that~$u \in O(v)$.
By definition of~$\orn{R}(v) = O(v)$, there is a directed path from~$u$ to~$v$ in~$\rev(R)$.
As $R$ is transitively closed, this implies that~$(u,v) \in \rev(R)$.
We conclude that~$\rev(\reori{O}) \subseteq \rev(R)$, and so~$\reori{O} \le R$.
\end{proof}

\begin{remark}
\label{rem:Reori2OrnIntervals2}
Note that the second example of \cref{rem:Reori2OrnIntervals1} shows that the set of transitively closed reorientations~$R$ of~$\tc(D)$ such that~$\orn{R} = O$ does not admit a maximum in general.
We will see in \cref{prop:ReoriTC2OrnIntervalT} that it does if~$D$ is a directed tree.
\end{remark}

\begin{proposition}
\label{prop:Reori2OrnMeetSemilaticeTC}
The map~$R \mapsto \orn{R}$ is a meet semilattice morphism from the transitively closed reorientation lattice~$\Rcl(\tc(D))$ to the ornamentation lattice~$\Orn(D)$.
\end{proposition}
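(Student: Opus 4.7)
The plan is to verify the two inclusions $\orn{R_1 \meet R_2}(v) \subseteq (\orn{R_1} \meet \orn{R_2})(v)$ and $(\orn{R_1} \meet \orn{R_2})(v) \subseteq \orn{R_1 \meet R_2}(v)$ for each vertex $v \in V$. First, recall from \cref{lem:rcl-subsemilattice} that the meet in $\Rcl(\tc(D))$ is inherited from $\Reori(\tc(D))$, so $\rev(R_1 \meet R_2) = \rev(R_1) \cap \rev(R_2)$. The first inclusion is then immediate from the order-preservation of $R \mapsto \orn{R}$ established in \cref{lem:Reori2Orn2}: since $R_1 \meet R_2 \le R_i$ for $i=1,2$, we get $\orn{R_1 \meet R_2} \le \orn{R_i}$, and hence $\orn{R_1 \meet R_2} \le \orn{R_1} \meet \orn{R_2}$.

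For the reverse inclusion, fix $v \in V$ and set $U \eqdef (\orn{R_1} \meet \orn{R_2})(v)$. By the formula in \cref{thm:OrnMeetJoin}, $U$ is itself an ornament of $D$ at $v$. By the maximality characterization of $\orn{R_1 \meet R_2}(v)$ in \cref{def:Reori2Orn}, it therefore suffices to show that every vertex $u \in U$ has a directed path to $v$ in $\rev(R_1 \meet R_2) = \rev(R_1) \cap \rev(R_2)$.

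This is where the transitive closedness assumption is used crucially. For any $u \in U$, we have $u \in \orn{R_1}(v) \cap \orn{R_2}(v)$, so there is a directed path from $u$ to $v$ in $\rev(R_1)$ and another one in $\rev(R_2)$ (possibly different paths, and possibly passing through vertices outside $U$). For $u \ne v$, the transitive closedness of $R_1$ and $R_2$ collapses each of these paths to a single arrow, yielding $(u,v) \in \rev(R_1) \cap \rev(R_2) = \rev(R_1 \meet R_2)$, which gives a length-one path from $u$ to $v$ in $\rev(R_1 \meet R_2)$; for $u = v$ the empty path suffices. Hence $U$ is contained in the set of vertices with a directed path to $v$ in $\rev(R_1 \meet R_2)$, and by maximality $U \subseteq \orn{R_1 \meet R_2}(v)$.

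This establishes $\orn{R_1 \meet R_2} = \orn{R_1} \meet \orn{R_2}$ componentwise, which together with surjectivity from \cref{prop:ReoriTC2Orn} shows that $R \mapsto \orn{R}$ is a meet semilattice morphism $\Rcl(\tc(D)) \twoheadrightarrow \Orn(D)$. There is no real obstacle here: the only subtlety is recognizing that the transitive closedness of $R_i$ is exactly what converts ``there is a path from $u$ to $v$ in $\rev(R_i)$'' into the much stronger ``$(u,v) \in \rev(R_i)$''. Note that this argument genuinely fails for the join (as foreshadowed in \cref{rem:Reori2OrnSemilatticeMap}) because the dual condition --- transitive coclosedness --- is not preserved by $\reori{O}$ in general (see \cref{rem:Orn2ReoriNotTransitivelyCoclosed}).
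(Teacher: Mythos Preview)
Your proof is correct and follows essentially the same approach as the paper's: both directions use order-preservation for the easy inclusion, and for the reverse inclusion both exploit transitive closedness to collapse ``there is a path from $u$ to $v$ in $\rev(R_i)$'' into ``$(u,v)\in\rev(R_i)$'', then invoke the maximality in \cref{def:Reori2Orn}. Your version is marginally more streamlined in that you argue directly with the whole ornament $U=(\orn{R_1}\meet\orn{R_2})(v)$, whereas the paper picks a $D$-path $u=w_0,\dots,w_p=v$ inside $U$ and checks $(w_i,v)\in\rev(R_1\meet R_2)$ for each $w_i$ before appealing to maximality; the logical content is the same.
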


\begin{proof}
Given two transitively closed reorientations~$R_1$ and~$R_2$ of $\tc(D)$, we want to show that $\orn{R_1 \meet R_2} = \orn{R_1} \meet \orn{R_2}$.
Since~$R \mapsto \orn{R}$ is order-preserving (\cref{lem:Reori2Orn2}) and~$R_1 \meet R_2 \le R_1$ and~$R_1 \meet R_2 \le R_2$, we have~$\orn{R_1 \meet R_2} \le \orn{R_1}$ and~$\orn{R_1 \meet R_2} \le \orn{R_2}$, hence~$\orn{R_1 \meet R_2} \le \orn{R_1} \meet \orn{R_2}$.

Conversely, consider~$u \in (\orn{R_1} \meet \orn{R_2})(v)$.
There exists a path~$u = w_0, w_1, \dots, w_p = v$ in $(\orn{R_1} \meet \orn{R_2})(v) \subseteq \orn{R_1}(v) \cap \orn{R_2}(v)$ (by \cref{thm:OrnMeetJoin}).
By definition of $\orn{R_1}$ and $\orn{R_2}$, there is a path from each $w_i$ to $v$ in both $\rev(R_1)$ and $\rev(R_2)$.
Since~$R_1$ and~$R_2$ are both transitively closed, each arrow $(w_i,v)$ of $\tc(D)$ is in both~$\rev(R_1)$ and~$\rev(R_2)$,
and therefore in $\rev(R_1) \cap \rev(R_2) = \rev(R_1 \meet R_2)$ (by \cref{lem:rcl-subsemilattice}).
So, $\{u = w_0, w_1, \dots, w_p = v\} \in \Orn(v \in D)$ and each $w_i$ has a directed path to $v$ in $\rev(R_1) \cap \rev(R_2)$.
By maximality of $\orn{R_1 \meet R_2}(v)$, we deduce $u \in \orn{R_1 \meet R_2}(v)$.
Thus, we have shown that $\orn{R_1} \meet \orn{R_2} \le \orn{R_1 \meet R_2}$, and we conclude that~$\orn{R_1 \meet R_2} = \orn{R_1} \meet \orn{R_2}$.
\end{proof}

\begin{remark}
In contrast, the first example of \cref{rem:Reori2OrnSemilatticeMap} shows that the map~$R \mapsto \orn{R}$ is not necessarily a join semilattice morphism from the transitively closed reorientation lattice~$\Rcl(\tc(D))$ to the ornamentation lattice~$\Orn(D)$, even when~$D$ is a path.
\end{remark}

\begin{remark}
Note that the image of the transitively coclosed reorientations of~$\tc(D)$ by the map~$R \mapsto \orn{R}$ does not cover all ornamentations of~$D$.
For instance, for the graph~$\Dgraph$ of \cref{fig:ornamentationsD}, the fibers under~$R \mapsto \orn{R}$ of the two ornamentations
\[
\includegraphics[scale=.8,valign=c]{cyclicOrnamentationD1}
\qquad
\includegraphics[scale=.8,valign=c]{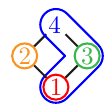}
\]
are singletons, respectively containing the reorientations
\[
\includegraphics[scale=.8,valign=c]{cyclicReorientationD1}
\qquad
\includegraphics[scale=.8,valign=c]{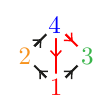}
\]
which are not transitively coclosed.
\end{remark}


\subsection{Sourcings}
\label{subsec:sourcings}

We now assume that~$V$ is totally ordered, say that~$V = [n]$ with the natural order.
We consider the following maps on hypergraphs (they are usually called orientations of hypergraphs, we have preferred the term sourcings of hypergraphs to avoid confusion with the reorientations of graphs).

\begin{definition}
\label{def:Sour}
A \defn{sourcing}~$S$ of a hypergraph~$\HH$ on~$V$ is a map~$S : \HH \to V$ such that~$S(H) \in H$ for all hyperedges~$H \in \HH$.
The \defn{sourcing lattice}~$\Sour(\HH)$ is the lattice of sourcings of~$\HH$ ordered componentwise, meaning~$S_1 \le S_2$ if and only if~$S_1(H) \le S_2(H)$ for all~$H \in \HH$.
In other words, $\Sour(\HH)$ is the Cartesian product of the subchains of~$V$ induced by the hyperedges~$\HH$.
\end{definition}

We now assume that~$D$ is an increasing graph on~$V = [n]$, meaning that~$u < v$ for any edge~$(u,v)$ of~$D$, and we consider sourcings of the path hypergraph of~$D$.

\begin{definition}
\label{def:pathHypergraph}
The \defn{path hypergraph} of a directed graph~$D$ is the collection~$\PP(D)$ of vertex sets of directed paths in~$D$.
\end{definition}

We first observe that any sourcing~$S$ of~$\PP(D)$ can be lifted to a reorientation~$\reori{S}$ of~$\tc(D)$

\begin{definition}
\label{def:Sour2Reori}
Consider a sourcing~$S$ of~$\PP(D)$.
We denote by~$\rev(S)$ the set of pairs~$(u,v) \in [n]^2$ such that there is a directed path~$P \in \PP(D)$ from~$u$ to~$v$ with~$S(P) = v$.
We denote by~$\reori{S}$ the reorientation of~$\tc(D)$ defined by~$\rev(\reori{S}) = \rev(S)$.
\end{definition}

\begin{lemma}
\label{lem:Sour2Reori}
The map~$S \mapsto \reori{S}$ is order-preserving.
\end{lemma}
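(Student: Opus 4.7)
The plan is to unfold both orders directly: the sourcing order compares values componentwise, while the reorientation order compares reversion sets by inclusion, so showing order-preservation amounts to showing $\rev(S_1) \subseteq \rev(S_2)$ whenever $S_1(P) \le S_2(P)$ for every path~$P \in \PP(D)$. The key observation I would exploit is that~$D$ is increasing, so every directed path~$P$ in~$D$ reaches its maximum vertex at its endpoint.

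I would take an arbitrary pair~$(u,v) \in \rev(S_1)$ and, by the definition of~$\rev(S_1)$, pick a witness path~$P \in \PP(D)$ from~$u$ to~$v$ with~$S_1(P) = v$. Because the edges of~$D$ point from smaller to larger vertices, the vertices of~$P$ form a strictly increasing chain, so~$v = \max P$. Then from~$S_1 \le S_2$ we get~$S_2(P) \ge S_1(P) = v$, and from~$S_2(P) \in P$ we get~$S_2(P) \le \max P = v$. Hence~$S_2(P) = v$, which witnesses~$(u,v) \in \rev(S_2)$. This gives $\rev(\reori{S_1}) = \rev(S_1) \subseteq \rev(S_2) = \rev(\reori{S_2})$, and therefore~$\reori{S_1} \le \reori{S_2}$ in~$\Reori(\tc(D))$.

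There is no real obstacle here; the proof is essentially a one-line consequence of the fact that in an increasing graph the target vertex of a directed path is always the maximum of its vertex set, which forces the source of the path under any sourcing~$\ge S_1$ to stay equal to that maximum once~$S_1$ already chose it.
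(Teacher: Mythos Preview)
Your proof is correct and essentially identical to the paper's own argument: both take a witnessing path~$P$ for~$(u,v) \in \rev(S_1)$, use that~$v = \max(P)$ since~$D$ is increasing, and squeeze~$S_2(P)$ between~$S_1(P) = v$ and~$\max(P) = v$ to conclude~$(u,v) \in \rev(S_2)$.
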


\begin{proof}
Consider two sourcings~$S_1 \le S_2$ of~$\PP(D)$.
For any~$(u,v) \in \rev(S_1)$, there is a path~$P \in \PP(D)$ from~$u$ to~$v$ with~$S_1(P) = v$.
We have~$v = S_1(P) \le S_2(P) \le \max(P) = v$, and so~$S_2(P) = v$, hence~$(u,v) \in \rev(S_2)$.
Thus,~$\rev(\reori{S_1}) = \rev(S_1) \subseteq \rev(S_2) = \rev(\reori{S_2})$, and so~${\reori{S_1} \le \reori{S_2}}$.
\end{proof}

In contrast, we cannot define a map from the reorientation lattice~$\Reori(\tc(D))$ to the sourcing lattice~$\Sour(\PP(D))$, because the subgraph of a reorientation of~$\tc(D)$ induced by the vertices of a path of~$\PP(D)$ might not have a source.
We will fix this issue by restricting to acyclic reorientations and acyclic sourcings in \cref{sec:acyclic}.

Observe also that $S \mapsto \reori{S}$ is not injective.
However, we can still use~$S \mapsto \reori{S}$ to define a map from all sourcings of~$\PP(D)$ to all ornamentations of~$D$.

\begin{definition}
\label{def:Sour2Orn}
For a sourcing~$S$ of~$\PP(D)$, we define a map~$\orn{S}$ on~$V$ which associates to each vertex~$v \in V$ the inclusion maximal ornament of~$D$ at~$v$ contained in the subset of vertices~$u \in V$ with a directed path to~$v$ in $\rev(S)$.
\end{definition}

\begin{lemma}
\label{lem:Sour2Orn1}
For any sourcing~$S$ of~$\PP(D)$, we have~$\orn{S} = \orn{\reori{S}}$.
\end{lemma}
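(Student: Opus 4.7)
The plan is to unfold the relevant definitions and observe that the two maps $\orn{S}$ and $\orn{\reori{S}}$ are computed from the very same auxiliary directed graph on $V$, and hence must agree vertex by vertex. By \cref{def:Sour2Reori}, the reorientation $\reori{S}$ is defined precisely so that $\rev(\reori{S}) = \rev(S)$ as sets of pairs of vertices. Consequently, ``directed path to $v$ in $\rev(\reori{S})$'' (as used in \cref{def:Reori2Orn}) and ``directed path to $v$ in $\rev(S)$'' (as used in \cref{def:Sour2Orn}) refer to paths in one and the same directed graph on $V$.

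The proof then proceeds by fixing a vertex $v \in V$ and writing $A(v)$ for the set of vertices $u \in V$ admitting a directed path from $u$ to $v$ in this common graph $\rev(S) = \rev(\reori{S})$. By \cref{def:Sour2Orn}, $\orn{S}(v)$ is the inclusion maximal ornament of $D$ at $v$ contained in $A(v)$, and by \cref{def:Reori2Orn}, $\orn{\reori{S}}(v)$ is the inclusion maximal ornament of $D$ at $v$ contained in $A(v)$ as well. Both maximal ornaments exist and are unique by \cref{lem:unionOrnaments}, so they must coincide, giving $\orn{S}(v) = \orn{\reori{S}}(v)$ for every $v \in V$. There is no genuine obstacle here: the lemma is a compatibility statement ensuring that the two routes from a sourcing to an ornamentation --- directly via \cref{def:Sour2Orn} or through the intermediate reorientation $\reori{S}$ and then applying \cref{def:Reori2Orn} --- produce the same output, and it follows at once from the choice made in \cref{def:Sour2Reori} to define $\reori{S}$ by $\rev(\reori{S}) = \rev(S)$.
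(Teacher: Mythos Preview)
Your proof is correct and is essentially the same as the paper's, which simply says ``Immediate from \cref{def:Reori2Orn,def:Sour2Reori,def:Sour2Orn}.'' You have merely unfolded those definitions explicitly, observing that $\rev(\reori{S}) = \rev(S)$ forces the two constructions to coincide.
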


\begin{proof}
Immediate from \cref{def:Reori2Orn,def:Sour2Reori,def:Sour2Orn}.
\end{proof}

\begin{lemma}
\label{lem:Sour2Orn2}
For any sourcing~$S$ of~$\PP(D)$, the map~$\orn{S}$ is an ornamentation of~$D$.
\end{lemma}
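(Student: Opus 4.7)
The plan is to reduce this statement immediately to the already-established analogue for reorientations. Specifically, \cref{lem:Sour2Orn1} gives the identity $\orn{S} = \orn{\reori{S}}$, where $\reori{S}$ is the reorientation of $\tc(D)$ associated to the sourcing $S$ via \cref{def:Sour2Reori}. Since $\reori{S}$ is a reorientation of $\tc(D)$, \cref{lem:Reori2Orn1} applies and tells us that $\orn{\reori{S}}$ is an ornamentation of~$D$. Combining these two facts yields the conclusion.

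Concretely, the argument is a one-line chain of equalities and implications: $\orn{S} \stackrel{\text{\cref{lem:Sour2Orn1}}}{=} \orn{\reori{S}} \stackrel{\text{\cref{lem:Reori2Orn1}}}{\in} \Orn(D)$. No further verification is needed since both prerequisite lemmas have already been proved in the excerpt.

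There is essentially no obstacle here; the work has been done upstream. The only thing worth double-checking is that \cref{def:Sour2Orn,def:Reori2Orn} really do agree on the common input, which is precisely the content of \cref{lem:Sour2Orn1}: both definitions ask for the inclusion maximal ornament of $D$ at $v$ contained in the set of vertices having a directed path to $v$ inside $\rev(S) = \rev(\reori{S})$, so there is nothing to check beyond matching the two definitions. This matches the pattern indicated in \cref{fig:allMaps}, which already records that the triangle through $\Sour(\PP(D))$, $\Reori(\tc(D))$, and $\Orn(D)$ commutes.
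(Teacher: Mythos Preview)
Your proposal is correct and follows exactly the same approach as the paper's own proof, which simply cites \cref{lem:Reori2Orn1,lem:Sour2Orn1}. The only difference is that you spell out the chain of implications explicitly, whereas the paper leaves it as a one-line citation.
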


\begin{proof}
Follows from~\cref{lem:Reori2Orn1,lem:Sour2Orn1}.
\end{proof}

\begin{lemma}
\label{lem:Sour2Orn3}
The map~$S \mapsto \orn{S}$ is order-preserving.
\end{lemma}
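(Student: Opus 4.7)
The plan is to obtain this lemma essentially for free by composing two order-preserving maps that have already been established in the excerpt. By \cref{lem:Sour2Orn1}, the map $S \mapsto \orn{S}$ factors as $S \mapsto \reori{S} \mapsto \orn{\reori{S}}$, so it suffices to show that each factor is order-preserving and invoke the composition.

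First, I would recall \cref{lem:Sour2Reori}, which states that $S_1 \le S_2$ in $\Sour(\PP(D))$ implies $\reori{S_1} \le \reori{S_2}$ in $\Reori(\tc(D))$. Then I would apply \cref{lem:Reori2Orn2}, which states that the map $R \mapsto \orn{R}$ is order-preserving, to conclude that $\orn{\reori{S_1}} \le \orn{\reori{S_2}}$ in $\Orn(D)$. Finally, using \cref{lem:Sour2Orn1} on both sides, this chain of inequalities rewrites as $\orn{S_1} \le \orn{S_2}$, which is precisely the desired conclusion.

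There is no real obstacle here: the entire content has been prepared by the preceding three lemmas, and the statement is essentially the observation that an order-preserving map composed with an order-preserving map is order-preserving. The only thing to be careful about is to invoke \cref{lem:Sour2Orn1} to justify rewriting $\orn{\reori{S_i}}$ as $\orn{S_i}$, so that the conclusion is expressed in terms of the map of interest rather than its factorization through $\Reori(\tc(D))$.
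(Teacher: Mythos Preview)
Your proof is correct and matches the paper's approach exactly: the paper's proof consists of the single line ``Follows from \cref{lem:Reori2Orn2,lem:Sour2Reori,lem:Sour2Orn1},'' which is precisely the factorization and composition argument you spelled out.
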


\begin{proof}
Follows from~\cref{lem:Reori2Orn2,lem:Sour2Reori,lem:Sour2Orn1}.
\end{proof}

\begin{definition}
\label{def:Orn2Sour}
For an ornamentation~$O$ of~$D$, we define a sourcing~$\sour{O}$ of~$\PP(D)$ where the source~$\sour{O}(P)$ for a path~$P$ from~$u$ to~$v$ in~$D$ is the maximal~$w \in P$ such that~$u \in O(w)$.
\end{definition}

\begin{lemma}
\label{lem:Orn2Sour1}
The map~$O \mapsto \sour{O}$ is order-preserving.
\end{lemma}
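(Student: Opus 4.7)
The plan is to unfold the definition of componentwise order in both lattices and observe that $\sour{O}(P)$ is defined as the maximum of a set of candidates that depends monotonically on $O$. Concretely, given a path~$P$ from~$u$ to~$v$ in~$D$, write
\[
	C(O,P) \eqdef \{ w \in P : u \in O(w) \},
\]
so that $\sour{O}(P) = \max C(O,P)$. Note that $C(O,P)$ is nonempty since $u \in O(u)$ (because $O(u)$ is an ornament at $u$), so the maximum is well-defined.

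Now assume $O_1 \le O_2$ in the ornamentation lattice, meaning $O_1(w) \subseteq O_2(w)$ for every vertex~$w$. Then for each $w \in P$, the condition $u \in O_1(w)$ implies $u \in O_2(w)$, and so $C(O_1,P) \subseteq C(O_2,P)$. Taking maxima, $\sour{O_1}(P) = \max C(O_1,P) \le \max C(O_2,P) = \sour{O_2}(P)$. Since $P$ was arbitrary, this gives $\sour{O_1} \le \sour{O_2}$ componentwise in~$\Sour(\PP(D))$.

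There is essentially no obstacle here: the argument is a one-line monotonicity observation using only the componentwise definition of the two orders and the fact that taking maxima is monotone under inclusion of subsets of a totally ordered set. The only minor point to flag is that $C(O,P)$ is guaranteed nonempty, which follows immediately from $u \in O(u)$, an automatic property of any ornament at~$u$.
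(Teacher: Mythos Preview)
Your proof is correct and takes essentially the same approach as the paper: both arguments unfold the definition of $\sour{O}(P)$ as a maximum over vertices $w \in P$ with $u \in O(w)$ and use that $O_1(w) \subseteq O_2(w)$ enlarges this candidate set. Your version is slightly more explicit in introducing the notation $C(O,P)$ and noting nonemptiness via $u \in O(u)$, but the underlying idea is identical.
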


\begin{proof}
Assume that~$O_1 \le O_2$, so that~$O_1(v) \subseteq O_2(v)$ for all~$v \in V$.
Consider any path~$P \in \PP(D)$ from~$u$ to~$v$, and let~$w \eqdef \sour{O_1}(P)$.
Then we have~$u \in O_1(w) \subseteq O_2(w)$, and so~$\sour{O_2}(P) \ge w$ by definition.
Therefore, $\sour{O_1}(P) \le \sour{O_2}(P)$ for any path~$P \in \PP(D)$, hence~${\sour{O_1} \le \sour{O_2}}$.
\end{proof}

\begin{lemma}
\label{lem:Orn2Sour2}
The following are equivalent for any ornamentation~$O$ of~$D$ and vertices~$u,v \in [n]$:
\begin{enumerate}[(i)]
\item $(u,v) \in \rev(\sour{O})$,
\item there is a path from~$u$ to~$v$ in~$\rev(\sour{O})$,
\item $u \in O(v)$.
\end{enumerate}
\end{lemma}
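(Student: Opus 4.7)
The plan is to establish the equivalence by proving $(\text{i}) \Rightarrow (\text{iii})$, then $(\text{iii}) \Rightarrow (\text{i})$, noting $(\text{i}) \Rightarrow (\text{ii})$ is trivial, and finally $(\text{ii}) \Rightarrow (\text{iii})$ by induction along a witnessing path. The pattern follows that of \cref{lem:Orn2Reori2}, but the extra care is needed because sources of paths are determined by maximality in $[n]$, not by the path structure of $\rev(\sour{O})$ directly.

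For $(\text{i}) \Rightarrow (\text{iii})$, I would simply unfold definitions: if $(u,v) \in \rev(\sour{O})$, then there is some $P \in \PP(D)$ from $u$ to $v$ with $\sour{O}(P) = v$, and by \cref{def:Orn2Sour} this means in particular that $u \in O(v)$.

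For $(\text{iii}) \Rightarrow (\text{i})$, the key geometric input is that $D$ is increasing. Given $u \in O(v)$, since $O(v) \in \Orn(v \in D)$, there exists a directed path $P$ from $u$ to $v$ whose vertices all lie in $O(v)$. As $D$ is increasing, $v = \max P$. Since $u \in O(v)$ and $v$ is the largest vertex of $P$, the maximality defining $\sour{O}(P)$ (as the largest $w \in P$ with $u \in O(w)$) forces $\sour{O}(P) = v$, so $(u,v) \in \rev(\sour{O})$.

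The implication $(\text{i}) \Rightarrow (\text{ii})$ is immediate (length-one path). For $(\text{ii}) \Rightarrow (\text{iii})$, take a path $u = w_0, w_1, \dots, w_p = v$ in $\rev(\sour{O})$; by the already proved $(\text{i}) \Rightarrow (\text{iii})$, each $w_{i-1} \in O(w_i)$. Since $O$ is an ornamentation, $w_{i-1} \in O(w_i)$ gives $O(w_{i-1}) \subseteq O(w_i)$, so by straightforward induction on $i$, $w_0 = u$ lies in $O(w_p) = O(v)$. The only nontrivial step is $(\text{iii}) \Rightarrow (\text{i})$, where one must remember to use the increasing hypothesis on $D$ to ensure the endpoint of the constructed path is its maximum; the remaining implications are essentially bookkeeping.
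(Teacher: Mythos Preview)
Your proof is correct and follows essentially the same approach as the paper's. The paper proves the cycle $(\text{i}) \Rightarrow (\text{ii}) \Rightarrow (\text{iii}) \Rightarrow (\text{i})$, while you prove $(\text{i}) \Leftrightarrow (\text{iii})$ first and then $(\text{ii}) \Rightarrow (\text{iii})$ by reusing $(\text{i}) \Rightarrow (\text{iii})$ on each edge of the path; the underlying arguments are identical, and your explicit mention of the increasing hypothesis on~$D$ (needed so that~$v = \max P$) makes transparent a step the paper leaves implicit.
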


\begin{proof}
\uline{\textsl{(i) $\Rightarrow$ (ii):}}
Nothing to prove.

\medskip\noindent
\uline{\textsl{(ii) $\Rightarrow$ (iii):}}
Assume that there is a path from~$u$ to~$v$ in~$\rev(\sour{O})$.
Denote by~${u = w_0, w_1, \dots, w_p = v}$ the vertices along this path.
For each $1 \le i \le p$, as~$(w_{i-1}, w_i) \in \rev(\sour{O})$, there is a path~$P_i \in \PP(D)$ from~$w_{i-1}$ to~$w_i$ with~$\sour{O}(P_i) = w_i$ by \cref{def:Sour2Reori}, hence~$w_{i-1} \in O(w_i)$ by \cref{def:Orn2Sour}.
Since~$O$ is an ornamentation, an immediate induction shows that~$w_i \in O(v)$ for all~$0 \le i \le p$.
We conclude that $u \in O(v)$

\medskip\noindent
\uline{\textsl{(iii) $\Rightarrow$ (i):}}
Assume that~$u \in O(v)$.
Then there is a path~$P \in \PP(D)$ from~$u$ to~$v$ in~$O(v)$.
Since~${u \in O(v)}$, we get~$\sour{O}(P) = v$ by \cref{def:Orn2Sour}.
Hence,~$(u,v) \in \rev(\sour{O})$ by \cref{def:Sour2Reori}.
\end{proof}

\begin{lemma}
\label{lem:Orn2Sour3}
For any ornamentation~$O$ of~$D$, we have~$\orn{\sour{O}} = O$. In other words, the map~${O \mapsto \sour{O}}$ is a section of the map~$S \mapsto \orn{S}$.
\end{lemma}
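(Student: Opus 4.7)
The plan is to combine \cref{lem:Orn2Sour2} with the definition of $\orn{S}$ (\cref{def:Sour2Orn}). First I would fix a vertex $v \in V$ and identify the set
\[
A_v \eqdef \set{u \in V}{\text{there is a directed path from } u \text{ to } v \text{ in } \rev(\sour{O})}
\]
that appears in the definition of $\orn{\sour{O}}(v)$. By \cref{lem:Orn2Sour2} (equivalence of (ii) and (iii)), a vertex $u$ lies in $A_v$ if and only if $u \in O(v)$. Hence $A_v = O(v)$.

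Next I would recall that, by \cref{def:Orn}, $O(v) \in \Orn(v \in D)$ is itself an ornament of $D$ at $v$. Therefore $O(v)$ is an ornament at $v$ contained in $A_v$, and by the maximality clause in \cref{def:Sour2Orn}, the inclusion-maximal ornament at $v$ contained in $A_v$ is at least $O(v)$. Conversely, $\orn{\sour{O}}(v) \subseteq A_v = O(v)$ by definition. Combining these two inclusions gives $\orn{\sour{O}}(v) = O(v)$ for every $v \in V$, which proves the claimed identity $\orn{\sour{O}} = O$.

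There is essentially no obstacle here, since the lemma is almost a direct consequence of \cref{lem:Orn2Sour2}: the only subtle point is to observe that $A_v$ is automatically an ornament at $v$ in the present situation (being equal to $O(v)$), so the maximality step is trivial and we do not need to prune $A_v$ to get $\orn{\sour{O}}(v)$. This also conceptually explains why $O \mapsto \sour{O}$ is a section of $S \mapsto \orn{S}$ whereas the analogous map $O \mapsto \reori{O}$ is a section of $R \mapsto \orn{R}$ only after passing through the transitive closure (\cref{lem:Orn2Reori2}).
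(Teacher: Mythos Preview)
Your proof is correct and follows the same approach as the paper: identify the set of $u$ with a directed path to $v$ in $\rev(\sour{O})$ with $O(v)$ via \cref{lem:Orn2Sour2}, then invoke that $O(v)$ is itself an ornament at $v$ together with the maximality in \cref{def:Sour2Orn}. Your final parenthetical remark about $O \mapsto \reori{O}$ is a bit off, since \cref{lem:Orn2Reori2} already shows it is a section of $R \mapsto \orn{R}$ without any extra hypothesis, but this does not affect the proof.
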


\begin{proof}
By \cref{lem:Orn2Sour2}, there is a path from~$u$ to~$v$ in~$\rev(\sour{O})$ if and only if~$u \in O(v)$.
Since~$O(v)$ is an ornament of~$D$ at~$v$, we conclude by maximality that~$\orn{\sour{O}}(v) = O(v)$.
\end{proof}

\begin{lemma}
\label{lem:Orn2Sour4}
For any ornamentation~$O$ of~$D$, we have~$\reori{O} = \reori{\sour{O}}$.
\end{lemma}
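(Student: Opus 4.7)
The plan is to unfold both sides as subsets of edges of~$\tc(D)$ and show that they are characterized by the same predicate on pairs~$(u,v)$.

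First, I would observe that by \cref{def:Orn2Reori}, the reversed edges of~$\reori{O}$ are characterized directly by the ornamentation: $(u,v) \in \rev(\reori{O})$ if and only if~$u \in O(v)$. So the right-hand side is already in the form we need.

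Next, I would analyze~$\reori{\sour{O}}$. By \cref{def:Sour2Reori} applied to the sourcing~$\sour{O}$, we have $\rev(\reori{\sour{O}}) = \rev(\sour{O})$, i.e., $(u,v) \in \rev(\reori{\sour{O}})$ if and only if there exists a directed path~$P \in \PP(D)$ from~$u$ to~$v$ with $\sour{O}(P) = v$. At this point, the key step is to invoke \cref{lem:Orn2Sour2}, whose equivalence (i)~$\Leftrightarrow$~(iii) gives exactly $(u,v) \in \rev(\sour{O})$ if and only if~$u \in O(v)$. This is the only nontrivial input, and it has already been established.

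Combining the two characterizations, for every pair~$(u,v)$ we have $(u,v) \in \rev(\reori{O}) \iff u \in O(v) \iff (u,v) \in \rev(\reori{\sour{O}})$, so $\rev(\reori{O}) = \rev(\reori{\sour{O}})$ and therefore $\reori{O} = \reori{\sour{O}}$. There is no real obstacle here: the statement is essentially a consistency check between the three maps~$O \mapsto \reori{O}$, $O \mapsto \sour{O}$, and~$S \mapsto \reori{S}$, and it reduces immediately to \cref{lem:Orn2Sour2} once the definitions are written out.
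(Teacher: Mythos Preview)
Your proof is correct and follows exactly the same approach as the paper, which simply states that the lemma is immediate from \cref{def:Orn2Reori,def:Sour2Reori,lem:Orn2Sour2}. You have just unpacked those references into the explicit chain of equivalences.
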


\begin{proof}
Immediate from \cref{def:Orn2Reori,def:Sour2Reori,lem:Orn2Sour2}.
\end{proof}

\begin{remark}
Note however that there are sourcings~$S$ of~$\PP(D)$ with~$\reori{S} \ne \reori{\orn{S}}$.
For instance, on the graph~\includegraphics[scale=.8,valign=c]{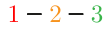}, for the sourcing~$S$ given by~$S(\{1,2\}) = 1$, $S(\{2,3\}) = 2$ and~$S(\{1,2,3\}) = 3$, we have $\rev(S) = \{(1,3)\}$ so that~$\orn{S} = \raisebox{-.2cm}{\includegraphics[scale=.8]{ornamentationI1}}$ and
\[
\reori{S} = \raisebox{-.2cm}{\includegraphics[scale=.8]{reorientationI1}} \ne \raisebox{-.2cm}{\includegraphics[scale=.8]{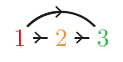}} = \reori{\orn{S}}.
\]
\end{remark}

To sum up, we obtained the following statement.

\begin{proposition}
\label{prop:Sour2Orn}
The map~$S \mapsto \orn{S}$ is an order-preserving surjection from the sourcing lattice~$\Sour(\PP(D))$ to the ornamentation lattice~$\Orn(D)$.
\end{proposition}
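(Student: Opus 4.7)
The plan is simply to assemble the three preceding lemmas, since the proposition is essentially a bookkeeping statement collecting properties already established in \cref{def:Sour2Orn} and its immediate consequences. Concretely, I would proceed in three steps. First, well-definedness of the map as a map $\Sour(\PP(D)) \to \Orn(D)$ follows from \cref{lem:Sour2Orn2}, which states that $\orn{S}$ is indeed an ornamentation of $D$ for every sourcing $S$ of $\PP(D)$. Second, order-preservation is exactly the content of \cref{lem:Sour2Orn3}. Third, for surjectivity I would use the section map $O \mapsto \sour{O}$ from \cref{def:Orn2Sour}: by \cref{lem:Orn2Sour3}, every ornamentation $O$ of $D$ satisfies $\orn{\sour{O}} = O$, so $O$ lies in the image of $S \mapsto \orn{S}$.

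Since the genuine combinatorial content has already been extracted into the preceding lemmas (in particular into \cref{lem:Orn2Sour2}, which characterizes $O(v)$ as the set of vertices reaching $v$ by a directed path in $\rev(\sour{O})$ and thereby makes the section identity transparent), I do not expect any obstacle: the proof is a two- or three-line citation. The only mild subtlety worth pointing out explicitly is that the section $O \mapsto \sour{O}$ need not agree with the factorization $O \mapsto \reori{O}$ through $\Reori(\tc(D))$, but this plays no role here — any section suffices to deduce surjectivity of the map under consideration.
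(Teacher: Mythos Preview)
Your proposal is correct and matches the paper's approach exactly: the proposition is stated with the preamble ``To sum up, we obtained the following statement'' and carries no separate proof, precisely because it is a bookkeeping summary of \cref{lem:Sour2Orn2}, \cref{lem:Sour2Orn3}, and \cref{lem:Orn2Sour3}. Your three-step assembly (well-definedness, order-preservation, surjectivity via the section) is the intended argument.
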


\begin{remark}
The map~$S \mapsto \orn{S}$ is obviously not injective.
For instance, $\orn{S}$ is the maximal ornamentation of~$D$ for any sourcing~$S$ of~$\PP(D)$ such that~$S(\{u,v\}) = v$ for each edge~$(u,v)$ of~$D$.
In contrast, we will see in \cref{sec:acyclic} that~$S \mapsto \orn{S}$ is injective on acyclic sourcings of~$\PP(D)$.
\end{remark}


\section{Acyclic reorientations, acyclic sourcings, and acyclic ornamentations}
\label{sec:acyclic}

We now focus our attention on the acyclic case.
We assume here that~$D$ is an increasing graph on~$V = [n]$, meaning that $u < v$ for any edge~$(u,v)$ of~$D$. In particular, $D$ is acyclic.
We then consider the acyclic reorientation poset~$\AReori(\tc(D))$ (\cref{subsec:acyclicReorientations}), the acyclic sourcing poset~$\ASour(\PP(D))$ (\cref{subsec:acyclicSourcings}), and the acyclic ornamentation poset~$\AOrn(D)$ (\cref{subsec:acyclicOrnamentations}), and define natural surjective poset morphisms:
\[
\begin{array}{rcccccl}
	\fS_n & \surjection & \AReori(\tc(D)) & \surjection & \ASour(\PP(D)) & \bijection & \AOrn(D) \\
	\pi & \longmapsto & \areori{\pi} \qquad R & \longmapsto & \asour{R} \qquad S & \longmapsto & \aorn{S}
\end{array}
.
\]


\subsection{Acyclic reorientations}
\label{subsec:acyclicReorientations}

\begin{figure}[b]
	\centerline{\includegraphics[scale=.68]{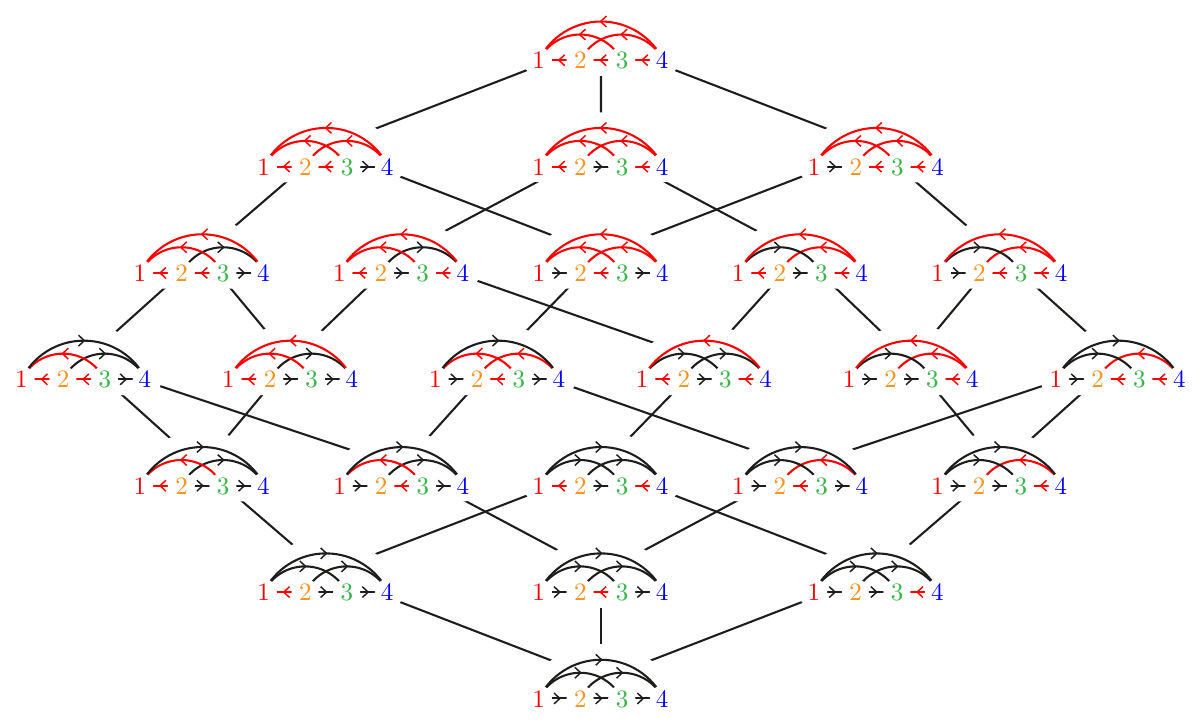}}
	\caption{The acyclic reorientation poset~$\AReori(\tc(\Igraph))$. (It is isomorphic to the weak order.)}
	\label{fig:acyclicReorientationsI}
\end{figure}

\begin{figure}
	\centerline{\includegraphics[scale=.68]{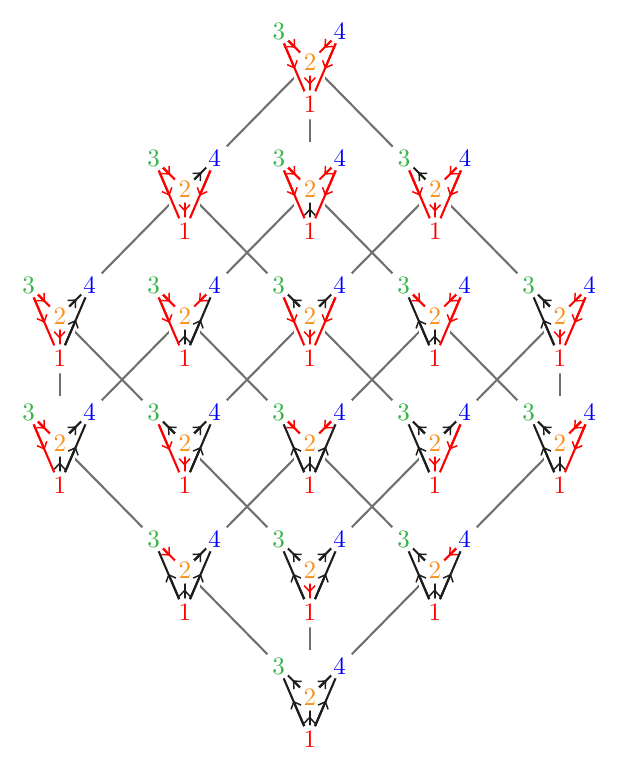} \qquad \includegraphics[scale=.68]{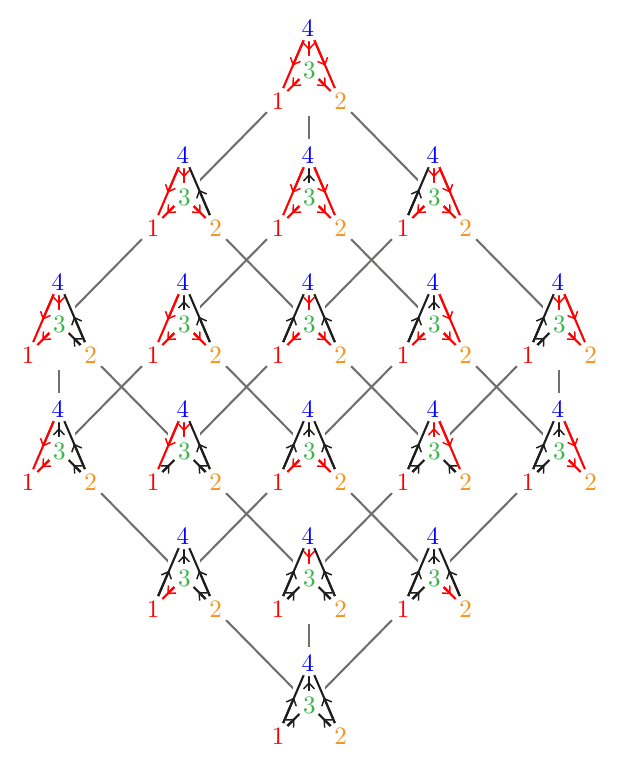}}
	\caption{The acyclic reorientation posets~$\AReori(\tc(\Ygraph))$ and~$\AReori(\tc(\Agraph))$. (They are both lattices, see \cref{prop:AReoriLatticeUT}.)}
	\label{fig:acyclicReorientationsAY}
\end{figure}

\begin{figure}
	\centerline{\includegraphics[scale=.68]{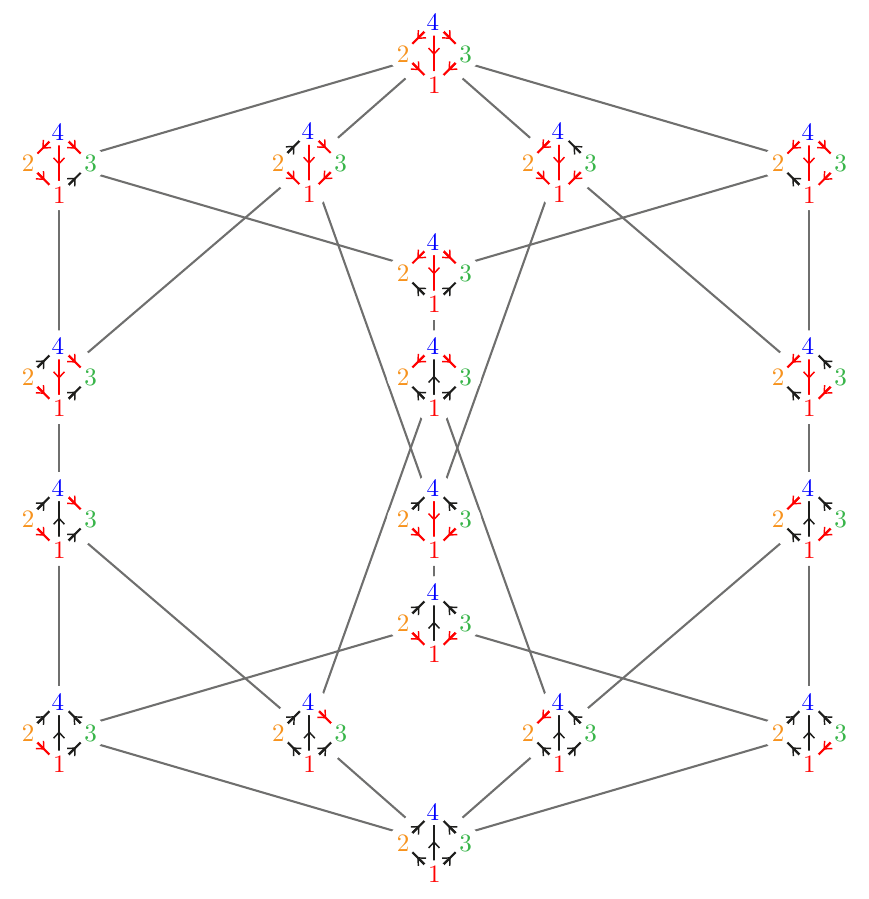}}
	\caption{The acyclic reorientation poset~$\AReori(\tc(\Dgraph))$. (Not a lattice, see \cref{fig:MacNeilleAcyclicReorientationsD}.)}
	\label{fig:acyclicReorientationsD}
\end{figure}

\begin{figure}
	\centerline{\includegraphics[scale=.68,valign=c]{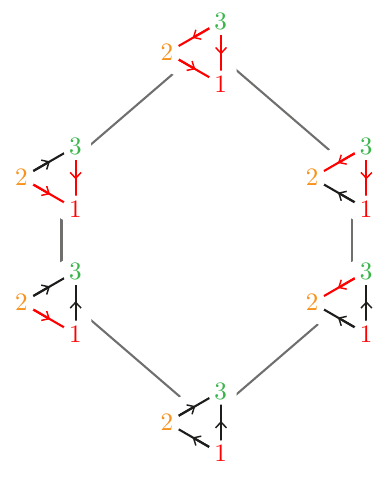}\qquad\includegraphics[scale=.68,valign=c]{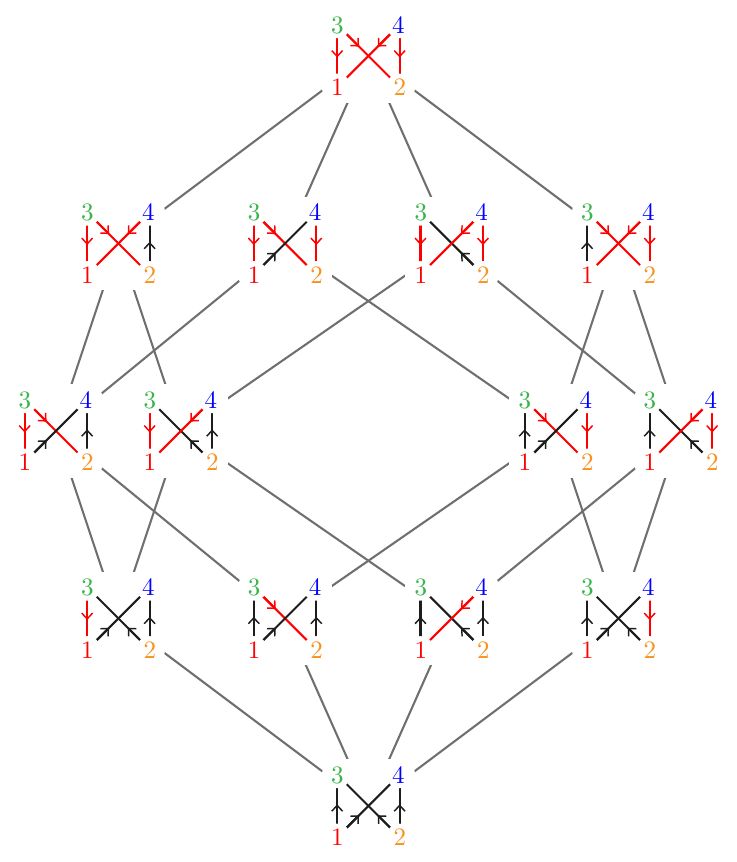}}
	\caption{The acyclic reorientation posets~$\AReori(\tc(\Kgraph))$ and~$\AReori(\tc(\Tgraph))$.}
	\label{fig:acyclicReorientationsKT}
\end{figure}

Acyclic reorientations of directed acyclic graphs is a classical subject, see for instance~\cite{Greene,GreeneZaslavsky,Pilaud-acyclicReorientationLattices} and \cref{fig:acyclicReorientationsAY,fig:acyclicReorientationsD,fig:acyclicReorientationsKT} for illustrations.

\begin{definition}
\label{def:AReori}
A reorientation of a directed graph~$E$ is \defn{acyclic} if it contains no directed cycle.
The \defn{acyclic reorientation poset}~$\AReori(E)$ is the subposet of the reorientation lattice~$\Reori(E)$ induced by acyclic reorientations.
\end{definition}

\begin{remark}
Let us recall the polytopal interpretation of the acyclic reorientation poset of~$E$.
Denote by~$(\b{e}_i)_{i \in [n]}$ the standard basis of~$\R^n$.
The \defn{graphical zonotope} of~$E$ is the Minkowski sum~$\simplex_E \eqdef \sum_{(u,v) \in E} \simplex_{uv}$, where~$\simplex_{uv}$ is the segment with endpoints~$\b{e}_u$ and~$\b{e}_v$.
The face lattice of~$\simplex_E$ is described by ordered partitions of~$E$.
In particular, the graph of~$\simplex_E$, oriented in the direction~$\b{\omega} \eqdef (n, n-1, \dots, 2, 1) - (1, 2, \dots, n-1, n) = (n-1, n-3, \dots, 3-n, 1-n)$, is isomorphic to the Hasse diagram of the acyclic reorientation poset~$\AReori(E)$.
\end{remark}

Note that the acyclic reorientation poset~$\AReori(E)$ is not always a lattice.
The lattice theory of acyclic reorientation posets was studied in detail in~\cite{Pilaud-acyclicReorientationLattices}.
For our purposes, we will only need the following statements.

\begin{proposition}[{\cite[Thm.~1]{Pilaud-acyclicReorientationLattices}}]
\label{prop:AReoriLat}
The acyclic reorientation poset~$\AReori(E)$ is a lattice if and only if the transitive reduction of any induced subgraph of~$E$ is a forest.
\end{proposition}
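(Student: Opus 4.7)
This statement is cited from~\cite[Thm.~1]{Pilaud-acyclicReorientationLattices}, so I would reprove both implications from first principles, treating necessity by contraposition and sufficiency via an explicit saturation construction.

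For necessity, suppose some induced subgraph of~$E$ has a transitive reduction which is not a forest. Then one can find two vertices~$u,v$ joined by two internally disjoint directed paths~$P_1, P_2$ in this transitive reduction. I would exhibit two acyclic reorientations~$R_1, R_2 \in \AReori(E)$ such that~$\rev(R_1) \cup \rev(R_2)$ creates a cycle traversing~$P_1$ and~$P_2$ in opposite senses, which can then be broken by reversing either a further edge of~$P_1$ or a further edge of~$P_2$. These two incomparable repairs yield two distinct minimal acyclic upper bounds, so $R_1 \join R_2$ does not exist in~$\AReori(E)$. The minimal explicit witness is the diamond~$\Dgraph$ of \cref{fig:ornamentationsD}: the reorientations with reversal sets~$\{(1,3)\}$ and~$\{(3,4)\}$ admit the two incomparable minimal acyclic upper bounds with reversal sets~$\{(1,3), (3,4), (1,2)\}$ and~$\{(1,3), (3,4), (2,4)\}$, which one can also read off directly from \cref{fig:acyclicReorientationsD}. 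A general induced-subgraph argument then lifts this obstruction to arbitrary~$E$.

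For sufficiency, given $R_1, R_2 \in \AReori(E)$ I would build a candidate join by a saturation procedure: initialize~$R$ with~$\rev(R) = \rev(R_1) \cup \rev(R_2)$, and while~$R$ contains a directed cycle, add to~$\rev(R)$ a \emph{canonical} additional edge whose reversal breaks that cycle without creating a new one. The procedure terminates within at most $|E|$ steps. Meets are defined dually, starting from $\rev(R_1) \cap \rev(R_2)$ and performing a dual saturation. I would then verify that the output is acyclic, dominates both inputs, and lies below (resp.\ above) every acyclic common upper (resp.\ lower) bound, yielding the required join (resp.\ meet) in~$\AReori(E)$.

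The crux of the sufficiency direction—and the main obstacle—is to make the ``canonical repair'' step rigorous and to prove its uniqueness under the forest hypothesis. The clean justification is that, in a forest transitive reduction, every pair of comparable vertices is joined by a unique directed path; consequently, any cycle appearing during the saturation localizes to an induced substructure where the edge whose reversal is forced is determined by this unique path. This is precisely the negation of the two-path obstruction exploited in the necessity direction. An induction on~$|\rev(R_1) \symdif \rev(R_2)|$, or on the number of saturation steps, then shows that the construction produces the least acyclic upper bound of~$R_1$ and~$R_2$.
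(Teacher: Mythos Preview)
The paper does not prove this statement at all: it is quoted verbatim from~\cite[Thm.~1]{Pilaud-acyclicReorientationLattices}, and the companion formula for the join and meet is likewise quoted in \cref{prop:AReoriLatJoinMeet}. So there is nothing to compare against except the external reference.

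That said, your sketch has two concrete problems.

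For necessity, your diamond computation is wrong. In~$\tc(\Dgraph)$ the edge~$(1,4)$ is present, and both of your claimed minimal acyclic upper bounds~$\{(1,3),(3,4),(1,2)\}$ and~$\{(1,3),(3,4),(2,4)\}$ still contain the directed cycle~$1 \to 4 \to 3 \to 1$, so neither is acyclic. The actual minimal acyclic upper bounds of~$\{(1,3)\}$ and~$\{(3,4)\}$ are~$\{(1,3),(3,4),(1,4),(1,2)\}$ and~$\{(1,3),(3,4),(1,4),(2,4)\}$ (you must also reverse~$(1,4)$, and then break the resulting cycle through~$2$ in one of two ways). The phenomenon you want is there, but the witness sets are off by one edge, so the argument as written does not establish anything.

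For sufficiency, your ``canonical repair'' procedure is not a proof: you never specify which edge to reverse, you never argue that such a reversal does not create a new cycle, and the appeal to ``unique paths in the forest transitive reduction'' is too loose to pin this down. The argument in~\cite{Pilaud-acyclicReorientationLattices} does not proceed by iterative repair at all; it shows directly that~$\rev(R_1 \join R_2) = E \cap \tc\big(\rev(R_1) \cup \rev(R_2)\big)$ defines an acyclic reorientation precisely under the forest hypothesis (this is the content of \cref{prop:AReoriLatJoinMeet}). If you want to salvage your approach, you would need to prove that your saturation stabilises at exactly this transitive closure, which is more work than proving the closed formula directly.
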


\begin{proposition}[{\cite[Thm.~9]{Pilaud-acyclicReorientationLattices}}]
\label{prop:AReoriLatJoinMeet}
When the acyclic reorientation poset~$\AReori(E)$ is a lattice, the meet and join of two acyclic reorientations~$R_1$ and~$R_2$ of~$E$ are given by
\begin{align*}
\rev(R_1 \join R_2) & = E \;\cap\; \tc \!\big( \rev(R_1) \cup \rev(R_2) \big) \\
\text{and}\qquad
E \ssm \rev(R_1 \meet R_2) & = E \;\cap\; \tc \!\big( E \ssm (\rev(R_1) \cap \rev(R_2)) \big).
\end{align*}
\end{proposition}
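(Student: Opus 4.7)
The plan is to prove the join formula first and deduce the meet formula by duality, with the main obstacle being to prove that the proposed join is itself acyclic.

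For the duality step, I would observe that the map $R \mapsto \bar R$ defined by $\rev(\bar R) := E \ssm \rev(R)$ is an order-reversing involution of $\AReori(E)$: it preserves acyclicity because $\bar R$ is simply the directed graph obtained by reversing every arrow of $R$, which is acyclic iff $R$ is. Since this involution swaps joins and meets and satisfies $\rev(\bar R_1) \cup \rev(\bar R_2) = E \ssm (\rev(R_1) \cap \rev(R_2))$, applying the join formula to $\bar R_1, \bar R_2$ translates directly into the meet formula for $R_1, R_2$. So it suffices to prove the join formula.

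Next I would establish a key closure property: every acyclic reorientation $R$ of $E$ satisfies $\rev(R) = E \cap \tc(\rev(R))$. The nontrivial inclusion $\supseteq$ holds because an edge $(u,v) \in E \ssm \rev(R)$ combined with a directed path from $v$ to $u$ through edges of $\rev(R)$ would produce a directed cycle in $R$. With this in hand, the upper bound half of the join formula is immediate: setting $\rev(R_\star) := E \cap \tc(\rev(R_1) \cup \rev(R_2))$, any acyclic $R$ with $R \geq R_1, R_2$ satisfies $\rev(R) \supseteq \rev(R_1) \cup \rev(R_2)$, so by monotonicity of $\tc$ and the closure property,
\[
\rev(R) \;=\; E \cap \tc(\rev(R)) \;\supseteq\; E \cap \tc(\rev(R_1) \cup \rev(R_2)) \;=\; \rev(R_\star).
\]
Consequently $\rev(R_\star) \subseteq \rev(R_1 \join R_2)$, and once $R_\star$ is shown acyclic, the reverse inclusion will follow automatically, since then $R_\star$ is itself an acyclic reorientation above both $R_1$ and $R_2$, forcing $R_\star \geq R_1 \join R_2$ and yielding equality.

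The main obstacle is therefore to show that $R_\star$ is acyclic. I would argue by contradiction: assume $R_\star$ contains a minimal directed cycle $c_0 \to c_1 \to \dots \to c_{k-1} \to c_0$. Each arrow of this cycle comes either from a reversed edge $(c_{i+1}, c_i) \in \rev(R_\star) \subseteq \rev(R_1 \join R_2)$, hence reversed the same way in $R_1 \join R_2$, or from an un-reversed edge $(c_i, c_{i+1}) \in E \ssm \rev(R_\star)$. If none of these un-reversed arrows of the cycle happens to be reversed in $R_1 \join R_2$, the same cycle would lift to $R_1 \join R_2$, contradicting its acyclicity. The problematic case is when there exists some $(c_i, c_{i+1}) \in \rev(R_1 \join R_2) \ssm \rev(R_\star)$; here I would invoke the structural characterization of \cref{prop:AReoriLat} — the transitive reduction of every induced subgraph of $E$ is a forest — applied to the induced subgraph on the cycle vertices $\{c_0, \dots, c_{k-1}\}$. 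Combined with minimality of the cycle, this forest constraint should preclude any such ``extra'' reversal in $R_1 \join R_2$, reducing back to the contradictory first case. This structural analysis is the technical heart of the argument and mirrors the approach of \cite[Thm.~9]{Pilaud-acyclicReorientationLattices}.
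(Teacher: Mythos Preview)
The paper does not prove this proposition; it is quoted verbatim from \cite{Pilaud-acyclicReorientationLattices} and used as a black box. So there is no ``paper's own proof'' to compare against, and your proposal must be judged on its own.

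Your steps (1)--(3) are correct and cleanly argued: the anti-automorphism $R \mapsto \bar R$ reduces meet to join, the closure identity $\rev(R) = E \cap \tc(\rev(R))$ for acyclic $R$ is right, and the deduction $\rev(R_\star) \subseteq \rev(R_1 \join R_2)$ is immediate. You are also right that the only remaining point is that $R_\star$ is acyclic, after which equality follows.

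The gap is in step (4). You have correctly located the technical heart, but you have not actually carried it out --- the sentence ``this forest constraint should preclude any such `extra' reversal'' is a hope, not an argument. More specifically, your strategy of comparing the putative cycle in $R_\star$ to the abstractly-defined join $R_1 \join R_2$ is awkward: you have no concrete description of $\rev(R_1 \join R_2)$ (that is exactly what you are trying to establish), so you cannot directly argue about which cycle edges lie in $\rev(R_1 \join R_2) \ssm \rev(R_\star)$. The natural route is instead to prove acyclicity of $R_\star$ \emph{directly} from the hypothesis of \cref{prop:AReoriLat}, without ever mentioning $R_1 \join R_2$: take a minimal directed cycle in $R_\star$, look at the induced subgraph of $E$ on its vertex set (whose transitive reduction is a forest by hypothesis, hence whose underlying undirected graph has a non-cut edge forcing a specific local structure), and derive a contradiction with the definition $\rev(R_\star) = E \cap \tc(\rev(R_1) \cup \rev(R_2))$ and the acyclicity of $R_1$ and $R_2$. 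This is where the actual work in \cite[Thm.~9]{Pilaud-acyclicReorientationLattices} lies, and your sketch stops just before it.
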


We now recall the standard map from permutations to acyclic reorientations.

\begin{definition}
\label{def:Perm2AReori}
For a permutation~$\pi$ of~$[n]$, we denote by~$\areori{\pi}$ the reorientation of~$\tc(D)$ where $(u,v) \in \tc(D)$ is reoriented in~$\areori{\pi}$ if~$\pi^{-1}(u) > \pi^{-1}(v)$.
\end{definition}

\begin{proposition}
\label{prop:Perm2AReori}
The map~$\pi \mapsto \areori{\pi}$ is an order-preserving surjection from the weak order on~$\fS_n$ to the acyclic reorientation poset~$\AReori(\tc(D))$.
\end{proposition}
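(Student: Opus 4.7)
The plan is to verify the three properties in turn: well-definedness (acyclicity of $\areori{\pi}$), order-preservation, and surjectivity.

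First, I would check that $\areori{\pi}$ is acyclic for every $\pi \in \fS_n$. The key observation is that every arrow of $\areori{\pi}$ points from the vertex of smaller $\pi$-position to the vertex of larger $\pi$-position. Indeed, for an edge $(u,v) \in \tc(D)$ (with $u < v$ since $D$ is increasing), the arrow remains as $(u,v)$ when $\pi^{-1}(u) < \pi^{-1}(v)$ and is reversed to $(v,u)$ otherwise, so in both cases the tail precedes the head in the one-line notation of $\pi$. Hence $\areori{\pi}$ is compatible with the total order on $[n]$ induced by $\pi$, and is therefore acyclic.

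Second, for order-preservation, I would identify $\rev(\areori{\pi})$ with $\tc(D) \cap \inv(\pi)$, under the standard identification of an inversion of $\pi$ with a pair $(u,v)$ with $u < v$ and $\pi^{-1}(u) > \pi^{-1}(v)$. Since the (right) weak order on $\fS_n$ is defined by $\pi \le \sigma \iff \inv(\pi) \subseteq \inv(\sigma)$, this description yields $\rev(\areori{\pi}) \subseteq \rev(\areori{\sigma})$, that is, $\areori{\pi} \le \areori{\sigma}$ in $\AReori(\tc(D))$.

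Third, for surjectivity, given an acyclic reorientation $R$ of $\tc(D)$, I would exhibit a preimage by taking any linear extension $\pi$ of the partial order on $[n]$ defined by $R$. By construction, $\pi$ places the tail of each $R$-oriented arrow before its head; hence, for each $(u,v) \in \tc(D)$, the arrow is reversed in $R$ precisely when $v$ precedes $u$ in $\pi$, equivalently when $\pi^{-1}(u) > \pi^{-1}(v)$, which is exactly the condition for it to be reversed in $\areori{\pi}$. Thus $\areori{\pi} = R$.

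The argument is essentially routine; the only point requiring a bit of care is matching the sign conventions between inversions of $\pi$ and arrow reversals in $\areori{\pi}$, but this is cosmetic and no step presents a serious obstacle.
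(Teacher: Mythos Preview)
Your proposal is correct and follows essentially the same approach as the paper: identifying $\rev(\areori{\pi}) = \inv(\pi) \cap \tc(D)$ for order-preservation and using linear extensions for surjectivity. Your explicit verification of acyclicity is a minor addition the paper leaves implicit.
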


\begin{proof}
Recall that the weak order on~$\fS_n$ is defined by the inclusion of inversion sets, where the inversion set of~$\pi \in \fS_n$ is~$\inv(\pi) \eqdef \set{(u,v)}{u < v \text{ and } \pi^{-1}(u) > \pi^{-1}(v)}$.
By definition, we have~$\rev(\areori{\pi}) = \inv(\pi) \cap \tc(D)$, hence~$\pi \mapsto \areori{\pi}$ is order-preserving.
It is surjective since the fiber of any reorientation~$R$ is the set of linear extensions of~$R$, which is non-empty when~$R$~is~acyclic.
\end{proof}


\subsection{Acyclic sourcings}
\label{subsec:acyclicSourcings}

Acyclic sourcings of hypergraphs were introduced in~\cite{BenedettiBergeronMachacek,BergeronPilaud} as combinatorial models for the vertex sets of hypergraphic polytopes.

\begin{definition}
\label{def:ASour}
A sourcing~$S$ of a hypergraph~$\HH$ is \defn{acyclic} if there are no distinct~$H_0, \dots, H_k \in \HH$ such that~$S(H_{i-1}) \in H_i \ssm \{S(H_i)\}$ for all~$i \in [k]$ and~$S(H_k) \in H_0 \ssm \{S(H_0)\}$.
The \defn{acyclic sourcing poset}~$\ASour(\HH)$ is the subposet of the sourcing lattice~$\Sour(\HH)$ induced by acyclic sourcings.
\end{definition}

\begin{remark}
Let us recall the polytopal interpretation of the acyclic sourcing poset of a hypergraph~$\HH$.
We still denote by~$(\b{e}_i)_{i \in [n]}$ the standard basis of~$\R^n$.
The \defn{hypergraphic polytope} of~$\HH$ is the Minkowski sum
\(
\simplex_\HH \eqdef \sum_{H\in \HH} \simplex_H\,,
\)
where $\simplex_H$ is the simplex given by the convex hull of the points $\b{e}_h$ for~$h \in H$.
The face lattice of~$\simplex_\HH$ was described combinatorially in terms of acyclic orientations of~$\HH$ in~\cite{BenedettiBergeronMachacek}.
In particular, it is proved in~\cite{Gelinas} that the transitive closure of the graph of~$\simplex_\HH$, oriented in the direction~$\b{\omega}$, is isomorphic to the acyclic sourcing poset~$\ASour(\HH)$.
Note that the graph of~$\simplex_\HH$, oriented in the direction~$\b{\omega}$, is not always transitively reduced, hence not always isomorphic to the Hasse diagram of the acyclic sourcing poset~$\ASour(\HH)$.
\end{remark}

\begin{remark}
\label{rem:ASourLat}
Note that the acyclic sourcing poset of~$\HH$ is not always a lattice.
Characterizing the hypergraphs whose acyclic sourcing poset is a lattice seems to be a difficult question in general.
Such characterizations exist for specific families of hypergraphs, notably for graphs~\cite{Pilaud-acyclicReorientationLattices} (see \cref{prop:AReoriLat}), for interval hypergraphs~\cite{BergeronPilaud}, and (partially for) graph associahedra~\cite{BarnardMcConville}.
We will further study this question for subhypergraphs of the path hypergraph of increasing trees in \cref{sec:intreevalHypergraphicPosets}.
\end{remark}

We start with two simple observations about \cref{def:Sour2Reori} .

\begin{lemma}
\label{lem:revTCAcyclic}
If~$S$ is an acyclic sourcing of~$\PP(D)$, then the set~$\rev(S)$ defined in \cref{def:Sour2Reori} is transitive.
\end{lemma}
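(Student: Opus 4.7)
The plan is to concatenate the two witnessing paths and then force the source of the resulting path to be $w$. Suppose $(u,v), (v,w) \in \rev(S)$. By \cref{def:Sour2Reori}, there exist $P_1, P_2 \in \PP(D)$ such that $P_1$ is a directed path from $u$ to $v$ with $S(P_1) = v$, and $P_2$ is a directed path from $v$ to $w$ with $S(P_2) = w$. Since $D$ is increasing, one has $u < v < w$ and consequently $P_1 \cap P_2 = \{v\}$, so that $P \eqdef P_1 \cup P_2$ is the vertex set of a directed path from $u$ to $w$ in $D$, that is, $P \in \PP(D)$. It therefore suffices to show $S(P) = w$, which immediately yields $(u,w) \in \rev(S)$.

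The main step is a proof by contradiction. Suppose $s \eqdef S(P) \neq w$. The idea is to exhibit two distinct hyperedges $H_0, H_1 \in \PP(D)$ with $S(H_0) \in H_1 \ssm \{S(H_1)\}$ and $S(H_1) \in H_0 \ssm \{S(H_0)\}$, contradicting the acyclicity of $S$. Since $s \in P \ssm \{w\}$, either $s \in P_1 \ssm \{v\}$ or $s \in P_2 \ssm \{w\}$ (placing the boundary case $s = v$ in the second alternative). In the first case, take $(H_0, H_1) \eqdef (P_1, P)$: they are distinct (different terminal vertices), $S(H_0) = v$ lies in $H_1 \ssm \{s\}$, and $S(H_1) = s$ lies in $H_0 \ssm \{v\}$. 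In the second case, take $(H_0, H_1) \eqdef (P_2, P)$: they are distinct (different initial vertices), $S(H_0) = w$ lies in $H_1 \ssm \{s\}$, and $S(H_1) = s$ lies in $H_0 \ssm \{w\}$. In either case, a forbidden two-element cycle appears.

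The only points requiring mild attention are that $P_1 \cup P_2$ is indeed (the vertex set of) a directed path in $D$, and that in each case the two hyperedges chosen are genuinely distinct; both are immediate from $D$ being increasing (which forces $u < v < w$ and hence $P_1 \cap P_2 = \{v\}$). I do not foresee any deeper obstacle.
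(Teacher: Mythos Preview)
Your proof is correct and follows essentially the same approach as the paper's: concatenate the two witnessing paths and use acyclicity (via a two-element cycle with either $P_1$ or $P_2$) to force $S(P) = w$. You spell out a couple of points the paper leaves implicit, namely that $P_1 \cap P_2 = \{v\}$ (using that $D$ is increasing, which is indeed a standing assumption in this section) and that the hyperedge pairs are distinct, but the argument is the same.
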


\begin{proof}
Assume that~$(u,v), (v,w) \in \rev(S)$.
There are directed paths~$P, Q$ from~$u$ to~$v$ and from~$v$ to~$w$ such that~$S(P) = v$ and~$S(Q) = w$ respectively.
Their concatenation~$PQ$ is a path from~$u$ to~$w$ with~$S(PQ) = w$ by acyclicity of~$S$ (indeed, $PQ$ and~$P$ form a cycle in~$S$ if~$S(PQ) \in P \ssm \{v\}$, and $PQ$ and~$Q$ form a cycle in~$S$ if~$S(PQ) \in Q \ssm \{w\}$).
Hence,~$(u,w) \in \rev(S)$.
\end{proof}

\begin{remark}
\label{rem:Sour2ReoriIncompatibleAcyclicity}
The reorientation~$\reori{S}$ of~$\tc(D)$ defined in \cref{def:Sour2Reori} by~$\rev(\reori{S}) = \rev(S)$ is not always acyclic even when~$S$ is acyclic.
For instance, consider the directed graph~$D$ and the reorientations~$R$ of~$\tc(D)$ and the sourcing~$S$ of~$\PP(D)$ given by
\[
D = \! \includegraphics[scale=.8,valign=c]{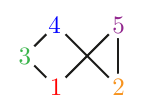}
\qquad
R = \! \includegraphics[scale=.8,valign=c]{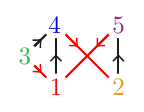}
\qquad\text{and}\qquad
\begin{array}{c}
S(13) = 3, \;\; S(134) = 3 \;\; S(15) = 5, \\
S(24) = 4, \;\; S(25) = 2, \;\; S(34) = 3.
\end{array}
\]
Note that~$S$ is acyclic and that~$\reori{S} = R$ is cyclic.
\end{remark}

We have observed in \cref{subsec:sourcings} that there is no natural surjection from all reorientations of~$\tc(D)$ to all sourcings of~$\PP(D)$.
We now recall that there is such a surjection when focusing on acyclic reorientations and acyclic sourcings.

\begin{definition}
\label{def:AReori2ASour}
For an acyclic reorientation~$R$ of~$\tc(D)$, we denote by~$\asour{R}$ the sourcing on~$\PP(D)$ where~$\asour{R}(P)$ is the source of~$P$ in~$R$, meaning the vertex~$u \in P$ such that~$(u,v) \in R$ for all~$v \in P \ssm \{u\}$ (it is well-defined since any path in~$D$ induces a clique in~$\tc(D)$).
\end{definition}

\begin{lemma}
\label{lem:AReori2ASour1}
For any acyclic reorientation~$R$ of~$\tc(D)$, we have~$\rev(\asour{R}) \subseteq \rev(R)$.
\end{lemma}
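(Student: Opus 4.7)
The plan is a direct unfolding of definitions; no substantive obstacle is expected. I would fix an acyclic reorientation $R$ of $\tc(D)$ and pick an arbitrary pair $(u,v) \in \rev(\asour{R})$, then show $(u,v) \in \rev(R)$.

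First, I would unpack what $(u,v) \in \rev(\asour{R})$ means. By \cref{def:Sour2Reori} applied to the sourcing $\asour{R}$, there exists a directed path $P \in \PP(D)$ from $u$ to $v$ such that $\asour{R}(P) = v$. In particular, $u \in P$ and the vertices of $P$ induce a clique in $\tc(D)$, since $D$ is increasing and any two vertices on a directed path are comparable in $\tc(D)$.

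Next, by \cref{def:AReori2ASour}, the equality $\asour{R}(P) = v$ means that $v$ is the source of $P$ in $R$, i.e., $(v,w) \in R$ for every $w \in P \smallsetminus \{v\}$. Applying this to $w = u$ gives $(v,u) \in R$. On the other hand, because $P$ is a directed path from $u$ to $v$ in $D$, the transitive closure $\tc(D)$ contains the edge $(u,v)$ (oriented from $u$ to $v$, as $D$ is increasing). Since $R$ is a reorientation of $\tc(D)$ and it contains $(v,u)$ rather than $(u,v)$, the edge $(u,v)$ lies in $\rev(R)$. This proves $\rev(\asour{R}) \subseteq \rev(R)$.

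The only thing worth double-checking is that the source $\asour{R}(P)$ is well-defined, but this is already noted in \cref{def:AReori2ASour} (the path's vertex set is a clique in $\tc(D)$, and $R$ restricted to any clique is a tournament which, being acyclic, has a unique source). So no further work is required.
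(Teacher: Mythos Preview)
Your proof is correct and follows essentially the same approach as the paper's own proof: pick $(u,v) \in \rev(\asour{R})$, extract a path $P$ from $u$ to $v$ with $\asour{R}(P) = v$, deduce $(v,u) \in R$ from the source condition, and conclude $(u,v) \in \rev(R)$. Your version is simply more explicit about why $(u,v) \in \tc(D)$ and why $\asour{R}(P)$ is well-defined, but the argument is the same.
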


\begin{proof}
Let~$u,v \in [n]$ be such that~$(u,v) \in \rev(\asour{R})$. 
Then there is a path~$P \in \PP(D)$ from~$u$ to~$v$ such that~$\asour{R}(P) = v$.
We conclude that~$(v,u) \in R$, and so~$(u,v) \in \rev(R)$.
\end{proof}

\begin{lemma}
\label{lem:AReori2ASour2}
For any acyclic reorientation~$R$ of~$\tc(D)$, the sourcing~$\asour{R}$ is acyclic.
\end{lemma}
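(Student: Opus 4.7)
The plan is to argue by contrapositive: if $\asour{R}$ has a sourcing cycle, then I can extract a directed cycle in $R$ itself, contradicting its acyclicity.

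First, suppose for contradiction that there are distinct paths $P_0, \dots, P_k \in \PP(D)$ with $s_i \eqdef \asour{R}(P_i) \in P_{i+1} \ssm \{s_{i+1}\}$ for all $i \in \{0, \dots, k-1\}$ and $s_k \in P_0 \ssm \{s_0\}$. Reading the indices modulo $k+1$, this gives $s_{i-1} \in P_i \ssm \{s_i\}$ for every $i$, and in particular $s_{i-1} \ne s_i$ for every $i$. Note also that~$k \ge 1$, since otherwise we would have $s_0 \in P_0 \ssm \{s_0\}$, which is absurd.

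Next, I unpack \cref{def:AReori2ASour}: for each~$i$, the vertex~$s_i$ is the source of~$P_i$ in~$R$, hence~$(s_i, w) \in R$ for every~$w \in P_i \ssm \{s_i\}$. Applying this with~$w \eqdef s_{i-1}$ yields~$(s_i, s_{i-1}) \in R$ for all~$i$ (indices mod~$k+1$).

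Concatenating these arrows, I obtain the closed walk
\[
s_0 \longrightarrow s_k \longrightarrow s_{k-1} \longrightarrow \dots \longrightarrow s_1 \longrightarrow s_0
\]
in~$R$, whose consecutive vertices are distinct. Any such closed walk contains a directed cycle (take a shortest subwalk returning to a repeated vertex), contradicting the acyclicity of~$R$. I conclude that~$\asour{R}$ is an acyclic sourcing of~$\PP(D)$.

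The argument is essentially a one-line observation once the definitions are unfolded; there is no real obstacle, as the source condition translates each sourcing edge directly into an arrow of~$R$, and the alternation of indices in the definition of a sourcing cycle is exactly what is needed to close up a directed cycle in~$R$.
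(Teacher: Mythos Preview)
Your proof is correct and follows essentially the same approach as the paper: unfold the source condition to obtain arrows $(s_i, s_{i-1}) \in R$, and observe that these close up into a directed cycle in~$R$. The paper phrases this as a direct argument (assume the path condition $s_{i-1} \in P_i \ssm \{s_i\}$ for $i \in [k]$ and deduce that the closing edge $(s_0, s_k)$ cannot exist), whereas you assume the full sourcing cycle and derive a contradiction, but the content is identical.
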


\begin{proof}
If~$P_0, \dots, P_k \in \PP(D)$ are such that~$\asour{R}(P_{i-1}) \in P_i \ssm \{\asour{R}(P_i)\}$, then~$\asour{R}(P_k), \dots, \asour{R}(P_0)$ is a directed path in~$R$, and so~$R$ has no edge from~$\asour{R}(P_0)$ to~$\asour{R}(P_k)$ by acyclicity of~$R$, hence~${\asour{R}(P_k) \notin P_0 \ssm \{\asour{R}(P_0)\}}$.
\end{proof}

\begin{lemma}
\label{lem:AReori2ASour3}
The map~$R \mapsto \asour{R}$ is order-preserving.
\end{lemma}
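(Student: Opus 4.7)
The plan is to prove the lemma directly by unraveling what it means to be the source of a path in a reorientation of~$\tc(D)$, and then comparing sources in two nested reorientations. Since~$D$ is increasing on~$[n]$, the vertices of any path~$P \in \PP(D)$ form a totally ordered subset on which the edges of~$\tc(D)$ are the pairs $(v,w)$ with $v < w$. In any reorientation~$R$ of~$\tc(D)$, for each such pair either $(v,w) \in \rev(R)$ (i.e.\ the edge points from~$w$ to~$v$ in~$R$) or $(v,w) \notin \rev(R)$ (i.e.\ it still points from~$v$ to~$w$).

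The key reformulation to carry out first is the following: a vertex~$u \in P$ equals $\asour{R}(P)$ if and only if for every $w \in P \ssm \{u\}$, one has $(w,u) \in \rev(R)$ when $w < u$ and $(u,w) \notin \rev(R)$ when $w > u$. This is immediate from \cref{def:AReori2ASour}, since all pairs of vertices of $P$ lie in $\tc(D)$ and $u$ being the source in $R$ just means all these edges of $R$ leave $u$.

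Next, I fix $R_1 \le R_2$ in $\AReori(\tc(D))$ and a path $P \in \PP(D)$, and set $u_i \eqdef \asour{R_i}(P)$. I want to show $u_1 \le u_2$, and I argue by contradiction, assuming $u_1 > u_2$. Then $u_2 < u_1$, so applying the characterization to $R_1$ with $u = u_1$ and $w = u_2$ gives $(u_2, u_1) \in \rev(R_1)$. Because $R_1 \le R_2$ means $\rev(R_1) \subseteq \rev(R_2)$, we conclude $(u_2,u_1) \in \rev(R_2)$. On the other hand, $u_1 > u_2$, so applying the characterization to $R_2$ with $u = u_2$ and $w = u_1$ gives $(u_2, u_1) \notin \rev(R_2)$, a contradiction.

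This yields $\asour{R_1}(P) \le \asour{R_2}(P)$ for every path $P \in \PP(D)$, which by \cref{def:Sour} is exactly $\asour{R_1} \le \asour{R_2}$ in $\Sour(\PP(D))$, and hence in the subposet $\ASour(\PP(D))$. There is essentially no obstacle here beyond careful bookkeeping of the direction convention on $\rev(R)$; notice in particular that acyclicity of $R_1$ and $R_2$ is not even needed for the order-preserving property, only the fact that the sources $u_1$ and $u_2$ exist (which is \cref{lem:AReori2ASour2}, or rather the well-definedness observation in \cref{def:AReori2ASour}).
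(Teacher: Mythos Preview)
Your proof is correct and follows essentially the same approach as the paper's: both use that $\rev(R_1) \subseteq \rev(R_2)$ forces the source of each path in $R_1$ to be no larger than its source in $R_2$, with your version simply spelling out the contradiction step that the paper compresses into a single ``hence''.
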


\begin{proof}
If~$R_1 \le R_2$, then~$\rev(R_1) \subseteq \rev(R_2)$, hence the source of any path~$P$ in~$R_1$ is smaller than the source of~$P$ in~$R_2$, hence~$\asour{R_1} \le \asour{R_2}$.
\end{proof}

\begin{definition}
\label{def:ASour2AReori}
Let~$S$ be an acyclic sourcing of~$\PP(D)$.
We denote by $\arr(S)$ the pairs~$(u,v) \in [n]^2$ such that there is~$P \in \PP(D)$ with~$u \in P$ and~$v = S(P)$.
Let~$\areori{S}$ be the reorientation of~$\tc(D)$ defined by~$\rev(\areori{S}) = \tc(\arr(S)) \cap \tc(D)$.
\end{definition}

\begin{remark}
\label{rem:ASour2AReori}
Note that~$\rev(S) \subseteq \rev(\areori{S})$, but the inclusion might be strict.
In fact, the map~$S \mapsto \areori{S}$ is not order-preserving.
For instance, consider the directed graph~$D$, the two reorientations~$R_1$ and~$R_2$ of~$\tc(D)$, and the two sourcings~$S_1$ and~$S_2$ of~$S_1$ of~$\PP(D)$ given by
\begin{gather*}
D = \! \includegraphics[scale=.8,valign=c]{graphR}
\qquad\qquad
R_1 = \! \includegraphics[scale=.8,valign=c]{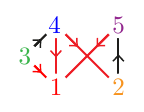}
\qquad\text{and}\qquad
R_2 = \! \includegraphics[scale=.8,valign=c]{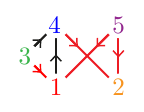}
\\
S_1(13) = 3, \;\; S_1(134) = 3, \;\; S_1(15) = 5, \;\; S_1(24) = 4, \;\; S_1(25) = 2, \;\; S_1(34) = 3, \\
S_2(13) = 3, \;\; S_1(134) = 3, \;\; S_2(15) = 5, \;\; S_2(24) = 4, \;\; S_2(25) = 5, \;\; S_2(34) = 3.
\end{gather*}
Note that~$R_1$, $R_2$, $S_1$ and~$S_2$ are all acyclic, and that~$R_1 = \areori{S_1}$ and~$R_2 = \areori{S_2}$ and conversely~$S_1 = \asour{R_1}$ and~$S_2 = \asour{R_2}$.
Moreover, one can check that~$R_1 \not< R_2$ while~$S_1 < S_2$.
\end{remark}

\begin{lemma}
\label{lem:ASour2AReori1}
For any acyclic sourcing~$S$ of~$\PP(D)$, the reorientation~$\areori{S}$ of~$\tc(D)$ is acyclic.
\end{lemma}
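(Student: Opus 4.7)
The plan is to prove acyclicity by contradiction. Suppose $\areori{S}$ contains a directed cycle $v_0 \to v_1 \to \cdots \to v_{k-1} \to v_0$, which I may choose of minimum length. Since $\rev(\areori{S}) = \tc(\arr(S)) \cap \tc(D)$, each edge $(v_i, v_{i+1})$ in the cycle is of one of two types: either \emph{Case A}, with $v_i < v_{i+1}$ and $(v_i, v_{i+1}) \in \tc(D) \ssm \tc(\arr(S))$, or \emph{Case B}, with $v_i > v_{i+1}$ and $(v_{i+1}, v_i) \in \tc(D) \cap \tc(\arr(S))$.

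First, I would observe that the cycle cannot consist only of Case A edges, as this would form a directed cycle in $\tc(D)$, contradicting that $\tc(D)$ is acyclic (since $D$ is increasing), nor only of Case B edges, as this would produce a $\prec_S$-cycle in the partial order $\preceq_S$ on $[n]$ induced by $\tc(\arr(S))$, contradicting that $\preceq_S$ is antisymmetric by the acyclicity of $S$. Using the transitivity of $\tc(D)$ and of $\tc(\arr(S))$ together with the minimality of the cycle, I would then argue that no two consecutive cycle edges can be of the same type: two consecutive Case A edges $v_{i-1} \to v_i \to v_{i+1}$ give $(v_{i-1}, v_{i+1}) \in \tc(D)$ and hence an $\areori{S}$-edge between $v_{i-1}$ and $v_{i+1}$ (Case A or Case B depending on whether $(v_{i-1}, v_{i+1}) \in \tc(\arr(S))$) that would shortcut or reroute the cycle, contradicting minimality; an analogous argument using the transitivity of $\tc(\arr(S))$ and $\tc(D)$ handles two consecutive Case B edges. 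Hence the minimal cycle alternates Case A and Case B.

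The core of the argument is then to exploit this alternating configuration to derive a sourcing cycle, contradicting the acyclicity of $S$. For each Case B edge $v_i \to v_{i+1}$, the chain witnessing $(v_{i+1}, v_i) \in \tc(\arr(S))$ yields paths $H^i_1, \ldots, H^i_{m_i} \in \PP(D)$ and a sequence of sources $v_{i+1} = w^i_0, w^i_1, \ldots, w^i_{m_i} = v_i$ with $w^i_{j-1} \in H^i_j$ and $S(H^i_j) = w^i_j$, so that consecutive paths of this chain already satisfy the sharing condition of the definition of acyclic sourcing. For each Case A edge $v_i \to v_{i+1}$, a path $Q_i \in \PP(D)$ from $v_i$ to $v_{i+1}$ has source $a_i \eqdef S(Q_i) \ne v_{i+1}$ (else $(v_i, v_{i+1}) \in \arr(S)$, contradicting Case A), which places both $v_i$ and $v_{i+1}$ strictly $\prec_S$-below $a_i$. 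Concatenating these pieces around the cycle should assemble a closed sequence of distinct paths in $\PP(D)$ violating the acyclicity of $S$. The main obstacle I anticipate is handling the ``jump'' at each Case A edge, where the source $a_i$ need not itself lie among the cycle vertices: I would address this by weaving $a_i$ into the two adjacent Case B chains through the relations $v_i \prec_S a_i$ and $v_{i+1} \prec_S a_i$, and by invoking the minimality of the cycle to guarantee distinctness of the paths produced.
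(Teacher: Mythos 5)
Your setup correctly identifies the real content of this lemma: the paper's own proof is the bare assertion that a cycle in $\areori{S}$ yields a cycle in $\arr(S)$, and your Case A/Case B analysis rightly shows this is not an arc-by-arc translation, since a non-reversed arc $u \to v$ only records that $(u,v) \in \tc(D) \ssm \tc(\arr(S))$, which is no relation of $\arr(S)$ at all. But there are two genuine gaps. The smaller one: your exclusion of two consecutive Case A edges breaks down when $k=3$. There $v_{i-1}$ and $v_{i+1}$ are themselves consecutive on the cycle, the ``chord'' between them is the cycle's own third edge (necessarily oriented $v_{i+1} \to v_{i-1}$, hence Case B), and neither shortcutting nor rerouting produces a shorter cycle. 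So the triangle with pattern A, A, B survives your minimality argument and needs a separate treatment, which you do not supply. (Two consecutive Case B edges are genuinely excluded, even for $k=3$, because $\rev(\areori{S}) = \tc(\arr(S)) \cap \tc(D)$ is transitive and would force the third arc to point the wrong way.)

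The serious gap is the final assembly, which you yourself flag as unresolved: it cannot be completed by the weaving you propose, because every relation you extract points the same way in $\tc(\arr(S))$. A Case B edge $v_i \to v_{i+1}$ gives $(v_{i+1}, v_i) \in \tc(\arr(S))$, and a Case A edge $v_i \to v_{i+1}$ gives $(v_i, a_i)$ and $(v_{i+1}, a_i)$ in $\arr(S)$ --- \emph{both} endpoints sit below $a_i$. In the directed graph $\arr(S)$ each $a_i$ therefore receives two arrows and emits none in the configuration you build, so no concatenation of these pieces closes into a directed cycle of $\arr(S)$; indeed, all the relations are consistent with $\tc(\arr(S))$ being antisymmetric. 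Closing the argument requires a genuinely new ingredient: one must locate the sources $a_i$ on the paths $Q_i$, exploit that acyclicity forces $S(Q') = S(Q)$ for any subpath $Q'$ of $Q$ containing $S(Q)$, and use the chord-freeness of the minimal cycle (which makes $v_0, \dots, v_{k-1}$ an induced alternating cycle of $\tc(D)$) to pin two hyperedges into a short sourcing cycle --- an analysis close in spirit to the paper's proof of \cref{prop:pathStarSparse}. As written, your argument establishes the combinatorial shape of a putative cycle but not the contradiction.
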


\begin{proof}
If~$\areori{S}$ contains a cycle, so does~$\arr(S)$, and so~$S$ is cyclic by \cref{def:ASour}.
\end{proof}

\begin{lemma}
\label{lem:ASour2AReori2}
For any acyclic sourcing~$S$ of~$\PP(D)$, we have~$\asour{\areori{S}} = S$. In other words, the map~$S \mapsto \areori{S}$ is a section of the map~$R \mapsto \asour{R}$.
\end{lemma}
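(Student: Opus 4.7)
The plan is to show, for each path $P \in \PP(D)$ with $v \eqdef S(P)$, that $v$ is the source of $P$ in $\areori{S}$, i.e.\ that $(v,u) \in \areori{S}$ for every $u \in P \setminus \{v\}$. Since $P$ is a directed path in $D$, its vertices form a clique in $\tc(D)$, so for any such $u$ either $(u,v) \in \tc(D)$ or $(v,u) \in \tc(D)$. In the first case, the conclusion is immediate from unfolding definitions: $u \in P$ and $v = S(P)$ give $(u,v) \in \arr(S) \subseteq \tc(\arr(S))$, hence $(u,v) \in \tc(\arr(S)) \cap \tc(D) = \rev(\areori{S})$, so this edge of $\tc(D)$ is reversed to $(v,u)$ in $\areori{S}$. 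In the second case, I must instead show that $(v,u) \notin \rev(\areori{S})$, i.e.\ that $(v,u) \notin \tc(\arr(S))$, so that the original edge $(v,u) \in \tc(D)$ survives in $\areori{S}$.

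The heart of the argument is the intermediate claim that acyclicity of $S$ as a sourcing forces $\arr(S)$ to be acyclic as a binary relation, and therefore $\tc(\arr(S))$ to be acyclic as well. Granting this, Case~2 is handled by contradiction: if $(v,u) \in \tc(\arr(S))$, then combined with $(u,v) \in \arr(S) \subseteq \tc(\arr(S))$ one obtains a cycle in $\tc(\arr(S))$, which is forbidden. To establish the intermediate claim, I would proceed contrapositively: pick a \emph{minimal} cycle $y_0, y_1, \ldots, y_m = y_0$ in $\arr(S)$ (so $y_0, \ldots, y_{m-1}$ are distinct and $m \geq 2$), and for each $i \in [m]$ choose a witnessing path $H_i \in \PP(D)$ with $y_{i-1} \in H_i$ and $S(H_i) = y_i$. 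The paths $H_1, \ldots, H_m$ have pairwise distinct sources $y_1, \ldots, y_m = y_0$ and are therefore pairwise distinct hyperedges, and the cyclic conditions $S(H_{i-1}) = y_{i-1} \in H_i \setminus \{S(H_i)\}$ follow from $y_{i-1} \neq y_i$. After a cyclic relabeling, this contradicts \cref{def:ASour} with $k = m-1 \geq 1$.

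The main obstacle is this translation between the two notions of acyclicity: the hypergraph-theoretic condition of \cref{def:ASour} requires the hyperedges to be \emph{distinct}, which is not automatic from a cycle in the relation $\arr(S)$. Choosing a minimal cycle circumvents this issue, since distinctness of the vertices $y_i$ propagates to distinctness of the sourced paths $H_i$. The remainder of the argument is a routine case analysis driven by the definitions of $\arr(S)$ and $\rev(\areori{S})$.
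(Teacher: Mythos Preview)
Your proof is correct and follows essentially the same route as the paper's: fix $P$ with $v=S(P)$, split on whether $(u,v)$ or $(v,u)$ lies in $\tc(D)$ (the paper phrases this as $u<v$ versus $u>v$, which is equivalent since $D$ is increasing), and in the second case use that $(u,v)\in\arr(S)$ together with acyclicity of $\arr(S)$ to rule out $(v,u)\in\tc(\arr(S))$. You are more careful than the paper in justifying that a cycle in $\arr(S)$ yields a cycle in the sense of \cref{def:ASour}, handling the distinctness of the witnessing hyperedges via minimality of the cycle---the paper simply asserts this step ``by acyclicity of $S$'' (here and in \cref{lem:ASour2AReori1}).
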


\begin{proof}
Pick~$P \in \PP(D)$ and let~$v = S(P)$.
For any~$u \in P$, we have~$(u,v) \in \arr(S)$.
If~$u < v$, then~$(u,v) \in \rev(\areori{S})$.
If~$u > v$, then~$(v,u) \notin \tc(\arr(S))$ by acyclicity of~$S$, and so~$(v,u) \notin \rev(\areori{S})$, and so~$(u,v) \in \areori{S}$ (since $P$ is a tournament in~$\areori{S}$).
Hence, $v$ is the source of~$P$ in~$\areori{S}$, and so~$\asour{\areori{S}}(P) = v = S(P)$.
We conclude that~$\asour{\areori{S}} = S$. 
\end{proof}

To sum up, we obtained the following statement.

\begin{proposition}
\label{prop:AReori2ASour}
The map~$R \mapsto \asour{R}$ is an order-preserving surjection from the acyclic reorientation poset~$\AReori(\tc(D))$ to the acyclic sourcing poset~$\ASour(\PP(D))$.
\end{proposition}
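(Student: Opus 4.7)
The plan is to simply assemble \cref{lem:AReori2ASour1,lem:AReori2ASour2,lem:AReori2ASour3,lem:ASour2AReori1,lem:ASour2AReori2}, which were laid out precisely so that this proposition drops out. The proof splits into three verifications: well-definedness of the map into $\ASour(\PP(D))$, order-preservation, and surjectivity.

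First I would observe that for any acyclic reorientation $R$ of $\tc(D)$, the source $\asour{R}(P)$ is well-defined for each~$P \in \PP(D)$ because the vertices of~$P$ form a clique in~$\tc(D)$, hence a tournament in~$R$, which admits a unique source by acyclicity of~$R$. Then \cref{lem:AReori2ASour2} guarantees that $\asour{R}$ is an acyclic sourcing of~$\PP(D)$, so the map lands in~$\ASour(\PP(D))$. Order-preservation is precisely \cref{lem:AReori2ASour3}.

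For surjectivity, given any acyclic sourcing~$S$ of~$\PP(D)$, I would invoke \cref{lem:ASour2AReori1} to get that $\areori{S}$ is an acyclic reorientation of~$\tc(D)$, and then \cref{lem:ASour2AReori2} to conclude that $\asour{\areori{S}} = S$. Thus $S \mapsto \areori{S}$ is a set-theoretic section of $R \mapsto \asour{R}$, which suffices to witness that every acyclic sourcing is in the image.

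There is no real obstacle here, since all the work has been done in the preceding lemmas; the only point worth flagging is a subtlety rather than a difficulty. Namely, the section $S \mapsto \areori{S}$ is \emph{not} itself order-preserving (as already noted in \cref{rem:ASour2AReori}), so this argument delivers surjectivity but stops short of exhibiting~$\ASour(\PP(D))$ as a subposet of~$\AReori(\tc(D))$ via this particular section; the order-theoretic relationship between the two posets will have to be addressed separately.
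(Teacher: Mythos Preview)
Your proposal is correct and matches the paper's approach exactly: the proposition is stated as a summary (``To sum up, we obtained the following statement'') of the preceding lemmas, and you assemble precisely \cref{lem:AReori2ASour2,lem:AReori2ASour3,lem:ASour2AReori1,lem:ASour2AReori2} in the intended way. The additional remark you make about the section $S \mapsto \areori{S}$ failing to be order-preserving is also noted in the paper (\cref{rem:ASour2AReori}), so you have faithfully reconstructed both the argument and its caveat.
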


\begin{remark}
Note that we could have argued the surjectivity of the map~$R \mapsto \asour{R}$ geometrically.
Namely, using that the hypergraphic polytope of $\PP(D)$ is a deformation of graphical zonotope of $\tc(D)$.
We have introduced the map~$S \mapsto \areori{S}$ instead to remain at the level of posets.
\end{remark}


\subsection{Acyclic ornamentations}
\label{subsec:acyclicOrnamentations}

Remember from \cref{def:Reori2Orn} (resp.~\cref{def:Sour2Orn}) that we have defined an ornamentation~$\orn{R}$ (resp.~$\orn{S}$) from any reorientation~$R$ of~$\tc(D)$ (resp.~sourcing~$S$ of~$\PP(D)$).
We now observe that these maps behave well on acyclic reorientations of~$\tc(D)$ and acyclic sourcings of~$\PP(D)$, which enables us to define acyclic ornamentations of~$D$.

\begin{lemma}
\label{lem:ASour2AOrn1}
The map~$S \mapsto \orn{S}$ is injective on acyclic sourcings.
\end{lemma}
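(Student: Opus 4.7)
The plan is to prove injectivity by showing that, for any acyclic sourcing $S$ of $\PP(D)$, the value $S(P)$ can be recovered from the ornamentation $O \eqdef \orn{S}$ alone. Precisely, for each path $P \in \PP(D)$ from $u$ to $v$, I claim that
\[
S(P) = \max\bigset{w \in P}{P_{\le w} \subseteq O(w)},
\]
where $P_{\le w}$ denotes the subpath of $P$ from $u$ to $w$. Since the right-hand side depends only on $O$, this characterization immediately implies that two distinct acyclic sourcings of~$\PP(D)$ cannot share the same image under~$S \mapsto \orn{S}$.

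To show that $S(P)$ is at most the right-hand side, set $w^* \eqdef S(P)$. I would first establish, via a length-two cycle argument, that every subpath~$P_{x \to w^*}$ of~$P$ from a vertex~$x \in P_{\le w^*}$ to~$w^*$ also has source~$w^*$. Indeed, if its source were a vertex $y \neq w^*$, then both $S(P) = w^* \in P_{x \to w^*} \ssm \{y\}$ and $S(P_{x \to w^*}) = y \in P \ssm \{w^*\}$ would hold, forming a $2$-cycle of~$S$ between $P$ and~$P_{x \to w^*}$ and contradicting acyclicity. Consequently $(x, w^*) \in \rev(S)$ for every~$x \in P_{\le w^*}$, so $P_{\le w^*}$ is an ornament of~$D$ at~$w^*$ all of whose vertices have a path to~$w^*$ in~$\rev(S)$. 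The maximality condition in~\cref{def:Sour2Orn} then yields~$P_{\le w^*} \subseteq O(w^*)$, placing~$w^*$ in the set on the right-hand side.

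The main obstacle is to show that no~$w \in P$ strictly beyond~$w^*$ belongs to this set. Suppose for contradiction that $P_{\le w} \subseteq O(w)$ for some such~$w$. Then $w^* \in P_{\le w}$ admits a path to~$w$ in~$\rev(S)$, and since $\rev(S)$ is transitive by~\cref{lem:revTCAcyclic}, this forces $(w^*, w) \in \rev(S)$, producing a path~$Q \in \PP(D)$ from~$w^*$ to~$w$ with $S(Q) = w$. On the other hand, the subpath $P_{w^* \to w}$ of~$P$ from~$w^*$ to~$w$ has source~$w^*$, again by the $2$-cycle argument above. Thus~$Q$ and~$P_{w^* \to w}$ are two paths from~$w^*$ to~$w$ with opposite sources, and together they form a $2$-cycle of~$S$: $S(Q) = w \in P_{w^* \to w} \ssm \{w^*\}$ and $S(P_{w^* \to w}) = w^* \in Q \ssm \{w\}$, again contradicting acyclicity. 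Combining the two directions gives the desired characterization of $S(P)$, and injectivity follows.
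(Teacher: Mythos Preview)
Your proof is correct but takes a genuinely different route from the paper's. The paper argues directly: given two distinct acyclic sourcings~$S_1 \ne S_2$, it picks a path~$P$ with~$u \eqdef S_1(P) < S_2(P) \defeq v$ and shows that~$u \notin \orn{S_1}(v)$ (via a cycle argument using a path from~$u$ to~$v$ in~$\rev(S_1)$ together with~$P$) while~$u \in \orn{S_2}(v)$ (using that the subpath of~$P$ from~$u$ to~$v$ must have source~$v$ for~$S_2$). You instead establish an explicit reconstruction formula~$S(P) = \max\bigset{w \in P}{P_{\le w} \subseteq \orn{S}(w)}$, which is more constructive: it effectively exhibits a left inverse to~$S \mapsto \orn{S}$ on acyclic sourcings, closely related to (though not identical to) the map~$\sour{O}$ of \cref{def:Orn2Sour}. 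Both proofs ultimately rest on the same short-cycle contradictions and on the transitivity of~$\rev(S)$ (\cref{lem:revTCAcyclic}); the paper's argument is slightly quicker for bare injectivity, while yours yields additional information. One small point worth making explicit in your write-up: when you invoke a $2$-cycle between two paths (say~$P$ and~$P_{x \to w^*}$, or~$Q$ and~$P_{w^* \to w}$), you should note that if the two paths happen to coincide then you already have a direct contradiction (the same path would have two different sources), so distinctness of the hyperedges in the cycle is not an issue.
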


\begin{proof}
Let~$S_1 \ne S_2$ be two distinct acyclic sourcings of~$\PP(D)$.
There exists~$P \in \PP(D)$ such that~$u \eqdef S_1(P) \ne S_2(P) \defeq v$.
We can assume by symmetry that~$u < v$.
If~$u \in \orn{S_1}(v)$, then there is a path from~$u$ to~$v$ in~$\rev(S_1)$.
Let~$u = w_0, w_1, \dots, w_k = v$ denote the vertices of this path.
For each~$i \in [k]$, we have~$(w_{i-1},w_i) \in \rev(S_1)$, hence there is a path~$P_i$ from~$w_{i-1}$ to~$w_i$ with~$S_1(P_i) = w_i$.
Together with~$P$, this contradicts the acyclicity of~$S_1$.
We conclude that~${u \notin \orn{S_1}(v)}$.
Consider now the subpath~$Q$ of~$P$ from~$u$ to~$v$.
Since~$S_2$ is acyclic, we have~$S_2(Q) = S_2(P) = v$.
Hence,~$(u,v) \in \rev(S_2)$, which implies that~$u \in \orn{S_2}(v)$.
We conclude that~$\orn{S_1} \ne \orn{S_2}$.
\end{proof}

\begin{lemma}
\label{lem:acyclicSimplifiesDef}
For any acyclic reorientation~$R$ of~$\tc(D)$ (resp.~acyclic sourcing~$S$ of~$\PP(D)$) and any~$u,v \in [n]$ with~$u \in \orn{R}(v)$ (resp.~$\orn{S}(v)$), we have~$(u,v) \in \rev(R)$ (resp.~$\rev(S)$).
\end{lemma}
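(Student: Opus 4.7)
The plan is to unfold the definitions of $\orn{R}(v)$ (from \cref{def:Reori2Orn}) and $\orn{S}(v)$ (from \cref{def:Sour2Orn}) and then invoke acyclicity in a targeted way for each case. In both cases, $u \in \orn{R}(v)$ (resp.\ $u \in \orn{S}(v)$) unfolds to the existence of a sequence $u = w_0, w_1, \dots, w_k = v$ with consecutive pairs lying in $\rev(R)$ (resp.\ $\rev(S)$), and the goal is to collapse this chain to the single pair~$(u,v)$.

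For the reorientation case, suppose $u \in \orn{R}(v)$. By definition there is a directed path $u = w_0, w_1, \dots, w_k = v$ in the subgraph of~$D$ consisting of arrows reversed in~$R$, so that $(w_{i-1}, w_i) \in \rev(R)$ for all $i \in [k]$. Reading this chain inside~$R$ yields a directed path $v = w_k \to w_{k-1} \to \cdots \to w_0 = u$. Since $D$ contains a path from~$u$ to~$v$, the pair $(u,v)$ belongs to~$\tc(D)$, and in~$R$ it is oriented either as $(u,v)$ or as $(v,u)$. If it were $(u,v)$, concatenating with the path above would produce a directed cycle in~$R$, contradicting acyclicity. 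Hence the edge is $(v,u)$ in~$R$, which means precisely $(u,v) \in \rev(R)$.

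For the sourcing case, suppose $u \in \orn{S}(v)$. By definition there is a sequence $u = w_0, w_1, \dots, w_k = v$ with $(w_{i-1}, w_i) \in \rev(S)$ for all $i \in [k]$. By \cref{lem:revTCAcyclic}, the relation $\rev(S)$ is transitive whenever $S$ is acyclic. Iterating transitivity along the chain produces $(u,v) = (w_0, w_k) \in \rev(S)$, as desired.

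In both cases the argument is a clean consequence of the definitions combined with the two acyclicity principles already available in the text, namely direct acyclicity of~$R$ for the reorientation case and the transitivity of~$\rev(S)$ established in \cref{lem:revTCAcyclic} for the sourcing case, so there is no substantial obstacle beyond unwinding the definitions carefully.
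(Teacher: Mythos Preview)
Your proof is correct and follows essentially the same approach as the paper. The paper's proof is even terser: it first observes in one line that $\rev(R)$ is transitive whenever $R$ is acyclic (your cycle argument is exactly the justification for this), and then applies transitivity uniformly to both cases via \cref{lem:revTCAcyclic}; you instead unroll the cycle argument directly for the reorientation case, but the content is identical.
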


\begin{proof}
Since~$R$ is acyclic, $\rev(R)$ is transitive.
Similarly, since~$S$ is acyclic, $\rev(S)$ is transitive by \cref{lem:revTCAcyclic}.
The statement thus follows directly from the definition of~$\orn{R}(v)$ (resp.~$\orn{S}(v)$).
\end{proof}

\begin{lemma}
\label{lem:AReori2ASour2AOrn}
For any acyclic reorientation~$R$ of~$\tc(D)$, we have~$\orn{R} = \orn{\asour{R}}$.
\end{lemma}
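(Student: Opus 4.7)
The plan is to verify the two inclusions $\orn{\asour{R}}(v) \subseteq \orn{R}(v)$ and $\orn{R}(v) \subseteq \orn{\asour{R}}(v)$ for every $v \in V$. The first inclusion is essentially automatic: since $\rev(\asour{R}) \subseteq \rev(R)$ by Lemma~\ref{lem:AReori2ASour1}, every vertex reaching $v$ via a directed path in $\rev(\asour{R})$ also reaches $v$ via such a path in $\rev(R)$. Hence any ornament at $v$ that fits inside the former set also fits inside the latter, and the maximality in Definition~\ref{def:Sour2Orn} forces $\orn{\asour{R}}(v) \subseteq \orn{R}(v)$.

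For the reverse inclusion, I plan to show that $\orn{R}(v)$ itself lies inside the set of vertices admitting a directed path to $v$ in $\rev(\asour{R})$. Since $\orn{R}(v)$ is already an ornament at $v$ by Lemma~\ref{lem:Reori2Orn1}, the maximality in Definition~\ref{def:Sour2Orn} will then yield $\orn{R}(v) \subseteq \orn{\asour{R}}(v)$. Concretely, I fix $u \in \orn{R}(v)$ and use the ornament condition to produce a directed path $P \in \PP(D)$ from $u$ to $v$ whose vertex set is contained in $\orn{R}(v)$. By Lemma~\ref{lem:acyclicSimplifiesDef}, each vertex $w$ of $P$ satisfies $(w,v) \in \rev(R)$, equivalently $(v,w) \in R$.

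The key observation is then that the vertex set of $P$ forms a clique in $\tc(D)$, so $R$ restricts to an acyclic tournament on it. Since $(v,w) \in R$ for every $w \neq v$ in $P$, the vertex $v$ is the unique source of this tournament, hence $\asour{R}(P) = v$ by Definition~\ref{def:AReori2ASour}. Definition~\ref{def:Sour2Reori} then gives $(u,v) \in \rev(\asour{R})$, which is a single-edge path from $u$ to $v$ in $\rev(\asour{R})$, as needed.

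I do not anticipate any real obstacle here; the main subtlety is to resist arguing edge by edge along $P$, since the individual edges $(w_j, w_{j+1})$ of $P$ need not lie in $\rev(R)$. The clean move is to apply the source-of-tournament argument to $P$ as a whole, with Lemma~\ref{lem:acyclicSimplifiesDef} supplying all the outgoing edges from $v$ simultaneously.
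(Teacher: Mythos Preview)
Your proof is correct and follows essentially the same approach as the paper's own proof: both establish $\orn{\asour{R}}(v) \subseteq \orn{R}(v)$ from $\rev(\asour{R}) \subseteq \rev(R)$ via Lemma~\ref{lem:AReori2ASour1}, and for the reverse inclusion both take $u \in \orn{R}(v)$, pick a path $P$ from $u$ to $v$ inside $\orn{R}(v)$, invoke Lemma~\ref{lem:acyclicSimplifiesDef} to get $(w,v) \in \rev(R)$ for all $w \in P$, conclude $\asour{R}(P) = v$ and hence $(u,v) \in \rev(\asour{R})$, then finish by maximality of $\orn{\asour{R}}(v)$.
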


\begin{proof}
Since~$R$ is acyclic, we have~$\rev(\asour{R}) \subseteq \rev(R)$ by \cref{lem:AReori2ASour1}, hence~$\orn{\asour{R}}(v) \subseteq \orn{R}(v)$ for any~$v \in [n]$.
Conversely, consider~$u,v \in [n]$ such that~$u \in \orn{R}(v)$.
Let~$P$ be a path from~$u$ to~$v$ in~$\orn{R}(v)$.
By \cref{lem:acyclicSimplifiesDef}, we have~$(w,v) \in \rev(R)$ for all~$w \in P$.
Therefore, $v$ is the source of~$P$ in~$R$, that is, $v = \asour{R}(P)$.
We thus obtain that~$(u,v) \in \rev(\asour{R})$.
We conclude that~$\rev(R) \subseteq \rev(\asour{R})$, hence that~$\orn{R}(v) \subseteq \orn{\asour{R}}(v)$ by maximality of~$\orn{\asour{R}}(v)$.
\end{proof}

\begin{lemma}
\label{lem:AOrn2}
The following conditions are equivalent for an ornamentation~$O$ of~$D$:
\begin{enumerate}[(i)]
\item there exists an acyclic reorientation~$R$ of~$\tc(D)$ such that~$\orn{R} = O$,
\item there exists an acyclic sourcing~$S$ of~$\PP(D)$ such that~$\orn{S} = O$.
\end{enumerate}
\end{lemma}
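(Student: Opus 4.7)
The plan is to establish each implication by using the maps already constructed between the three acyclic worlds, together with the compatibility lemma \cref{lem:AReori2ASour2AOrn} that identifies $\orn{R}$ and $\orn{\asour{R}}$ for acyclic~$R$.

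For the implication (i)$\Rightarrow$(ii), suppose $R$ is an acyclic reorientation of $\tc(D)$ with $\orn{R} = O$. Then $S \eqdef \asour{R}$ is an acyclic sourcing of $\PP(D)$ by \cref{lem:AReori2ASour2}, and by \cref{lem:AReori2ASour2AOrn} we have $\orn{S} = \orn{\asour{R}} = \orn{R} = O$, which gives the desired acyclic sourcing.

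For the converse (ii)$\Rightarrow$(i), suppose $S$ is an acyclic sourcing of $\PP(D)$ with $\orn{S} = O$. The natural candidate is $R \eqdef \areori{S}$, which is an acyclic reorientation of $\tc(D)$ by \cref{lem:ASour2AReori1}. Since $\asour{\areori{S}} = S$ by \cref{lem:ASour2AReori2}, applying \cref{lem:AReori2ASour2AOrn} to the acyclic reorientation $R = \areori{S}$ yields $\orn{R} = \orn{\asour{R}} = \orn{\asour{\areori{S}}} = \orn{S} = O$.

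In short, both directions are almost free once the diagram on \cref{fig:allMaps} is in place: the $\asour{R}$ and $\areori{S}$ constructions move between acyclic reorientations and acyclic sourcings, and \cref{lem:AReori2ASour2AOrn} guarantees that the associated ornamentation is preserved. There is no real obstacle here; the content of the lemma is that the ornamentations coming from acyclic reorientations and those coming from acyclic sourcings coincide, which justifies the forthcoming definition of an acyclic ornamentation as the common image.
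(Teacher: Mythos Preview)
Your proof is correct and matches the paper's own argument essentially line for line: both directions use $S \eqdef \asour{R}$ and $R \eqdef \areori{S}$ respectively, together with \cref{lem:AReori2ASour2AOrn} (and \cref{lem:ASour2AReori2} for the section property) to transfer the ornamentation. The only cosmetic difference is that you explicitly invoke \cref{lem:AReori2ASour2} and \cref{lem:ASour2AReori1} for the acyclicity of the constructed objects, which the paper leaves implicit.
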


\begin{proof}
\uline{\textsl{(i) $\Rightarrow$ (ii):}}
If~$R$ is an acyclic reorientation of~$\tc(D)$ such that~$\orn{R} = O$, then~$S \eqdef \asour{R}$ is an acyclic sourcing of~$\PP(D)$ such that~$\orn{S} = \orn{\asour{R}} = \orn{R} = O$ by \cref{lem:AReori2ASour2AOrn}.

\medskip\noindent
\uline{\textsl{(ii) $\Rightarrow$ (i):}}
If~$S$ is an acyclic sourcing of~$\PP(D)$ such that~$\orn{S} = O$, then~$R \eqdef \areori{S}$ is an acyclic reorientation of~$\tc(D)$ such that~$\orn{R} = \orn{\areori{S}} = \orn{\asour{\areori{S}}} = \orn{S} = O$ by \cref{lem:ASour2AReori2,lem:AReori2ASour2AOrn}.
\end{proof}

\begin{remark}
\label{rem:Orn2Sour&Orn2ReoriIncompatibleAcyclicity}
Careful, the conditions of \cref{lem:AOrn2} are not equivalent to $\reori{O}$ and/or~$\sour{O}$ being acyclic.
For instance, consider the two ornamentations
\[
O_1 = \! \includegraphics[scale=.8,valign=c]{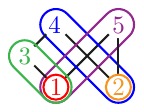}
\qquad\text{and}\qquad
O_2 = \! \includegraphics[scale=.8,valign=c]{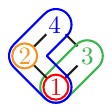}
.
\]
Note that~$O_1 = \orn{\reori{34251}}$ and~$O_2 = \orn{\reori{3412}}$ so that they both fulfill the conditions of \cref{lem:AOrn2}.
However,
\begin{itemize}
\item the reorientation~$\reori{O_1}$ is cyclic since it contains the cycle~$1, 4, 2, 5, 1$,
\item the sourcing~$\sour{O_2}$ is cyclic since~$\sour{O_2}(\{3,4\}) = 3$ and~$\sour{O_2}(\{1,3,4\}) = 4$.
\end{itemize}
\end{remark}

\begin{definition}
\label{def:AOrn}
An ornamentation~$O$ of~$D$ is \defn{acyclic} if the equivalent conditions of \cref{lem:AOrn2} are satisfied.
The \defn{acyclic ornamentation poset}~$\AOrn(D)$ is the subposet of the ornamentation lattice~$\Orn(D)$ induced by acyclic ornamentations.
\end{definition}

\begin{remark}
\label{rem:COrn}
We will see in \cref{subsec:unstarredTrees} that all the ornamentations of an unstarred tree are acyclic.
This is false in general, even for increasing trees.
For instance, the cyclic ornamentations of the directed graphs of \cref{fig:ornamentationsX,fig:ornamentationsD,fig:ornamentationsKT} are
\[
\includegraphics[scale=.8,valign=c]{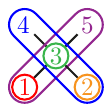}\;\;
\includegraphics[scale=.8,valign=c]{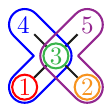}\;\;
\includegraphics[scale=.8,valign=c]{cyclicOrnamentationD1}
\includegraphics[scale=.8,valign=c]{cyclicOrnamentationD2}\;\;
\includegraphics[scale=.8,valign=c]{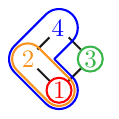}
\includegraphics[scale=.8,valign=c]{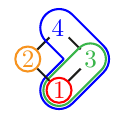}\;\;
\includegraphics[scale=.8,valign=c]{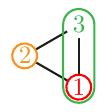}\;\;
\includegraphics[scale=.8,valign=c]{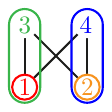}\;\;
\includegraphics[scale=.8,valign=c]{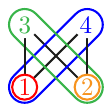}
\]
since their fibers under~$R \mapsto \orn{R}$ are all singletons, respectively containing the cyclic reorientations
\[
\includegraphics[scale=.8,valign=c]{cyclicReorientationX1}\;\;
\includegraphics[scale=.8,valign=c]{cyclicReorientationX2}\;\;
\includegraphics[scale=.8,valign=c]{cyclicReorientationD1}\;
\includegraphics[scale=.8,valign=c]{cyclicReorientationD2}\;
\includegraphics[scale=.8,valign=c]{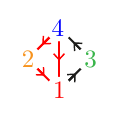}\;
\includegraphics[scale=.8,valign=c]{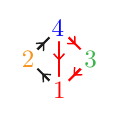}\;
\includegraphics[scale=.8,valign=c]{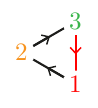}\;\;
\includegraphics[scale=.8,valign=c]{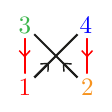}\;\;
\includegraphics[scale=.8,valign=c]{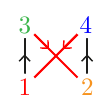}
\]
The acyclic ornamentations are shaded in gray in \cref{fig:ornamentationsX,fig:ornamentationsD,fig:ornamentationsKT}.
\end{remark}

\begin{remark}
\label{rem:AOrnLat}
We will see in \cref{subsec:unstarredTrees} that the acyclic ornamentation poset of an unstarred tree  is a lattice.
This is false in general, even for increasing trees.
For instance, the acyclic ornamentation posets of the directed graphs~$\Xgraph$ of \cref{fig:ornamentationsX}, $\Dgraph$ of \cref{fig:ornamentationsD}, and $\Tgraph$ of \cref{fig:ornamentationsKT} are not lattices.
In contrast, the acyclic ornamentation poset of the directed graph~$\Kgraph$ of \cref{fig:ornamentationsKT} is a lattice.
We do not have a complete characterization of the directed graphs whose acyclic ornamentation poset is a lattice.
\end{remark}

\begin{definition}
\label{def:moreNotations}
To keep our notations coherent, we denote by~$\aorn{R} \eqdef \orn{R}$ the acyclic ornamentation of~$D$ corresponding to an acyclic reorientation~$R$ of~$\tc(D)$, and by~$\aorn{S} \eqdef \orn{S}$ the acyclic ornamentation of~$D$ corresponding to an acyclic sourcing~$S$ of~$\PP(D)$.
Conversely, for an acyclic ornamentation~$O$ of~$D$, we denote by~$\asour{O}$ the corresponding acyclic sourcing of~$\PP(D)$ (\cref{lem:ASour2AOrn1}), and we define an acyclic reorientation of~$\tc(D)$ by~$\areori{O} \eqdef \areori{\asour{O}}$.
Be careful that~$\asour{O} \ne \sour{O}$ and~$\areori{O} \ne \reori{O}$ in general (as~$\sour{O}$ and~$\reori{O}$ might not be acyclic even if~$O$ is acyclic, see \cref{rem:Orn2Sour&Orn2ReoriIncompatibleAcyclicity}).
\end{definition}

\begin{proposition}
\label{prop:ASour2AOrn}
The map~$S \mapsto \aorn{S}$ is a poset isomorphism from the acyclic sourcing \linebreak poset~$\ASour(\PP(D))$ to the acyclic ornamentation poset~$\AOrn(D)$.
\end{proposition}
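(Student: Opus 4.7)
The plan is to observe that all the pieces needed are already essentially in place, and the only genuinely new content is that the set-theoretic inverse is order-preserving.

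First I would assemble the easy bits. By \cref{def:AOrn}, an ornamentation~$O$ is acyclic precisely when~$O=\orn{S}=\aorn{S}$ for some acyclic sourcing~$S$, so the map $S\mapsto\aorn{S}$ is well-defined on~$\ASour(\PP(D))$ and surjective onto~$\AOrn(D)$. Injectivity is exactly \cref{lem:ASour2AOrn1}, and the map is order-preserving as the restriction of $S\mapsto\orn{S}$ from \cref{lem:Sour2Orn3}. Thus it is an order-preserving bijection, and the only remaining step is to show that its inverse is also order-preserving.

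Now the key step. Suppose $\aorn{S_1}\le\aorn{S_2}$ and fix a path $P\in\PP(D)$, which I write as a directed path $u_0,u_1,\dots,u_k$ in~$D$ (so $u_0<u_1<\cdots<u_k$ since~$D$ is increasing). Set $w_1\eqdef S_1(P)=u_i$ and consider the subpath $P'\eqdef (u_0,\dots,u_i)$. A short acyclicity argument for~$S_1$ (paralleling the one in the proof of \cref{lem:Sour2Orn2}) shows that $S_1$ must source each subpath $(u_\ell,\dots,u_i)$ of~$P$ at $w_1$, so the entire vertex set of~$P'$ is an ornament of~$D$ at~$w_1$ contained in the set of vertices with a directed path to~$w_1$ in~$\rev(S_1)$; hence by maximality $\{u_0,\dots,u_i\}\subseteq\aorn{S_1}(w_1)\subseteq\aorn{S_2}(w_1)$.

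The last step is to derive $S_2(P)\ge w_1$ from this inclusion. Assume for contradiction that $S_2(P)=u_j$ with $j<i$, so that $u_j\in P'$. Since $u_j\in\aorn{S_2}(w_1)$ and~$S_2$ is acyclic, \cref{lem:acyclicSimplifiesDef} (or equivalently the transitivity of $\rev(S_2)$ from \cref{lem:revTCAcyclic}) gives $(u_j,w_1)\in\rev(S_2)$, so there exists a path~$Q$ from~$u_j$ to~$w_1$ in~$D$ with $S_2(Q)=w_1$. The paths~$P$ and~$Q$ are distinct (they have different endpoints, using $j<i$ and $S_2(P)=u_j\ne w_1$), and satisfy $S_2(P)=u_j\in Q\ssm\{S_2(Q)\}$ and $S_2(Q)=w_1\in P\ssm\{S_2(P)\}$, a $2$-cycle contradicting acyclicity of~$S_2$. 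Hence $S_1(P)\le S_2(P)$ for every~$P$, so $S_1\le S_2$ and the inverse is order-preserving, completing the proof.

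The main obstacle is precisely this last contradiction: one has to lift the componentwise inclusion of ornaments back to a componentwise inequality of sources, and the trick is to let the subpath ending at $S_1(P)$ serve as the bridge, so that the acyclicity of~$S_2$ forbids the source from sliding strictly earlier along~$P$.
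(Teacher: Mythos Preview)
Your proof is correct and follows essentially the same approach as the paper: both arguments show that the initial segment of~$P$ up to~$S_1(P)$ lies in~$\aorn{S_1}(S_1(P))\subseteq\aorn{S_2}(S_1(P))$ via acyclicity of~$S_1$, and then derive a $2$-cycle in~$S_2$ if~$S_2(P)<S_1(P)$. One tiny quibble: your parenthetical ``they have different endpoints'' is not literally the reason~$P\ne Q$ (the endpoints could coincide when~$j=0$ and~$i=k$); the clean justification is simply that~$S_2(P)=u_j\ne w_1=S_2(Q)$.
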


\begin{proof}
The map~$S \mapsto \aorn{S}$ is injective by \cref{lem:ASour2AOrn1} and surjective by \cref{def:AOrn}.
Moreover, it is order-preserving as it is the restriction of the map $S \to \orn{S}$, which is order-preserving by \cref{lem:Sour2Orn3}.
We are only left to show that $S \mapsto \aorn{S}$ is order-reflecting.

Let $S_1$ and~$S_2$ be two acyclic sourcings of~$\PP(D)$ such that~$\aorn{S_1} \leq \aorn{S_2}$.
Suppose that~$S_1 \not\leq S_2$ and let~$P \in \PP(D)$ such that~$u \eqdef S_2(P) < S_1(P) \defeq v$.
By acyclicity of~$S_1$ (resp.~$S_2$) we can assume that~$P$ ends at~$v$ (resp.~begins at~$u$).
The acyclicity of~$S_1$ further implies that~$S_1(Q) = v$ for any subpath~$Q$ of~$P$ ending at~$v$.
Hence,~$u \in P \subseteq \aorn{S_1}(P) \subseteq \aorn{S_2}(P)$.
But then, there exists a path~$P'$ from~$u$ to~$v$ such that~$S_2(P') = v$, contradicting the acyclicity of~$S_2$.
Thus, we necessarily have~$S_1 \leq S_2$.
\end{proof}


\section{Directed trees}
\label{sec:trees}

In this section, we focus on the specific case when the underlying directed graph~$D$ is a directed tree~$T$ (but not necessarily a rooted tree), as for example in \cref{fig:ornamentationsNI,fig:ornamentationsAY,fig:ornamentationsX}.
We start with a few basic observations (\cref{subsec:basicObservationsT}).
We then describe the join and meet irreducible ornamentations of~$T$ and prove that the ornamentation lattice~$\Orn(T)$ is semidistributive (\cref{subsec:semidistributivityT}).
We then revisit some structural properties of the reorientations of~$\tc(T)$ (\cref{subsec:maximalReorientationT,subsec:transitivelyBiclosedReorientationsT}).
Finally, we describe the MacNeille completions of the acyclic reorientation poset~$\AReori(\tc(T))$ and of the acyclic sourcing poset~$\ASour(\PP(T))$ (\cref{subsec:MacNeilleT}).


\subsection{Basic observations}
\label{subsec:basicObservationsT}

Let~$T$ be a directed tree on~$V$.
For~$u,v \in V$, we write~$u \le_T v$ if the unique path between~$u$ and~$v$ in~$T$ is a directed path from~$u$ to~$v$, and we denote the vertices along this path by~$[u,v]_T \eqdef \set{w \in T}{u \le_T w \le_T v}$.
We let~$\lessin{T}{v} \eqdef \set{u \in V}{u \le_T v}$ denote the set of vertices which admit a directed path towards~$v$ in~$T$.
We begin with a few observations.

\begin{lemma}
\label{lem:meetOrnT}
If~$T$ is a directed tree, and~$U,U' \in \Orn(v \in T)$, then~$U \cap U' \in \Orn(v \in T)$.
\end{lemma}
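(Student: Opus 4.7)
The plan is to exploit the uniqueness of paths in a tree: in a directed tree~$T$, whenever $u \le_T v$, the only directed path from $u$ to $v$ (viewed inside any subgraph containing both endpoints and such a path) must be the unique tree path~$[u,v]_T$. This rigidity is precisely what fails in the diamond example~$\Dgraph$ at the end of the proof of \cref{lem:unionOrnaments}, so it is the natural place to locate the argument.

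More concretely, I would pick any $u \in U \cap U'$ and argue as follows. Since $U \in \Orn(v \in T)$, there is a directed path from~$u$ to~$v$ in the subgraph of~$T$ induced by~$U$; since $T$ is a tree, this path must be~$[u,v]_T$, so $[u,v]_T \subseteq U$. Applying the same reasoning to~$U'$, I get $[u,v]_T \subseteq U'$. Therefore $[u,v]_T \subseteq U \cap U'$, which exhibits a directed path from~$u$ to~$v$ in the subgraph of~$T$ induced by~$U \cap U'$. Since $u$ was arbitrary in $U \cap U'$, this verifies the definition of $U \cap U' \in \Orn(v \in T)$.

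There is essentially no obstacle here; the only subtlety is making explicit the uniqueness-of-paths step, which is what distinguishes trees from general DAGs. It is worth remarking (perhaps in passing, or implicitly via a reference back to \cref{lem:unionOrnaments}) that this lemma fails as soon as $D$ has two internally disjoint directed paths between a pair of vertices, so the tree hypothesis is genuinely used.
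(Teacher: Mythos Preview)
Your proof is correct and takes essentially the same approach as the paper's: both arguments rest on the observation that, in a tree, an ornament at~$v$ is characterized by the condition that for every $u$ in it the unique path~$[u,v]_T$ is contained in it, and this condition is manifestly closed under intersection. The paper states this characterization and its closure under intersection in one breath, while you unpack it element by element, but the content is the same.
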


\begin{proof}
In a directed tree~$T$, a nonempty subset $U \subseteq \lessin{T}{v}$ is an ornament of~$T$ at~$v$ if and only if for all $u \in U$, the (vertex set of the) unique path from $u$ to $v$ in $T$ is contained in $U$.
This property is closed under intersection.
\end{proof}

\begin{corollary}
\label{coro:meetOrnT}
The meet of two ornamentations~$O_1$ and~$O_2$ of a directed tree~$T$ is given by
\[
(O_1 \meet O_2)(v) = O_1(v) \cap O_2(v).
\]
\end{corollary}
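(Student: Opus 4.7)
The plan is to apply \cref{thm:OrnMeetJoin} together with \cref{lem:meetOrnT} directly, with essentially no extra work. Recall that \cref{thm:OrnMeetJoin} characterizes $(O_1 \meet O_2)(v)$ as the inclusion maximal ornament at $v$ that is contained in $O_1(v) \cap O_2(v)$. So what I would do is verify that, when $T$ is a directed tree, the candidate set $O_1(v) \cap O_2(v)$ is itself already an ornament at $v$, hence coincides with this maximum.

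More concretely, first I would observe that since $O_1$ and $O_2$ are ornamentations, both $O_1(v)$ and $O_2(v)$ lie in $\Orn(v \in T)$. Then I would invoke \cref{lem:meetOrnT}, which tells me that in a directed tree the family $\Orn(v \in T)$ is closed under intersection, to conclude that $O_1(v) \cap O_2(v) \in \Orn(v \in T)$. By \cref{thm:OrnMeetJoin}, $(O_1 \meet O_2)(v)$ is the largest ornament contained in $O_1(v) \cap O_2(v)$; since $O_1(v) \cap O_2(v)$ is itself such an ornament, it is trivially the maximum, yielding the equality $(O_1 \meet O_2)(v) = O_1(v) \cap O_2(v)$.

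There is no real obstacle here: the result is a one-line consequence of the preceding lemma and theorem. The only thing worth double checking is that \cref{lem:meetOrnT} indeed handles the possibly empty case correctly, but since $v \in O_1(v) \cap O_2(v)$, the intersection is nonempty, so the ornament characterization via path containment applies verbatim.
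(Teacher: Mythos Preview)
Your proposal is correct and matches the paper's approach exactly: the paper states this as an immediate corollary of \cref{lem:meetOrnT} (with no written proof), and the implicit argument is precisely the one you spell out --- combine \cref{thm:OrnMeetJoin} with the fact that $O_1(v)\cap O_2(v)$ is already an ornament at~$v$.
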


We now describe the cover relations in the ornamentation lattice~$\Orn(T)$.

\begin{lemma}
\label{lem:coverRelationsOrnT}
Two ornamentations~$O_1$ and~$O_2$ of a directed tree~$T$ form a cover relation~$O_1 \lessdot O_2$ if and only if there are~$u, v \in V$ such that~$u \notin O_1(v)$ and $O_2(v) = O_1(u) \cup O_1(v)$ while~$O_1(w) = O_2(w)$ for all~${w \in V \ssm \{v\}}$.
Moreover, $u$ and~$v$ are uniquely determined by~$O_1$ and~$O_2$.
\end{lemma}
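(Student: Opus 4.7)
My plan is to prove the two implications and the uniqueness separately. The backward direction (sufficiency) is the easier one: given ornamentations $O_1 \le O_2$ with $O_2(v) = O_1(u) \cup O_1(v)$, $u \notin O_1(v)$, and $O_1(w) = O_2(w)$ for $w \ne v$, I want to show no ornamentation lies strictly between. If $O_1 \le O' \le O_2$, then $O'(w) = O_1(w)$ for $w \ne v$ and $O_1(v) \subseteq O'(v) \subseteq O_1(u) \cup O_1(v)$. If $O'(v) \ne O_1(v)$, pick any $w \in O'(v) \ssm O_1(v) \subseteq O_1(u)$. Then $w \le_T u \le_T v$, so the unique path $[w,v]_T$ passes through $u$; since $O'(v)$ is an ornament at $v$, this path lies in $O'(v)$, forcing $u \in O'(v)$. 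By nesting, $O_1(u) = O'(u) \subseteq O'(v)$, so $O'(v) = O_2(v)$ and $O' = O_2$.

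For the forward direction (necessity), given $O_1 \lessdot O_2$, the strategy is to exhibit an explicit atomic step $O_1 < O' \le O_2$, which must then equal $O_2$. Let $V_0 \eqdef \{w \in V : O_1(w) \ne O_2(w)\}$, pick $v$ maximal in $V_0$ under $\le_T$, and pick $u$ maximal in $O_2(v) \ssm O_1(v)$ under $\le_T$. Define $O'$ by $O'(v) \eqdef O_1(u) \cup O_1(v)$ and $O'(w) \eqdef O_1(w)$ for $w \ne v$. By maximality of $u$, the immediate successor of $u$ on the path from $u$ to $v$ lies in $O_1(v)$, so $[u,v]_T \subseteq \{u\} \cup O_1(v) \subseteq O'(v)$, which combined with $O_1(u)$ being an ornament at $u$ shows that $O'(v)$ is an ornament at $v$. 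Downward nesting ($O'(w') \subseteq O'(v)$ for $w' \in O'(v)$) is immediate from the nesting condition on $O_1$.

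The main obstacle is the upward nesting check, where the maximality of $v$ in $V_0$ is essential (and where the tree structure gets used crucially): for any $w \ne v$ with $v \in O_1(w)$, I must show $O'(v) \subseteq O_1(w)$. Since $v \in O_1(w)$ forces $v \le_T w$, the maximality of $v$ in $V_0$ gives $w \notin V_0$, so $O_1(w) = O_2(w)$. Then nesting in $O_2$ yields $O_2(v) \subseteq O_2(w) = O_1(w)$, hence $O'(v) \subseteq O_2(v) \subseteq O_1(w)$. This confirms $O'$ is an ornamentation with $O_1 < O' \le O_2$, so $O' = O_2$ by the cover assumption; in particular $V_0 = \{v\}$ and $O_2(v) = O_1(u) \cup O_1(v)$.

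For uniqueness, $v$ is unique because $V_0 = \{v\}$. For $u$, suppose $u_1, u_2 \notin O_1(v)$ both satisfy $O_2(v) = O_1(v) \cup O_1(u_i)$. Then $u_1 \in O_1(u_1) \subseteq O_1(v) \cup O_1(u_2)$, and since $u_1 \notin O_1(v)$ we obtain $u_1 \in O_1(u_2)$, hence $u_1 \le_T u_2$. By symmetry $u_2 \le_T u_1$, and since $\le_T$ is a partial order on the directed tree we conclude $u_1 = u_2$.
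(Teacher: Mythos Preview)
Your proof is correct and follows the same overall strategy as the paper's: both directions use intermediate ornamentations sandwiched between $O_1$ and $O_2$, and both pick $u$ maximal in $O_2(v)\ssm O_1(v)$.

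The one structural difference is in the forward direction. The paper works in two steps: it first chooses $v$ with $O_1(w)=O_2(w)$ for all $w\in O_1(v)$, builds the intermediate ornamentation $O$ with $O(v)=O_1(v)$ and $O(w)=O_2(w)$ elsewhere, and uses $O_1\le O<O_2$ to conclude $V_0=\{v\}$ \emph{before} introducing $u$; once $V_0=\{v\}$ is known, checking that $O_1(u)\cup O_1(v)$ gives an ornamentation is immediate. You instead pick $v$ maximal in $V_0$ under $\le_T$ and build $O'$ in a single step, which forces you to do the upward nesting check by hand using that maximality (your observation that $v<_T w$ forces $w\notin V_0$, hence $O_1(w)=O_2(w)$). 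Your route is slightly more economical; the paper's route makes each verification a bit lighter. You also spell out the uniqueness of $u$ explicitly via $u_1\le_T u_2\le_T u_1$, which the paper leaves implicit.
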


\begin{proof}
Assume first that~$O_1$ and~$O_2$ are two ornamentations of~$T$ such that there are~$u, v \in V$ with~$u \notin O_1(v)$ and~$O_2(v) = O_1(u) \cup O_1(v)$ while~$O_1(w) = O_2(w)$ for all~${w \in V \ssm \{v\}}$.
We clearly have~$O_1 \le O_2$.
Moreover, consider any ornamentation~$O$ of~$T$ such that~$O_1 \le O \le O_2$.
Then~$O_1(w) = O(w) = O_2(w)$ for all~${w \in V \ssm \{v\}}$ and~$O_1(v) \subseteq O(v) \subseteq O_2(v)$.
If~$O_1 \ne O$, then there is~$x \in O(v) \ssm O_1(v)$.
Note that~$x \in O(v) \ssm O_1(v) \subseteq O_2(v) \ssm  O_1(v) = O_1(u)$.
Hence, the directed path from~$x$ to~$v$ in~$T$ passes through~$u$, and we have~$u \in O(v)$.
We conclude that~$O_1(u) = O(u) \subseteq O(v)$, and thus that~$O = O_2$.

Conversely, assume that~$O_1$ and~$O_2$ are two ornamentations of~$T$ such that~$O_1 \lessdot O_2$ is a cover relation in~$\Orn(T)$.
Consider~$v \in V$ such that~$O_1(v) \ne O_2(v)$ and~$O_1(w) = O_2(w)$ for any~$w \in O_1(v)$.
Then the map~$O$ on~$V$ defined by $O(v) = O_1(v)$ and~$O(w) = O_2(w)$ for all~$w \ne v$ is an ornamentation of~$T$.
Moreover, $O_1 \le O < O_2$.
Hence, $O_1 = O$.
Consider now~$u$ maximal in~$T$ such that~$u \in O_2(v) \ssm O_1(v)$.
Then the map~$O$ on~$V$ defined by~$O(v) = O_1(v) \cup O_1(u)$ and~$O(w) = O_1(w)$ for all~$w \ne v$ is an ornamentation of~$T$.
Moreover, $O_1 < O \le O_2$.
Hence, $O_2 = O$.
We have thus found~$u$ and~$v$ as required, and they are determined by~$O_1$ and~$O_2$.
\end{proof}

\begin{remark}
\label{rem:coverRelationsOrn}
The description of \cref{lem:coverRelationsOrnT} is not sufficient to have a cover relation of the ornamentation lattice~$\Orn(D)$ of any directed graph~$D$.
See \eg \cref{fig:ornamentationsKT}\,(left).
The general description is slightly more involved.
Namely, two ornamentations~$O_1$ and~$O_2$ of~$D$ form a cover relation~$O_1 \lessdot O_2$ if and only if there are~$u, v \in V$ such that~$u \notin O_1(v)$, $O_2(v) = O_1(u) \cup O_1(v)$ while~$O_1(w) = O_2(w)$ for all~${w \in V \ssm \{v\}}$, and~$O_1(w) = O_1(u)$ for any~$w \in O_1(u)$ with an edge towards~$O_1(v)$. A proof may be found in~\cite{Sack}.
\end{remark}

Finally, we will need the following convenient statement.

\begin{lemma}
\label{lem:Reori2OrnT}
For a directed tree~$T$, an acyclic reorientation~$R$ of~$\tc(T)$ and vertices $u \le_T v$, we have~$u \in \orn{R}(v)$ if and only if~$(u',v) \in \rev(R)$ for all~$u \le_T u' <_T v$.
\end{lemma}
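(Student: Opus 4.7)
The plan is to handle the two directions separately, exploiting the tree structure to pin down the unique path between~$u$ and~$v$, together with \cref{lem:acyclicSimplifiesDef} which characterizes membership in~$\orn{R}(v)$ by the direct presence of reversed arrows.

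For the forward direction, suppose~$u \in \orn{R}(v)$. Since~$\orn{R}(v)$ is an ornament of~$T$ at~$v$, there is a directed path from~$u$ to~$v$ contained in~$\orn{R}(v)$. Because~$T$ is a directed tree, this path is necessarily the unique path~$[u,v]_T$, hence $[u,v]_T \subseteq \orn{R}(v)$. For every $u'$ with~$u \le_T u' \le_T v$ we then have~$u' \in \orn{R}(v)$, and \cref{lem:acyclicSimplifiesDef} gives~$(u', v) \in \rev(R)$ whenever $u' \ne v$, as required.

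For the converse, assume that~$(u', v) \in \rev(R)$ for every~$u \le_T u' <_T v$. Consider the subset~$[u,v]_T \subseteq V$. As noted in the proof of \cref{lem:meetOrnT}, this is an ornament of~$T$ at~$v$, since for every~$u' \in [u,v]_T$ the path from~$u'$ to~$v$ in~$T$ is~$[u',v]_T \subseteq [u,v]_T$. Moreover, each such~$u'$ with~$u' \ne v$ has a directed path to~$v$ in~$\rev(R)$, namely the single reversed arrow~$(u',v)$. Hence~$[u,v]_T$ is an ornament at~$v$ contained in the set of vertices with a directed path to~$v$ in~$\rev(R)$. By the maximality in the definition of~$\orn{R}(v)$ (\cref{def:Reori2Orn}), we conclude~$[u,v]_T \subseteq \orn{R}(v)$, and in particular~$u \in \orn{R}(v)$.

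No serious obstacle is expected: the statement is essentially a translation between ornament membership and arrow reversals along the unique tree path, and both directions follow immediately from acyclicity (via \cref{lem:acyclicSimplifiesDef}) and the tree structure.
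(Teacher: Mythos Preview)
Your proof is correct and follows essentially the same approach as the paper's. The paper's proof is more compressed: it first observes directly that acyclicity of~$R$ makes~$\rev(R)$ transitive (so a directed path from~$u$ to~$v$ in~$\rev(R)$ is equivalent to~$(u,v)\in\rev(R)$), and then concludes from the tree characterization of ornaments, whereas you split into two directions and invoke \cref{lem:acyclicSimplifiesDef} as a black box for the forward implication---but this is the same argument at the level of content.
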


\begin{proof}
Since~$R$ is acyclic, for any $u <_T v$, there is a directed path from~$u$ to~$v$ in~$\rev(R)$ if and only if~$(u,v) \in \rev(R)$.
By \cref{def:Reori2Orn}, $\orn{R}(v)$ is the maximal ornament of~$T$ at~$v$ contained in the subset of vertices~$u \in V$ with a directed path to~$v$ in $\rev(R)$.
Hence, $u \in \orn{R}(v)$ if and only if~$u' \in \rev(R)$ for all~$u'$ along the path from~$u$ to~$v$ in~$T$.
\end{proof}


\subsection{Semidistributivity and canonical join representations in the ornamentation lattice}
\label{subsec:semidistributivityT}

Let us start with a quick recollection on semidistributivity.
We consider a finite lattice~$(L, \le, \meet, \join)$.

\begin{definition}
An element~$x \in L$ is called \defn{join} (resp.~\defn{meet}) \defn{irreducible} if it covers (resp.~is covered by) a unique element denoted~$x_\star$ (resp.~$x^\star$).
We denote by $\JI$ (resp.~$\MI$) the subposet of~$L$ induced by the set of join (resp.~meet) irreducible elements of~$L$.
\end{definition}

\begin{definition}
A \defn{join representation} of~$x \in L$ is a subset~${J \subseteq L}$ such that~$x = \bigJoin J$.
Such a representation is \defn{irredundant} if~$x \ne \bigJoin J'$ for any strict subset~$J' \subsetneq J$.
The irredundant join representations in~$L$ are antichains of~$L$, and are ordered by containment of the lower sets of their elements (\ie~$J \le J'$ if and only if for any~$y \in J$ there exists~$y' \in J'$ such that~$y \le y'$ in~$L$).
The \defn{canonical join representation} of~$x$, denoted~$\CJR(x)$, is the minimal irredundant join representation of~$x$ for this order, when it exists.
\end{definition}

Note that when it exists, $\CJR(x)$ is an antichain of~$\JI$.
The following statement characterizes the lattices where canonical join representations exist.

\begin{proposition}[{\cite[Thm.~2.24 \& Thm.~2.56]{FreeseNation}}]
\label{prop:semidistributive}
A finite lattice~$L$ is \defn{join semidistributive} when the following equivalent conditions hold:
\begin{enumerate}[(i)]
\item $x \join y = x \join z$ implies $x \join (y \meet z) = x \join y$ for any~$x, y, z \in L$,
\item for any cover relation~$x \lessdot y$ in~$L$, the set \[K_\join(x,y) \eqdef \set{z \in L}{z \not\le x \text{ but } z \le y} = \set{z \in L}{x \join z = y}\] has a unique minimal element~$k_\join(x,y)$ (which is then automatically join irreducible),
\item any element of~$L$ admits a canonical join representation.
\end{enumerate}
Moreover, 
\begin{itemize}
\item the join irreducible elements of~$L$ are precisely the elements~$k_\join(x,y)$ for all cover relations~$x \lessdot y$ in~$L$,
\item the canonical join representation of~$y \in L$ is~$\CJR(y) = \set{k_\join(x, y)}{x \lessdot y}$.
\end{itemize}
\end{proposition}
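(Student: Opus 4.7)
The plan is to prove the cycle (i) $\Rightarrow$ (ii) $\Rightarrow$ (iii) $\Rightarrow$ (i), then extract the two moreover statements as byproducts.

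For (i) $\Rightarrow$ (ii), I first observe that the two descriptions of $K_\join(x,y)$ coincide: any $z \le y$ with $z \not\le x$ satisfies $x < x \join z \le y$, forcing $x \join z = y$ by the cover $x \lessdot y$; the converse is immediate. Condition (i) then shows that $K_\join(x,y)$ is closed under binary meets: if $z_1, z_2 \in K_\join(x,y)$, then $x \join z_1 = y = x \join z_2$ yields $x \join (z_1 \meet z_2) = y$, hence $z_1 \meet z_2 \in K_\join(x,y)$. The resulting unique minimum $k_\join(x,y)$ is join irreducible, since any decomposition $k_\join(x,y) = a \join b$ with $a, b < k_\join(x,y)$ would force $a, b \le x$ by minimality, contradicting $k_\join(x,y) \not\le x$.

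For (ii) $\Rightarrow$ (iii), set $J_y \eqdef \set{k_\join(x,y)}{x \lessdot y}$. The equality $\bigJoin J_y = y$ follows from finiteness: if $\bigJoin J_y < y$, pick a cover $x \lessdot y$ with $\bigJoin J_y \le x$, so $k_\join(x,y) \in J_y$ would satisfy $k_\join(x,y) \le x$, a contradiction. Irredundance and the antichain property follow from the characterization of $k_\join(x,y)$ as the minimum of $K_\join(x,y)$. The minimality of $J_y$ among irredundant join representations rests on the lemma that for any irredundant $y = \bigJoin S$ and any cover $x \lessdot y$, some $s \in S$ is not below $x$ (otherwise $y = \bigJoin S \le x$), hence $s \in K_\join(x,y)$ and $s \ge k_\join(x,y)$.

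For (iii) $\Rightarrow$ (i), the crucial input is the refinement property of $\CJR$: for any $a \in L$ and any $S \subseteq L$ with $a = \bigJoin S$, every element of $\CJR(a)$ is bounded above by some element of $S$. This is obtained by extracting an irredundant subset $S' \subseteq S$ with $\bigJoin S' = a$ and invoking the minimality of $\CJR(a)$ in the order on antichains. Now assume $x \join y = x \join z = w$. Each $j \in \CJR(w)$ satisfies $j \le x$ or $j \le y$ (from $w = x \join y$) and also $j \le x$ or $j \le z$ (from $w = x \join z$), hence $j \le x \join (y \meet z)$ in every case; joining over $\CJR(w)$ gives $w \le x \join (y \meet z)$, and the reverse inequality is trivial.

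I expect the main obstacle to be the minimality step in (ii) $\Rightarrow$ (iii) and the refinement lemma used in (iii) $\Rightarrow$ (i), both requiring careful handling of the order on antichains of irredundant join representations. The moreover statements follow along the way: each $k_\join(x,y)$ is join irreducible by the argument in (i) $\Rightarrow$ (ii), and any join irreducible $j$ equals $k_\join(j_\star, j)$ because $j \in K_\join(j_\star, j)$ and every element strictly below $j$ lies below $j_\star$ in a finite lattice; while the identity $\CJR(y) = J_y$ is precisely what (ii) $\Rightarrow$ (iii) establishes.
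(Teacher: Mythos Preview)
The paper does not supply its own proof of this proposition; it is quoted from Freese--Nation as background for the semidistributivity discussion. Your sketch follows the standard argument found in that reference and is essentially correct, with the acknowledged subtleties in (ii) $\Rightarrow$ (iii) (showing that $J_y$ is irredundant, equivalently an antichain) being the only places that require additional care.
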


Note that in a finite join semidistributive lattice~$L$, we can associate to any meet irreducible element~$m$ of~$L$ a join irreducible element~$\kappa_\join(m) \eqdef k_\join(m, m^\star)$ of~$L$.

The \defn{meet semidistributivity} property, the maps~$K_\meet$, $k_\meet$ and~$\kappa_\meet$, and the \defn{canonical meet representation}~$\CMR(x)$ are all defined dually.
A lattice~$L$ is \defn{semidistributive} if it is both meet and join semidistributive.
In this case, the maps~$\kappa_\join$ and~$\kappa_\meet$ define inverse bijections between~$\MI$ and~$\JI$.

\medskip
We now consider semidistributivity in ornamentation lattices.
We first describe the join and meet irreducible ornamentations of a directed tree~$T$.
Recall that for~$v \in V$, we denote by $\lessin{T}{v} \eqdef \set{u \in V}{u \le_T v}$ the set of vertices which admit a directed path towards~$v$ in~$T$.

\begin{definition}
\label{def:irreducibleOrnamentationsT}
For a directed path~$P$ from~$u$ to~$v$ in a directed tree~$T$, we denote by~$J_P$ and~$M_P$ the two maps on~$V$ defined by
\[
J_P(w) \eqdef \begin{cases} P & \text{if } w = v \\ \{w\} & \text{otherwise} \end{cases}
\qquad\text{and}\qquad
M_P(w) \eqdef \begin{cases} \lessin{T}{w} \ssm \lessin{T}{u} & \text{if } u <_T w \le_T v \\ \lessin{T}{w} & \text{otherwise} \end{cases}
.
\]
\end{definition}

\begin{lemma}
For any~$P \in \PP(T)$, the maps~$J_P$ and~$M_P$ are both ornamentations of~$T$.
\end{lemma}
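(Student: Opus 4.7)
The plan is to verify, for each of $J_P$ and $M_P$, the two defining properties of an ornamentation: (a) the value at each vertex $w$ is an ornament at $w$, and (b) the nesting condition $u \in O(w) \Rightarrow O(u) \subseteq O(w)$. For $J_P$, both properties are almost immediate: singletons are trivially ornaments and $P$ is an ornament at $v$ because every vertex of $P$ reaches $v$ via a suffix of $P$, while the nesting condition collapses to either the tautology $\{w\} \subseteq \{w\}$ (when the target vertex is not $v$, forcing $u = w$) or to $\{u\} \subseteq P$ (when the target vertex is $v$ and $u \in P$, with the subcase $u = v$ giving $P \subseteq P$).

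For $M_P$ the key tree-theoretic fact I would deploy is: for any $x, u \le_T w$ admitting a common $\le_T$-predecessor $y$, the two directed paths from $y$ to $w$ going through $x$ and through $u$ coincide by uniqueness of paths in $T$, whence $x$ and $u$ are $\le_T$-comparable. Using this, I would first check that $\lessin{T}{w} \ssm \lessin{T}{u}$ is an ornament at $w$ whenever $u <_T w$: for any $x$ in this set, the entire $T$-path from $x$ to $w$ stays in the set, since any vertex $y \le_T u$ on this path would force $u$ itself to lie on the path (again by uniqueness), hence $x \le_T u$, contradicting $x \notin \lessin{T}{u}$.

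For the nesting condition of $M_P$, I would split on whether $w$ lies in the interval $(u, v]_T$. When $M_P(w) = \lessin{T}{w}$, the inclusion $M_P(x) \subseteq M_P(w)$ is automatic from $\lessin{T}{x} \subseteq \lessin{T}{w}$, regardless of which clause of the definition produces $M_P(x)$. When $w \in (u, v]_T$ and $x \in M_P(w) = \lessin{T}{w} \ssm \lessin{T}{u}$, the subcase $x \in (u, v]_T$ is easy since then $M_P(x) = \lessin{T}{x} \ssm \lessin{T}{u} \subseteq \lessin{T}{w} \ssm \lessin{T}{u}$. The main obstacle is the subcase $x \notin (u, v]_T$: here I would first observe that $x$ is $\le_T$-incomparable to $u$ (immediate from $x \not\le_T u$ combined with $u \not<_T x$, the latter forced by $x \notin (u, v]_T$ and $x \le_T w \le_T v$), and then invoke the tree-uniqueness fact above to deduce $\lessin{T}{x} \cap \lessin{T}{u} = \emptyset$, which yields $M_P(x) = \lessin{T}{x} \subseteq \lessin{T}{w} \ssm \lessin{T}{u} = M_P(w)$.
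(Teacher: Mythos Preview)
Your proof is correct and follows essentially the same approach as the paper's. The paper is more compact—dispatching $J_P$ by citing an earlier example, and for $M_P$ unifying your two subcases by observing that $M_P(u') = \lessin{T}{u'} \ssm \lessin{T}{u}$ holds uniformly (the subtraction being vacuous when $u$ and $u'$ are incomparable)—but the underlying tree-uniqueness argument is identical to yours.
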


\begin{proof}
The claim for~$J_P$ follows from \cref{exm:ornamentations}\,(iii).
Consider now~$u', v' \in V$ such that~$u' \in M_P(v')$.
If~$u <_T v' \le_T v$, then~$M_P(u') = \lessin{T}{u'} \ssm \lessin{T}{u} \subseteq \lessin{T}{v'} \ssm \lessin{T}{u} = M_P(v')$ (observe that the first equality holds even if $u \not\le_T u'$, since in this case $\lessin{T}{u'} \ssm \lessin{T}{u} = \lessin{T}{u'}$).
Otherwise, $M_P(u') \subseteq \lessin{T}{u'} \subseteq \lessin{T}{v'} = M_P(v')$.
\end{proof}

\begin{lemma}
\label{lem:semidistributive}
Let~$O_1, O_2$ be two ornamentations of a directed tree~$T$ which form a cover relation~$O_1 \lessdot O_2$, let~$u,v \in V$ such that~$u \notin O_1(v)$ and $O_2(v) = O_1(u) \cup O_1(v)$ while~$O_1(w) = O_2(w)$ for all~${w \in V \ssm \{v\}}$ (see \cref{lem:coverRelationsOrnT}), and let~$P$ denote the directed path from~$u$ to~$v$ in~$T$.
Then~$J_P$ (resp.~$M_P$) is the unique minimal element of~$\set{O \in \Orn(T)}{O \not\le O_1 \text{ but } O \le O_2}$ (resp.~maximal element of~$\set{O \in \Orn(T)}{O_1 \le O \text{ but } O_2 \not\le O}$).
\end{lemma}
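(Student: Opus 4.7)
The plan is to fix the notation $P = (u = w_0, w_1, \ldots, w_{k-1}, w_k = v)$ for the directed path from $u$ to $v$ in $T$, and to establish at the outset the following key preliminary fact: $w_i \in O_1(v)$ for every $1 \le i \le k$. This follows because $O_2(v) = O_1(u) \cup O_1(v)$ is an ornament at $v$, so the unique directed path $P$ lies inside $O_2(v)$; since $w_i \not\le_T u$ for $i \ge 1$ while $O_1(u) \subseteq \lessin{T}{u}$, the inclusion $w_i \in O_2(v)$ forces $w_i \in O_1(v)$.

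For the statement about $J_P$, I first check that $J_P \le O_2$ (using the preliminary together with $u \in O_1(u) \subseteq O_2(v)$) and $J_P \not\le O_1$ (since $u \in J_P(v) = P$ but $u \notin O_1(v)$). For minimality, I take any $O$ with $O \le O_2$ and $O \not\le O_1$. Since $O_1$ and $O_2$ agree off $v$, the failure $O \not\le O_1$ must occur at $v$, and picking $x \in O(v) \ssm O_1(v)$ the inclusion $O(v) \subseteq O_2(v) = O_1(u) \cup O_1(v)$ forces $x \in O_1(u)$, so $x \le_T u$. The ornament condition on $O(v)$ then propagates the unique directed path $x \to \cdots \to u \to w_1 \to \cdots \to v$ into $O(v)$, yielding $P \subseteq O(v)$ and hence $J_P \le O$.

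For the statement about $M_P$, I first verify $M_P \ge O_1$ and $M_P \not\ge O_2$. The only nontrivial inclusion is $O_1(w_i) \subseteq M_P(w_i) = \lessin{T}{w_i} \ssm \lessin{T}{u}$ for $1 \le i \le k$: if $y \in O_1(w_i) \cap \lessin{T}{u}$, then the ornament property forces $u \in O_1(w_i)$, and the preliminary $w_i \in O_1(v)$ together with the nesting in $O_1$ gives $u \in O_1(v)$, contradicting the hypothesis $u \notin O_1(v)$. The inclusion $M_P \not\ge O_2$ is immediate from $u \in O_2(v) \ssm M_P(v)$. For maximality, I take any $O$ with $O_1 \le O$ and $O_2 \not\le O$. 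As in the $J_P$ argument the failure occurs at $v$, yielding $x \in O_1(u) \ssm O(v)$; I first observe $u \notin O(v)$, for otherwise ornament and nesting in $O$ would propagate $x$ into $O(v)$. Now suppose for contradiction that $O(w_j) \cap \lessin{T}{u} \ne \emptyset$ for some $1 \le j \le k$. The ornament property gives $u \in O(w_j)$, while the preliminary $w_j \in O_1(v) \subseteq O(v)$ combined with nesting in $O$ yields $O(w_j) \subseteq O(v)$, hence $u \in O(v)$, a contradiction. So $O(w_j) \subseteq M_P(w_j)$ and $O \le M_P$.

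The main obstacle is the maximality step for $M_P$: the preliminary observation that $w_i \in O_1(v)$ for all $i \in [k]$ is the engine driving the argument, because once $x$ has been pulled into $O(v)$ one needs to climb back up along the path $P$ through the interior $w_j$, and this climb is enabled precisely by $w_j \in O_1(v) \subseteq O(v)$. Without this preliminary, the nesting-propagation argument stalls at any $w_j$ with $j < k$. Once this observation is in place, everything else reduces to routine manipulation of the ornament and nesting conditions in the tree $T$.
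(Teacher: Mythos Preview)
Your proof is correct and follows essentially the same approach as the paper's own proof. The main difference is that you explicitly isolate and prove the preliminary fact $w_i \in O_1(v)$ for $1 \le i \le k$, whereas the paper uses this fact implicitly (in the line ``As~$w \in O_1(v) \subseteq O(v)$'') without spelling out its justification; your explicit treatment makes the $M_P$ maximality argument considerably clearer.
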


\begin{proof}
Observe first that~$J_P \not\le O_1$ (since~$u \in P = J_P(v)$ while~$u \notin O_1(v)$) but~$J_P \le O_2$ (since~${P \subseteq O_2(v)}$). 
Consider now an ornamentation~$O$ of~$T$ such that~$O \not\le O_1$ but~$O \le O_2$.
Since~$O_1(w) = O_2(w)$ for~$w \ne v$, we must have~$u \in O(v)$, hence~$P \subseteq O(v)$.
We conclude that~$J_P \le O$.

Observe now that~$O_1 \le M_P$ (as~$u \notin O_1(v)$, we have~$\lessin{T}{u} \cap O_1(w) = \varnothing$ for all~$u <_T w \le_T v$) but~$O_2 \not\le M_P$ (because~$u \in O_2(v)$ while~$u \notin M_P(v)$).
Consider now an ornamentation~$O$ of~$T$ such that~$O_1 \le O$ but~$O_2 \not\le O$.
Note that~$O_2(w) = O_1(w) \subseteq O(w)$ for all~$w \ne v$.
Assume that~$u \in O(w)$ for some~$u <_T w \le_T v$.
As~$w \in O_1(v) \subseteq O(w)$, we have~$u \in O(v)$, hence~$O_1(u) \subseteq O(u) \subseteq O(v)$.
We thus obtain that~$O_2(v) = O_1(u) \cup O_1(v) \subseteq O(v)$.
This contradicts~$O_2 \not\le O$.
We thus obtained that~$u \notin O(w)$, hence~$\lessin{T}{u} \cap O(w) = \varnothing$ for all~$u <_T w \le_T v$.
We conclude that~$O \le M_P$.
\end{proof}

\begin{theorem}
\label{thm:OrnSemidistributiveT}
For a directed tree~$T$, 
\begin{itemize}
\item the ornamentation lattice~$\Orn(T)$ is semidistributive,
\item the join (resp.~meet) irreducible ornamentations of~$T$ are precisely the ornamentations~$J_P$ (resp.~$M_P$) for all directed paths~$P$ in~$T$,
\item the canonical join (resp.~meet) representation of an ornamentation~$O$ is~$O = \bigJoin_P J_P$ (resp.~$O = \bigMeet_P M_P$) where the join (resp.~meet) ranges over all paths~$P = [u,v]$ described in \cref{lem:coverRelationsOrnT} for the cover relations~$O' \lessdot O$ (resp.~$O \lessdot O'$),
\item we have~$\kappa_\join(M_P) = J_P$ and~$\kappa_\meet(J_P) = M_P$ for any~$P \in \PP(T)$.
\end{itemize}
\end{theorem}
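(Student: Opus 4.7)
The plan is to apply \cref{prop:semidistributive}(ii) with \cref{lem:semidistributive} doing almost all of the work. For any cover relation $O_1 \lessdot O_2$ in $\Orn(T)$, \cref{lem:coverRelationsOrnT} uniquely extracts a pair $u, v$ and hence a unique directed path $P$ from $u$ to $v$ in $T$, and \cref{lem:semidistributive} states that $J_P$ (resp.\ $M_P$) is the minimum of $K_\join(O_1, O_2)$ (resp.\ the maximum of $K_\meet(O_1, O_2)$). This immediately yields both join and meet semidistributivity of $\Orn(T)$, and identifies $k_\join(O_1, O_2) = J_P$ and $k_\meet(O_1, O_2) = M_P$.

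Next, the last bullet of \cref{prop:semidistributive} characterizes the join irreducibles as the $k_\join(O', O) = J_P$, which yields the inclusion $\JI[\Orn(T)] \subseteq \set{J_P}{P \in \PP(T)}$. For the reverse inclusion I would check that each $J_P$ is genuinely join irreducible: applying \cref{lem:coverRelationsOrnT} to a cover $O' \lessdot J_P$ produces a pair $(u', v')$, and since $J_P(w) = \{w\}$ for every $w \neq v$, a short case analysis forces $v' = v$ and then (using that the sub-ornaments of the chain $P$ at $v$ are exactly its suffixes) $u' = u$. This determines the unique lower cover $(J_P)_\star$ which sends $v$ to $P \ssm \{u\}$ and agrees with $J_P$ elsewhere. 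The meet side is dual. The canonical join representation formula is then a direct reading of \cref{prop:semidistributive}: $\CJR(O) = \set{k_\join(O', O)}{O' \lessdot O}$ becomes $\{J_P\}$ indexed by the paths attached to the covers below $O$, and dually for $\CMR(O)$.

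Finally, the bijection between $\MI$ and $\JI$ follows by combining the uniqueness of these covers with \cref{lem:semidistributive}. Since the unique lower cover $(J_P)_\star \lessdot J_P$ carries associated path $P$, \cref{lem:semidistributive} gives $\kappa_\meet(J_P) = k_\meet((J_P)_\star, J_P) = M_P$. Symmetrically, I would identify the unique upper cover $M_P \lessdot M_P^\star$: \cref{lem:coverRelationsOrnT} together with the ornamentation nesting condition rules out extending $M_P$ at any vertex strictly between $u$ and $v$, because propagating $u \in \lessin{T}{u}$ into $M_P(w)$ for $w \in (u,v]_T$ would contradict $u \notin M_P(w)$. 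This forces $v' = v$ and $u' = u$, so $M_P^\star(v) = \lessin{T}{v}$. The associated path is again $P$, hence $\kappa_\join(M_P) = k_\join(M_P, M_P^\star) = J_P$.

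The main technical content is the case analysis pinning down the unique covers $(J_P)_\star$ and $M_P^\star$, particularly for $M_P$ where ruling out intermediate extensions relies on the ornamentation propagation condition. Everything else reduces to bookkeeping against \cref{lem:semidistributive} and \cref{prop:semidistributive}.
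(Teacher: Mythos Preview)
Your proposal is correct and follows the same route as the paper, whose proof is literally the single sentence ``This directly follows from \cref{prop:semidistributive,lem:semidistributive}.'' You simply unpack that sentence: semidistributivity from \cref{prop:semidistributive}(ii) via \cref{lem:semidistributive}, the identification of irreducibles, and the canonical representations all fall out exactly as you describe.

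One remark on efficiency: your computation of the unique covers $(J_P)_\star$ and $M_P^\star$ is correct, but for the $\kappa$ bijection you are working harder than necessary. A standard fact about finite semidistributive lattices is that for \emph{any} cover $x \lessdot y$, the elements $j = k_\join(x,y)$ and $m = k_\meet(x,y)$ satisfy $\kappa_\meet(j) = m$ and $\kappa_\join(m) = j$. Since \cref{lem:semidistributive} already gives $k_\join(O_1,O_2) = J_P$ and $k_\meet(O_1,O_2) = M_P$ for the \emph{same} cover with associated path $P$, the identities $\kappa_\meet(J_P) = M_P$ and $\kappa_\join(M_P) = J_P$ follow immediately without ever locating $(J_P)_\star$ or $M_P^\star$. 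This also handles the reverse inclusion $\{J_P\} \subseteq \JI[\Orn(T)]$ for free, since every nontrivial path $P$ is the path associated to at least one cover (for instance the one below $J_P$ itself), so $J_P$ arises as some $k_\join$. Your case analysis is a valid alternative, just longer.
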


\begin{proof}
This directly follows from \cref{prop:semidistributive,lem:semidistributive}.
\end{proof}

\begin{remark}
Observe that the ornamentation lattice~$\Orn(\Dgraph)$ of the diamond graph~$\Dgraph$ represented in \cref{fig:ornamentationsD} is neither join nor meet semidistributive, since
\begin{multline*}
\includegraphics[scale=.8,valign=c]{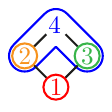} \join \includegraphics[scale=.8,valign=c]{cyclicOrnamentationD1} = \includegraphics[scale=.8,valign=c]{ornamentationD1} \join \includegraphics[scale=.8,valign=c]{cyclicOrnamentationD2} = \includegraphics[scale=.8,valign=c]{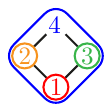}
\\[-.2cm]
 \ne \includegraphics[scale=.8,valign=c]{ornamentationD1} = \includegraphics[scale=.8,valign=c]{ornamentationD1} \join \includegraphics[scale=.8,valign=c]{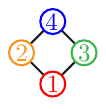} = \includegraphics[scale=.8,valign=c]{ornamentationD1} \join \Big( \includegraphics[scale=.8,valign=c]{cyclicOrnamentationD1} \meet  \includegraphics[scale=.8,valign=c]{cyclicOrnamentationD2} \Big)
\end{multline*}
and
\begin{multline*}
\includegraphics[scale=.8,valign=c]{cyclicOrnamentationD1} \meet \includegraphics[scale=.8,valign=c]{ornamentationD1} = \includegraphics[scale=.8,valign=c]{cyclicOrnamentationD1} \meet \includegraphics[scale=.8,valign=c]{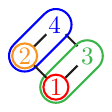} = \includegraphics[scale=.8,valign=c]{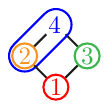}
\\[-.2cm]
 \ne \includegraphics[scale=.8,valign=c]{cyclicOrnamentationD1} = \includegraphics[scale=.8,valign=c]{cyclicOrnamentationD1} \meet \includegraphics[scale=.8,valign=c]{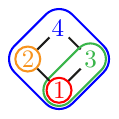} = \includegraphics[scale=.8,valign=c]{cyclicOrnamentationD1} \meet \Big( \includegraphics[scale=.8,valign=c]{ornamentationD1} \join  \includegraphics[scale=.8,valign=c]{ornamentationD4} \Big)
\end{multline*}
\end{remark}


\subsection{Maximal reorientation}
\label{subsec:maximalReorientationT}

We now revisit \cref{rem:Reori2OrnIntervals1,rem:Reori2OrnIntervals2} for directed trees.

\begin{definition}
\label{def:Reori2OrnMaxT}
Consider an ornamentation~$O$ of a directed tree~$T$.
Let~$\maxreori{O}$ denote the reorientation of~$\tc(T)$ where for each path~$u, u', \dots, v$ in~$T$ (note that $u'$ might coincide with~$v$), the edge~$(u,v)$ is in~$\rev(\maxreori{O})$ if and only if there is no~$w \in V$ such that~$u \notin O(w)$ while~$u', v \in O(w)$.
\end{definition}

\begin{proposition}
\label{prop:Reori2OrnMaxTree}
For any ornamentation~$O$ of a directed tree~$T$, the reorientation~$\maxreori{O}$ is the maximum reorientation~$R$ of~$\tc(T)$ with~$\orn{R} = O$.
\end{proposition}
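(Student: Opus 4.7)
The plan is to establish two claims separately: first, that $\orn{\maxreori{O}} = O$, and second, that every reorientation $R$ of~$\tc(T)$ with~$\orn{R} = O$ satisfies~$R \le \maxreori{O}$.

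For the inclusion $O(v) \subseteq \orn{\maxreori{O}}(v)$, I fix $u \in O(v)$ and consider the tree path $u = w_0, w_1, \dots, w_k = v$, which lies in $O(v)$ because $O(v)$ is an ornament at $v$. I would verify that $(w_i, v) \in \rev(\maxreori{O})$ for every $i$: applying \cref{def:Reori2OrnMaxT} to the tree path from $w_i$ to $v$, one needs to rule out a vertex $w$ with $w_i \notin O(w)$ and $w_{i+1}, v \in O(w)$. Such a $w$ would give $O(v) \subseteq O(w)$ by the nesting condition applied to $v \in O(w)$, and since $w_i \in O(v)$, this forces $w_i \in O(w)$, a contradiction. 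Then $\{w_0, \dots, w_k\}$ is an ornament at $v$ witnessing $u \in \orn{\maxreori{O}}(v)$.

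For the reverse inclusion, let $u \in \orn{\maxreori{O}}(v)$ and again consider the tree path $u = w_0, \dots, w_k = v$, which lies in $\orn{\maxreori{O}}(v)$ by the ornament property. I would prove $w_i \in O(v)$ by descending induction on $i$. The base $w_k = v$ is clear. For the inductive step, $w_i \in \orn{\maxreori{O}}(v)$ gives a directed path $w_i = y_0, y_1, \dots, y_s = v$ in $\rev(\maxreori{O}) \subseteq \tc(T)$. Since $w_i <_T y_1$ and $y_1 \le_T v$, uniqueness of tree paths forces $y_1 = w_j$ for some $j > i$. The definition of $\maxreori{O}$ applied to the edge $(w_i, y_1)$, with the test vertex $w = v$, forbids $w_i \notin O(v)$ together with $w_{i+1}, w_j \in O(v)$; both $w_{i+1}$ and $w_j$ lie in $O(v)$ by the inductive hypothesis (with $w_j = v \in O(v)$ trivially if $j = k$), hence $w_i \in O(v)$.

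For the maximality claim, take any reorientation $R$ with $\orn{R} = O$ and any edge $(u, v) \in \rev(R)$; I want $(u, v) \in \rev(\maxreori{O})$. Suppose instead the defining condition fails, so there is $w$ with $u \notin O(w)$ and $u', v \in O(w)$, where $u'$ is the next vertex after $u$ on the tree path to $v$. Since $O(w) = \orn{R}(w)$ is an ornament at $w$, the tree paths from $u'$ and from $v$ to $w$ both lie in $O(w)$; because $u' \le_T v \le_T w$, these paths merge at $v$, so the entire tree path from $u$ to $w$ except $u$ itself lies in $O(w)$. Moreover, $(u,v) \in \rev(R)$ together with a path from $v$ to $w$ in $\rev(R)$ guaranteed by $v \in \orn{R}(w)$ yields a directed path from $u$ to $w$ in $\rev(R)$. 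Hence $O(w) \cup \{u\}$ is an ornament at $w$ (the tree path from $u$ to $w$ now lies inside it) contained in the set of vertices with paths to $w$ in $\rev(R)$, contradicting the maximality of $\orn{R}(w) = O(w)$.

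The main obstacle is the descending induction in the second half of the first claim. The subtlety is that a directed path in $\rev(\maxreori{O})$ may use transitive (non-tree) edges of $\tc(T)$, so one cannot assume a priori that it follows the tree path; however, in a tree the first jump $(w_i, y_1)$ still lands on some $w_j$ on the tree path from $w_i$ to $v$, and this is precisely what lets the induction go through.
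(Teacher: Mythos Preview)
Your proof is correct and follows essentially the same plan as the paper's: establish $\orn{\maxreori{O}} = O$ by double inclusion, then derive maximality by showing that any $(u,v)\in\rev(R)\setminus\rev(\maxreori{O})$ would let one enlarge $\orn{R}(w)=O(w)$ by adjoining~$u$. The only organizational difference is in the reverse inclusion $\orn{\maxreori{O}}(v)\subseteq O(v)$: where you run a descending induction along the tree path and analyze the first jump of a path in $\rev(\maxreori{O})$, the paper instead picks the vertex $u\notin O(v)$ closest to~$v$ and observes directly that \emph{every} edge $(u,v')$ with $u' \le_T v' \le_T v$ fails the defining condition of~$\maxreori{O}$ (with test vertex $w=v$), so no path from $u$ to $v$ in $\rev(\maxreori{O})$ can even start. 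This sidesteps what you flagged as the main obstacle, but the underlying mechanism---applying \cref{def:Reori2OrnMaxT} with $w=v$---is identical.
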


\begin{proof}
We first prove that~$\orn{\maxreori{O}} = O$.
Fix~$u,v \in V$.
Observe first that~$u \in O(v)$ implies~$(u,v) \in \rev(\maxreori{O})$, since for any~$w \in V$, the fact that~$v \in O(w)$ implies~$u \in O(v) \subseteq O(w)$. Hence, by definition,~$u \in \orn{\maxreori{O}}(v)$.
Conversely, assume that~$u \notin O(v)$ and consider such an~$u$ which is the closest to~$v$.
Consider the path~$u, u', \dots, v$ in~$T$.
For any~$v'$ between~$u'$ and~$v$, we have~$u \notin O(v)$ while~$u', v' \in O(v)$, which implies that~$(u,v') \notin \rev(\maxreori{O})$.
Hence, there is no path from~$u$ to~$v$ in~$\rev(\maxreori{O})$, and thus~$u \notin \orn{\maxreori{O}}(v)$.
We conclude that~$u \in O(v) \iff u \in \orn{\maxreori{O}}(v)$, that is~$O = \orn{\maxreori{O}}$.

Consider now any reorientation~$R$ of~$\tc(T)$ such that~$\orn{R} = O$.
Assume that there is~$u,v \in V$ such that~$(u,v) \in \rev(R) \ssm \rev(\maxreori{O})$.
Consider the path~$u, u', \dots, v$ in~$T$, and let~$w \in V$ be such that~$u \notin O(w)$ while~$u', v \in O(w)$.
Since~$\orn{R} = O$ and~$v \in O(w)$, there is a path from~$v$ to~$w$ in~$\rev(R)$, hence from~$u$ to~$v$ in~$\rev(R)$ since~$(u,v) \in \rev(R)$.
Moreover, since~$(u,u') \in T$ and~$u' \in O(w)$, the set~$O(w) \cup \{u\}$ is an ornament of~$T$ at~$w$.
Hence, $O(w) \cup \{u\}$ contradicts the maximality of~$\orn{R}(w) = O(w)$.
We conclude that if~$\orn{R} = O$, then~$\rev(R) \subseteq \rev(\maxreori{O})$, hence~$R \le \maxreori{O}$.
\end{proof}

\begin{proposition}
\label{prop:ReoriTC2OrnIntervalT}
For any ornamentation~$O$ of a directed tree~$T$, the reorientation~$\maxreori{O}$  of~\cref{def:Reori2OrnMaxT} is transitively closed.
Hence, the transitively closed reorientations~$R$ of~$\tc(T)$ with~${\orn{R} = O}$ form an interval of the transitively closed reorientation lattice~$\Rcl(\tc(T))$.
\end{proposition}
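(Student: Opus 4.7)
The plan is to establish two statements: (i) $\maxreori{O}$ is transitively closed, and (ii) the set of transitively closed reorientations of $\tc(T)$ with image $O$ under $R \mapsto \orn{R}$ is precisely the interval $[\reori{O}, \maxreori{O}]$ in $\Rcl(\tc(T))$. Once (i) is in hand, (ii) is a formal consequence of what has already been proved.

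For (i), I would take arrows $(u,v), (v,w) \in \rev(\maxreori{O})$ and argue by contradiction that $(u,w) \in \rev(\maxreori{O})$. The concatenation of the directed $T$-paths $u \to v$ and $v \to w$ is itself a simple directed $T$-path (two such paths can only share the endpoint $v$, by uniqueness of paths in the underlying tree), so the successor $u'$ of $u$ along the $T$-path from $u$ to $w$ coincides with the successor of $u$ along the $T$-path from $u$ to $v$. If $(u,w) \notin \rev(\maxreori{O})$, then by \cref{def:Reori2OrnMaxT} some $z \in V$ satisfies $u \notin O(z)$ and $u', w \in O(z)$. The key step is to show that necessarily $v \in O(z)$: from $w \in O(z)$ I extract $w \leq_T z$, and combining with $v \leq_T w$ yields $v \leq_T z$; since also $u' \leq_T v$, the unique directed $T$-path from $u'$ to $z$ runs through $v$, and since $O(z)$ is an ornament at $z$ containing $u'$, this whole path lies in $O(z)$. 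Thus $v \in O(z)$, and then $z$ witnesses the failure of $(u,v) \in \rev(\maxreori{O})$, a contradiction.

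For (ii), I would assemble the following: \cref{prop:Reori2OrnMinTC} gives $\reori{O}$ as the minimum transitively closed reorientation with image $O$; \cref{prop:Reori2OrnMaxTree} gives $\maxreori{O}$ as the maximum among all reorientations with image $O$; and (i) places $\maxreori{O}$ inside $\Rcl(\tc(T))$, so it is also the maximum within the transitively closed reorientations with image $O$. Conversely, for any transitively closed $R$ with $\reori{O} \le R \le \maxreori{O}$, the order-preservation of $R \mapsto \orn{R}$ (\cref{lem:Reori2Orn2}), together with $\orn{\reori{O}} = O$ (\cref{lem:Orn2Reori2}) and $\orn{\maxreori{O}} = O$ (\cref{prop:Reori2OrnMaxTree}), forces $\orn{R} = O$ via $O = \orn{\reori{O}} \le \orn{R} \le \orn{\maxreori{O}} = O$.

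The main obstacle I anticipate is precisely the tree-theoretic deduction in (i): one must combine the ornament structure of $O(z)$ with uniqueness of directed paths in $T$ to force the intermediate vertex $v$ into $O(z)$. This step breaks outside the tree setting, because then multiple directed $u'$-to-$z$ paths can bypass $v$; this is consistent with the fact that $\maxreori{O}$ need not be transitively closed for general directed graphs $D$.
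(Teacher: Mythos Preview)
Your proof is correct and follows essentially the same approach as the paper: both argue transitive closedness by taking a witness $z$ (the paper's $w$) for $(u,w)\notin\rev(\maxreori{O})$ and using the ornament property in a tree to show that the intermediate vertex lies in $O(z)$, thereby contradicting $(u,v)\in\rev(\maxreori{O})$. Your treatment of part~(ii) is in fact more explicit than the paper's, which simply cites the existence of a minimum and a maximum; your sandwich argument via \cref{lem:Reori2Orn2} makes the convexity of the fiber fully transparent.
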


\begin{proof}
Assume that there is a directed path~$u, u', \dots, v$ in~$T$ such that~$(u,v) \notin \rev(\maxreori{O})$.
Let~$w \in V$ be such that $u \notin O(w)$ but~$u',v \in O(w)$.
Then, for any~$v'$ between~$u'$ and~$v$, we have~$v' \in O(w)$, hence~$(u,v') \not \in \rev(\maxreori{O})$.
We thus obtained that~$(u,v'), (v',v) \in \rev(\maxreori{O})$ implies~$(u,v) \in \rev(\maxreori{O})$, that is, $\rev(\maxreori{O})$ is transitively closed.
We conclude that the fiber of~$O$ is an interval, since it has a minimum by \cref{prop:Reori2OrnMinTC} and a maximum by \cref{prop:Reori2OrnMaxTree}.
\end{proof}


\subsection{Transitively biclosed reorientations}
\label{subsec:transitivelyBiclosedReorientationsT}

We now revisit \cref{rem:biclosedNotLattice,rem:Orn2ReoriNotTransitivelyCoclosed} for directed trees.

\begin{proposition}
\label{prop:biclosedLatticeT}
For a directed tree~$T$, the subposet of~$\Reori(\tc(T))$ induced by the set~$\Rbi(\tc(T))$ of transitively biclosed reorientations of~$T$ is a lattice.
\end{proposition}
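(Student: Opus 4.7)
The plan is to show that $\Rbi(\tc(T))$ is a finite bounded poset in which every pair of elements has a meet (the existence of joins then follows by taking meets of upper bounds). First, I observe that $\Rbi(\tc(T))$ contains both the identity reorientation (with $\rev = \varnothing$) and the full-reversal reorientation (with $\rev = \tc(T)$), since in either case one of $\rev(R)$ and $\tc(T) \ssm \rev(R)$ is empty and hence trivially transitively closed. So $\Rbi(\tc(T))$ is bounded, and it suffices to construct pairwise meets.

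Given $R_1, R_2 \in \Rbi(\tc(T))$, I would compute their meet in $\Rcl(\tc(T))$ first: by \cref{lem:rcl-subsemilattice} this is the intersection $\rev(R_1) \cap \rev(R_2)$, which is transitively closed but in general not transitively coclosed (a small example: $T = u \to v \to w$ with $\rev(R_1) = \{(v,w),(u,w)\}$ and $\rev(R_2) = \{(u,v),(u,w)\}$ gives an intersection $\{(u,w)\}$ whose complement is not closed). The candidate meet $R^\star$ in $\Rbi(\tc(T))$ should be the largest biclosed reorientation with $\rev(R^\star) \subseteq \rev(R_1) \cap \rev(R_2)$. To exhibit it concretely, I would take the join in the lattice $\Rco(\tc(T))$ of all biclosed reorientations that lie below both $R_1$ and $R_2$. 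This set is non-empty (it contains the bottom) and its $\Rco$-join is automatically transitively coclosed, since $\Rco(\tc(T))$ is closed under joins in $\Reori(\tc(T))$.

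The \textbf{main obstacle} is to show that this $\Rco$-join $R^\star$ is also transitively closed, hence biclosed and therefore the required meet. This is precisely where the tree assumption enters: for a general directed graph, multiple paths between two vertices give independent biclosedness constraints that can conflict (this is what prevents $\Rbi(\tc(\Dgraph))$ from being a lattice, as illustrated in \cref{rem:biclosedNotLattice}). In a tree, by contrast, any triple $u <_T v <_T w$ sits on a unique directed path $u \to \cdots \to v \to \cdots \to w$, so an obstruction to transitive closedness involves only a single chain. Concretely, if $(u,v), (v,w) \in \rev(R^\star)$, then $(u,v) \in \rev(R'_1)$ and $(v,w) \in \rev(R'_2)$ for biclosed $R'_1, R'_2 \le R_1, R_2$, and I would exhibit an explicit biclosed reorientation $R'' \le R_1, R_2$ containing $(u,w)$ in its reversal set by grafting the chain structure of $R'_1$ on $[u,v]_T$ to that of $R'_2$ on $[v,w]_T$, with the remaining edges of $\tc(T)$ oriented as in $R'_1 \meet_{\Rcl} R'_2$ elsewhere. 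The tree property ensures no side branches interfere with this local modification.

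The dual construction (take the meet in $\Rcl(\tc(T))$ of all biclosed reorientations above both $R_1$ and $R_2$) produces joins in $\Rbi(\tc(T))$, yielding the lattice structure. In passing, the proof identifies the meet and join in $\Rbi(\tc(T))$ as the largest biclosed below the $\Rcl$-intersection and the smallest biclosed above the $\Rco$-union, respectively.
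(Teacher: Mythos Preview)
Your overall strategy --- showing $\Rbi(\tc(T))$ is bounded and has meets --- is sound and essentially dual to the paper's, which constructs joins directly.  But the step where you actually need the tree hypothesis is not carried out, and your proposed ``grafting'' construction is too vague to be a proof.

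Concretely: to show your $R^\star$ is transitively closed you need, given biclosed $R'_1, R'_2 \le R_1, R_2$ with $(u,v) \in \rev(R'_1)$ and $(v,w) \in \rev(R'_2)$, a biclosed $R'' \le R_1, R_2$ with $(u,w) \in \rev(R'')$.  The natural candidate is $\rev(R'') \eqdef \tc\bigl(\rev(R'_1) \cup \rev(R'_2)\bigr)$: it is closed by construction, contains $(u,w)$, and sits inside both $\rev(R_1)$ and $\rev(R_2)$ since those are closed.  What remains is to check that $R''$ is \emph{coclosed}, and this is exactly the computation the paper performs.  Their argument: if $(a,c) \in \rev(R'')$ with $a <_T b <_T c$, take a path $a = u_0, \dots, u_k = c$ in $\rev(R'_1) \cup \rev(R'_2)$; since $T$ is a tree, some step satisfies $u_{i-1} \le_T b \le_T u_i$, and coclosedness of whichever of $R'_1, R'_2$ contains that step then forces $(u_{i-1},b)$ or $(b,u_i)$ into $\rev(R'_1) \cup \rev(R'_2)$, hence $(a,b)$ or $(b,c)$ into $\rev(R'')$.

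Your ``grafting'' description (use $R'_1$ on $[u,v]_T$, $R'_2$ on $[v,w]_T$, and $R'_1 \meet_{\Rcl} R'_2$ elsewhere) does not specify what happens to edges of $\tc(T)$ running between the path $[u,w]_T$ and side branches, and ``no side branches interfere'' is an assertion rather than an argument.  If you replace this with the transitive-closure candidate above, your proof goes through --- but then you have reproduced the paper's key computation inside a more elaborate wrapper.  The paper's version is shorter: it defines the join of $R_1, R_2 \in \Rbi(\tc(T))$ directly by $\rev(R) = \tc\bigl(\rev(R_1) \cup \rev(R_2)\bigr)$, checks coclosedness by the tree argument, and concludes that $\Rbi(\tc(T))$ is a finite bounded join semilattice, hence a lattice.
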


\begin{proof}
Consider two transitively biclosed reorientations~$R_1$ and~$R_2$ of~$\tc(T)$.
Let~$R$ be the reorientation of~$\tc(T)$ defined by~$\rev(R) = \tc(\rev(R_1) \cup \rev(R_2))$.

The reorientation~$R$ is closed by definition, and we claim that it is also coclosed.
Indeed, we consider~$u,v,w \in V$ such that~$(u,v), (v,w) \in \tc(T)$ and~$(u,w) \in \rev(R)$, and we will show that either~$(u,v) \in \rev(R)$ or~$(v,w) \in \rev(R)$.
By definition of~$\rev(R)$, there is a path~${u = u_0, \dots, u_k = w}$ such that~$(u_{i-1}, u_i) \in \rev(R_1) \cup \rev(R_2)$ for all~$i \in [k]$.
As~$(u,v), (v,w) \in \tc(T)$ and~$T$ is a tree, there is~$i \in [k]$ such that~$u_{i-1} \le_T v \le_T u_i$.
If $v = u_{i-1}$ or $v = u_i$, the claim follows. 
Otherwise, assume by symmetry that~$(u_{i-1}, u_i) \in \rev(R_1)$.
Since~$R_1$ is coclosed, either~$(u_{i-1}, v) \in \rev(R_1)$ or~$(v, u_i) \in \rev(R_1)$.
We thus obtain that either~$(u,v) \in \rev(R)$ or~$(v,w) \in \rev(R)$, hence that~$R$ is indeed coclosed.

Since any transitively closed reorientation~$R'$ such that~$R' > R_1$ and~$R' > R_2$ clearly satisfies~$R' \ge R$, we conclude that~$R$ is the join of~$R_1$ and~$R_2$ among transitively biclosed reorientations.
Thus, $\Rbi(\tc(T))$ is a finite bounded join semilattice, hence a lattice.
\end{proof}

\begin{lemma}
\label{lem:Orn2ReoriT}
For an ornamentation~$O$ of a directed tree~$T$, the reorientation~$\reori{O}$ is transitively biclosed.
\end{lemma}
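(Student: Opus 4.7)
The transitive closedness of $\reori{O}$ is free: it is already established in full generality by \cref{prop:ReoriTC2Orn}, and uses nothing about~$T$ being a tree. So the content of the statement is the transitive \emph{coclosedness}, and the plan is to prove this via its contrapositive, exploiting path uniqueness in the tree.

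Concretely, I would fix $u,v,w \in V$ with $(u,v), (v,w) \in \tc(T)$ and $(u,w) \in \rev(\reori{O})$, and show that $(v,w) \in \rev(\reori{O})$ (one cannot in general hope for $(u,v) \in \rev(\reori{O})$: the relevant edge is the one ``closer to $w$''). Since $T$ is increasing, $(u,v), (v,w) \in \tc(T)$ gives $u \le_T v \le_T w$, so the unique path from $u$ to $w$ in $T$ passes through $v$. By definition of $\reori{O}$, the hypothesis $(u,w) \in \rev(\reori{O})$ means $u \in O(w)$. Now comes the key step: in a tree, an ornament~$U$ at~$w$ containing~$u$ must contain the \emph{entire} unique $T$-path from~$u$ to~$w$, because the existence of a directed path from~$u$ to~$w$ in the subgraph induced by~$U$ forces that unique path to lie in~$U$. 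Applying this to $U = O(w)$ yields $v \in O(w)$, hence $(v,w) \in \rev(\reori{O})$.

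The main obstacle, if any, is keeping track of which of $(u,v)$ or $(v,w)$ one recovers: the argument is asymmetric and only produces the edge with head~$w$, which is exactly what coclosedness asks for. There are no real calculations; the whole proof rests on the uniqueness of paths in trees plus the definition of an ornament, together with the already-established transitive closedness. Once these two halves are in hand, $\reori{O}$ satisfies both clauses of \cref{def:transitivelyClosed} and is therefore transitively biclosed.
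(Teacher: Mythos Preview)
Your proposal is correct and follows essentially the same approach as the paper: cite \cref{prop:ReoriTC2Orn} for closedness, then for coclosedness take $u,v,w$ with $(u,v),(v,w)\in\tc(T)$ and $(u,w)\in\rev(\reori{O})$, use that $u\in O(w)$ forces the entire $T$-path from $u$ to $w$ (hence $v$) into $O(w)$, and conclude $(v,w)\in\rev(\reori{O})$. One cosmetic point: you invoke ``$T$ is increasing'' to get $u\le_T v\le_T w$, but the lemma is stated for arbitrary directed trees; all you need is that $(u,v)\in\tc(T)$ already means $u\le_T v$ by definition.
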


\begin{proof}
We have already seen in \cref{prop:ReoriTC2Orn} that~$\reori{O}$ is transitively closed.
Let~$u, v, w \in V$ be such that~$(u,v)$ and~$(v,w)$ are in~$\tc(T)$,
and that~$(u,w) \in \rev(\reori{O})$. Then~$u \in O(w)$. As~$T$ is a tree, the complete path from~$u$ to~$w$ is in~$O(w)$, hence~$v \in O(w)$ and thus $(v,w) \in \rev(\reori{O})$.
We conclude that~$\reori{O}$ is transitively coclosed.
\end{proof}


\subsection{MacNeille completions}
\label{subsec:MacNeilleT}

We first briefly recall the definition of the MacNeille completion of a poset.

\begin{definition}
\label{def:MacNeilleCompletion}
A \defn{completion} of a finite poset~$P$ is a lattice~$L$ such that there is an order-embedding~$f : P \to L$ (meaning that $f$ is injective and~$x \le_P y$ if and only if~$f(x) \le_L f(y)$).
The MacNeille completion~$C(P)$ of~$P$ is the smallest completion of~$P$, meaning that for any completion~$L$ of~$P$, there exists an order-embedding from~$C(P)$ to~$L$.
\end{definition}

\begin{proposition}[{\cite[Thm.~7.42]{DaveyPriestley}}]
\label{prop:MacNeilleCompletion}
Any finite lattice~$L$ is the MacNeille completion of its subposet induced by~$\JI \cup \MI$.
\end{proposition}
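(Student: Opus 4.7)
The plan is to verify the universal property of the MacNeille completion directly. Set $P \eqdef \JI \cup \MI$, viewed as a subposet of~$L$; the inclusion $P \hookrightarrow L$ is already an order-embedding, so $L$ is a completion of~$P$, and what remains is to embed $L$ into an arbitrary completion of~$P$ compatibly with the inclusion. The key preliminary input is the standard lemma, proved by downward induction on rank in~$L$, that every $x \in L$ satisfies
\[
x = \bigJoin_{j \in \JI,\, j \le x} j \quad\text{and}\quad x = \bigMeet_{m \in \MI,\, m \ge x} m,
\]
with both operations computed in~$L$.

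Fix an arbitrary completion $f : P \hookrightarrow L'$, and define $\phi : L \to L'$ by
\[
\phi(x) \eqdef \bigJoin_{p \in P,\, p \le x} f(p),
\]
where the join is now taken in~$L'$; this exists because $L'$ is a lattice and $L$ is finite. For $p \in P$, the element $p$ belongs to the indexing set of $\phi(p)$, while $q \le p$ in~$P$ forces $f(q) \le f(p)$ in~$L'$; hence $\phi(p) = f(p)$, so $\phi$ extends~$f$ on~$P$. Order-preservation of~$\phi$ is immediate from the definition. For order-reflection, suppose $x \not\le y$ in~$L$; pick $j \in \JI$ with $j \le x$ but $j \not\le y$ (using the first equality applied to~$x$), then pick $m \in \MI$ with $m \ge y$ but $j \not\le m$ (using the second equality applied to~$y$). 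On the one hand $f(j) \le \phi(x)$ in~$L'$, and on the other hand, for every $p \in P$ with $p \le y$ the chain $p \le y \le m$ in~$L$ yields $f(p) \le f(m)$ in~$L'$, so $\phi(y) \le f(m)$. If $f(j) \le \phi(y)$ held, then $f(j) \le f(m)$, whence $j \le m$ since $f$ is an order-embedding, contradicting the choice of~$m$. Thus $\phi(x) \not\le \phi(y)$.

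This shows $\phi$ is an order-embedding $L \hookrightarrow L'$ extending~$f$, and since $L'$ was arbitrary, the universal property identifies~$L$ with the MacNeille completion~$C(P)$. The main technical point is the interplay between join-irreducibles and meet-irreducibles in the order-reflection step: a join-irreducible below~$x$ witnesses that $\phi(x)$ is large, while a meet-irreducible above~$y$ caps $\phi(y)$ from above in~$L'$, so both halves of~$P$ are genuinely needed. Restricting to $P = \JI$ alone would give no such cap and the argument would collapse.
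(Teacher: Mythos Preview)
The paper does not supply its own proof of this proposition; it is quoted as \cite[Thm.~7.42]{DaveyPriestley} and used as a black box. Your argument is correct and is essentially the standard textbook proof: the two representations $x = \bigJoin_{j \le x} j = \bigMeet_{m \ge x} m$ over join- and meet-irreducibles are exactly what is needed, and your order-reflection step (a join-irreducible $j \le x$ to push $\phi(x)$ up, a meet-irreducible $m \ge y$ to cap $\phi(y)$) is the crux. One very minor point: when $x = \hat 0_L$ and $\hat 0_L \notin P$, the defining join for $\phi(x)$ is empty, so you are implicitly using that $L'$ has a bottom element; this is harmless here since the MacNeille completion of a finite poset is finite, and it suffices to run your construction with $L' = C(P)$ to get mutually inverse embeddings between $L$ and $C(P)$.
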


We now come back to ornamentation lattices.
As already observed on the directed tree~$\Xgraph$ of \cref{fig:ornamentationsX},
\begin{itemize}
\item there might be some transitively biclosed but cyclic reorientations of~$\tc(T)$ (\cref{rem:biclosedCyclic}),
\item the acyclic reorientation poset~$\AReori(\tc(T))$ is not always a lattice (\cref{prop:AReoriLat}),
\item there might be some cyclic ornamentations of~$T$ (\cref{rem:COrn}),
\item the acyclic ornamentation poset~$\AOrn(T)$ is not always a lattice (\cref{rem:AOrnLat}).
\end{itemize}
However, we now observe that for a directed tree~$T$, the transitively biclosed reorientation lattice~$\Rbi(\tc(T))$ (resp.~the ornamentation lattice~$\Orn(T)$) is precisely the MacNeille completion of the acyclic reorientation poset~$\AReori(\tc(T))$ (resp.~of the acyclic ornamentation lattice~$\AOrn(T)$).
For this, we need the following lemmas.

\begin{lemma}
\label{lem:irreduciblesReorientationsAcyclicT}
If~$R$ is a transitively biclosed and cyclic reorientation of~$\tc(T)$, then it covers (resp.~is covered by) at least two biclosed reorientations of~$\tc(T)$.
\end{lemma}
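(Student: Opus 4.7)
The plan is to exhibit two distinct biclosed reorientations covered by $R$, and then to derive two distinct biclosed reorientations covering $R$ by duality. I will work with a directed cycle $C = u_0 \to u_1 \to \cdots \to u_{k-1} \to u_0$ in $R$ of minimum length, and classify each cycle edge as \emph{Type A} when $(u_i, u_{i+1}) \in \tc(T) \setminus \rev(R)$ (so $u_i \le_T u_{i+1}$) or \emph{Type B} when $(u_{i+1}, u_i) \in \rev(R)$ (so $u_{i+1} \le_T u_i$).

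The first step is to show that $C$ must contain at least two Type A and at least two Type B edges. If only a single Type B edge $e = (u_{j+1}, u_j) \in \rev(R)$ existed in $C$, the remaining edges would arrange the cycle vertices into a $\le_T$-ascending chain $u_{j+1} \le_T u_{j+2} \le_T \cdots \le_T u_j$ lying entirely in $\tc(T) \setminus \rev(R)$, and iterated coclosedness of $\rev(R)$ along this chain would force $(u_{j+1}, u_j) \notin \rev(R)$, contradicting its Type B nature. The symmetric closedness argument excludes cycles with only one Type A edge.

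Next, I would pick two distinct Type B edges $e_1, e_2$ of $C$ and argue that each $\rev(R) \setminus \{e_j\}$ is still biclosed, so that it is a lower cover $R_j \lessdot R$ in $\Rbi(\tc(T))$. A closedness failure for $\rev(R) \setminus \{e_j\}$, writing $e_j = (a, b)$, would arise from intermediate $(a, c), (c, b) \in \rev(R) \setminus \{e_j\}$ with $a <_T c <_T b$ on the unique $T$-path from $a$ to $b$; together with the cycle edges of $C$ adjacent to $e_j$, such a configuration allows one to shortcut $C$ through $c$ and produce a strictly shorter cycle in $R$, contradicting minimality. A coclosedness failure would analogously force additional $\tc(T)$-edges incident to an endpoint of $e_j$ to land in $\tc(T) \setminus \rev(R)$, again yielding a shorter cycle in $R$ via the Type A cycle edge at that endpoint. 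Since $e_1 \ne e_2$, the covers $R_1, R_2$ are distinct, because $e_{3-j} \in \rev(R_j)$ while $e_j \notin \rev(R_j)$.

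Finally, the order-reversing involution $S \mapsto \tc(T) \setminus S$ on biclosed subsets of $\tc(T)$ interchanges closedness and coclosedness, and sends the cycle $C$ of $R$ to a cycle (traversed in the opposite direction) of the involuted reorientation $R'$, which is again cyclic and biclosed; lower covers of $R'$ thereby correspond to upper covers of $R$, and applying the construction to $R'$ yields the two required distinct upper covers. The delicate part of this plan is the biclosedness verification for $\rev(R) \setminus \{e_j\}$: the minimality of $C$ is essential, and the tree structure of $T$ (so that any two ancestors of a common vertex are totally $T$-ordered) is needed to tightly control the potential (co)closure witnesses at the two endpoints of $e_j$.
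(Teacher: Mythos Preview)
Your overall strategy is reasonable and shares much with the paper's: a minimum-length cycle is indeed an induced alternating cycle in $\tc(T)$ (two consecutive edges of the same type would produce a chord by biclosedness), and the duality $\rev(R) \mapsto \tc(T) \ssm \rev(R)$ is a legitimate way to pass from lower covers to upper covers. The paper instead directly constructs $k \ge 2$ upper covers by \emph{adding} a Type~A edge to $\rev(R)$, and symmetrically $k$ lower covers by removing a Type~B edge, so the two approaches are essentially dual.

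The gap is in your biclosedness verification for $\rev(R) \ssm \{e_j\}$. Write $e_j = (a,b)$ with $a <_T b$, so the $R$-edge is $b \to a$. A closedness failure means there exists $c$ with $a <_T c <_T b$ and $(a,c),(c,b) \in \rev(R)$, i.e.\ $R$-edges $b \to c$ and $c \to a$. You claim this lets you ``shortcut $C$ through $c$'', but it does the opposite: replacing the single edge $b \to a$ by the path $b \to c \to a$ \emph{lengthens} the cycle by one. The adjacent Type~A edges $u_{i-1} \to b$ and $a \to u_{i+2}$ do not help, because $u_{i-1}$ and $c$ (both $<_T b$) need not be $T$-comparable, nor need $c$ and $u_{i+2}$ (both $>_T a$); so there is no forced chord producing a shorter cycle. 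The same issue arises for your coclosedness argument: a witness $x <_T a$ with $(x,a) \notin \rev(R)$ and $(x,b) \in \rev(R)$ gives $R$-edges $b \to x \to a$, again a lengthening, and replacing $a$ by $x$ yields a cycle of the \emph{same} length, not a shorter one. Minimum length is simply not the right minimality condition here.

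The paper fixes this by imposing a stronger (but still attainable) minimality on the induced alternating cycle $C = (u_1,v_1,\dots,u_k,v_k)$: no $u_i$ can be replaced by an outgoing $R$-neighbor, and no $v_i$ by an incoming $R$-neighbor, while still yielding an induced oriented cycle. This condition is exactly what rules out the potential (co)closedness witnesses when one flips a single edge, because each such witness would furnish precisely such a replacement vertex. If you want to salvage your argument, you need to replace ``minimum length'' by an analogous extremality condition tailored to the Type~B edges you remove.
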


\begin{proof}
As~$R$ contains a cycle, it contains an induced oriented cycle~$C$ (any chord can be used to make the cycle shorter).
Since~$R$ is transitively biclosed, $C$ is an induced alternating cycle in~$\tc(T)$.
Write~$C = (u_1, v_1, \dots, u_k, v_k)$ where~$(u_i, v_i)$ and~$(u_{i+1},v_i)$ are in~$\tc(T)$ for all~$i \in [k]$.
We can moreover assume without loss of generality that for~$i \in [k]$ and any outgoing neighbor~$u'$ of~$u_i$ (resp.~incoming neighbor~$v'$ of~$v_i$) in~$R$, replacing~$u_i$ by~$u'$ (resp.~$v_i$ by~$v'$) in~$C$ does not yield an induced oriented cycle in~$R$.
For~$i \in [k]$, consider the reorientation~$R_i$ of~$\tc(T)$ obtained by reversing the edge~$(u_i,v_i)$, meaning that~$\rev(R_i) = \rev(R) \cup \{(u_i,v_i)\}$.
The reorientation~$R_i$ clearly covers~$R$, and we claimed that is still biclosed.
Indeed,
\begin{itemize}
\item If~$R_i$ were not closed, we would have~$u' \in V$ with~$(u',u_i) \in \rev(R)$ and~$(u',v_i) \notin \rev(R)$ or~$v' \in V$ with~$(v_i, v') \in \rev(R)$ and~$(u_i,v') \notin \rev(R)$. We could then replace~$u_i$ by~$u'$ or~$v_i$ by~$v'$ in~$C$ and obtained another induced oriented cycle in~$R$.
\item If~$R_i$ were not coclosed, we would have~$u' \in V$ with~$(u_i,u') \in \tc(T) \ssm \rev(R)$ and~$(u',v_i) \in \tc(T) \ssm \rev(R)$. We could then replace~$u_i$ by~$u'$ in~$C$ and obtained another induced oriented cycle in~$R$.
\end{itemize}
In both cases, this contradicts our assumption on~$C$.
Hence, we obtained~$k$ biclosed reorientations of~$\tc(T)$ covering~$R$.
Symmetrically, we can construct a biclosed reorientations~$R_i$ of~$\tc(T)$ covered by~$R$ by reversing the arrow~$(u_{i+1}, v_i)$ for any~$i \in [k]$.
\end{proof}

\begin{lemma}
\label{lem:irreduciblesOrnamentationsAcyclicT}
For any~$P \in \PP(T)$, the reorientations~$\reori{J_P}$ and~$\reori{M_P}$ of~$\tc(T)$ are both acyclic.
\end{lemma}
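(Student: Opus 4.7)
My plan is to handle the two ornamentations separately, giving a direct combinatorial argument for $\reori{J_P}$ and exhibiting an explicit topological order for $\reori{M_P}$.

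For $\reori{J_P}$: since $J_P(w) = \{w\}$ for all $w \ne v$ while $J_P(v) = P$, the reversed edges in $\reori{J_P}$ are precisely the arrows $(a, v) \in \tc(T)$ for $a \in P \ssm \{v\}$, each flipped to an arrow $v \to a$. I would argue by contradiction: assume $\reori{J_P}$ contains a directed cycle. As $\tc(T)$ is acyclic, this cycle must traverse at least one reversed edge, and because every reversed edge issues from the common source~$v$ and vertices in a cycle are distinct, there is exactly one reversed edge on the cycle. Writing the cycle as $v = v_1 \to v_2 \to \cdots \to v_k \to v_1$ with $v_1 \to v_2$ the unique reversed edge, $v_2 \in P \ssm \{v\}$ and all remaining arrows are non-reversed $\tc(T)$-arrows, hence $<_T$-increasing. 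Chaining these together forces $u \le_T v_2 <_T v_3 <_T \cdots <_T v_k <_T v$, so each $v_i$ lies in the tree interval~$[u,v]_T = P$. But then the closing arrow $v_k \to v$ corresponds to $(v_k, v) \in \tc(T)$ with $v_k \in P$, which is reversed rather than non-reversed --- contradiction.

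For $\reori{M_P}$: I plan to exhibit an explicit topological ordering. Setting $Q \eqdef (u, v]_T$, a direct unwinding of \cref{def:irreducibleOrnamentationsT} shows that an arrow $(a, w) \in \tc(T)$ is reversed in $\reori{M_P}$ exactly when either $w \notin Q$, or $w \in Q$ and $a \not\le_T u$. I would then order the vertices of~$V$ so that every element of $V \ssm Q$ comes before every element of~$Q$, and inside each of these two blocks extend the reverse of~$<_T$ to a linear order. Showing this is a valid topological order amounts to a short case analysis: the non-reversed edges go from a vertex of~$A \eqdef \lessin{T}{u}$ (which lies in $V \ssm Q$) into~$Q$, respecting the block ordering, while every reversed edge $w \to a$ satisfies $a <_T w$ and so respects the reversed tree order inside whichever block it lies in.

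The main subtle point, which I expect to be the only real obstacle, is to check that a reversed edge $w \to a$ with $w \in Q$ has $a \in Q$ as well, so that it stays inside the second block. This is where the tree hypothesis is crucial: since $u \le_T w$ and $a \le_T w$, both $a$ and $u$ are $<_T$-ancestors of~$w$ in the tree~$T$ and are therefore $<_T$-comparable, and the reversal condition $a \not\le_T u$ then forces $u <_T a \le_T v$, so~$a \in Q$. Once this is established the proposed linear order is genuinely a topological sort of $\reori{M_P}$, and acyclicity follows.
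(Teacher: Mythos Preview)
Your argument for~$\reori{J_P}$ is correct and matches the paper's proof closely.

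Your argument for~$\reori{M_P}$ contains a genuine error at precisely the point you flagged as ``the only real obstacle''. You claim that since $u \le_T w$ and $a \le_T w$, the vertices $a$ and~$u$ are $<_T$-comparable. This holds in a \emph{rooted} tree, where each vertex has a unique in-neighbor and hence $\lessin{T}{w}$ is a chain, but it is false for a general directed tree, which is the setting of this section. Concretely, take $T = \Xgraph$ (the tree with edges $1 \to 3$, $2 \to 3$, $3 \to 4$, $3 \to 5$) and $P = [1,4]_T$, so $u = 1$, $v = 4$, $Q = \{3,4\}$. The edge $(2,3) \in \tc(T)$ is reversed (since $2 \in M_P(3) = \{2,3\}$), giving the arrow $3 \to 2$ in~$\reori{M_P}$ with $3 \in Q$ and $2 \notin Q$. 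Your proposed linear order places $2 \in V \ssm Q$ before $3 \in Q$, so it is not a topological sort.

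The fix is to replace $Q = (u,v]_T$ by the larger set $B \eqdef \lessin{T}{v} \ssm \lessin{T}{u}$. Then a reversed edge $w \to a$ with $w \in B$ does have $a \in B$: from $a \le_T w \le_T v$ we get $a \in \lessin{T}{v}$, and if $a \le_T u$ then $a \le_T u \le_T v$ and $a \le_T w \le_T v$ place both $u$ and $w$ on the unique directed path $[a,v]_T$, forcing $u$ and $w$ to be comparable, which contradicts either $w \not\le_T u$ (from $w \in B$) or the reversal condition $a \not\le_T u$ (when $w \in Q$). This is essentially the paper's approach: rather than building a topological order, it observes directly that every non-reversed edge enters~$B$ while no edge of~$\reori{M_P}$ leaves~$B$, so a cycle (which must use at least one non-reversed edge) cannot close up.
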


\begin{proof}
Let~$u \le_T v$ be the endpoints of the directed path~$P$.

Observe that~$\rev(\reori{J_P})= (P \ssm \{v\}) \times \{v\}$.
Hence, any induced cycle in~$\reori{J_P}$ must consist of an edge~$(v,y)$ and a directed path from~$y$ to~$v$ in~$T$ for some~$y \in P$.
This is impossible as the last edge of this path would be in~$\rev(\reori{J_P})$.

Observe that~$\tc(T) \ssm \rev(\reori{M_P}) = \lessin{T}{u} \times (P \ssm \{u\})$.
Any cycle in~$\reori{M_P}$ should have an edge of $\tc(T) \ssm \rev(\reori{M_P})$, hence a vertex in~$\lessin{T}{u}$ and a vertex in~$P \ssm \{u\} \subseteq T_{\le v} \ssm T_{\le u}$.
This is impossible as there is no edge of~$\reori{M_P}$ leaving~$T_{\le v} \ssm T_{\le u}$.
\end{proof}

\begin{corollary}
\label{coro:irreduciblesAcyclicT}
For a directed tree~$T$, 
\begin{itemize}
\item all join (resp.~meet) irreducible transitively biclosed reorientations of~$\tc(T)$ are acyclic,
\item all join (resp.~meet) irreducible ornamentations of~$T$ are acyclic.
\end{itemize}
\end{corollary}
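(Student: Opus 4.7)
The plan is to obtain each of the two bullets as a direct consequence of the two immediately preceding technical lemmas combined with the characterizations of irreducibles already established for~$\Rbi(\tc(T))$ and~$\Orn(T)$.

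For the first bullet, I would argue by contrapositive. Suppose that~$R$ is a transitively biclosed reorientation of~$\tc(T)$ that is cyclic. Then \cref{lem:irreduciblesReorientationsAcyclicT} gives at least two biclosed reorientations of~$\tc(T)$ covered by~$R$ and at least two biclosed reorientations covering~$R$. Hence~$R$ is neither join irreducible nor meet irreducible in the lattice~$\Rbi(\tc(T))$ (whose existence is \cref{prop:biclosedLatticeT}). Taking contrapositives yields both halves of the first bullet simultaneously.

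For the second bullet, I would first invoke \cref{thm:OrnSemidistributiveT}, which identifies the join (resp.~meet) irreducible ornamentations of~$T$ as precisely the~$J_P$ (resp.~$M_P$) for~$P \in \PP(T)$. To establish acyclicity of each such~$O \in \{J_P, M_P\}$, it suffices, by \cref{def:AOrn} together with condition~(i) of \cref{lem:AOrn2}, to exhibit an acyclic reorientation~$R$ of~$\tc(T)$ with~$\orn{R} = O$. \cref{lem:Orn2Reori2} tells us that~$\orn{\reori{O}} = O$ for any ornamentation~$O$, so the reorientation~$R \eqdef \reori{O}$ does the job \emph{provided} it is acyclic~---~but this is exactly the content of \cref{lem:irreduciblesOrnamentationsAcyclicT} applied to~$J_P$ and~$M_P$. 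This completes both halves of the second bullet.

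There is really no hard step left at this stage: the substance has been pushed into \cref{lem:irreduciblesReorientationsAcyclicT} and \cref{lem:irreduciblesOrnamentationsAcyclicT}, and the corollary is a one-line assembly of these lemmas with \cref{thm:OrnSemidistributiveT} and \cref{lem:Orn2Reori2}. The only thing to be careful about is keeping the two categories straight~---~the first bullet lives in the ambient lattice~$\Rbi(\tc(T))$ (not in~$\Reori(\tc(T))$), while the second bullet uses the fact that the irreducibles of~$\Orn(T)$ have a concrete presentation as~$J_P, M_P$ and that the map~$O \mapsto \reori{O}$ gives a canonical acyclic lift witnessing acyclicity of the ornamentation.
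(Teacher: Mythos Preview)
Your proposal is correct and follows essentially the same approach as the paper: the first bullet is deduced from \cref{lem:irreduciblesReorientationsAcyclicT} by contrapositive, and the second from \cref{thm:OrnSemidistributiveT} together with \cref{lem:irreduciblesOrnamentationsAcyclicT}. Your write-up is simply more explicit than the paper's one-line proof, in particular spelling out the role of \cref{lem:Orn2Reori2} and \cref{lem:AOrn2}\,(i) in converting the acyclicity of~$\reori{J_P}$ and~$\reori{M_P}$ into acyclicity of the ornamentations~$J_P$ and~$M_P$ themselves.
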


\begin{proof}
It follows from \cref{lem:irreduciblesReorientationsAcyclicT} for transitively biclosed reorientations, and from \cref{lem:irreduciblesOrnamentationsAcyclicT,thm:OrnSemidistributiveT} for ornamentations.
\end{proof}

We conclude with the main statement of this section.
\begin{figure}[b]
	\centerline{\includegraphics[scale=.68]{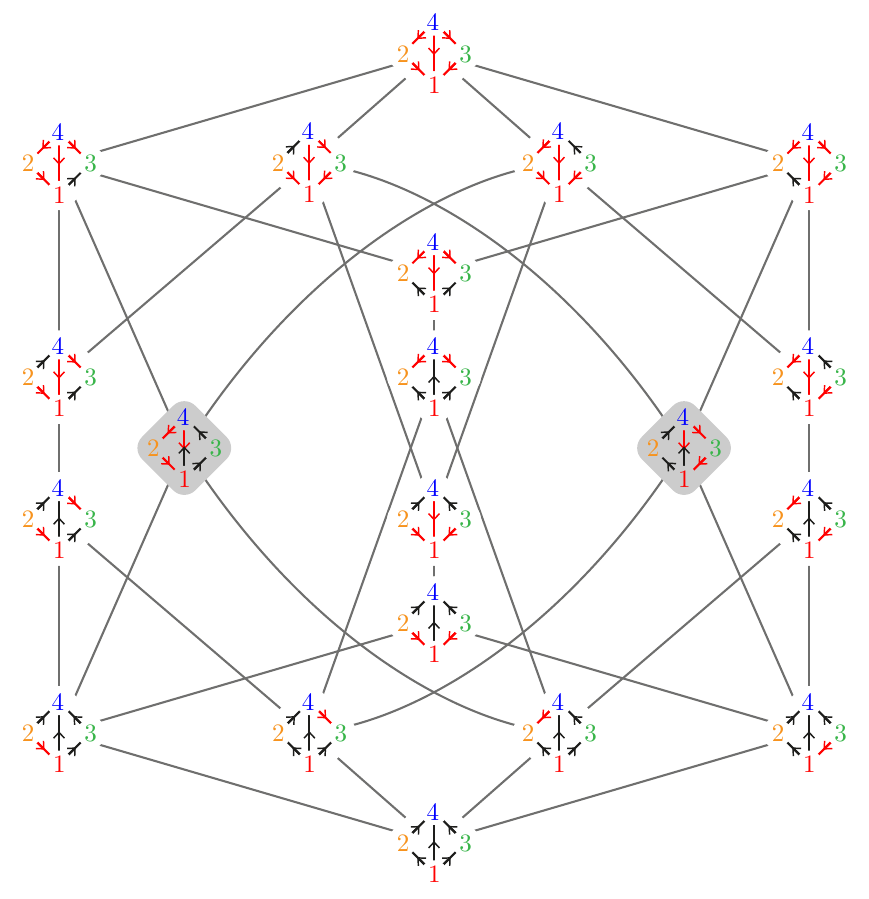}}
	\caption{The MacNeille completion of the acyclic reorientation poset~$\AReori(\tc(\Dgraph))$ of \cref{fig:acyclicReorientationsD}. (Gray = added element). See \cref{rem:MacNeille}.}
	\label{fig:MacNeilleAcyclicReorientationsD}
\end{figure}
\begin{figure}
	\centerline{\includegraphics[scale=.68]{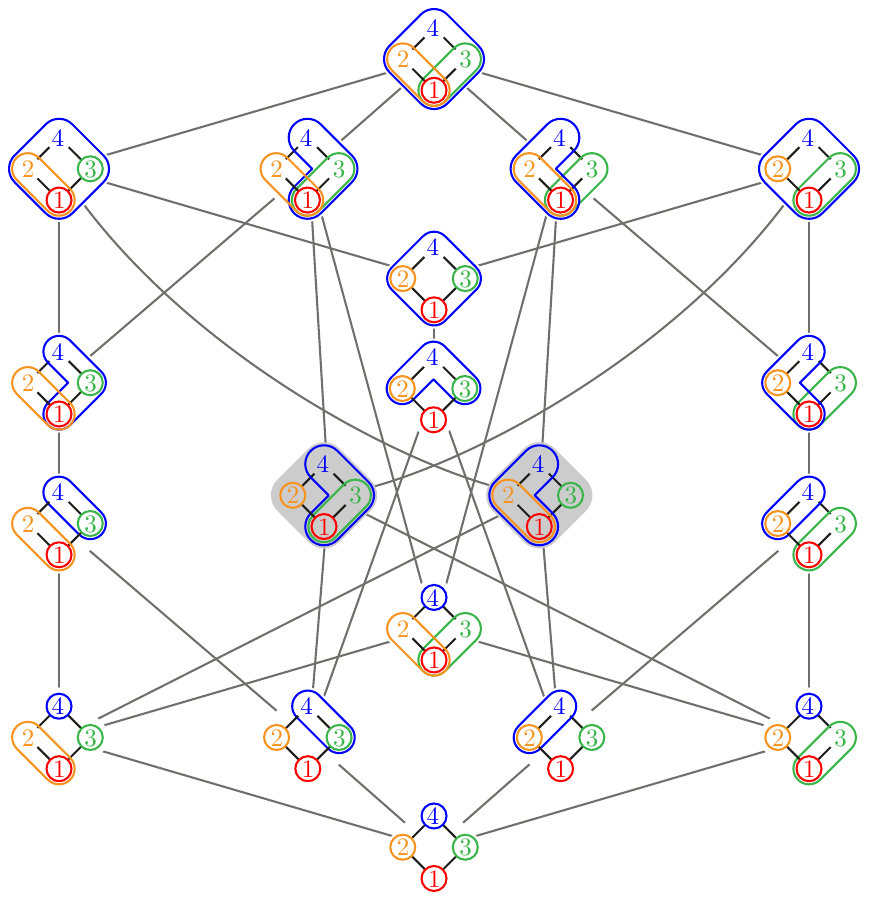}}
	\caption{The MacNeille completion of the acyclic ornamentation poset~$\AOrn(\Dgraph)$ of \cref{fig:ornamentationsD}. (Gray = added element). See \cref{rem:MacNeille}.}
	\label{fig:MacNeilleAcyclicOrnamentationsD}
\end{figure}

\begin{theorem}
\label{thm:MacNeille}
For a directed tree~$T$, 
\begin{itemize}
\item the transitively biclosed reorientation lattice~$\Rbi(\tc(T))$ is the MacNeille completion of the acyclic reorientation poset~$\AReori(\tc(T))$,
\item the ornamentation lattice~$\Orn(T)$ is the MacNeille completion of the acyclic ornamentation poset~$\AOrn(T)$ (or equivalently of the acyclic sourcing poset~$\ASour(\PP(T))$).
\end{itemize}
\end{theorem}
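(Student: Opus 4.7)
The plan is to invoke the standard density characterization of the MacNeille completion: for a finite lattice~$L$ and a subposet~$P \subseteq L$, the lattice~$L$ is the MacNeille completion of~$P$ if and only if every element of~$L$ is simultaneously a join of elements of~$P$ and a meet of elements of~$P$. This is a slight strengthening of \cref{prop:MacNeilleCompletion}, available in~\cite{DaveyPriestley}. Combined with \cref{coro:irreduciblesAcyclicT}, this criterion reduces both halves of the statement to essentially formal verifications.

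For the ornamentation half, the inclusion $\AOrn(T) \subseteq \Orn(T)$ is immediate from \cref{def:AOrn}. Moreover, \cref{thm:OrnSemidistributiveT} provides, for every~$O \in \Orn(T)$, a canonical join representation~$O = \bigJoin_P J_P$ and a canonical meet representation~$O = \bigMeet_P M_P$ in terms of join- and meet-irreducible ornamentations. Since all~$J_P$ and all~$M_P$ are acyclic by \cref{coro:irreduciblesAcyclicT}, they lie in~$\AOrn(T)$, and hence~$\AOrn(T)$ is both join-dense and meet-dense in~$\Orn(T)$. The density criterion then identifies~$\Orn(T)$ as the MacNeille completion of~$\AOrn(T)$. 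The equivalent statement for~$\ASour(\PP(T))$ follows at once from the poset isomorphism $\ASour(\PP(T)) \simeq \AOrn(T)$ of \cref{prop:ASour2AOrn}.

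For the reorientation half, I would first verify the inclusion~$\AReori(\tc(T)) \subseteq \Rbi(\tc(T))$. If~$R$ is acyclic and~$(u,v),(v,w) \in \rev(R)$, then~$R$ contains the directed path $w \to v \to u$, so acyclicity of~$R$ forces~$(w,u) \in R$, giving~$(u,w) \in \rev(R)$; the symmetric argument applied to~$\tc(T) \ssm \rev(R)$ establishes transitive coclosedness. Next, $\Rbi(\tc(T))$ is a finite lattice by \cref{prop:biclosedLatticeT}, so by \cref{prop:MacNeilleCompletion} it is the MacNeille completion of its own subposet of join and meet irreducibles. Since these irreducibles are all acyclic by \cref{coro:irreduciblesAcyclicT}, they lie in~$\AReori(\tc(T))$, so $\AReori(\tc(T))$ is both join-dense and meet-dense in~$\Rbi(\tc(T))$, and the density criterion concludes.

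The main substantive ingredient is \cref{coro:irreduciblesAcyclicT}, which has already been established. What remains is the routine inclusion $\AReori(\tc(T)) \subseteq \Rbi(\tc(T))$ and the correct invocation of the MacNeille density criterion. I therefore expect the proof to be short.
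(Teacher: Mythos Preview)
Your proposal is correct and follows essentially the same approach as the paper: both reduce the statement to \cref{coro:irreduciblesAcyclicT} combined with the MacNeille completion criterion of \cref{prop:MacNeilleCompletion}. The paper's proof is the one-liner ``This follows from \cref{prop:MacNeilleCompletion,coro:irreduciblesAcyclicT}''; your version simply makes explicit two points the paper leaves implicit, namely the inclusion $\AReori(\tc(T)) \subseteq \Rbi(\tc(T))$ and the passage from ``MacNeille completion of~$\JI \cup \MI$'' to ``MacNeille completion of any intermediate subposet'' via join/meet density.
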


\begin{proof}
This follows from \cref{prop:MacNeilleCompletion,coro:irreduciblesAcyclicT}.
\end{proof}

\pagebreak
\begin{remark}
\label{rem:MacNeille}
Note that~\cref{thm:MacNeille} fails beyond directed trees.
For instance, for the directed graph~$\Dgraph$ of  \cref{fig:ornamentationsD},
\begin{itemize}
\item The transitively biclosed reorientation poset~$\Rbi(\tc(\Dgraph))$ is not even a lattice as observed in \cref{rem:biclosedNotLattice}. The MacNeille completion of the acyclic reorientation poset~$\AReori(\tc(\Dgraph))$ is illustrated in \cref{fig:MacNeilleAcyclicReorientationsD}.
\item Adding to the acyclic ornamentation poset~$\AOrn(\Dgraph)$ the two cyclic ornamentations
\[
\includegraphics[scale=.8,valign=c]{cyclicOrnamentationD3}
\quad\text{and}\quad
\includegraphics[scale=.8,valign=c]{cyclicOrnamentationD4}
\]
suffices to make it a lattice, as illustrated in \cref{fig:MacNeilleAcyclicOrnamentationsD}.
\end{itemize}
Another interesting case is the directed graph~$\Kgraph$ of \cref{fig:ornamentationsKT,fig:reorientationsKNT,fig:acyclicReorientationsKT}, for which the acyclic ornamentations already form lattice, even if there exists a cyclic ornamentation.
In contrast, \cref{thm:MacNeille} still holds for the directed graph~$\Tgraph$ of \cref{fig:ornamentationsKT}.
In general, we have no description of the MacNeille completion of the acyclic ornamentation lattice of an arbitrary directed graph.
\end{remark}


\section{Unstarred trees and rooted trees}
\label{sec:unstarredTrees}

In this section, we discuss the specific case when the underlying directed graph~$D$ is an unstarred tree~$T$ (defined in \cref{subsec:unstarredTrees}).
The main result is the proof of \cref{thm:main2} (\cref{subsec:OrnUT}).
Finally, we discuss enumeration properties of ornamentations of brooms (\cref{subsec:brooms}) and combs (\cref{subsec:combs}).


\subsection{Unstarred trees}
\label{subsec:unstarredTrees}

We first define the family of unstarred trees by the following equivalent conditions on~$\tc(T)$.
We will see further equivalent conditions in terms of the posets~$\AReori(\tc(T))$, $\ASour(\PP(T))$ and~$\AOrn(T)$ in \cref{subsec:OrnUT}.
See \cref{coro:starredCharacterizations} for a summary.

\begin{lemma}
\label{lem:starredCharacterization}
The following conditions are equivalent for a directed tree~$T$ on~$V$:
\begin{enumerate}[(i)]
\item $\tc(T)$ contains an induced alternating cycle,
\item there are distinct $a,b,c,d,e \in V$ such that~$(a,c)$, $(b,c)$, $(c,d)$, and~$(c,e)$ are all in~$\tc(T)$, while~$(a,b)$, $(b,a)$, $(d,e)$ and~$(e,d)$ are not in~$\tc(T)$,
\item there are~$u,v \in V$ such that~$u$ has at least two incoming edges in~$T$, $v$ has at least two outgoing edges in~$T$, and $T$ has a directed path from~$u$ to~$v$,
\end{enumerate}
\end{lemma}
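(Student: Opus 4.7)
My plan is to establish the cycle of implications $(\text{iii}) \Rightarrow (\text{ii}) \Rightarrow (\text{i}) \Rightarrow (\text{iii})$; the first two are routine verifications, and the real work is the last. For $(\text{iii}) \Rightarrow (\text{ii})$ I will set $c \eqdef u$, pick two distinct incoming $T$-neighbors $a,b$ of $u$ and two distinct outgoing $T$-neighbors $d,e$ of $v$, and observe that the edges $(a,c),(b,c),(c,d),(c,e)$ lie in $\tc(T)$ by composing tree edges along the path $u \le_T v$, while the pairs $(a,b),(b,a),(d,e),(e,d)$ are absent from $\tc(T)$ because in a tree the unique undirected path between two in-neighbors of $u$ reverses orientation at $u$ (and symmetrically for $d,e$ at $v$); distinctness of the five vertices is immediate from tree acyclicity. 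For $(\text{ii}) \Rightarrow (\text{i})$, I will show that the induced subgraph of $\tc(T)$ on $\{a,b,d,e\}$ has exactly the four edges $a \to d$, $a \to e$, $b \to d$, $b \to e$ (coming from transitivity through $c$, together with the non-edges of the hypothesis and the acyclicity of $\tc(T)$), which is precisely an induced alternating $4$-cycle.

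For $(\text{i}) \Rightarrow (\text{iii})$, I take an induced alternating cycle $C = (v_1, \ldots, v_{2k})$ in $\tc(T)$ with odd-indexed sources and even-indexed sinks. To each source $v_{2i-1}$ I associate its \emph{split} $s_{2i-1}$, the last vertex common to the $T$-paths $v_{2i-1} \to v_{2i-2}$ and $v_{2i-1} \to v_{2i}$, and to each sink $v_{2i}$ its \emph{merge} $m_{2i}$, the first vertex common to the $T$-paths $v_{2i-1} \to v_{2i}$ and $v_{2i+1} \to v_{2i}$; by construction, each $s_{2i-1}$ then has at least two outgoing edges in $T$ and each $m_{2i}$ has at least two incoming edges in $T$. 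I will then branch into cases: if some $s_{2i-1} = m_{2j}$, this vertex satisfies (iii) with $u = v$; if two distinct sources share a split (or two distinct sinks share a merge), a short path-merging argument produces a vertex with $\ge 2$ incoming $T$-edges lying on a directed $T$-path ending at a vertex with $\ge 2$ outgoing $T$-edges, again giving (iii).

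The main obstacle is the last subcase, where all $2k$ junction vertices are pairwise distinct. My strategy is to consider the $2k$ \emph{middle segments} $s_{2i-1} \to m_{2j}$ corresponding to the cycle edges, and to show that any pair of them sharing an internal vertex $x$ in $T$ forces both $v_{2i-1} \to v_{2j'}$ and $v_{2i'-1} \to v_{2j}$ to lie in $\tc(T)$; a short index-arithmetic computation then shows that for $k \ge 3$ at least one of these must be a chord of $C$, contradicting induced-ness. Hence for $k \ge 3$ the middle segments are pairwise internally disjoint and form an undirected $2k$-cycle on the junctions in $T$, contradicting that $T$ is a tree. For $k = 2$, a shared internal vertex between two diagonal middle segments does not create a chord, but such a vertex itself acquires $\ge 2$ incoming and $\ge 2$ outgoing $T$-edges and so yields (iii) directly; in the absence of any such sharing, the same $4$-cycle contradiction applies. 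The crux is thus to convert the combinatorial induced-ness of $C$ into a rigid geometric constraint on how the $T$-paths realizing the cycle edges can overlap, and to combine this with the tree constraint to close each case.
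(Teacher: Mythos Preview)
Your cycle of implications is sound, and your $(\text{iii}) \Rightarrow (\text{ii})$ and $(\text{ii}) \Rightarrow (\text{i})$ match the paper's arguments essentially verbatim. For the nontrivial direction the paper takes a different route: rather than proving $(\text{i}) \Rightarrow (\text{iii})$ directly, it proves $(\text{i}) \Rightarrow (\text{ii})$ by an inductive reduction. It first checks that every edge of the induced alternating cycle lies in $\tc(T) \setminus T$, then lets $w$ be the $T$-predecessor of the sink $v_k$ along the $T$-path from $u_1$, argues that $w$ also lies on the $T$-path from $u_k$ to $v_k$, and either reads off the five vertices of $(\text{ii})$ (if $(w,v_i) \in \tc(T)$ for some other sink $v_i$) or replaces $v_k$ by $w$ to obtain a new induced alternating cycle with strictly smaller total $T$-path length, concluding by induction. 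Your structural ``junction cycle'' argument is more geometric and avoids induction; the paper's reduction is shorter and sidesteps the case analysis on $k$.

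Two small imprecisions in your sketch are worth tightening. First, you tacitly assume each middle segment is a directed $T$-path $s_{2i-1} \to m_{2j}$. For $k \ge 3$ this is forced by your chord argument, but for $k = 2$ one can have $m_{2i} <_T s_{2i-1}$; you should note that then $(u,v) = (m_{2i}, s_{2i-1})$ already witnesses $(\text{iii})$. Second, in the $k = 2$ diagonal case, the shared vertex $x$ need not itself have in- and out-degree~$\ge 2$ in $T$: what you actually obtain is $v_{2i-1}, v_{2i'-1} \le_T x \le_T v_{2j}, v_{2j'}$ with the two sources (resp.\ sinks) incomparable, so the merge $m \le_T x$ of the incoming $T$-paths and the split $s \ge_T x$ of the outgoing $T$-paths satisfy $m \le_T s$ and furnish $(\text{iii})$. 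With these two patches your argument goes through.
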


\begin{proof}
\uline{\textsl{(i) $\Rightarrow$ (ii):}}
Let~$C$ be an induced alternating cycle in~$\tc(T)$.
Write~$C = (u_1, v_1, \dots, u_k, v_k)$ where~$(u_i, v_i)$ and~$(u_{i+1}, v_i)$ are in~$\tc(T)$ for all~$i \in [k]$.
As~$C$ is induced, $u_i$ and~$u_j$ (resp.~$v_i$ and~$v_j$) are incomparable for any~$i \ne j$.

Assume first that~$(u_1, v_k)$ is an edge of~$T$.
Let~$D$ and~$U$ denote the connected components of~$T \ssm \{(u_1, v_k)\}$ containing~$u_1$ and~$v_k$ respectively.
Let~$i \in [k]$ be maximal such that~$u_i \in D$.
Since~$u_{i+1} \in U$ and~$(u_{i+1}, v_i) \in \tc(T)$, we have~$v_i \in U$.
Hence, the edge~$(u_1, v_k)$ is an edge of the path from~$u_i$ to~$v_i$ in~$T$.
We conclude that~$C$ was not induced.
Note that the same argument shows that all edges of~$C$ are in~$\tc(T) \ssm T$.

Let~$w$ denote the last vertex before~$v_k$ along the path from~$u_1$ to~$v_k$ in $T$.
If~$w$ does not belong to the path from~$u_k$ to~$v_k$, then the cycle~$C$ must pass again through~$v_k$, contradicting inducedness.
We now distinguish two cases:
\begin{itemize}
\item If there is~$i \in [k-1]$ such that~$(w,v_i) \in \tc(T)$, then we obtain~(ii) by considering~$a = u_1$, $b = u_k$, $c = w$, $d = v_i$ and~$e = v_k$.
\item otherwise, we obtain a new induced alternating cycle by replacing~$v_k$ by~$w$ in~$C$. We thus obtain~(ii) by induction on the sum of the length of the paths between~$u_i$ and~$v_i$ in~$T$.
\qedhere
\end{itemize}

\medskip\noindent
\uline{\textsl{(ii) $\Rightarrow$ (i):}} Let~$a,b,c,d,e$ as in~(ii). Then~$(a,d,b,e)$ forms an induced alternating cycle in~$\tc(T)$.

\medskip\noindent
\uline{\textsl{(ii) $\Rightarrow$ (iii):}} Let~$a,b,c,d,e$ as in~(ii). Let $u$ (resp.~$v$) be the first (resp.~last) common vertex of the paths from~$a$ and~$b$ to~$c$ (resp.~from~$c$ to~$d$ and~$e$). Then~$u$ (resp.~$v$) has at least two incoming (resp.~outgoing) arrows, and there is a path from~$u$ to~$v$ passing through~$c$.

\medskip\noindent
\uline{\textsl{(iii) $\Rightarrow$ (ii):}} Let~$u,v$ as in~(iii). Let~$a,b$ be two incoming neighbors of~$u$, let~$d,e$ be two outgoing neighbors of~$v$, and let~$c$ be any vertex along the path from~$u$ to~$v$. Then~$a,b,c,d,e$ are as in~(ii).
\end{proof}

\begin{definition}
\label{def:starredTree}
We say that an increasing tree~$T$ on~$[n]$ is \defn{starred} if it satisfies the equivalent conditions of \cref{lem:starredCharacterization} and \defn{unstarred} otherwise.
\end{definition}

Note that rooted trees (with increasing edges all oriented toward the root) are unstarred.
For instance, the tree of \cref{fig:ornamentationsX} is starred (it is the smallest starred tree), while those of \cref{fig:ornamentationsNI,fig:ornamentationsAY} are all unstarred.


\subsection{Ornamentation lattices of unstarred trees}
\label{subsec:OrnUT}

This section is devoted to the proof of \cref{thm:main2}, which we cut into pieces.

\begin{proposition}
\label{prop:AReoriLatticeUT}
For an unstarred tree~$T$, all transitively biclosed reorientations of~$\tc(T)$ are acyclic.
In particular, the acyclic reorientation poset~$\AReori(\tc(T))$ is a lattice.
\end{proposition}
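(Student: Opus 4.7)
The plan is to argue by contradiction: assume~$R$ is a transitively biclosed reorientation of~$\tc(T)$ that contains a directed cycle, and derive that~$T$ must be starred. Any cycle in~$R$ contains an induced directed cycle~$C$ (shortcutting chords at will). The heart of the argument, already sketched inside the proof of~\cref{lem:irreduciblesReorientationsAcyclicT}, is that biclosedness forces~$C$ to be alternating in~$\tc(T)$.

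I would spell this out by examining three consecutive vertices~$x,y,z$ along~$C$. The two edges~$(x,y),(y,z)$ cannot both be unreversed with respect to~$\tc(T)$, for then~$(x,y),(y,z)\in\tc(T)\ssm\rev(R)$ and transitive coclosedness would yield~$(x,z)\in\tc(T)\ssm\rev(R)$, producing the chord~$(x,z)\in R$. They cannot both be reversed either, because then~$(z,y),(y,x)\in\rev(R)$ and transitive closedness gives~$(z,x)\in\rev(R)$, again producing the chord~$(x,z)\in R$. Hence reversedness alternates along~$C$, so~$C$ has even length~$2k$ and can be written as~$C=(u_1,v_1,\dots,u_k,v_k)$ with~$(u_i,v_i),(u_{i+1},v_i)\in\tc(T)$ for every~$i\in[k]$. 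Since~$R$ and~$\tc(T)$ share their underlying edge set, inducedness of~$C$ in~$R$ transfers to inducedness of this alternating cycle in~$\tc(T)$. This directly contradicts~\cref{lem:starredCharacterization}, since~$T$ was assumed to be unstarred.

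For the ``in particular'' clause, I would first record the easy inclusion~$\AReori(\tc(T))\subseteq\Rbi(\tc(T))$: if~$R$ is acyclic and~$(u,v),(v,w)\in\rev(R)$, then~$(w,v),(v,u)\in R$ while~$(u,w)\in\tc(T)$ by transitivity, and acyclicity of~$R$ rules out~$(u,w)\in R$, forcing~$(w,u)\in R$, i.e.~$(u,w)\in\rev(R)$; the coclosed condition is symmetric. Combined with the first half of the proposition, this yields the equality~$\AReori(\tc(T))=\Rbi(\tc(T))$ of posets, and~\cref{prop:biclosedLatticeT} then supplies the lattice structure for free. The only delicate step is the three-vertex case analysis above; everything else follows directly from statements already established in the paper.
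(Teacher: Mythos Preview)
Your proof is correct and follows the same strategy as the paper: pass to an induced directed cycle, use biclosedness to show it is alternating in~$\tc(T)$, and invoke \cref{lem:starredCharacterization}. The paper asserts the alternation step in one line without justification; your three-vertex case analysis is the natural way to unpack it. For the lattice conclusion, the paper leaves the argument implicit (the subsequent remark appeals instead to \cref{prop:AReoriLat}), while you obtain it from the equality $\AReori(\tc(T))=\Rbi(\tc(T))$ together with \cref{prop:biclosedLatticeT}; both routes are fine.
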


\begin{proof}
Assume that there is a transitively biclosed and cyclic reorientation~$R$ of~$\tc(T)$.
As~$R$ contains a cycle, it contains an induced oriented cycle~$C$ (any chord can be used to make the cycle shorter).
Since~$R$ is transitively biclosed, $C$ is an induced alternating cycle in~$\tc(T)$.
This implies that~$T$ is starred by \cref{def:starredTree}.
\end{proof}

\begin{remark}
Note that it is also straightforward to check that an unstarred tree~$T$ satisfies the condition of~\cref{prop:AReoriLat}.
Indeed, given any subset~$U \subset [n]$, the subgraph of~$\tc(T)$ induced by~$U$ contains no induced alternating cycle, hence no induced cycles of length at least~$4$.
Hence, its transitive reduction contains no cycle.
\end{remark}

\begin{proposition}
\label{prop:AOrnLatticeUT}
For any ornamentation~$O$ of an unstarred tree~$T$, the reorientation~$\reori{O}$ of~$\tc(T)$ and the sourcing~$\sour{O}$ of~$\PP(T)$ are both acyclic.
In particular, all ornamentations of~$T$ are acyclic and~$\ASour(\PP(T)) \simeq \AOrn(T) = \Orn(T)$.
\end{proposition}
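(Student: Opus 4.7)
The plan is to reduce both acyclicity claims to the fact that the reorientation $\reori{O}$ itself is acyclic, which follows directly from two previously-established facts. Since $T$ is a tree, \cref{lem:Orn2ReoriT} gives that $\reori{O}$ is transitively biclosed for any ornamentation~$O$. Since $T$ is unstarred, \cref{prop:AReoriLatticeUT} then guarantees that every transitively biclosed reorientation of $\tc(T)$ is acyclic, and in particular $\reori{O}$ is acyclic. Combined with the identity $O = \orn{\reori{O}}$ from \cref{lem:Orn2Reori2}, this certifies condition~(i) of \cref{lem:AOrn2} with $R \eqdef \reori{O}$, so $O$ is acyclic as an ornamentation. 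Since $O$ was arbitrary, this already gives $\AOrn(T) = \Orn(T)$, and then \cref{prop:ASour2AOrn} supplies the isomorphism $\ASour(\PP(T)) \simeq \AOrn(T)$.

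It remains to argue that the sourcing $\sour{O}$ is acyclic. The key step is to identify $\sour{O}$ with the source-of-path sourcing attached to the acyclic reorientation $\reori{O}$, i.e., to show that $\sour{O} = \asour{\reori{O}}$. Fix a directed path $P = (w_0, w_1, \dots, w_k)$ in $T$ with starting vertex $u = w_0$, and let $w_j \eqdef \sour{O}(P)$ be the maximal $w_i$ with $u \in O(w_i)$. For $i < j$, the unique $T$-path from $u$ to $w_j$ necessarily passes through $w_i$ and is entirely contained in $O(w_j)$ by the ornament property (using uniqueness of paths in the tree), hence $w_i \in O(w_j)$ and so $(w_j, w_i) \in \reori{O}$ by \cref{def:Orn2Reori}. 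For $i > j$, if $w_j$ were in $O(w_i)$ then the ornamentation nesting would force $u \in O(w_j) \subseteq O(w_i)$, contradicting the maximality of $j$; hence $w_j \notin O(w_i)$, and again $(w_j, w_i) \in \reori{O}$. Therefore $w_j$ is the source of $P$ in $\reori{O}$, so $\sour{O}(P) = w_j = \asour{\reori{O}}(P)$ for every $P \in \PP(T)$. Acyclicity of $\sour{O} = \asour{\reori{O}}$ is then immediate from \cref{lem:AReori2ASour2}.

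The main obstacle is the identification $\sour{O} = \asour{\reori{O}}$, and this is precisely where the tree structure enters in an essential way: in a tree, any vertex on the unique $T$-path between two ornament-related vertices automatically lies in the larger ornament, which is exactly what converts the ``maximal $w \in P$ with $u \in O(w)$'' description of $\sour{O}$ into the ``source of the induced tournament in $\reori{O}$'' description of $\asour{\reori{O}}$. Without this, the identification can fail even when $\reori{O}$ is acyclic, as already signalled by the second half of \cref{rem:Orn2Sour&Orn2ReoriIncompatibleAcyclicity}.
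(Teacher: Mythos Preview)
Your proof is correct. The argument for the acyclicity of~$\reori{O}$ and the deduction that all ornamentations are acyclic coincides exactly with the paper's proof: transitively biclosed by \cref{lem:Orn2ReoriT}, hence acyclic by \cref{prop:AReoriLatticeUT}, and then \cref{lem:Orn2Reori2} plus \cref{lem:AOrn2} give~$\AOrn(T)=\Orn(T)$.

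For the acyclicity of~$\sour{O}$, the paper simply writes ``the proof is similar'' without further detail, whereas you supply a complete argument by proving the identity~$\sour{O}=\asour{\reori{O}}$ and then invoking \cref{lem:AReori2ASour2}. Your verification of this identity is clean and correct: the tree hypothesis is used exactly where you say, namely to conclude that the entire subpath from~$u$ to~$w_j$ lies in~$O(w_j)$ once~$u\in O(w_j)$, and the nesting property of ornamentations handles the vertices above~$w_j$. This is arguably the most natural way to make the paper's ``similar'' precise, and it has the added benefit of making explicit why the tree hypothesis matters for the sourcing claim (as you correctly link to \cref{rem:Orn2Sour&Orn2ReoriIncompatibleAcyclicity}).
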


\begin{proof}
The orientation~$R_O$ is transitively biclosed by~\cref{lem:Orn2ReoriT}, hence acyclic by \cref{prop:AReoriLatticeUT}.
The proof for the acyclicity of~$\sour{O}$ is similar.
\end{proof}

\begin{proposition}
For a starred tree~$T$, there exists a transitively biclosed and cyclic reorientation \linebreak of~$\tc(T)$ and a cyclic ornamentation of~$T$.
Hence, neither the acyclic reorientation poset~$\AReori(\tc(T))$, the acyclic sourcing poset~$\ASour(\PP(T))$, nor the acyclic ornamentation poset~$\AOrn(T)$ are lattices.
\end{proposition}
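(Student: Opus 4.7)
The plan is to exhibit a single cyclic ornamentation $O$ of $T$, from which a cyclic transitively biclosed reorientation will follow via $\reori{O}$, and then to derive the three ``not a lattice'' conclusions from \cref{thm:MacNeille}. To set up, \cref{lem:starredCharacterization} supplies distinct vertices $a, b, c, d, e \in V$ with $(a,c), (b,c), (c,d), (c,e) \in \tc(T)$ and $(a,b), (b,a), (d,e), (e,d) \notin \tc(T)$; by transitivity also $(a,d), (a,e), (b,d), (b,e) \in \tc(T)$. I would let $P_{ad}$ and $P_{be}$ denote the vertex sets of the directed $T$-paths from $a$ to $d$ and from $b$ to $e$ (both passing through $c$) and define $O$ by $O(d) \eqdef P_{ad}$, $O(e) \eqdef P_{be}$, and $O(w) \eqdef \{w\}$ for every other vertex $w$; the nesting condition is trivial since $O(w) = \{w\}$ for all $w \ne d, e$.

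The heart of the proof, and what I expect to be the main obstacle, is verifying that $O$ is cyclic. Suppose some acyclic reorientation $R$ of $\tc(T)$ satisfies $\orn{R} = O$. Applying \cref{lem:Reori2OrnT} to $a \in \orn{R}(d) = P_{ad}$ forces $(a', d) \in \rev(R)$ for every $a' \in P_{ad} \ssm \{d\}$; in particular, every vertex in $P_{ad} \ssm \{a, d\}$, which (since $T$ is a tree) coincides with the interior of the $T$-path from $b$ to $d$, satisfies $(w, d) \in \rev(R)$. Because $b \notin \orn{R}(d) = P_{ad}$, \cref{lem:Reori2OrnT} then isolates the edge $(b, d)$ as the unique remaining reversal-candidate that must fail, whence $(b, d) \notin \rev(R)$. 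A symmetric analysis yields $(b, e) \in \rev(R)$ and $(a, e) \notin \rev(R)$, so that together with $(a, d) \in \rev(R)$ the reorientation $R$ contains the directed $4$-cycle $a \to e \to b \to d \to a$, contradicting acyclicity. The delicate point is this ``unique offending vertex'' argument, which crucially uses the tree structure: the $T$-paths from $a$ to $d$ and from $b$ to $d$ agree everywhere except at their starting vertex, and similarly for the paths to $e$.

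Once $O$ is known to be cyclic, \cref{lem:Orn2ReoriT} tells us that $\reori{O}$ is transitively biclosed and \cref{lem:Orn2Reori2} gives $\orn{\reori{O}} = O$; since any acyclic reorientation with ornamentation $O$ would witness the acyclicity of $O$ via \cref{def:AOrn}, $\reori{O}$ itself must be cyclic, which supplies the required cyclic biclosed reorientation. For the final clause, \cref{thm:MacNeille} identifies $\Rbi(\tc(T))$ and $\Orn(T)$ as the MacNeille completions of $\AReori(\tc(T))$ and $\AOrn(T)$ respectively. The witnesses just constructed show that each of these acyclic posets is a proper subposet of its MacNeille completion, and since a finite lattice always coincides with its own MacNeille completion, neither $\AReori(\tc(T))$ nor $\AOrn(T)$ can be a lattice; the conclusion for $\ASour(\PP(T))$ is then immediate from the isomorphism of \cref{prop:ASour2AOrn}.
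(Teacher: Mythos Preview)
Your overall strategy matches the paper's: build the cyclic ornamentation $O$ with $O(d) = [a,d]_T$, $O(e) = [b,e]_T$, and singletons elsewhere; deduce a cyclic biclosed reorientation (you elegantly take $\reori{O}$ via \cref{lem:Orn2ReoriT,lem:Orn2Reori2}, whereas the paper writes it down directly); and finish with \cref{thm:MacNeille}. The final MacNeille argument is clean and correct.

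There is, however, a real gap in the cyclicity step. Your assertion that $P_{ad} \ssm \{a,d\}$ ``coincides with the interior of the $T$-path from $b$ to $d$'' is not justified by the hypotheses of \cref{lem:starredCharacterization}(ii). Let $u$ be the first common vertex of the $T$-paths from $a$ and $b$ toward $c$; nothing forces $a$ and $b$ to be in-neighbors of $u$. If, say, the path from $a$ to $u$ has length $\ge 2$, then $P_{ad}$ contains interior vertices (those strictly between $a$ and $u$) that do not lie on $P_{bd}$ at all. Your ``unique offending vertex'' isolation therefore fails: from $b \notin \orn{R}(d)$ you only learn that some $b' \in [b,u[_T$ has $(b',d) \notin \rev(R)$, not that $b' = b$.

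Two easy fixes are available. First, invoke condition (iii) of \cref{lem:starredCharacterization} instead of (ii) and take $a,b$ to be in-neighbors of $u$ and $d,e$ to be out-neighbors of $v$; then $[a,u[_T = \{a\}$ and $[b,u[_T = \{b\}$, and your isolation argument goes through verbatim. Second, with arbitrary $a,b,c,d,e$ as in (ii), keep the witnesses $w_1 \in [a,u[_T$ with $(w_1,e) \notin \rev(R)$ and $w_2 \in [b,u[_T$ with $(w_2,d) \notin \rev(R)$; since $w_1 \in [a,d[_T$ and $w_2 \in [b,e[_T$, you still get $(w_1,d),(w_2,e) \in \rev(R)$, and $d \to w_1 \to e \to w_2 \to d$ is the desired cycle. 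The paper's proof is terse on precisely this point, so the slip is understandable.
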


\begin{proof}
Consider a starred tree~$T$, and let~$a,b,c,d,e$ be vertices of~$T$ as in \cref{lem:starredCharacterization}\,(ii).

The reorientation~$R$ of~$\tc(T)$ defined by~$\rev(R) = \set{(w,d)}{a \le_T w <_T d} \cup \set{(w,e)}{b \le_T w <_T e}$ is transitively closed because~$d \not<_T e$ and~$e \not<_T d$, transitively coclosed because~$(u,w) \in \rev(R)$ implies~$(v,w) \in \rev(R)$ for any~$u <_T v <_T w$, and cyclic because~$a, e, b, d$ forms a directed cycle.

The ornamentation~$O = \orn{R}$ of~$T$ with~$O(d) = [a,d]_T$, $O(e) = [b,e]_T$, and~$O(w) = \{w\}$ for all~$w \in V \ssm \{d,e\}$ is cyclic.
Indeed, consider any reorientation~$R'$ of~$\tc(T)$ such that~$O = \orn{R'}$.
We have~$(w,d) \in \rev(R')$ for all~$w \in [a,d]_T$ (because $w \in O(d)$) and similarly~$(w,e) \in \rev(R')$ for all~$w \in [b,e]_T$.
Therefore,~$(a,e) \notin \rev(R')$ (because~${a \notin O(e)}$ while~$(w,e) \in \rev(R')$ for all~$w \in [a,e]_T$ and similarly~$(b,d) \notin \rev(R')$.
We conclude that~$a, e, b, d$ forms a directed cycle in~$R'$ as well.
Hence, $O$ is cyclic.

We conclude by~\cref{thm:MacNeille} that~$\AReori(\tc(T))$ and~$\ASour(\PP(T)) \simeq \AOrn(T)$ are not lattices.
\end{proof}

To sum up, we proved the following statement.

\begin{corollary}
\label{coro:starredCharacterizations}
The following conditions are equivalent for a directed tree~$T$ on~$V$:
\begin{enumerate}[(i)]
\item $\tc(T)$ contains an induced alternating cycle,
\item there are distinct $a,b,c,d,e \in V$ such that~$(a,c)$, $(b,c)$, $(c,d)$, and~$(c,e)$ are all in~$\tc(T)$, while~$(a,b)$, $(b,a)$, $(d,e)$ and~$(e,d)$ are not in~$\tc(T)$,
\item there are~$u,v \in V$ such that~$u$ has at least two incoming edges in~$T$, $v$ has at least two outgoing edges in~$T$, and $T$ has a directed path from~$u$ to~$v$,
\item there exists a transitively biclosed and cyclic reorientation of~$\tc(T)$,
\item there exists a cyclic ornamentation of~$T$,
\item the acyclic reorientation poset~$\AReori(\tc(T))$ is not a lattice,
\item the acyclic sourcing poset~$\ASour(\PP(T))$ is not a lattice,
\item the acyclic ornamentation poset~$\AOrn(T)$ is not a lattice.
\end{enumerate}
\end{corollary}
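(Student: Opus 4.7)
The plan is to assemble the corollary by combining results already established in this section and the previous one, since all the hard content has been done.  First, \cref{lem:starredCharacterization} gives the equivalence (i) $\iff$ (ii) $\iff$ (iii), and by \cref{def:starredTree} these are synonymous with $T$ being starred.  It then suffices to show that each of (iv)--(viii) is equivalent to $T$ being starred.

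For the direction ``$T$ unstarred implies none of (iv)--(viii) hold,'' I would invoke \cref{prop:AReoriLatticeUT} (every transitively biclosed reorientation of $\tc(T)$ is acyclic, ruling out (iv), and $\AReori(\tc(T))$ is a lattice, ruling out (vi)) together with \cref{prop:AOrnLatticeUT} (every ornamentation of $T$ is acyclic, so no cyclic ornamentation exists, and $\AOrn(T) = \Orn(T) \simeq \ASour(\PP(T))$ is a lattice, ruling out (v), (vii), and (viii) simultaneously).

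For the direction ``$T$ starred implies each of (iv)--(viii) holds,'' I would follow the construction already carried out in the unnamed proposition just before the corollary.  Fix $a,b,c,d,e$ as in (ii) and let $R$ be the reorientation of $\tc(T)$ with
\[
\rev(R) = \set{(w,d)}{a \le_T w <_T d} \;\cup\; \set{(w,e)}{b \le_T w <_T e}.
\]
One checks that $R$ is transitively biclosed and that $(a,e,b,d)$ is a directed cycle in $R$, which gives (iv).  The ornamentation $O = \orn{R}$ defined by $O(d) = [a,d]_T$, $O(e) = [b,e]_T$, and $O(w) = \{w\}$ otherwise is cyclic because any reorientation $R'$ with $\orn{R'} = O$ must contain the same cycle $(a,e,b,d)$ by \cref{lem:Reori2OrnT}; this gives (v).  To deduce (vi)--(viii) from (iv)--(v), I would appeal to \cref{thm:MacNeille}: since $\Rbi(\tc(T))$ (respectively $\Orn(T)$) is the MacNeille completion of $\AReori(\tc(T))$ (respectively of $\AOrn(T) \simeq \ASour(\PP(T))$), if the acyclic poset were already a lattice it would equal its MacNeille completion and contain no additional element, contradicting the existence of the cyclic $R$ (respectively $O$) just constructed.

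There is no real obstacle here beyond bookkeeping: the two propositions and the MacNeille theorem do all the work, and the only verifications of substance (that the explicit $R$ is biclosed and cyclic, and that $O = \orn{R}$ is cyclic) are exactly those carried out in the proposition preceding the corollary.  The proof of the corollary is thus just a cross-reference argument.
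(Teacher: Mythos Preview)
Your proposal is correct and follows essentially the same route as the paper: the corollary is presented there as a summary (``To sum up, we proved the following statement''), with (i)--(iii) coming from \cref{lem:starredCharacterization}, the failure of (iv)--(viii) for unstarred trees from \cref{prop:AReoriLatticeUT,prop:AOrnLatticeUT}, and the truth of (iv)--(viii) for starred trees from the explicit construction in the preceding proposition together with \cref{thm:MacNeille}. One small caveat: \cref{lem:Reori2OrnT} is stated only for acyclic reorientations, so you should not cite it to handle an arbitrary $R'$ in the fiber of $O$; instead defer to the direct argument in that preceding proposition, as you in fact do in your final paragraph.
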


We now prove that the ornamentation map is a lattice map when~$T$ is unstarred.

\begin{proposition}
\label{prop:AReori2AOrnLatticeMapUT}
For an unstarred tree~$T$, the map $R \mapsto \aorn{R}$ is a surjective lattice map, hence the lattices~$\ASour(\PP(T)) \simeq \AOrn(T) = \Orn(T)$ are lattice quotients~of~$\AReori(\tc(T))$.
\end{proposition}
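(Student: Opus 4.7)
The plan is to prove that $\phi \colon R \mapsto \aorn{R}$ preserves both binary meets and binary joins; together with the order-preservation and surjectivity already established, this will give the desired surjective lattice morphism, and the isomorphisms of \cref{prop:AOrnLatticeUT,prop:ASour2AOrn} then transfer the quotient structure to $\Orn(T) = \AOrn(T) \simeq \ASour(\PP(T))$. Throughout, I will combine the explicit meet and join formulas from \cref{prop:AReoriLatJoinMeet} (applicable since $\AReori(\tc(T))$ is a lattice by \cref{prop:AReoriLatticeUT}) with the tree-specific characterization of \cref{lem:Reori2OrnT}: for $u \le_T v$, we have $u \in \aorn{R}(v)$ if and only if $(x, v) \in \rev(R)$ for every $x \in [u,v]_T \ssm \{v\}$.

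For meets, \cref{coro:meetOrnT} reduces the problem to showing the inclusion $\aorn{R_1}(v) \cap \aorn{R_2}(v) \subseteq \aorn{R_1 \meet R_2}(v)$. Given $u$ in the intersection and some $x \in [u,v]_T \ssm \{v\}$, any witness sequence $x = z_0, \dots, z_k = v$ in $\tc(T) \ssm (\rev(R_1) \cap \rev(R_2))$ for the failure of $(x, v) \in \rev(R_1 \meet R_2)$ would lie entirely in $[x, v]_T \subseteq [u, v]_T$, forcing $z_{k-1} \in [u, v]_T \ssm \{v\}$ and hence $(z_{k-1}, v) \in \rev(R_1) \cap \rev(R_2)$ by the hypothesis on $u$, contradicting the defining property of the sequence.

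For joins, I will prove the substantive inclusion $\aorn{R_1 \join R_2}(v) \subseteq (\aorn{R_1} \join \aorn{R_2})(v)$ by strong induction on $|[u, v]_T|$. In the inductive step with $u <_T v$, it suffices to find $w \in [u, v]_T \ssm \{u\}$ and $c \in \{1, 2\}$ such that $u \in \aorn{R_c}(w)$: then $w$ lies in $\aorn{R_1 \join R_2}(v)$ by the ornament property, hence in $(\aorn{R_1} \join \aorn{R_2})(v)$ by the induction hypothesis, and closure of the latter set under $\aorn{R_1} \cup \aorn{R_2}$ captures $u$ too. To produce such $w$, I will use that each $R_c$ restricted to the clique $[u, v]_T$ of $\tc(T)$ is an acyclic tournament, hence a linear order $\pi_c$, and that \cref{lem:Reori2OrnT} identifies the condition $u \in \aorn{R_c}(w)$ with $w$ being the $\pi_c$-minimum of $[u, w]_T$. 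Let $m_c$ be the $\pi_c$-minimum of the whole path $[u, v]_T$: if $m_c \ne u$ for some $c$, then $w \eqdef m_c$ works. Otherwise $m_c = u$ for both $c$, which forces $(u, x) \notin \rev(R_c)$ for every $x \in [u, v]_T \ssm \{u\}$ and every $c \in \{1, 2\}$; this contradicts the witness sequence $u = s_0, s_1, \dots, s_m = v$ for $(u, v) \in \rev(R_1 \join R_2)$ supplied by \cref{prop:AReoriLatJoinMeet}, whose first edge must satisfy $(u, s_1) \in \rev(R_c)$ for some $c$ with $s_1 \in [u, v]_T \ssm \{u\}$.

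The main obstacle is exactly this ``jump vertex'' claim in the join case: one cannot simply choose $w$ to be the immediate tree successor $u'$ of $u$, because $(u, u')$ may fail to be reversed in either $R_1$ or $R_2$ even when $(u, v) \in \rev(R_1 \join R_2)$ holds via a longer witness sequence. The linear-order argument bypasses this by allowing $w$ to jump further along $[u, v]_T$, leveraging the acyclicity of each $R_c$ on this clique.
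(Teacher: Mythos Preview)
Your proposal is correct and follows essentially the same approach as the paper's proof: both arguments reduce to the characterization of \cref{lem:Reori2OrnT}, use the explicit meet/join formulas of \cref{prop:AReoriLatJoinMeet}, and exploit acyclicity of each $R_c$ on the clique $[u,v]_T$ to locate a ``jump vertex'' $w$ with $u \in \aorn{R_c}(w)$. The only cosmetic difference is in the join step: the paper performs a descending induction along the tree path $u = w_0, \dots, w_k = v$ and, at each $w_i$, takes the minimal $\ell$ with $(w_i,w_\ell) \in \rev(R_1) \cup \rev(R_2)$ and uses acyclicity to deduce $(w_j,w_\ell) \in \rev(R_c)$ for all $i \le j \le \ell$; you instead take $w$ to be the source $m_c$ of the tournament on $[u,v]_T$ and rule out $m_1 = m_2 = u$ by the first edge of a witness sequence. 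Both routes arrive at the same jump-vertex conclusion.
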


\begin{proof}
The map is surjective by \cref{def:AOrn}.
Consider two acyclic reorientations~$R_1$ and~$R_2$ of $\tc(T)$.
We need to show that ${\orn{R_1 \meet R_2} = \orn{R_1} \meet \orn{R_2}}$ and~$\orn{R_1 \join R_2} = \orn{R_1} \join \orn{R_2}$.
Since~$R \mapsto \orn{R}$ is order-preserving (\cref{lem:Reori2Orn2}) and~$R_1 \meet R_2 \le R_1 \le R_1 \join R_2$ and~$R_1 \meet R_2 \le R_2 \le R_1 \join R_2$, we have~$\orn{R_1 \meet R_2} \le \orn{R_1} \le \orn{R_1 \join R_2}$ and~$\orn{R_1 \meet R_2} \le \orn{R_2} \le \orn{R_1 \join R_2}$, hence~${\orn{R_1 \meet R_2} \le \orn{R_1} \meet \orn{R_2}}$ and~${\orn{R_1} \join \orn{R_2} \le \orn{R_1 \join R_2}}$.
To prove the converse inequalities, remember the descriptions of~${O_1 \meet O_2}$ and~${O_1 \join O_2}$ from \cref{thm:OrnMeetJoin}, and of~$\rev(R_1 \meet R_2)$ and~$\rev(R_1 \join R_2)$ from~\cref{prop:AReoriLatJoinMeet}.

If $u <_T v$ but~$u \notin \orn{R_1 \meet R_2}(v)$, then there is $u'$ along the path from~$u$ to~$v$ such that~$(u',v) \notin \rev(R_1 \meet R_2)$ by \cref{lem:Reori2OrnT}.
Therefore, the path~$u' = w_0, w_1, \dots, w_k = v$ satisfies~$(w_{i-1}, w_i) \notin \rev(R_1) \cap \rev(R_2)$ for all $i \in [k]$ by~\cref{prop:AReoriLatJoinMeet}.
As~$w_{k-1}$ is on the path from~$u$ to~$v$ and~$(w_{k-1},v) \notin \rev(R_1) \cap \rev(R_2)$, we conclude by \cref{lem:Reori2OrnT} that~$u \notin \orn{R_1}(v)$ or~$u \notin \orn{R_2}(v)$, hence~$u \notin \orn{R_1}(v) \cap \orn{R_2}(v) = (\orn{R_1} \meet \orn{R_2})(v)$ (by \cref{coro:meetOrnT}, since~$T$ is a directed tree).
We conclude that~$(\orn{R_1} \meet \orn{R_2}) \le \orn{R_1 \meet R_2}$, hence that~$\orn{R_1 \meet R_2} = \orn{R_1} \meet \orn{R_2}$.

Consider now~$u \in \orn{R_1 \join R_2}(v)$.
Let~$u = w_0, w_1, \dots, w_k = v$ denote the path from~$u$ to~$v$.
By \cref{lem:Reori2OrnT}, $(w_i, v) \in \rev(R_1 \join R_2)$ for all~$i \in [k]$.
Assume that~$w_{i+1}, \dots, w_k \in (\orn{R_1} \join \orn{R_2})(v)$ for some~$i \in [k-1]$.
Since~$(w_i, v) \in \rev(R_1 \join R_2) = \tc(T) \cap \tc(\rev(R_1) \cup \rev(R_2))$, there is~${i+1 \le \ell \le k}$ such that~$(w_i, w_\ell) \in \rev(R_1) \cup \rev(R_2)$.
Assume that~$\ell$ is minimal for this property, and assume by symmetry that~$(w_i, w_\ell) \in \rev(R_1)$.
Since~$R_1$ is acyclic, the minimality of $\ell$ implies that~$(w_j, w_\ell) \in \rev(R_1)$ for all~${i \le j \le \ell}$.
By \cref{lem:Reori2OrnT}, we obtain that~$w_i \in \orn{R_1}(w_\ell)$.
By \cref{thm:OrnMeetJoin}, we therefore obtain that~${w_i \in (\orn{R_1} \join \orn{R_2})(v)}$.
Hence, by descending induction, we obtain that~$u \in (\orn{R_1} \join \orn{R_2})(v)$.
We conclude that~$\orn{R_1 \join R_2} \le \orn{R_1} \join \orn{R_2}$, hence that~${\orn{R_1 \join R_2} = \orn{R_1} \join \orn{R_2}}$.
\end{proof}

\begin{corollary}
If~$T$ is an unstarred tree, then the ornamentation lattice~$\Orn(T)$ is isomorphic to the transitive closure of the graph of the path hypergraphic polytope~$\simplex_{\PP(D)} \eqdef \sum_{P \in \PP(T)} \simplex_P$ oriented in the direction~$\omega$.
\end{corollary}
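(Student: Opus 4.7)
The plan is essentially to chain together two isomorphisms already assembled in the paper, so no new combinatorial or geometric work is needed. On the polytopal side, I would recall from the discussion at the start of \cref{subsec:acyclicSourcings} (citing \cite{Gelinas}) that for any hypergraph $\HH$ on $[n]$, the transitive closure of the graph of the hypergraphic polytope $\simplex_\HH$, oriented in the linear direction $\b\omega$, is isomorphic as a poset to the acyclic sourcing poset $\ASour(\HH)$. Specializing this general statement to $\HH = \PP(T)$ yields the desired geometric description of $\ASour(\PP(T))$ as the transitive closure of the graph of $\simplex_{\PP(T)}$ oriented in the direction $\b\omega$.

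On the combinatorial side, I would then invoke \cref{prop:AOrnLatticeUT}: since $T$ is unstarred, every ornamentation of $T$ is acyclic, so $\AOrn(T) = \Orn(T)$. Combined with the poset isomorphism $\ASour(\PP(T)) \simeq \AOrn(T)$ of \cref{prop:ASour2AOrn}, this gives $\ASour(\PP(T)) \simeq \Orn(T)$. Composing the two isomorphisms yields the claimed realization of $\Orn(T)$ as the transitive closure of the graph of $\simplex_{\PP(T)}$ oriented in the direction $\b\omega$.

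The main (and only) conceptual obstacle has already been absorbed into \cref{prop:AOrnLatticeUT}, which is exactly where the unstarred hypothesis gets used: for a starred tree, there exist cyclic ornamentations, and these have no counterpart among the vertices of $\simplex_{\PP(T)}$, so the above chain of isomorphisms breaks down. Note finally that this corollary is precisely item~\eqref{item:polytopalRealization} of \cref{thm:main2}, and in particular it answers the open question of \cite{DefantSack} in the case of unstarred (and hence rooted) trees.
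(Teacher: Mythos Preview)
Your proposal is correct and matches the paper's implicit argument: the corollary is stated without proof precisely because it follows by composing the general identification of $\ASour(\HH)$ with the oriented graph of $\simplex_\HH$ (recalled from \cite{Gelinas}) and the isomorphism $\ASour(\PP(T)) \simeq \AOrn(T) = \Orn(T)$ of \cref{prop:AOrnLatticeUT}. Your remark on where the unstarred hypothesis enters is also accurate.
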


\begin{remark}
Lattice quotients of acyclic reorientation lattices were studied in detail in~\cite{Pilaud-acyclicReorientationLattices}.
We can reformulate \cref{prop:AReoriLatticeUT,prop:AOrnLatticeUT,prop:AReori2AOrnLatticeMapUT} in the language of~\cite{Pilaud-acyclicReorientationLattices}: for an unstarred~tree~$T$,
\begin{enumerate}
\item the transitive closure~$\tc(T)$ is skeletal, and so~$\AReori(\tc(T))$ is an acyclic reorientation lattice,
\item the reorientation lattice~$\AReori(\tc(T))$ is (isomorphic to) the Tamari lattice of~$\tc(T)$ defined by the $\tc(T)$-sylvester congruence,
\item the path hypergraphic polytope~$\simplex_{\PP(T)}$ is the $\tc(T)$-associahedron.
\end{enumerate}
\end{remark}

\begin{remark}
It might seem more natural to use the map~$\fS_n \to \ASour(\PP(T))$ defined by~$\pi \mapsto \asour{\areori{\pi}}$ instead of the map~$\AReori(\tc(T)) \to \ASour(\PP(T))$ defined by~$R \mapsto \asour{R}$.
However, the former is not always a lattice map, while the latter is.
For instance, the fiber of the acyclic sourcing identified with the ornamentation
\[
\includegraphics[scale=.8,valign=c]{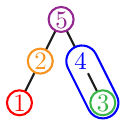}
\]
is the set of permutations~$\{41325, 43125, 14325, 14235, 12435, 12453, 14253, 41253, 41235\}$ which is not even an interval of the weak order as it has two maximal elements~$43125$ and~$41253$.
\end{remark}


\subsection{Brooms}
\label{subsec:brooms}

This short section is dedicated to the enumeration of ornamentations of the following specific family of trees, illustrated in \cref{fig:broomsCombs}\,(left).

\begin{definition}
Let~$m,n \ge 0$.
The \defn{$(m,n)$-broom} is the rooted tree obtained by attaching $m$ nodes (called \defn{bristles}) to the initial vertex of a path with $n$ nodes (called \defn{handle}).
\end{definition}

\begin{remark}
The transitive closure of the $(m,n)$-broom has~$2^{n(2m+n-1)/2}$ reorientations, of which $n! \, (n+1)^m$ are acyclic, and~$(n+1)!^m \prod_{k = 1}^n k^{n+1-k}$ sourcings.
As the $(m,n)$-broom is a rooted tree, all its ornamentations are acyclic by \cref{prop:AOrnLatticeUT}, so its numbers of acyclic sourcings, acyclic ornamentations and ornamentations all coincide, and we compute them below.
\end{remark}

We denote by~$B_{m,n}$ the number of (acyclic) ornamentations of the $(m,n)$-broom.
The first few values are gathered in \cref{tab:broom}.
In~\cite{OEIS}, the first three rows of \cref{tab:broom} are the sequences \OEIS{A000108}, \OEIS{A000108}, and \OEIS{A070031} respectively, while the first three columns of \cref{tab:broom} are the sequences \OEIS{A000012}, \OEIS{A000079}, and \OEIS{A007689} respectively.
We denote by
\[
B(x,y) \eqdef \sum_{m \ge 0, \, n \ge 0} B_{m,n} \, \frac{x^m}{m!} \, y^n
\]
their generating function (exponential in~$x$ and ordinary in~$y$), and by
\[
B_m(y) \eqdef m! \, [x^m] \, B(x,y) = \sum_{n \ge 0} B_{m,n} \, y^n
\]
the coefficient of~$x^m/m!$ in~$B(x,y)$.
As~$B_{0,n} = B_{1,n-1}$ is the $n$th Catalan number~$C_n \eqdef \frac{1}{n+1}\binom{2n}{n}$, we have $B_0(y) = C(y)$ and~$B_1(y) = (C(y) - 1)/y$ where~$C(y) \eqdef \sum_{n \ge 0} C_n \, y^n = \frac{1-\sqrt{1-4y}}{2y}$.

\begin{table}
	\[
	\begin{array}{c|ccccccccccc}
	m \backslash n & 0 & 1 & 2 & 3 & 4 & 5 & 6 & 7 & 8 & \cdots \\
	\hline
	0 & 1 & 1 & 2 & 5 & 14 & 42 & 132 & 429 & 1430 & \cdots \\ 
	1 & 1 & 2 & 5 & 14 & 42 & 132 & 429 & 1430 & 4862 & \cdots \\ 
	2 & 1 & 4 & 13 & 42 & 138 & 462 & 1573 & 5434 & 19006 & \cdots \\ 
	3 & 1 & 8 & 35 & 134 & 492 & 1782 & 6435 & 23270 & 84422 & \cdots \\ 
	4 & 1 & 16 & 97 & 450 & 1878 & 7458 & 28873 & 110266 & 418030 & \cdots \\
	5 & 1 & 32 & 275 & 1574 & 7572 & 33342 & 139659 & 567590 & 2263142 & \cdots \\
	6 & 1 & 64 & 793 & 5682 & 31878 & 157122 & 717673 & 3124474 & 13177006 & \cdots \\
	7 & 1 & 128 & 2315 & 21014 & 138852 & 772302 & 3872955 & 18167270 & 81443702 & \cdots \\
	8 & 1 & 256 & 6817 & 79170 & 621318 & 3927378 & 21752953 & 110506426 & 528949870 & \cdots \\ 
	\vdots & \vdots & \vdots & \vdots & \vdots & \vdots & \vdots & \vdots & \vdots & \vdots & \ddots \\
	\end{array}
	\]
	\caption{The number $B_{m,n}$ of (acyclic) ornamentations of the $(m,n)$-broom.}
	\label{tab:broom}
\end{table}

\begin{proposition}
\label{prop:brooms1}
The numbers~$B_{m,n}$ satisfy the following recurrence relation:
\[
B_{m,n} = \sum_{k = 0}^m \sum_{\ell = 1}^n \binom{m}{k} \, C_{n-\ell} \, B_{k,\ell-1}
\]
where~$\displaystyle C_n \eqdef \frac{1}{n+1}\binom{2n}{n}$ is the $n$th Catalan number.
\end{proposition}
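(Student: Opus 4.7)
The plan is to set up a bijection between ornamentations of the $(m,n)$-broom and triples, parameterized by a splitting index $\ell$. Label the handle $b_1 \to b_2 \to \cdots \to b_n$ and the bristles $a_1, \ldots, a_m$ (each with an edge to $b_1$). For each ornamentation~$O$, define
\[
\ell(O) \eqdef \max \set{j \in [n]}{\{b_1, \ldots, b_j\} \subseteq O(b_j)},
\]
which is well defined and at least $1$ since $\{b_1\} \subseteq O(b_1)$. The goal is to show that an ornamentation $O$ with $\ell(O) = \ell$ corresponds bijectively to the data of a subset $B \subseteq \{a_1, \ldots, a_m\}$, an ornamentation of the $(|B|, \ell-1)$-broom with bristles~$B$ and handle $b_1, \ldots, b_{\ell-1}$, and an ornamentation of the path $b_{\ell+1}, \ldots, b_n$. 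Summing $\binom{m}{k} B_{k, \ell-1} C_{n-\ell}$ over $\ell \in [n]$ and $k \in \{0, \ldots, m\}$ then yields the recurrence.

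The key step is showing that for any $j > \ell(O)$, writing $\ell = \ell(O)$, we have $O(b_j) \subseteq \{b_{\ell+1}, \ldots, b_j\}$. Suppose for contradiction that $b_i \in O(b_j)$ for some $i \le \ell$. Since $O(b_j)$ is an ornament at~$b_j$, the directed path $b_i, b_{i+1}, \ldots, b_j$ in $T$ lies in $O(b_j)$, whence $b_\ell \in O(b_j)$. Nesting then gives $O(b_\ell) \subseteq O(b_j)$, and by definition of~$\ell$ we have $\{b_1, \ldots, b_\ell\} \subseteq O(b_\ell) \subseteq O(b_j)$. Combined with $\{b_\ell, \ldots, b_j\} \subseteq O(b_j)$, this forces $\{b_1, \ldots, b_j\} \subseteq O(b_j)$, contradicting the maximality of~$\ell$. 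A bristle in $O(b_j)$ would likewise force $b_1 \in O(b_j)$ and the same contradiction. It follows that each $O(b_j)$ with $j > \ell$ is a connected subpath $\{b_s, \ldots, b_j\}$ with $\ell < s \le j$, and therefore $(O(b_j))_{\ell < j \le n}$ is precisely an ornamentation of the path $b_{\ell+1}, \ldots, b_n$, accounting for the factor $C_{n-\ell}$.

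For the ``below'' part, $O(b_\ell)$ is an ornament at~$b_\ell$ containing $\{b_1, \ldots, b_\ell\}$, hence equals $\{b_1, \ldots, b_\ell\} \cup B$ for the uniquely determined set $B \eqdef O(b_\ell) \cap \{a_1, \ldots, a_m\}$; setting $k = |B|$ contributes the factor $\binom{m}{k}$. For $j < \ell$, nesting forces $O(b_j) \subseteq O(b_\ell)$, and combined with $O(b_j) \subseteq \lessin{T}{b_j}$ this yields $O(b_j) \subseteq \{b_1, \ldots, b_j\} \cup B$. The family $(O(b_j))_{j < \ell}$ is then precisely an ornamentation of the $(k, \ell-1)$-broom with bristle set~$B$, accounting for $B_{k, \ell-1}$. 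The reverse construction glues any such triple into an ornamentation of the full broom with $\ell(O) = \ell$; verifying the nesting axioms across the cut is routine since the ``above'' ornaments are by construction disjoint from $\{b_1, \ldots, b_\ell\}$ and from all bristles, so no cross-nesting constraint arises. Summing over $\ell$ and $k$ produces the stated recurrence. The main technical obstacle lies in the maximality contradiction of the previous paragraph; everything else is a routine verification of the ornament and nesting axioms on each piece.
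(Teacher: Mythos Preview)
Your proof is correct and uses essentially the same decomposition as the paper: split each ornamentation at a handle index~$\ell$ into a smaller broom ornamentation below and a path ornamentation above. The only difference is in how the splitting index is chosen. The paper takes~$\ell$ to be the index of the inclusion-maximal ornament of~$O$ containing at least one bristle, explicitly noting that such an ornament might not exist; your choice~$\ell(O) = \max\{j : \{b_1,\dots,b_j\}\subseteq O(b_j)\}$ is always defined. The two agree whenever the paper's index exists (if some~$O(b_j)$ contains a bristle then it contains~$b_1$, and conversely nesting forces any bristle in~$O(b_{\ell_{\text{paper}}})$ into every~$O(b_j)$ with~$b_{\ell_{\text{paper}}}\in O(b_j)$), so the distinction only affects bookkeeping. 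Your version has the small advantage of treating~$k=0$ uniformly rather than as a residual case handled implicitly via the Catalan convolution~$\sum_{\ell=1}^n C_{\ell-1}C_{n-\ell}=C_n$.
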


\begin{proof}
Consider an ornament~$U$ of the $(m,n)$-broom containing~$k \ge 1$ bristles and $\ell \ge 1$ vertices of the handle.
The ornamentations~$O$ of the $(m,n)$-broom containing~$U$ are clearly in bijection with pairs formed by an ornamentation of the $(k, \ell-1)$-broom and an ornamentation of the $(n-\ell)$-path.
Conversely, we can associate to each ornamentation of the $(m,n)$-broom the inclusion maximal ornament~$U$ of~$O$ containing at least one bristle (note that there might be no such ornament).
The formula immediately follows.
\end{proof}

\begin{proposition}
\label{prop:brooms2}
The generating function~$\displaystyle B(x,y) \eqdef \sum_{m \ge 0, \, n \ge 0} B_{m,n} \, \frac{x^m}{m!} \, y^n$ is given~by
\[
B(x,y) = \frac{e^x}{1-e^x \, y \, C(y)}
\]
where~$C(y) \eqdef \sum_{n \ge 0} C_n \, y^n = \frac{1-\sqrt{1-4y}}{2y}$.
\end{proposition}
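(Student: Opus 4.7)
The plan is to translate the recurrence of \cref{prop:brooms1} into a functional equation for~$B(x,y)$ by combining the binomial convolution for the exponential variable~$x$ with the Cauchy product for the ordinary variable~$y$. Along the way, the key fact to keep in mind is the boundary value~$B_{m,0}=1$ (the unique ornamentation of~$m$ isolated bristles), since the recurrence of \cref{prop:brooms1} vanishes on~$n=0$ and therefore only applies when~$n\ge 1$.

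First, I would fix~$m$ and pass to the univariate series~$B_m(y) \eqdef \sum_{n\ge 0} B_{m,n}\,y^n$. Multiplying the recurrence of \cref{prop:brooms1} by~$y^n$, summing over~$n\ge 1$, and swapping the order of summation, the inner sum over~$n\ge \ell$ of~$C_{n-\ell}\,y^{n-\ell}$ collapses to~$C(y)$, the outer shift~$\ell \mapsto \ell-1$ produces an extra factor of~$y$, and the left-hand side becomes~$B_m(y) - 1$ using~$B_{m,0}=1$. This yields the recurrence
\[
B_m(y) - 1 = y\,C(y)\sum_{k=0}^{m}\binom{m}{k}\,B_k(y).
\]

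Next, I would multiply by~$x^m/m!$ and sum over~$m\ge 0$. The left-hand side gives~$B(x,y) - e^x$. On the right-hand side, the sum~$\sum_{k=0}^m \binom{m}{k}\,B_k(y)$ is exactly the binomial convolution of the sequences~$(B_k(y))_{k\ge 0}$ and~$(1)_{k\ge 0}$, so it transforms under the exponential generating function in~$x$ into the product~$e^x \, B(x,y)$. We thus obtain the clean functional equation
\[
B(x,y) - e^x = y\,C(y)\,e^x\,B(x,y),
\]
and solving for~$B(x,y)$ immediately yields the claimed closed form.

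The argument is essentially routine manipulation of formal power series. The only delicate points are remembering to separate the~$n=0$ term when converting the recurrence (since \cref{prop:brooms1} does not cover it), and correctly identifying the binomial sum over~$k$ as an EGF convolution with the series of all~$1$'s rather than something more complicated.
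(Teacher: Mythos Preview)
Your proposal is correct and is essentially the paper's second proof, carried out in two stages (first summing over~$n$ to reach~$B_m(y)-1=y\,C(y)\sum_k\binom{m}{k}B_k(y)$, then summing exponentially over~$m$) rather than invoking the mixed EGF--OGF convolution formula all at once; the resulting functional equation $B(x,y)-e^x=y\,C(y)\,e^x\,B(x,y)$ is identical. The paper also gives an alternative first proof by reading the closed form directly from the combinatorial decomposition of broom ornamentations as sequences of (subset of bristles, path ornamentation) pairs.
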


We give two proofs of \cref{prop:brooms2}: the first is a direct translation of the decomposition of ornamentations of brooms discussed in the proof of \cref{prop:brooms1}, the second is a more pedestrian proof based on the expression of \cref{prop:brooms1}.

\begin{proof}[First proof of \cref{prop:brooms2}]
The decomposition of ornamentations of brooms discussed in the proof of \cref{prop:brooms1} tells that ornamentations of brooms can be thought of as sequences of pairs, each formed by a subset of bristles and an ornamentation of a path.
Using classical generatingfunctionology~\cite{FlajoletSedgewick}, this immediately yields the expression of the statement.
\end{proof}

\begin{proof}[Second proof of \cref{prop:brooms2}]
Recall that if
\[
U(x,y) = \sum_{m \ge 0, \, n \ge 0} U_{m,n} \, \frac{x^m}{m!} \, y^n
\qquad\text{and}\qquad
V(x,y) = \sum_{m \ge 0, \, n \ge 0} V_{m,n} \, \frac{x^m}{m!} \, y^n
\]
are two generating functions exponential in~$x$ and ordinary in~$y$, then
\[
U(x,y) \cdot V(x,y) = \sum_{m \ge 0, \, n \ge 0} W_{m,n}  \, \frac{x^m}{m!} \, y^n
\qquad\text{where}\qquad
W_{m,n} = \sum_{k = 0}^m \sum_{\ell = 0}^n \binom{m}{k} \, U_{k,\ell} \, V_{n-k,n-\ell}.
\]
Considering~$U(x,y) \eqdef B(x,y) \, y$ and~$V(x,y) \eqdef e^x \, C(y)$, we obtain by \cref{prop:brooms1}
\begin{align*}
B(x,y) \, y \, e^x \, C(y) & = \sum_{m \ge 0, \, n \ge 1}  \left( \sum_{k = 0}^m \sum_{\ell = 1}^n \binom{m}{k} \, B_{k,\ell-1} \, C_{n-\ell} \right) \, \frac{x^m}{m!} \, y^n \\
& = \sum_{m \ge 0, \, n \ge 1} B_{m,n} \, \frac{x^m}{m!} \, y^n \\
& = B(x,y) - e^x
\end{align*}
from which we derive that
\[
B(x,y) = \frac{e^x}{1-e^x \, y \, C(y)}.
\qedhere
\]
\end{proof}

\begin{proposition}
\label{prop:brooms3}
For any~$m \in \N$, we have
\[
y \, B_m(y) \, C(y) = \sum_{k = 0}^m (-1)^{m-k} \binom{m}{k} \, B_k(y),
\]
where~$\displaystyle C(y) \eqdef \sum_{n \ge 0} C_n \, y^n = \frac{1-\sqrt{1-4y}}{2y}$.
\end{proposition}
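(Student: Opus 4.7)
The plan is to read the identity off directly from the closed form of the bivariate generating function established in \cref{prop:brooms2}. Starting from $B(x,y) = e^x / (1 - e^x \, y \, C(y))$, I would clear the denominator to obtain $B(x,y) - e^x \, y \, C(y) \, B(x,y) = e^x$, and then multiply through by $e^{-x}$ to get the cleaner relation
\[
e^{-x} \, B(x,y) \;=\; 1 \;+\; y \, C(y) \, B(x,y).
\]

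Next, I would extract the coefficient of $x^m/m!$ on both sides. On the right-hand side, since $y \, C(y)$ does not depend on $x$, the coefficient is simply $y \, C(y) \, B_m(y)$, up to the constant contribution $\delta_{m,0}$ coming from the~$1$. On the left-hand side, the standard product rule for exponential generating functions applied to $e^{-x} = \sum_k (-1)^k x^k/k!$ and $B(x,y) = \sum_k B_k(y) \, x^k/k!$ gives
\[
m! \, [x^m] \bigl( e^{-x} B(x,y) \bigr) \;=\; \sum_{k=0}^{m} (-1)^{m-k} \binom{m}{k} B_k(y).
\]
Equating the two expressions yields the claimed identity for every $m \ge 1$; the $m = 0$ case, after the constant is accounted for, reduces to the familiar Catalan functional equation $y \, C(y)^2 = C(y) - 1$.

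I do not anticipate any real obstacle: the whole argument is a short generating-function manipulation, requiring only \cref{prop:brooms2} and the convolution rule for exponential generating functions. The only mildly delicate point is noticing that the factor $y \, C(y)$ is a scalar with respect to $x$, so that the EGF product formula needs to be applied only to $e^{-x}$ and $B(x,y)$ on the left-hand side, while the right-hand side contributes purely termwise.
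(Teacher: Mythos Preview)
Your proof is correct and is essentially the same binomial-inversion argument as the paper's, just phrased in generating-function language. The paper re-derives the functional equation $B_m = 1 + y\,C\,\sum_{k=0}^m \binom{m}{k} B_k$ (equivalently, $B(x,y) = e^x + e^x\,y\,C(y)\,B(x,y)$) from the combinatorial decomposition of \cref{prop:brooms1} and then inverts the binomial matrix; you instead start from the closed form of \cref{prop:brooms2}, multiply by $e^{-x}$, and extract coefficients, which is exactly the generating-function realization of that same inversion. Both approaches handle the $m=0$ boundary identically (the alternating binomial sum of $1$'s vanishes only for $m\ge 1$), and you are right that this case collapses to the Catalan identity.
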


\begin{proof}
The same argument as in the proof of \cref{prop:brooms1} gives the functional equation
\[
B_m = 1 + y \, C \, \sum_{k = 0}^m \binom{m}{k} \, B_k.
\]
As the inverse of the binomial matrix~$\big[\binom{m}{k}\big]_{m,k}$ is the signed binomial matrix~$\big[(-1)^{m-k}\binom{m}{k}\big]_{m,k}$, we obtain that
\[
y \, B_m \, C = \sum_{k = 0}^m (-1)^{m-k} \binom{m}{k} \,(B_k - 1) = \sum_{k = 0}^m (-1)^{m-k} \binom{m}{k} \, B_k.
\qedhere
\]
\end{proof}

\begin{example}
For~$m = 2$, we have
\(
B_2 = 1 + y \, C \, (B_0 + 2  \, B_1 + B_2) = 1 + y \, C( C + 2 \, (C - 1) / y + B_2)
\)
from which we derive that
\(
B_2 \, (1 - y \, C) = 1 + y \, C^2 + 2 \, C \, (C - 1) = 2 \, C^2 - C. 
\)
Finally, we observe that
\(
C^2 (1+y \, C) \, ( 1 - y \, C) = C^2 \, (1 - y^2 \, C^2) = C^2 - y^2 \, C^4 = C^2 - (C - 1)^2 = 2 \, C^2 - C
\)
We conclude that~$B_2 = C^3 (1+y \, C)$.
\end{example}

\begin{figure}
	\centerline{\includegraphics[scale=.8]{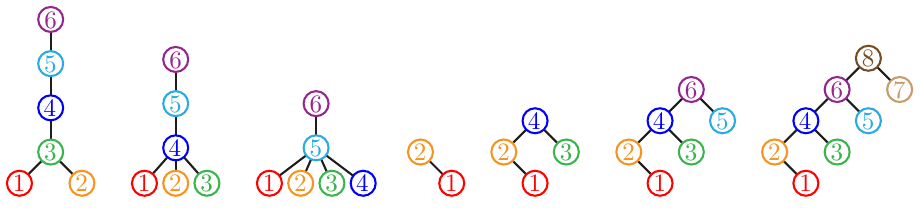}}
	\caption{The $(2,4)$-, $(3,3)$- and $(4,2)$-brooms (left) and the $1$-, $2$-, $3$- and $4$-combs (right).}
	\label{fig:broomsCombs}
\end{figure}

\subsection{Combs}
\label{subsec:combs}

This short section is dedicated to the enumeration of ornamentations of the following specific family of trees, illustrated in \cref{fig:broomsCombs}\,(right).

\begin{definition}
Let~$n \ge 0$.
The \defn{$n$-comb} is the rooted tree obtained by attaching a node (called \defn{tooth}) to each node of a path with $n$ nodes (called \defn{handle}).
\end{definition}

To fix a convention, let us denote by~$1, \dots, 2n-1$ the teeth and by~$2, \dots, 2n$ the handle of the $n$-comb, so that the parent of node~$i$ is $i+1$ if~$i$ is odd and~$i+2$ if~$i$ is even (except the node~$2n$ which is the root).
See \cref{fig:broomsCombs}.

\begin{remark}
The transitive closure of the $n$-comb has~$2^{n^2}$ reorientations, of which $n! \, (n+1)!$ are acyclic, and~$\prod_{k = 1}^n k^{n+1-k} \, (k+1)!$ sourcings.
As the $n$-comb is a rooted tree, all its ornamentations are acyclic by \cref{prop:AOrnLatticeUT}, so its numbers of acyclic sourcings, acyclic ornamentations and ornamentations all coincide, and we compute them below.
\end{remark}

We denote by~$E_n$ the number of (acyclic) ornamentations of the $n$-comb.
It is the sequence \OEIS{A000698} in \cite{OEIS}, and its first few values are
\[
1, 2, 10, 74, 706, 8162, 110410, 1708394, 29752066, 576037442, 12277827850, 285764591114, \dots
\]
In order to give a formula for~$E_n$, we introduce the following two families of combinatorial objects, illustrated in \cref{fig:comb}.

\begin{figure}
	\centerline{\includegraphics[scale=.68]{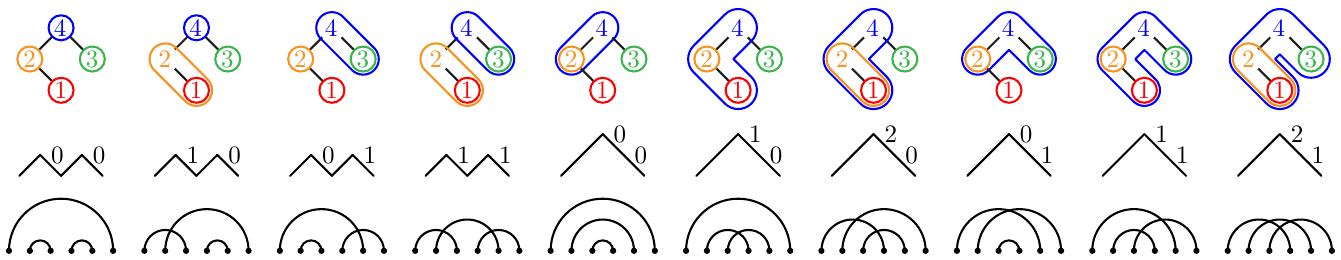}}
	\caption{The bijections of \cref{prop:comb} between (acyclic) ornamentations of the $n$-comb (top), labeled Dyck paths of semilength~$n$ (middle) and indecomposable perfect matchings of~$[2n+2]$ (bottom), for~$n = 2$.}
	\label{fig:comb}
\end{figure}

\begin{definition}
A \defn{Dyck path} of semilength~$n$ is a path with up steps~$(1,1)$ and down steps~$(1,-1)$ starting at~$(0,0)$, ending at~$(n,n)$, and never passing strictly below the horizontal axis.
A \defn{labeled Dyck path} is a Dyck path where each down step is labeled by an integer in~$[0,h]$ where $h$ is the height of its top endpoint.
\end{definition}

\begin{definition}
A \defn{perfect matching} of~$[2n]$ is a partition~$\pi$ of~$[2n]$ into $n$ pairs.
It is \defn{decomposable} if there is~$k \in [n]$ such that any pair of~$\pi$ is contained either in~$[2k]$ or in its complement~$[2n] \ssm [2k]$.
\end{definition}

\begin{proposition}
\label{prop:comb}
The following sets of combinatorial objects are in bijection:
\begin{enumerate}[(i)]
\item the (acyclic) ornamentations of the $n$-comb,
\item the labeled Dyck paths of semilength $n$,
\item the indecomposable perfect matchings of~$[2n+2]$.
\end{enumerate}
Hence, the number~$E_n$ of (acyclic) ornamentations of the $n$-comb satisfies
\[
E_n = (2n+1)!! - \sum_{k = 1}^n (2k-1)!! \, E_{n-k}
\]
where~$\displaystyle(2k-1)!! = \frac{(2k)!}{2^k \, k!} = (2k-1) \cdot (2k-3) \cdots 5 \cdot 3 \cdot 1$.
\end{proposition}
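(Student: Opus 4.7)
The plan is to establish the three sets are equinumerous via explicit bijections (i)$\leftrightarrow$(ii) and (ii)$\leftrightarrow$(iii), and then to derive the recurrence from the matching interpretation.

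For (i)$\leftrightarrow$(ii), observe that the only predecessor of a tooth $2i-1$ in the $n$-comb is itself, so $O(2i-1) = \{2i-1\}$ is forced and $O$ is determined by its handle values. Each $O(2i)$ decomposes uniquely as $\{2k_i, 2k_i + 2, \ldots, 2i\} \cup \{2j-1 : j \in T_i\}$ with $k_i \in [i]$ and $T_i \subseteq \{k_i, \ldots, i\}$. The handle data $(k_i)_{i \in [n]}$ is itself an ornamentation of the $n$-path and therefore encodes, via the classical Catalan bijection, a Dyck path $D$ of semilength $n$ whose $i$-th down step corresponds to the bracket $[k_i, i]$. The remaining nesting condition $T_j \subseteq T_i$ (enforced whenever $k_i \le j \le i$) forces, for each $\ell \in [n]$, the set $A_\ell \eqdef \{i : \ell \in T_i\}$ to be upward-closed in the chain of brackets containing $2\ell$; this chain is totally ordered by containment and has length equal to the height $h_\ell$ of the top endpoint of the $\ell$-th down step of $D$. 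Thus $|A_\ell| \in \{0, 1, \ldots, h_\ell\}$, and labeling the $\ell$-th down step of $D$ by $|A_\ell|$ yields a labeled Dyck path; the inverse map reconstructs each $T_i$ by placing each tooth into the ornaments of the largest brackets indicated by the labels.

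For (ii)$\leftrightarrow$(iii), I would use the classical encoding of a perfect matching $M$ of $[2m]$ as a Dyck path of semilength $m$ with down-step labels in $\{1, \ldots, h\}$: each element of $[2m]$ becomes an up step if it is an opener and a down step if it is a closer, and the down step's label records the position from the top of its partner among the currently open arcs. The matching is indecomposable if and only if the associated Dyck path is irreducible (does not return to height $0$ except at its endpoints), and such paths of semilength $n+1$ are exactly those of the form $\mathsf{U} D' \mathsf{D}$ with $D'$ an arbitrary Dyck path of semilength $n$. The appended $\mathsf{D}$ has top endpoint height $1$ and thus a forced label $1$; every down step of $D'$ has top endpoint height in the irreducible path equal to its top endpoint height in $D'$ plus $1$, so its label range $\{1, \ldots, h+1\}$ shifts to $\{0, \ldots, h\}$, matching exactly the labeling convention of (ii).

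The recurrence follows from the standard decomposition of matchings: every perfect matching of $[2n+2]$ decomposes uniquely by taking $2k$ to be the smallest index for which the restriction to $[2k]$ is itself a sub-matching; this restriction is then an indecomposable matching of $[2k]$ (counted by $E_{k-1}$ via (iii)), and the complement is an arbitrary matching of $\{2k+1, \ldots, 2n+2\}$ (counted by $(2(n+1-k)-1)!!$, with the convention $(-1)!! = 1$). Summing over $k \in [n+1]$ gives $(2n+1)!! = \sum_{k=1}^{n+1} E_{k-1} \, (2(n+1-k)-1)!!$; isolating the $k = n+1$ term (which equals $E_n$) and reindexing produces the stated recurrence. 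The most delicate step is verifying well-definedness and surjectivity of the first bijection: an independent upward-closed choice of $A_\ell$ in each bracket chain must assemble into a valid ornamentation, i.e., the resulting tooth data must satisfy $T_j \subseteq T_i$ whenever $k_i \le j \le i$. This holds because each $A_\ell$ is upward-closed in the chain of brackets containing $2\ell$, and any bracket containing another bracket contains every $2\ell$ the inner bracket contains, so the local cutoffs propagate consistently along the handle.
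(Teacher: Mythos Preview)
Your proof is correct and follows essentially the same strategy as the paper. The bijection (i)$\leftrightarrow$(ii) is identical: restrict to the handle to get a Dyck path, then record for each tooth how many ornaments contain it. Your presentation of (ii)$\leftrightarrow$(iii) via the classical ``openers/closers'' encoding of matchings, followed by restricting to irreducible paths and shifting labels, is a slightly more modular description of exactly the construction the paper gives step by step (start with a free point, process the path, close the last free point); one checks that the two descriptions yield the same matching. The paper does not spell out the recurrence derivation, so your standard first-return decomposition of perfect matchings is a welcome addition.
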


\begin{proof}
We only sketch the ideas of the bijections, which are illustrated in \cref{fig:comb}.

\medskip\noindent
\uline{\textsl{Ornamentations versus labeled Dyck paths:}}
The restriction of an ornamentation~$O$ to the handle gives an ornamentation of the $n$-path, hence a Dyck path~$D$ of semilength~$n$.
The height of the top endpoint of the $i$th down step of~$D$ is the number of ornaments of~$O$ containing the node $2i$ (\ie the $i$th node of the handle) of the $n$-comb.
The label of this $i$th down step records how many ornaments of~$O$ contain the node~$2i-1$ (\ie the $i$th tooth) of the $n$-comb.

\medskip\noindent
\uline{\textsl{Labeled Dyck paths versus indecomposable perfect matchings:}}
Reading a labeled Dyck path~$D$ of semilength~$n$, we construct a perfect matching~$M$ iteratively as follows.
We start with a free point.
If we read an up step, we add a free point to the right of~$M$.
If we read a down step labeled by~$\ell$, we add a point to the right of~$M$ and connect it to the $(\ell+1)$st free point starting from the right.
Finally, when we finished to read the Dyck path~$D$, we add a last point to the right of~$M$ and connect it to the only remaining free point.
The conditions on the labels clearly ensure that we obtain a perfect matching.
It is indecomposable since we never exhaust the free points until the very last point that we add.
\end{proof}

\begin{remark}
The number of ornamentations of the $n$-comb minus its bottommost node is the sequence \OEIS{A105616} in~\cite{OEIS}, whose first few values are
\[
1, 4, 26, 226, 2426, 30826, 451586, 7489426, 138722426, 2839238026, 63654973826, 1551919194226, \dots 
\]
The sequences \OEIS{A000698} and \OEIS{A105616} are the first two columns of the table \OEIS{A105615} in~\cite{OEIS}.
We have no explanation for this fact, nor candidates for families of directed trees whose numbers of ornamentations would give the other columns of this table.
\end{remark}



\section{Intreeval hypergraphic lattices}
\label{sec:intreevalHypergraphicPosets}

In this section, we characterize the subhypergraphs~$\II$ of the path interval~$\PP(T)$ of an increasing tree~$T$ whose acyclic sourcing poset~$\ASour(\II)$ is a lattice.
We first state the needed definitions and the characterization (\cref{subsec:characterization}), then show that these conditions are necessary (\cref{subsec:necessary}) and sufficient (\cref{subsec:sufficient}) using an important property of star sparse intreeval hypergraphs (\cref{subsec:shortCycles}).
Finally, when~$\ASour(\II)$ is a lattice, we give an explicit formula for its meets and joins (\cref{subsec:formulaMeetJoin}).


\subsection{Characterization}
\label{subsec:characterization}

We first present a few definitions needed to state our characterization in \cref{thm:intreevalLattices}.

\begin{definition}
An \defn{intreeval hypergraph} of an increasing tree~$T$ is a subhypergraph of~$\PP(T)$.
In other words, any hypergraph whose hyperedges are (the vertex sets of) some directed paths in~$T$.
\end{definition}

\begin{definition} 
\label{def:pathIntersectionClose}
We say that an intreeval hypergraph~$\II$ of an increasing tree~$T$ is \defn{path intersection closed} if for all~$I,J\in \II$ with~$|I\cap J|> 1$ such that $\min(I) <_T \min(I\cap J)$ and  $\max(J) >_T \max(I\cap J)$, there exists~$K \in \II$ such that~$I \cap J \subseteq K \subseteq [\min(J) , \max(I)]_T$. 
\end{definition}

\begin{remark}
Note that if~$\II$ is \defn{intersection closed} (\ie~$I , J \in \II$ and~$|I \cap J| > 1$ implies~$I \cap J \in \II$), then~$\II$ is path intersection closed.
However, the reverse implication is wrong.
See \cref{exm:intreevalHypergraphs,fig:intreevalHypergraphs}.
\end{remark}

\begin{definition}
\label{def:pathStarSparse}
Let~$\II$ be an intreeval hypergraph of an increasing tree~$T$.
Consider~$u \le_T v$, and let~$u_1, \dots, u_k$ (resp.~$v_1, \dots, v_\ell$) denote the incoming (resp.~outgoing) neighbors of~$u$ (resp.~$v$) in~$T$.
We denote by~$X(T,u,v,\II)$ the (undirected) graph on~$u_1, \dots, u_k,\; v_1, \dots, v_\ell$ with an edge joining~$u_i$ and~$v_j$ if and only if there is~$I \in \II$ containing both~$u_i$ and~$v_j$.
We say that~$\II$ is \defn{star sparse} if~$X(T,u,v,\II)$ is acyclic for all~$u,v$ connected by a directed path.
\end{definition}

\begin{example}
\label{exm:intreevalHypergraphs}
Among the $8$ intreeval hypergraphs of \cref{fig:intreevalHypergraphs},
\begin{itemize}
\item the first two are both path intersection closed and star sparse,
\item the next four are not path intersection closed but are star sparse,
\item the next one is path intersection closed but not star sparse,
\item the last one is neither path intersection closed nor star sparse.
\end{itemize}

\begin{figure}[b]
	\centerline{\includegraphics[scale=.8]{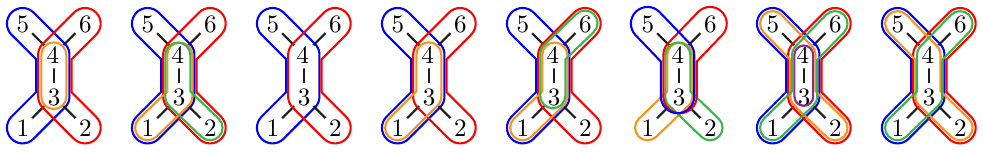}}
	\caption{Some intreeval hypergraphs. See \cref{exm:intreevalHypergraphs}.}
	\label{fig:intreevalHypergraphs}
\end{figure}
\end{example}

The goal of this section is to show the following characterization.
It will be a consequence of \cref{lem:pathIntersectionClosed,lem:pathStarSparse,prop:quasiLatticeMapIntreevalLattices}.

\begin{theorem}
\label{thm:intreevalLattices}
Let~$\II$ be an intreeval hypergraph of an increasing tree~$T$.
The acyclic sourcing poset~$\ASour(\II)$ is a lattice if and only if $\II$ is path intersection closed (\cref{def:pathIntersectionClose}) and star sparse (\cref{def:pathStarSparse}).
\end{theorem}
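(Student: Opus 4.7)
The proof splits into two directions. For necessity, I will show that each failure of path intersection closedness or star sparsity produces explicit acyclic sourcings of~$\II$ that have no meet or no join in~$\ASour(\II)$. For sufficiency, the strategy, modeled on~\cite{BergeronPilaud}, is to construct a quasi-lattice map $\phi \colon \Orn(T) \surjection \ASour(\II)$ and then invoke the general fact that the image of a quasi-lattice map from a lattice is itself a lattice.

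\medskip
\noindent\textbf{Necessity.} Suppose first that~$\II$ is not path intersection closed, witnessed by~$I,J\in\II$ with $|I\cap J|>1$, $\min(I) <_T \min(I\cap J)$, $\max(J) >_T \max(I\cap J)$, and no $K\in\II$ with $I\cap J \subseteq K \subseteq [\min(J),\max(I)]_T$. I will construct two acyclic sourcings that agree on all hyperedges disjoint from $[\min(J),\max(I)]_T$, but one selects~$\max(I\cap J)$ (so behaves ``right-leaning'' on~$I$) and the other selects~$\min(I\cap J)$ (``left-leaning'' on~$J$). The absence of a mediating hyperedge~$K$ will force any upper bound to be acyclic only if it selects a common source inside~$I\cap J$, and the absence of such a~$K$ in~$\II$ will yield two minimal such upper bounds, breaking the lattice property. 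A symmetric construction handles the meet. If instead~$\II$ is not star sparse, fix $u\le_T v$ and a cycle $u_{i_1}, v_{j_1}, u_{i_2}, \dots, u_{i_k}, v_{j_k}$ in $X(T,u,v,\II)$. Taking a sourcing that ``points toward~$u$ through half the edges of the cycle'' and a second that ``points through the other half'' produces two acyclic sourcings such that every pointwise upper bound must complete the cycle, which then forces a directed cycle among the chosen sources; this yields at least two incomparable minimal upper bounds, so no join exists.

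\medskip
\noindent\textbf{Sufficiency.} Define $\phi \colon \Orn(T) \to \ASour(\II)$ by $\phi(O)(I) = \sour{O}(I)$ for each~$I\in\II$, where $\sour{O}$ is the acyclic sourcing of~$\PP(T)$ coming from~$O$ as in \cref{def:Orn2Sour}. Surjectivity follows from the fact that any $S\in\ASour(\II)$ can be extended to some acyclic sourcing of~$\PP(T)$, which via $\aorn{\cdot}$ yields an ornamentation whose image under $\phi$ is~$S$. The core work is then to verify the two quasi-lattice map axioms. First, I must show that each fiber $\phi^{-1}(S)$ is an interval $[\projDown{S},\projUp{S}]$ of the ornamentation lattice~$\Orn(T)$. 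Using \cref{thm:OrnMeetJoin}, $\phi^{-1}(S)$ is automatically closed under meets and joins in~$\Orn(T)$, so it suffices to establish that $\phi^{-1}(S)$ is an order-convex subset. This is where path intersection closedness enters: whenever an ornament at some vertex~$v$ is modified by swapping a path~$I$ for a path~$J$ on their common intersection, closedness guarantees that any intermediate ornament stays in the same fiber because all intermediate ``witness paths'' demanded by the ornamentation axioms are present in~$\II$. Second, I must show that $S\le S'$ in $\ASour(\II)$ implies $\projDown{S}\le \projUp{S'}$ in $\Orn(T)$. Here star sparsity is crucial: by processing vertex pairs $(u,v)$ one at a time and using that $X(T,u,v,\II)$ is a forest, I can orient the local choices consistently when building $\projUp{S'}$ from $\projDown{S}$, so that the resulting ornamentation dominates $\projDown{S}$ componentwise.

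\medskip
\noindent\textbf{Main obstacle.} The delicate step is showing that the fibers of~$\phi$ are intervals. Ornamentations live on~$T$ and are sensitive to \emph{every} directed path, whereas sourcings only see paths in~$\II$; consequently ``moving'' an ornamentation inside a fiber can be blocked by a path in $\PP(T)\smallsetminus\II$ that would be forced into a conflicting source assignment. Showing that path intersection closedness is exactly the combinatorial condition that rules out such blocks — and dually, that a failure of it \emph{creates} one — will require a careful case analysis on how an ornament at a common vertex can be deformed. Once that interval structure is in hand, the star-sparsity step and the passage from quasi-lattice map to lattice structure on the image follow by the standard argument used in~\cite{BergeronPilaud}.
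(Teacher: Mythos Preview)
Your overall framework for sufficiency---a quasi-lattice map $\phi:\Orn(T)\to\ASour(\II)$ defined by restricting $\sour{O}$ to $\II$---is exactly the paper's approach, but you have swapped the roles of the two hypotheses, and in doing so you have missed the place where star sparsity actually bites. The first issue is that $\phi$ is not automatically a map into $\ASour(\II)$: for an arbitrary increasing tree $T$ and ornamentation $O$, the sourcing $\sour{O}$ need not be acyclic (see \cref{rem:Orn2Sour&Orn2ReoriIncompatibleAcyclicity}), and its restriction $\sour{O}^\II$ can likewise be cyclic. What one can show unconditionally is that $\sour{O}^\II$ never has a $2$-cycle (\cref{lem:Orn2SourIntreeval2}); star sparsity then enters through the intermediate result (\cref{prop:pathStarSparse}) that every minimal cycle in any sourcing of a star sparse $\II$ has length $2$, which forces $\sour{O}^\II$ to be acyclic. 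Your proposal never addresses well-definedness of $\phi$, and the role you assign to star sparsity (building $\projUp{S'}$ from $\projDown{S}$) is not where it is used.

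Second, the fibers of $\phi$ are intervals \emph{unconditionally}: the paper gives explicit ornamentations $\minorn{S}{\II}=\bigJoin_{I\in\II}\minorn{S}{I}$ and $\maxorn{S}{\II}=\bigMeet_{I\in\II}\maxorn{S}{I}$ and checks directly that $\phi^{-1}(S)=[\minorn{S}{\II},\maxorn{S}{\II}]$ (\cref{lem:Sour2OrnIntreeval2}), with no appeal to path intersection closedness. Your proposed route---``closed under meets and joins by \cref{thm:OrnMeetJoin}, then order-convex''---does not obviously work: \cref{thm:OrnMeetJoin} describes meets and joins in $\Orn(T)$ but gives no reason why $\phi(O_1\meet O_2)=\phi(O_1)$ when $\phi(O_1)=\phi(O_2)$. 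Path intersection closedness is instead used exactly for the quasi-lattice comparison $S_1\le S_2\Rightarrow\minorn{S_1}{\II}\le\maxorn{S_2}{\II}$ (\cref{lem:Sour2OrnIntreeval3}): if $\minorn{S_1}{I}\not\le\maxorn{S_2}{J}$ for some $I,J\in\II$, one extracts a hyperedge $K$ with $I\cap J\subseteq K\subseteq[\min(J),\max(I)]_T$ from the hypothesis and derives $S_1(K)>S_2(K)$. So your ``main obstacle'' is not an obstacle at all, while the step you treated as routine is where the real content lies.
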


\begin{remark}
When~$T$ is an oriented path, 
\begin{itemize}
\item the path intersection closed condition boils down to being closed under intersection (including singletons by default),
\item the star sparse condition holds trivially,
\item the characterization of \cref{thm:intreevalLattices} was proved in~\cite[Thm.~A]{BergeronPilaud}.
\end{itemize}
\end{remark}

\begin{remark}
When~$T$ is a rooted tree,
\begin{itemize}
\item the path intersection closed condition rephrases to for all~$I,J\in \II$ with~$|I\cap J|> 1$ such that $\min(I) <_T \min(I\cap J)$ and  $\max(J) >_T \max(I\cap J)$, there exists~$K \in \II$ such that $\min(J) = \min(K)$ and ${\max(I\cap J) \le_T \max(K) \le_T \max(I)}$,
\item the star sparse condition holds trivially.
\end{itemize}
In particular, we have the following corollary.
\end{remark}

\begin{corollary}
The acyclic sourcing poset~$\ASour(\II)$ of an intersection closed intreeval hypergraph~$\II$ of a rooted tree~$T$ is a lattice.
\end{corollary}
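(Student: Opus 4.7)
The plan is to derive the corollary as a direct consequence of Theorem~\ref{thm:intreevalLattices}, so it suffices to check that~$\II$ is both path intersection closed (Definition~\ref{def:pathIntersectionClose}) and star sparse (Definition~\ref{def:pathStarSparse}) under the given hypotheses.

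For path intersection closure, I would exhibit $K \eqdef I \cap J$ itself as the required witness in Definition~\ref{def:pathIntersectionClose}: it lies in~$\II$ by the intersection closure assumption, it contains $I \cap J$ trivially, and it is contained in $[\min(J), \max(I)]_T$ since $\min(J) \le_T \min(I \cap J)$ (from $I \cap J \subseteq J$) and $\max(I \cap J) \le_T \max(I)$ (from $I \cap J \subseteq I$). This verification is routine and uses neither the rooted structure nor anything beyond the containments~$I\cap J\subseteq I$ and~$I\cap J\subseteq J$.

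For star sparsity, the key observation is that in a rooted tree, whose edges are oriented toward the root, every vertex has at most one outgoing neighbor, namely its parent. Hence, for any $u \le_T v$, the vertex~$v$ contributes at most one vertex on its side of the bipartition defining the graph $X(T, u, v, \II)$. Since any cycle in a bipartite graph uses at least two vertices on each side, $X(T, u, v, \II)$ is forced to be acyclic, and $\II$ is star sparse.

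There is no real obstacle in this argument: the corollary reduces to a direct verification of the two hypotheses of Theorem~\ref{thm:intreevalLattices}, both of which become essentially trivial in the rooted setting (intersection closure already provides the sharpest witness, and the uniqueness of parents collapses one side of the bipartition). All of the substance lies in Theorem~\ref{thm:intreevalLattices} itself.
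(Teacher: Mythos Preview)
Your proposal is correct and follows exactly the approach the paper intends: the corollary is stated immediately after remarks noting that intersection closedness implies path intersection closedness (so $K=I\cap J$ works) and that star sparsity holds trivially for rooted trees, and is then deduced from \cref{thm:intreevalLattices}. Your argument simply makes these two verifications explicit.
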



\subsection{Necessary condition}
\label{subsec:necessary}

We first prove that the two conditions of \cref{thm:intreevalLattices} are necessary.

\begin{lemma}
\label{lem:pathIntersectionClosed}
Let~$\II$ be an intreeval hypergraph of an increasing tree~$T$.
If~$\ASour(\II)$ is a lattice, then $\II$ is path intersection closed (\cref{def:pathIntersectionClose}).
\end{lemma}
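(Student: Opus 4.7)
The plan is to prove the contrapositive: given $I, J \in \II$ violating path intersection closure, I construct two acyclic sourcings $S_1, S_2 \in \ASour(\II)$ with two incomparable minimal common upper bounds, so that $\ASour(\II)$ fails to be a lattice. Set $a \eqdef \min(I)$, $b \eqdef \min(I \cap J)$, $c \eqdef \max(I \cap J)$, $d \eqdef \max(J)$, and let $c^+$ denote the successor of $c$ along $J$; then $a <_T b <_T c <_T d$ (with $b <_T c$ strictly since $|I \cap J| > 1$), $c^+ \in J \ssm I$, and by hypothesis no $K \in \II$ satisfies $I \cap J \subseteq K \subseteq [\min(J), \max(I)]_T$.

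I construct all four sourcings through the composition $\pi \mapsto \asour{\areori{\pi}}$ of \cref{prop:Perm2AReori,prop:AReori2ASour}, which for any permutation of $[n]$ produces an acyclic sourcing of $\PP(T)$ (hence of $\II$ by restriction), and under which the source of $H$ is the first vertex of the permutation lying in $H$. With a fixed linear extension $\rho$ of $T$ used as a common suffix (with the letters already listed in the prefix removed from $\rho$), set $S_1 \eqdef \asour{\areori{b\cdot\rho}}$, $S_2 \eqdef \asour{\areori{a\cdot c\cdot\rho}}$, $U_A \eqdef \asour{\areori{c\cdot b\cdot a\cdot\rho}}$, and $U_B \eqdef \asour{\areori{c^+\cdot b\cdot a\cdot\rho}}$. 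Then $(S_1(I), S_1(J)) = (b, b)$, $(S_2(I), S_2(J)) = (a, c)$, $(U_A(I), U_A(J)) = (c, c)$, and $(U_B(I), U_B(J)) = (b, c^+)$; $U_A$ and $U_B$ are incomparable on $\{I, J\}$, and a direct case analysis (depending on which of $a, b, c, c^+$ belong to $H \in \II$) confirms $U_A, U_B \ge S_1, S_2$ componentwise on all of $\II$.

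The main obstacle is verifying that $U_A$ and $U_B$ are minimal among acyclic common upper bounds; this is where the hypothesis on the missing $K$ plays its crucial role. On $\{I, J\}$ alone, any acyclic common upper bound $U$ must satisfy $U(I) \ge b$ and $U(J) \ge c$, and avoiding the $I$--$J$ cycle arising when $U(I) \in [b, c)_T$ and $U(J) = c$ yields the dichotomy: either $U(I) \ge c$ (branch~$A$, with minimum~$U_A$) or $U(J) \ge c^+$ (branch~$B$, with minimum~$U_B$). A putative $K \in \II$ with $I \cap J \subseteq K \subseteq [\min(J), \max(I)]_T$ would satisfy $S_2(K) = c$ (since $a \notin K$ but $c \in K$), forcing $U(K) \ge c$; in branch~$A$ the natural choice $U(K) = c$ is acyclic, but in branch~$B$ one has $U_B(I) = b \in K \ssm \{c\}$ and $c \in I \ssm \{b\}$, giving a $K$--$I$ cycle that kills branch~$B$. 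The assumed non-existence of such $K$ is precisely what preserves branch~$B$. A complementary case analysis, the most delicate part, shows that any other $H \in \II$ whose interaction with $I, J$ would eliminate either branch must itself be a directed path containing $I \cap J$ and contained in $[\min(J), \max(I)]_T$, hence a forbidden $K$; so no such $H$ exists in $\II$. With both branches preserved, $U_A$ and $U_B$ are incomparable minimal acyclic common upper bounds of $S_1$ and $S_2$, and therefore $S_1 \join S_2$ does not exist in $\ASour(\II)$.
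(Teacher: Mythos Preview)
Your overall strategy (build two lower sourcings $S_1,S_2$ and two incomparable acyclic upper bounds $U_A,U_B$ such that nothing sits below both) is close to the paper's, but the specific permutations you chose are not robust enough, and the claimed ``complementary case analysis'' fails.

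The concrete gap is the assertion $U_B \ge S_2$.  Take $H \in \II$ with $b,c \in H$ but $a,c^+ \notin H$.  Then $U_B(H)=b$ (since $c^+\notin H$ and $b\in H$) while $S_2(H)=c$ (since $a\notin H$ and $c\in H$), so $U_B(H)<S_2(H)$.  You assert that any such obstructing $H$ must satisfy $I\cap J\subseteq H\subseteq[\min(J),\max(I)]_T$ and hence be a forbidden $K$; this is false.  For instance, let $T$ have edges $1\!\to\!4$, $2\!\to\!4$, $3\!\to\!4$, $4\!\to\!5$, $5\!\to\!6$, $5\!\to\!7$, $5\!\to\!8$, take $I=\{1,4,5,6\}$, $J=\{2,4,5,7\}$ (so $a=1$, $b=4$, $c=5$, $c^+=7$) and $H=\{3,4,5,8\}$.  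Then $H$ contains $I\cap J=\{4,5\}$, avoids $a$ and $c^+$, but $3,8\notin[\min(J),\max(I)]_T=\{2,4,5,6\}$, so $H$ is \emph{not} excluded by the failure of path intersection closedness for $(I,J)$; yet it breaks $U_B\ge S_2$.  (A similar problem arises for $H$ with $\min(H)\in(b,c)_T$, $c\in H$, $c^+\notin H$: such $H$ need not even contain $I\cap J$.)

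The paper's proof sidesteps exactly this issue by prefixing \emph{every} permutation with a fixed word $X$ listing all vertices of $[n]\ssm(I\cup J)$.  This guarantees that every hyperedge not contained in $I\cup J$ receives the same source under all six sourcings, so the only hyperedges that can distinguish the constructed sourcings are those contained in $I\cup J$; for those, the argument that an obstruction must be a forbidden $K$ does go through.  Your permutations lack this $X$-prefix, which is why paths reaching a ``third branch'' of $T$ at $b$ or at $c$ defeat the comparison.
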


\begin{proof}
The proof follows exactly the same lines as that of~\cite[Prop.~4.6]{BergeronPilaud}.
Assume by means of contradiction that~$\ASour(\II)$ is a lattice but that there are~$I,J\in \II$ with~$|I\cap J|> 1$ such that $\min(I) <_T \min(I\cap J)$ and  $\max(J) >_T \max(I\cap J)$, and there is no~$K \in \II$ such that ${I \cap J \subseteq K \subseteq [\min(J) , \max(I)]_T}$.

Let~$b$ (resp.~$c$) be the bottom (resp.~top) endpoint of~$I \cap J$, and~$a$ (resp.~$d$) be its neighbor in~$I \ssm J$ (resp.~$J \ssm I$).
Note that~$a,b,c,d$ are all distinct since~$|I \cap J| > 1$.
Let~$Y$ be a word formed by~$(I \cup J) \ssm \{a,b,c,d\}$ in any order, and~$X$ be a word formed by~$[n] \ssm (I \cup J)$ in any order.
Consider the permutations
\[
\pi_A \eqdef XbacdY
\quad
\pi_B \eqdef XacdbY
\quad
\pi_C \eqdef XdbacY
\quad
\pi_D \eqdef XcdbaY
\quad
\pi_E \eqdef XadcbY
\quad
\pi_F \eqdef XadbcY
\]
and let~$S_A \eqdef \asour{\pi_A}^\II, \dots, S_F \eqdef \asour{\pi_F}^\II$ denote the corresponding acyclic sourcings of~$\II$, where~$\asour{\pi}^\II$ is defined by~$\asour{\pi}^\II(I) \eqdef \pi\bigl(\min \set{j \in [n]}{\pi(j) \in I} \bigr)$.
See \cref{exm:pathIntersectionClosed,fig:pathIntersectionClosedStarSparse}\,(left) for an illustration of these permutations and acyclic sourcings.

We claim that~$S_E = S_F$.
Indeed, if~$K \in \II$ is such that~$S_E(K) \ne S_F(K)$, then~$K \supseteq \{b,c\}$ but~${K \cap (X \cup \{a,d\}) = \varnothing}$.
This would imply that~${I \cap J = [b,c] \subseteq K \subseteq [\min(J) , \max(I)]_T}$, contradicting our assumption on~$I$ and~$J$.

Observe now that
\[
\pi_A < \pi_C
\qquad
\pi_A < \pi_D
\qquad
\pi_B < \pi_D
\qquad
\pi_B < \pi_E
\qquad
\pi_F < \pi_C
\]
in weak order (note that~$\pi_B \not< \pi_C$ though).
As~$\pi \mapsto \aorn{\pi}$ is order-preserving and~$S_E = S_F$, we conclude that
\[
S_A < S_C
\qquad
S_A < S_D
\qquad
S_B < S_E = S_F < S_C
\qquad
S_B < S_D.
\]

As~$\ASour(\II)$ is a lattice, we have an acyclic sourcing~$S_G$ of~$\II$ such that
\[
S_A \le S_G
\qquad
S_B \le S_G
\qquad
S_G \le S_C
\qquad
S_G \le S_D.
\]
Since~$S_G$ is acyclic and~$\pi \mapsto \asour{\pi}$ is surjective, there is a permutation~$\pi_G$ such that~$S_G = \asour{\pi}$.
Let
\[
g \eqdef \pi_G(\min\set{i \in [n]}{\pi_G(i) \in I \cup J}).
\]
We discuss four cases:
\begin{enumerate}[(i)]
\item if~$\min(I) \le_T g <_T b$, then~$S_G(I) = g < b = S_A(I)$, contradicting that~$S_A < S_G$,
\item if~$\min(J) \le_T g <_T c$, then~$S_G(J) = g < c = S_B(J)$, contradicting that~$S_B < S_G$,
\item if~$c <_T g \le \max(J)$, then~$S_G(J) = g > c = S_D(J)$, contradicting that~$S_G < S_D$,
\item if~$b <_T g \le_T \max(I)$, then~$S_G(I) = g > b = S_C(I)$, contradicting that~$S_G < S_C$.
\end{enumerate}
Note that the case $g = b$ (resp.~$g = c$) is covered by Case~(ii) (resp.~(iii)), hence these four cases cover all possibilities.
As these four cases are all excluded, we reached a contradiction.
\end{proof}

\pagebreak

\begin{example}
\label{exm:pathIntersectionClosed}
We illustrate the construction of the proof of \cref{lem:pathIntersectionClosed} on the third intreeval hypergraph of \cref{fig:intreevalHypergraphs}.
Let~$I \eqdef \{1,3,4,5\}$ and~$J \eqdef \{2,3,4,6\}$ so that~$a = 1$, $b = 3$, $c = 4$, $d = 6$, $X = \varnothing$, $Y = 25$ and we have the following permutations and sources:
\[
\begin{array}{c|cccccc}
	Z & A & B & C & D & E & F \\
	\hline
	\pi_Z & 314625 & 146325 & 631425 & 463125 & 164325 & 163425 \\
	S_Z(\blue{1345}) & 3 & 1 & 3 & 4 & 1 & 1 \\
	S_Z(\red{2346}) & 3 & 4 & 6 & 4 & 6 & 6
\end{array}
\]
See \cref{fig:pathIntersectionClosedStarSparse}\,(left).
The reader can check that~$S_A < S_C$, $S_A < S_D$, $S_B < S_E = S_F < S_C$, and~$S_B < S_D$.
\begin{figure}
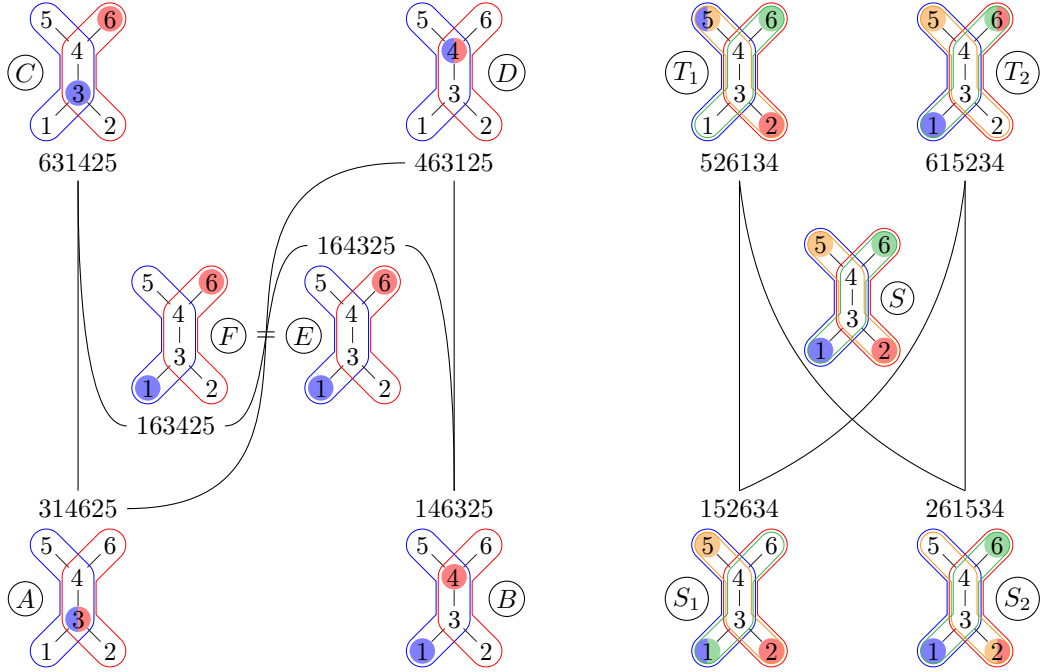

	\centerline{
		\input{figures/pathIntersectionClosed}
		\hspace{1.5cm}
		\input{figures/starSparse}
	}
	\caption{The permutations and sourcings of \cref{exm:pathIntersectionClosed} (left) and of \cref{exm:pathStarSparse} (right), illustrating the proofs of \cref{lem:pathIntersectionClosed,lem:pathStarSparse}. The black arrows indicate relations in weak order. The source of each hyperedge is colored in the color of the hyperedge.}
	\label{fig:pathIntersectionClosedStarSparse}
\end{figure}
\end{example}

\begin{lemma}
\label{lem:pathStarSparse}
Let~$\II$ be an intreeval hypergraph of an increasing tree~$T$.
If~$\ASour(\II)$ is a lattice, then $\II$ is star sparse (\cref{def:pathStarSparse}).
\end{lemma}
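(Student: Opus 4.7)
The plan is to prove the contrapositive: if $X(T, u, v, \II)$ contains a cycle for some $u \le_T v$, then $\ASour(\II)$ is not a lattice. Since $X(T, u, v, \II)$ is bipartite between the incoming neighbors of~$u$ and the outgoing neighbors of~$v$, any shortest cycle has even length $2m \ge 4$. I first treat the model case~$m = 2$ in detail, and then explain how the argument adapts to general~$m$ by exploiting minimality of the cycle.

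For $m = 2$, the cycle has vertices $u_a, u_c$ among the incoming neighbors of~$u$ and $v_b, v_d$ among the outgoing neighbors of~$v$, witnessed by four hyperedges $I_1 \supseteq \{u_a, v_b\}$, $I_2 \supseteq \{u_c, v_b\}$, $I_3 \supseteq \{u_c, v_d\}$, $I_4 \supseteq \{u_a, v_d\}$ in~$\II$; each of these is a directed path in~$T$, and so contains the whole subpath from its $u$-endpoint to its $v$-endpoint (in particular, $u$ and~$v$). Assuming without loss of generality $u_a < u_c$ and $v_b < v_d$, I would construct four permutations of~$[n]$ that place $u_a, v_b, u_c, v_d$ in the first four positions in patterns paralleling $152634$, $261534$, $526134$, $615234$ from \cref{fig:pathIntersectionClosedStarSparse}, with the remaining vertices trailing in any fixed order. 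Denoting by~$S_1, S_2, T_1, T_2$ the associated acyclic sourcings of~$\II$, a direct computation of the sources of $I_1, I_2, I_3, I_4$ shows that $S_1 \le T_1$, $S_1 \le T_2$, $S_2 \le T_1$, and $S_2 \le T_2$ in~$\ASour(\II)$, while~$T_1$ and~$T_2$ are incomparable.

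I then argue that $S_1$ and~$S_2$ admit no join in~$\ASour(\II)$. Any sourcing $S$ satisfying $S_1, S_2 \le S \le T_1$ and $S \le T_2$ is forced componentwise to satisfy $S(I_1) = u_a$, $S(I_2) = v_b$, $S(I_3) = u_c$, and $S(I_4) = v_d$. But this assignment is cyclic in the sense of~\cref{def:ASour} via the sequence $I_1, I_4, I_3, I_2$: indeed $S(I_1) = u_a \in I_4 \ssm \{S(I_4)\}$, $S(I_4) = v_d \in I_3 \ssm \{S(I_3)\}$, $S(I_3) = u_c \in I_2 \ssm \{S(I_2)\}$, and $S(I_2) = v_b \in I_1 \ssm \{S(I_1)\}$. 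Hence no acyclic~$S$ fits, so~$S_1 \join S_2$ does not exist in~$\ASour(\II)$, contradicting the lattice hypothesis.

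The main obstacle is extending this construction to cycles of length $2m > 4$. To handle the general case, I would pick a \emph{shortest} cycle in~$X(T, u, v, \II)$ and design analogous permutations alternating around its~$2m$ special vertices. The forced candidate join would then assign each of the $2m$ witnessing hyperedges an endpoint so that the $2m$-cycle in~$X$ translates into a $2m$-cycle of hyperedges in the sense of~\cref{def:ASour}. Minimality of the chosen cycle is crucial here: it prevents any ``shortcut'' hyperedge in~$\II$ from rerouting one of the forced sources and breaking the obstruction. Once this extension is carried out, the lemma follows.
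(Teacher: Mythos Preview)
Your approach is the same as the paper's: it builds cyclic rotations of the sequence $u_1 v_1 u_2 v_2 \cdots u_p v_p$ to produce lower sourcings~$S^i$ and upper sourcings~$T^i$, and shows that any acyclic sourcing squeezed between them must agree with an explicit cyclic sourcing. Two points to tighten: the inequalities~$S_i \le T_j$ must be checked on \emph{every} hyperedge of~$\II$, not just the four witnesses (this is exactly where inducedness of the cycle enters, since any directed path of~$T$ meets at most one~$u_i$ and at most one adjacent~$v_j$); and for general~$m$ the paper uses all~$p$ rotations~$\sigma^i,\tau^i$ so that for each~$I \in \II$ one can pick an index~$i$ with~$u_i \notin I$, forcing~$S^i(I) = T^i(I)$ to equal the cyclic value --- your sketch should make this explicit rather than appeal vaguely to minimality.
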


\begin{proof}
Assume by means of contradiction that~$\ASour(\II)$ is a lattice but that there are~$u \le_T v$ such that~$X(T,u,v,\II)$ contains a cycle~$u_1, v_1, u_2, v_2, \dots, u_p, v_p$, where~$p \ge 2$ (the indices are understood modulo~$p$).
Without loss of generality, we can assume that this cycle is induced.
Note that~$u_i <_T v_j$ for all~$i,j \in [p]$ since we consider increasing trees, and that there is no~$I \in \II$ containing either two distinct~$u_i$'s or two distinct~$v_j$'s since they are incomparable in~$T$.

Let~$W$ be a word formed by the other vertices~$[n] \ssm \{u_0, v_0, \dots, u_p, v_p\}$ in any order.
For~$i \in [p]$, define the permutations
\[
\sigma^i \eqdef u_i v_i u_{i+1} v_{i+1} \dots u_{i+p-1} v_{i+p-1} W
\qquad\text{and}\qquad
\tau^i \eqdef v_i u_{i+1} v_{i+1} \dots u_{i+p-1} v_{i+p-1} u_i W
\]
and the acyclic sourcings~$S^i \eqdef \asour{\sigma^i}$ and~$T^i \eqdef \asour{\tau^i}$ of~$\II$.

Let~$S$ denote the sourcing on~$\II$ defined by
\[
S(I) = 
\begin{cases}
	u_i & \text{if~$u_i \in I$ and~$v_{i-1} \notin I$} \\
	v_i & \text{if~$v_i \in I$ and~$u_{i-1} \notin I$} \\
	w & \text{in all other cases, where~$w$ is the first letter of~$W$ that appears in~$I$}.
\end{cases}
\]
See \cref{exm:pathStarSparse,fig:pathIntersectionClosedStarSparse}\,(right) for an illustration of these permutations and sourcings.

Note that~$S$ is cyclic since it contains the cycle~$u_1, v_1, u_2, v_2, \dots, u_p, v_p$ that we started from.
Observe also that for any~$i \in [p]$ and any~$I \in \II$,
\begin{itemize}
\item $S^i(I) = S(I)$ except if~$\{u_i, v_{i-1}\} \subseteq I$ in which case we have~$S^i(I) = u_i < v_{i-1} = S(I)$,
\item $T^i(I) = S(I)$ except if~$\{u_i, v_i\} \subseteq I$ in which case we have~$S(I) = u_i < v_i = T^i(I)$.
\end{itemize}
In particular, we obtain that~$S^i < S < T^i$ for all~$i \in [p]$.

As~$\ASour(\II)$ is a lattice, we have an acyclic sourcing~$T$ of~$\II$ such that~$S^i \le T \le T^i$ for all~$i \in [p]$.
For any~$I \in \II$, there is~$i \in [p]$ such that~$u_i \notin I$.
For this~$i$, we have~$S(I) = S^i(I) \le T(I) \le T^i(I) = S(I)$, hence~$S(I) = T(I)$.
We conclude that~$S = T$ contradicting the acyclicity of~$T$.
\end{proof}

\begin{example}
\label{exm:pathStarSparse}
We illustrate the construction of the proof of \cref{lem:pathStarSparse} on the last intreeval hypergraph of \cref{fig:intreevalHypergraphs}.
Here, we have~$u = 3$, $v = 4$, $p = 2$ and the cycle in~$X(T, u, v, \II)$ is~$1, 5, 2, 6$.
Therefore, we have the following permutations and ornamentations:
\[
\begin{array}[b]{c|cc}
	i & 1 & 2 \\ 
	\hline
	\sigma^i & 152634 & 261534 \\
	S^i(\blue{1345}) & 1 & 1 \\
	S^i(\orange{2345}) & 5 & 2 \\
	S^i(\red{2346}) & 2 & 2 \\
	S^i(\green{1346}) & 1 & 6 \\
\end{array}
\qquad
\begin{array}[b]{c|c}
	S(\blue{1345}) & 1 \\
	S(\orange{2345}) & 5 \\
	S(\red{2346}) & 2 \\
	S(\green{1346}) & 6 \\
\end{array}
\qquad
\begin{array}[b]{c|cc}
	i & 1 & 2 \\ 
	\hline
	\tau^i & 526134 & 615234 \\
	T^i(\blue{1345}) & 5 & 1 \\
	T^i(\orange{2345}) & 5 & 5 \\
	T^i(\red{2346}) & 2 & 6 \\
	T^i(\green{1346}) & 6 & 6 \\
\end{array}
\]
See \cref{fig:pathIntersectionClosedStarSparse}\,(right).
The reader can check that~$S^i < S < T^i$ for all~$i \in [2]$.
\end{example}


\subsection{Star sparse implies no long cycles}
\label{subsec:shortCycles}

In this section, we observe the following important property of the star sparse intreeval hypergraphs.

\begin{proposition}
\label{prop:pathStarSparse}
Let~$\II$ be a star sparse (\cref{def:pathStarSparse}) intreeval hypergraph of an increasing tree~$T$.
Then any minimal cycle in any sourcing of~$S$ has length~$2$.
\end{proposition}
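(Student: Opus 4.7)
The plan is to argue by contradiction: assume $\II$ is star sparse and suppose some sourcing $S$ of $\II$ has a minimal cycle $H_0, H_1, \ldots, H_k$ (indices taken modulo $k+1$) with sources $v_i \eqdef S(H_i)$ and length $k+1 \geq 3$. Since each $H_i \in \II \subseteq \PP(T)$ is a directed path in $T$ containing both $v_{i-1}$ and $v_i$, these two sources are comparable in $T$ and the directed sub-path between them is contained in $H_i$. The aim is to derive a contradiction either by extracting a proper sub-cycle of $(H_0, \ldots, H_k)$ (contradicting minimality) or by producing a cycle in some star $X(T, u, v, \II)$ with $u \leq_T v$ (contradicting star sparsity).

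First I would exploit minimality to constrain the cyclic structure. If three consecutive sources $v_{i-1}, v_i, v_{i+1}$ lie on a common chain in $T$ and $v_i$ is an extremum of that chain (not its median), a case analysis shows that either $v_{i-1}$ lies on the sub-path from $v_i$ to $v_{i+1}$ in $T$ (so $v_{i-1} \in H_{i+1}$ and removing $H_i$ yields a proper sub-cycle) or $v_{i+1}$ lies on the sub-path from $v_{i-1}$ to $v_i$ (so $v_{i+1} \in H_i$ and $(H_i, H_{i+1})$ is already a sub-cycle of length $2$); in either case minimality fails. Hence for every $i$, the triple $(v_{i-1}, v_i, v_{i+1})$ is either monotone along a chain in $T$ with $v_i$ as median, or $v_{i-1}$ and $v_{i+1}$ are incomparable in $T$, making $v_i$ a local extremum of the cyclic sequence. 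Picking a $T$-minimal and a $T$-maximal source shows both local minima and local maxima exist; at a local min $v_m$, the directed paths in $T$ from $v_m$ to $v_{m-1}$ and to $v_{m+1}$ must diverge at some $w_m^- \geq_T v_m$ which has at least two distinct outgoing neighbors in $T$, and dually at a local max $v_l$, the directed paths to $v_l$ from $v_{l-1}$ and from $v_{l+1}$ merge at some $w_l^+ \leq_T v_l$ with at least two distinct incoming neighbors.

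The core of the argument is to identify a pair $u \leq_T v$ in $T$ such that the minimal cycle induces a cycle in $X(T, u, v, \II)$. I would take $u = w_l^+$ and $v = w_m^-$ for suitably chosen cyclically adjacent local extrema $v_l, v_m$, the key claim being that the comparability of consecutive sources—together with the tree structure of $T$—forces both the ordering $w_l^+ \leq_T w_m^-$ on the chain between $v_l$ and $v_m$, and the passage of every hyperedge $H_i$ through the sub-path $[u, v]_T$. Each such $H_i$ then determines an incoming neighbor of $u$ (by which it enters $u$) and an outgoing neighbor of $v$ (by which it leaves $v$), hence an edge of $X(T, u, v, \II)$, and tracking these edges around the cycle yields a closed walk of length $k+1$ that minimality of the original cycle should force to be non-retracing, giving the desired cycle in $X(T, u, v, \II)$. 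The main obstacle is this last step: checking rigorously that every hyperedge passes through the bottleneck $[u, v]_T$ and that the induced walk in $X(T, u, v, \II)$ is a genuine cycle, which requires a careful analysis of how the sources of the minimal cycle distribute across the branches of $T$ at each local extremum.
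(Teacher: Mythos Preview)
Your overall strategy---extract a minimal cycle and exhibit a cycle in some star graph $X(T,u,v,\II)$---matches the paper's, but the argument as written has a genuine gap at exactly the point you flag as ``the main obstacle.''

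First, a correction to your preliminary reduction. The ``monotone median'' case ($v_{i-1} <_T v_i <_T v_{i+1}$ on a common chain) is in fact also excluded by minimality: the union $\bigcup_{j\notin\{i,i+1\}} H_j$ is connected, contains both $v_{i-1}$ and $v_{i+1}$, and avoids $v_i$ (since $v_i\in H_j$ only for $j\in\{i,i+1\}$), so the tree path from $v_{i-1}$ to $v_{i+1}$ avoids $v_i$---impossible if $v_i$ is the median. Hence \emph{every} $v_i$ is a local extremum, and the $v_i$ form an alternating cycle in $\tc(T)$; in particular the cycle length is even, say $2q$.

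The real gap is the bottleneck claim. Knowing only that the cycle is minimal does not force all hyperedges through a single interval $[u,v]_T$: the merge points below the local maxima $v_{2j}$ (your $w_l^+$) can sit at different vertices of $T$, and likewise for the branch points above the local minima. Picking one adjacent pair of extrema gives a candidate $[u,v]_T$, but hyperedges on the far side of the cycle need not meet it. The paper closes this gap with an extra reduction you are missing: one may further assume that $S$ itself is minimal (in the componentwise order on sourcings) among all sourcings admitting a minimal cycle of length at least~$3$. Under this hypothesis, if the merge point $w'_{2j}$ below $v_{2j}$ were not already in $H_{2j+2}$, one could lower $S(H_{2j})$ to $w'_{2j}$ and keep the same minimal cycle, contradicting minimality of~$S$. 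This forces $w'_{2j}\le_T w'_{2j+2}$ for all $j$, hence all $w'_{2j}$ coincide at a single vertex $v$; the intersection $\bigcap_j H_j$ is then a nonempty directed path $[u,v]_T$, and the desired cycle in $X(T,u,v,\II)$ follows. Without this second minimization your plan does not go through.
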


\begin{proof}
Assume by means of contradiction that there is a sourcing~$S$ of~$\II$ with a minimal cycle~$I_1, \dots, I_p$ where~$p \ge 3$ (the indices are understood modulo~$p$), and let~$w_i \eqdef S(I_i)$ for all~$i \in [p]$.
In other words, we have
\begin{enumerate}[(a)]
\item $w_i \in I_{i+1} \ssm \{w_{i+1}\}$ for all~$i \in [p]$ (because~$I_1, \dots, I_p$ form a cycle in~$S$), and
\item $w_i \in I_j \iff j \in \{i,i+1\}$ for all~$i,j \in [p]$ (because this cycle is minimal).
\end{enumerate}
Moreover, we can assume that there is no sourcing~$S' < S$ with a minimal cycle of length at least~$3$.

\begin{figure}[b]
	\centerline{
	\begin{tikzpicture}
	\coordinate (Z) at (-1,1.5);
	\coordinate (Y) at (-1,1.2);
	\coordinate (A) at (0,0.0);
	\coordinate (B) at (0,0.3);
	\coordinate (D) at (1,1.5);
	\coordinate (C) at (1,1.2);
	\coordinate (E) at (2,0.0);
	\coordinate (F) at (2,0.3);
	\coordinate (H) at (3,1.5);
	\coordinate (G) at (3,1.2);

	\node[brown] at (-.55,1.5) {\scriptsize $w_{2j-2}$};
	\node at (-.55,1.2) {\scriptsize $w'_{2j-2}$};
	\node[red] at (0.45,0.0) {\scriptsize $w_{2j-1}$};
	\node at (0.45,0.3) {\scriptsize $w'_{2j-1}$};
	\node[orange] at (1.3,1.5) {\scriptsize $w_{2j}$};
	\node at (1.3,1.2) {\scriptsize $w'_{2j}$};
	\node[green] at (2.45,0.0) {\scriptsize $w_{2j+1}$};
	\node at (2.45,0.3) {\scriptsize $w'_{2j+1}$};
	\node[blue] at (3.45,1.5) {\scriptsize $w_{2j+2}$};
	\node at (3.45,1.2) {\scriptsize $w'_{2j+2}$};

	\node[red] at (-.6,.6) {\scriptsize $I_{2j-1}$};
	\node[orange] at (.7,.6) {\scriptsize $I_{2j}$};
	\node[green] at (1.4,.6) {\scriptsize $I_{2j+1}$};
	\node[blue] at (2.8,.6) {\scriptsize $I_{2j+2}$};
	
	\node at (-2,.75) {$\cdots$};
	\draw[brown] (-1.5,.75) to [out=10,in=270] ([xshift=-.2] Y) -- ([xshift=-.2] Z);
	\draw[red] ([xshift=-.2] A.center) -- ([xshift=-.2] B.center) to [out=90,in=270] ([xshift=.2] Y) -- ([xshift=.2] Z);
	\draw[orange] ([xshift=.2] A) -- ([xshift=.2] B) to [out=90,in=270] ([xshift=-.2] C) -- ([xshift=-.2] D);
	\draw[green] ([xshift=-.2] E) -- ([xshift=-.2] F) to [out=90,in=270] ([xshift=.2] C) -- ([xshift=.2] D);
	\draw[blue] ([xshift=.2] E) -- ([xshift=.2] F) to [out=90,in=270] ([xshift=-.2] G) -- ([xshift=-.2] H);
	\draw[purple] (3.5,.75) to [out=170,in=270] ([xshift=.2] G) -- ([xshift=.2] H);
	\node at (4,.75) {$\cdots$};

	\node[fill,circle,inner sep=1pt] at (Z) {};
	\node[fill,circle,inner sep=1pt] at (Y) {};
	\node[fill,circle,inner sep=1pt] at (A) {};
	\node[fill,circle,inner sep=1pt] at (B) {};
	\node[fill,circle,inner sep=1pt] at (D) {};
	\node[fill,circle,inner sep=1pt] at (C) {};
	\node[fill,circle,inner sep=1pt] at (E) {};
	\node[fill,circle,inner sep=1pt] at (F) {};
	\node[fill,circle,inner sep=1pt] at (H) {};
	\node[fill,circle,inner sep=1pt] at (G) {};
	
	\end{tikzpicture}
	}
	\caption{Illustration of the notations in the proof of \cref{prop:pathStarSparse}.}
	\label{fig:pathStarSparse}
\end{figure}
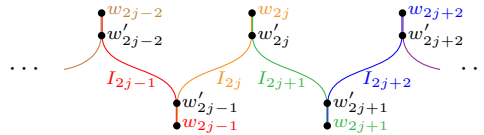

As~$\bigcup_{j \in [p] \ssm \{i,i+1\}} I_j$ is connected by~(a) and contains both~$w_{i-1} \in I_{i-1}$ and~$w_{i+1} \in I_{i+2}$ but not~$w_i$ by~(b), the (not necessarily directed) path joining~$w_{i-1}$ to~$w_{i+1}$ in~$T$ cannot contain~$w_i$.
Hence, the paths from~$w_i$ to~$w_{i-1}$ and to~$w_{i+1}$ respectively share their vertex adjacent to $w_i$, which we denote by~$w'_i$.
Since~$w_i$ and~$w_{i+1}$ both belong to the path~$I_{i+1}$, they are connected by a directed path in~$T$.
We thus obtain that the vertices~$w_1, \dots, w_p$ form an alternating cycle in~$\tc(T)$, and thus that~$p = 2q$.
By symmetry, assume that~$w_i$ has two incoming (resp.~outgoing) arrows in this cycle if~$i$ is even (resp.~odd).
See \cref{fig:pathStarSparse}.

Let~$j \in [q]$.
If~$w'_{2j}$ is not in~$I_{2j+2}$, we can consider the sourcing~$S'$ of~$\II$ defined by~$S'(I_{2j}) = w'_{2j}$ and~$S'(I) = S(I)$ for all~$I \in \II$.
We have~$S' < S$ by definition, and~$I_1, \dots, I_p$ still form a minimal cycle in~$S'$, contradicting the minimality of~$S$.
We conclude that~$w'_{2j}$ belongs to~$I_{2j+2}$, hence to the path from~$w_{2j+1}$ to~$w_{2j+2}$.
This implies in particular that~$w'_{2j} \le w'_{2j+2}$.
We thus obtain that~$w'_2 \le w'_4 \le \dots \le w'_{2q} \le w'_2$, and thus that they all coincide.
We denote this vertex by~$v$.

We obtained that~$I \eqdef \bigcap_{i \in [q]} I_i$ is a non-empty (as it contains~$v$) directed path (as intersection of directed paths in a tree~$T$).
Its final vertex is~$v$, and we denote its initial vertex by~$u$.
For~$j \in [q]$, we denote by~$u_j$ the neighbor of~$u$ in the path from~$w_{2j-1}$ to~$u$, and we denote by~$v_j \eqdef w_{2j}$ which is a neighbor of~$v$.
We obtained that~$u_j \in I_{2j-1} \cap I_{2j}$ while~$v_j \in I_{2j} \cap I_{2j+1}$, hence that~$u_1, v_1, \dots, u_p, v_p$ form a cycle in~$X(T, u, v, \II)$, showing that~$T$ is not star sparse.
\end{proof}

\begin{remark}
\cref{prop:pathStarSparse} fails if the intreeval hypergraph~$\II$ is not star sparse.
For instance, if~$\II = \PP(\Xgraph)$ is the collection of all paths of the graph~$\Xgraph$ of \cref{fig:ornamentationsX}, we have sourcings with minimal cycles of length~$4$.
\end{remark}


\subsection{Sufficient condition}
\label{subsec:sufficient}

To prove the reverse implication of \cref{thm:intreevalLattices}, we will use a quasi-lattice map as defined in~\cite[Rem.~4.14]{BergeronPilaud}.

\begin{definition}[{\cite[Rem.~4.14]{BergeronPilaud}}]
\label{def:quasiLatticeMap}
A \defn{quasi-lattice map} is a surjective order-preserving map~$f$ from a lattice~$L$ to a poset~$P$ such that
\begin{itemize}
\item the fiber of any~$p \in P$ is an interval~$f^{-1}(p) = [\projDown{p}, \projUp{p}]$ of~$L$,
\item $\projDown{p} \le \projUp{q}$ for any~$p \le q$ in~$P$.
\end{itemize}
\end{definition}

\begin{proposition}[{\cite[Rem.~4.14]{BergeronPilaud}}]
\label{prop:quasiLatticeMap}
If~$f : L \to P$ is a quasi-lattice map, then~$P$ is a lattice. 
Moreover, for any~$p,q \in P$,
\[
p \join q = f(\projDown{p} \join \projDown{q})
\qquad\text{and}\qquad
p \meet q = f(\projUp{p} \meet \projUp{q}).
\]
\end{proposition}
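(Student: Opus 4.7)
The plan is to directly verify that $f(\projDown{p} \join \projDown{q})$ is the join of $p$ and $q$ in $P$ (and dually for the meet), which automatically yields that $P$ is a lattice.

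First I would check that $r \eqdef f(\projDown{p} \join \projDown{q})$ is an upper bound of $\{p,q\}$. Since $\projDown{p} \join \projDown{q} \ge \projDown{p}$ in $L$ and $f$ is order-preserving, we get $r \ge f(\projDown{p}) = p$, and symmetrically $r \ge q$.

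Next I would show $r$ is the least such upper bound. Let $s \in P$ satisfy $s \ge p$ and $s \ge q$. By the second defining property of a quasi-lattice map (applied to $p \le s$ and to $q \le s$), we have $\projDown{p} \le \projUp{s}$ and $\projDown{q} \le \projUp{s}$ in $L$, hence $\projDown{p} \join \projDown{q} \le \projUp{s}$. Applying $f$, which is order-preserving, yields $r \le f(\projUp{s}) = s$. Therefore $r = p \join q$. The formula for the meet follows by the dual argument, using $\projUp{p} \meet \projUp{q}$ and the inequality $\projDown{s} \le \projUp{p}$, $\projDown{s} \le \projUp{q}$ for any lower bound $s$ of $\{p,q\}$.

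There is no real obstacle here: the whole argument relies entirely on the two axioms of \cref{def:quasiLatticeMap}, namely that fibers are intervals (so that $\projDown{}$ and $\projUp{}$ are well-defined and $f \circ \projDown{} = f \circ \projUp{} = \mathrm{id}_P$) and that $p \le q$ in $P$ implies $\projDown{p} \le \projUp{q}$ in $L$. The only subtlety worth flagging is why one needs the mixed inequality $\projDown{p} \le \projUp{s}$ rather than the stronger (and generally false for a quasi-lattice map) $\projDown{p} \le \projDown{s}$: it is precisely this weakening that makes the notion useful, and the key point is that after joining the two $\projDown{}$'s, the image under $f$ still lies below $s = f(\projUp{s})$.
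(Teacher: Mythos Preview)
Your proof is correct and follows essentially the same approach as the paper: both verify directly that $f(\projDown{p} \join \projDown{q})$ is an upper bound of $\{p,q\}$ via order-preservation, and that it lies below any upper bound $s$ via the defining inequality $\projDown{p} \le \projUp{s}$. Your version is in fact slightly more streamlined, since the paper first argues existence of the join and then separately checks the formula, whereas you obtain both at once.
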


\begin{proof}
We repeat the proof here for completeness.
Let~$p,q,r,s \in P$ such that~$p, q \le r, s$.
Then~$\projDown{p}, \projDown{q} \le \projUp{r},  \projUp{s}$.
Hence,~$\projDown{p}, \projDown{q} \le \projDown{p} \join \projDown{q} \le \projUp{r}, \projUp{s}$.
Since~$f$ is order-preserving, we obtain that~$f(\projUp{p}) = p, f(\projUp{q}) = q \le f(\projDown{p} \join \projDown{q}) \le f(\projUp{r}) = r, f(\projUp{s}) = s$.
This implies that~$p$ and~$q$ admit a join~$p \join q \le f(\projDown{p} \join \projDown{q})$.
Moreover, as~$p, q \le p \join q$, we have~$\projDown{p}, \projDown{q} \le \projUp{p \join q}$, hence $\projDown{p} \join \projDown{q} \le \projUp{p \join q}$.
Since~$f$ is order-preserving, we obtain that~$f(\projDown{p} \join \projDown{q}) \le f(\projUp{p \join q}) = p \join q$.
We conclude that~$p \join q = f(\projDown{p} \join \projDown{q})$.
The proof for the meet is symmetric.
\end{proof}

\begin{remark}
Recall that a lattice map is a surjective order-preserving map~$f$ from a lattice~$L$ to a poset~$P$ such that
\begin{itemize}
\item the fiber of any~$p \in P$ is an interval~$f^{-1}(p) = [\projDown{p}, \projUp{p}]$ of~$L$,
\item $\projDown{p} \le \projDown{q}$ and~$\projUp{p} \le \projUp{q}$ for any~$p \le q$ in~$P$.
\end{itemize}
Clearly, a lattice map is a quasi-lattice map, but the reverse is not true.
While a quasi-lattice map already ensures that~$P$ is a lattice, it is much weaker than a lattice map.
\end{remark}

We now consider the following map from the ornamentation lattice~$\Orn(T)$ to the sourcing poset~$\Sour(\II)$.
Note that $\sour{O}^\II$ is just the restriction to~$\II$ of the sourcing~$\sour{O}$ of~$\PP(T)$ defined in \cref{def:Orn2Sour}.

\begin{definition}
\label{def:Orn2SourIntreeval}
Let~$\II$ be an intreeval hypergraph of an increasing tree~$T$.
For an ornamentation~$O$ of~$T$, we define a sourcing~$\sour{O}^\II$ of~$\II$ where the source~$\sour{O}^\II(I)$ for a path~$I \in \II$ from~$u$ to~$v$ in~$T$ is the maximal~$w \in I$ such that~$u \in O(w)$.
\end{definition}

\begin{lemma}
\label{lem:Orn2SourIntreeval1}
The map~$O \mapsto \sour{O}^\II$ is order-preserving.
\end{lemma}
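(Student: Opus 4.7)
The plan is to mimic the proof of \cref{lem:Orn2Sour1}, of which this statement is essentially the restriction to the subhypergraph $\II \subseteq \PP(T)$. Indeed, by \cref{def:Orn2SourIntreeval}, the sourcing $\sour{O}^\II$ is nothing but the restriction of the sourcing $\sour{O} \in \Sour(\PP(T))$ of \cref{def:Orn2Sour} to the hyperedges belonging to $\II$.

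Concretely, I would assume $O_1 \le O_2$ in $\Orn(T)$, so that $O_1(w) \subseteq O_2(w)$ for every $w \in V$. Fixing an arbitrary hyperedge $I \in \II$, viewed as a directed path from some $u$ to some $v$ in $T$, set $w \eqdef \sour{O_1}^\II(I)$. By the definition of $\sour{O_1}^\II$, we have $u \in O_1(w)$, and therefore $u \in O_2(w)$ since $O_1(w) \subseteq O_2(w)$. By the maximality clause in \cref{def:Orn2SourIntreeval} applied to $O_2$, this forces $\sour{O_2}^\II(I) \ge w = \sour{O_1}^\II(I)$. Since $I \in \II$ was arbitrary, we conclude $\sour{O_1}^\II \le \sour{O_2}^\II$ componentwise, which is the desired order-preservation.

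There is no real obstacle here: the argument is purely definitional and does not use path intersection closedness or star sparsity of $\II$, nor any acyclicity hypothesis. In fact, the same proof works verbatim for any subhypergraph of $\PP(T)$, which is precisely why this lemma is stated in full generality before specializing to the intreeval setting of \cref{thm:intreevalLattices} in the subsequent subsections.
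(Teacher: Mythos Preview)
Your proof is correct and matches the paper's approach: the paper simply notes that $\sour{O}^\II$ is the restriction to $\II$ of the sourcing $\sour{O}$ from \cref{def:Orn2Sour} and invokes \cref{lem:Orn2Sour1}, which is exactly what you do (while additionally spelling out the short argument of that lemma).
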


\begin{proof}
Follows from \cref{lem:Orn2Sour1} as $\sour{O}^\II$ is just the restriction to~$\II$ of the sourcing~$\sour{O}$ of~$\PP(T)$ defined in \cref{def:Orn2Sour}.
\end{proof}

\begin{lemma}
\label{lem:Orn2SourIntreeval2}
For any ornamentation~$O$ of~$T$, all cycles of the sourcing~$\sour{O}^\II$ have length at least~$3$.
\end{lemma}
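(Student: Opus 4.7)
The plan is to argue by contradiction. Suppose $\sour{O}^\II$ has a cycle of length~$2$, witnessed by two paths $I_1, I_2 \in \II$ with sources $w_i \eqdef \sour{O}^\II(I_i)$ satisfying $w_1 \in I_2 \ssm \{w_2\}$ and $w_2 \in I_1 \ssm \{w_1\}$. Writing each $I_i$ as the directed path from $u_i$ to $v_i$ in~$T$, the definition of $\sour{O}^\II$ (\cref{def:Orn2SourIntreeval}) says that $w_i$ is the $\le_T$-maximal vertex of $I_i$ with $u_i \in O(w_i)$.

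The first step is to observe that, since $w_1$ and $w_2$ both lie on the directed path $I_1$, they are comparable in $\le_T$; similarly on $I_2$. Because $T$ is a tree, the unique $T$-path joining $w_1$ and $w_2$ is simultaneously a subpath of $I_1$ and of $I_2$, so the $\le_T$-ordering of $w_1, w_2$ along $I_1$ agrees with their ordering along $I_2$. By symmetry I may assume $w_1 <_T w_2$. Then $w_2$ lies strictly between $w_1$ and $v_1$ on $I_1$, and $w_1$ lies strictly between $u_2$ and $w_2$ on $I_2$.

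The key step is then to exploit the ornament property. Since $u_2 \in O(w_2)$ and $O(w_2)$ is an ornament of~$T$ at~$w_2$, the unique $T$-path from $u_2$ to $w_2$ is entirely contained in $O(w_2)$; this path coincides with the subpath of $I_2$ from $u_2$ to $w_2$, which passes through $w_1$. Hence $w_1 \in O(w_2)$, and the nesting condition on ornamentations yields $O(w_1) \subseteq O(w_2)$. Consequently, $u_1 \in O(w_1) \subseteq O(w_2)$. Since $w_2 \in I_1$ sits strictly past $w_1$ along $I_1$, this contradicts the $\le_T$-maximality of $w_1$ as a vertex of $I_1$ with $u_1 \in O(\cdot)$, and completes the proof.

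I do not foresee a genuine obstacle here. The only slightly delicate point is the initial reduction showing that whenever two vertices lie on two directed paths of a tree, the two induced orderings must coincide; this is a standard consequence of the uniqueness of paths in~$T$. Everything else is a direct application of the defining properties of ornaments and ornamentations.
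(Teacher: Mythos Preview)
Your proof is correct and follows essentially the same approach as the paper. Both proofs assume a $2$-cycle, order the two sources (you use $\le_T$, the paper uses the natural order on $[n]$, which coincide here since $T$ is increasing and the vertices lie on a common directed path), use the ornament property to show the smaller source lies in the ornament of the larger, apply nesting to propagate $u_1 \in O(w_2)$, and contradict maximality. Your discussion of the two orderings along $I_1$ and $I_2$ agreeing is slightly more than needed, since $\le_T$ is an intrinsic relation on vertices rather than a path-dependent one, but this does no harm.
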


\begin{proof}
Let~$v_1 \eqdef \sour{O}^\II(I_1)$ and~$v_2 \eqdef \sour{O}^\II(I_2)$ for some~$I_1,I_2 \in \II$, and assume by symmetry that~${v_1 \le v_2}$.
If~$v_1 \in I_2 \ssm \{v_2\}$, then~$v_1 \in O(v_2)$, hence~$O(v_1) \subseteq O(v_2)$.
By definition of~$\sour{O}^\II$, we thus obtain that~$\min(I_1) \in O(v_1) \subseteq O(v_2)$.
If~$v_2 \in I_1$, this implies~$v_1 = \sour{O}^\II(I_1) \ge v_2$, hence~${v_1 = v_2}$.
\end{proof}

\begin{corollary}
\label{coro:Orn2SourIntreeval3}
Let~$T$ be an increasing tree, $\II$ be a star sparse intreeval hypergraph of~$T$, and~$O$ be an ornamentation of~$T$.
Then the sourcing~$\sour{O}^\II$ is acyclic.
\end{corollary}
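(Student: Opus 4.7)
The plan is to combine the two results immediately preceding the statement, \cref{lem:Orn2SourIntreeval2} and \cref{prop:pathStarSparse}, which are perfectly complementary. \cref{prop:pathStarSparse} tells us that in a star sparse intreeval hypergraph, any minimal cycle of any sourcing must have length exactly~$2$, while \cref{lem:Orn2SourIntreeval2} specifically rules out cycles of length~$2$ for sourcings of the form~$\sour{O}^\II$. Together these leave no room for any cycle at all.

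Concretely, I would proceed by contradiction: suppose that $\sour{O}^\II$ is cyclic. Then among the (finitely many) cycles of $\sour{O}^\II$, pick one of minimum length; this cycle is minimal in the sense of \cref{def:ASour}. Since $\II$ is star sparse, \cref{prop:pathStarSparse} applies to the sourcing $\sour{O}^\II$ and forces this minimal cycle to have length~$2$. But \cref{lem:Orn2SourIntreeval2} states that every cycle of $\sour{O}^\II$ has length at least~$3$, which is the desired contradiction. Hence $\sour{O}^\II$ is acyclic.

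There is essentially no obstacle here; the only subtlety worth double-checking is that \cref{prop:pathStarSparse} applies to arbitrary sourcings (not just those coming from ornamentations), so it is legitimate to invoke it on $\sour{O}^\II$, and that a minimum-length cycle is automatically minimal in the sense of \cref{def:ASour}, which is immediate since any sub-cycle would contradict the minimality of its length. The proof is therefore a two-line deduction.
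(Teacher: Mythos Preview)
Your proposal is correct and follows exactly the paper's own approach: the paper's proof is the single line ``Follows from \cref{lem:Orn2SourIntreeval2,prop:pathStarSparse}.'' Your elaboration of the contradiction argument is a faithful unpacking of that deduction.
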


\begin{proof}
Follows from \cref{lem:Orn2SourIntreeval2,prop:pathStarSparse}.
\end{proof}

\begin{definition}
Let~$\II$ be an intreeval hypergraph of an increasing tree~$T$.
For an acyclic sourcing~$S$ of~$\II$ and any~$I \in \II$, we define two ornamentations~$\minorn{S}{I}$ and~$\maxorn{S}{I}$ of~$T$ by
\[
\minorn{S}{I}(v) \eqdef \begin{cases} I \cap \lessin{T}{v} & \text{if } v = S(I) \\ \{v\} & \text{otherwise} \end{cases}
\qquad\text{and}\qquad
\maxorn{S}{I}(v) \eqdef  \begin{cases} \lessin{T}{v} \ssm (I \cap \lessin{T}{S(I)}) & \text{if } S(I) < v \in I \\ \lessin{T}{v} & \text{otherwise} \end{cases}
.
\]
Finally, we define two ornamentations~$\minorn{S}{\II}$ and~$\maxorn{S}{\II}$ of~$T$ by
\[
\minorn{S}{\II} \eqdef \bigJoin_{I \in \II} \minorn{S}{I}
\qquad\text{and}\qquad
\maxorn{S}{\II} \eqdef \bigMeet_{I \in \II} \maxorn{S}{I}
.
\]
\end{definition}

Note that~$\minorn{S}{I}$ and~$\maxorn{S}{I}$ are indeed ornamentations of~$T$ by \cref{exm:ornamentations}\,(iii), hence~$\minorn{S}{\II}$ and~$\maxorn{S}{\II}$ are also ornamentations of~$T$.

\begin{lemma}
\label{lem:Sour2OrnIntreeval1}
For any acyclic sourcing~$S$ of~$\II$ and~$I, J \in \II$, we have~$\minorn{S}{I} \le \maxorn{S}{J}$.
Hence,~$\minorn{S}{\II} \le \maxorn{S}{\II}$.
\end{lemma}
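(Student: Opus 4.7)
The plan is to establish the first inequality vertex by vertex, and then derive the second by pure order-theoretic manipulation. Fix an acyclic sourcing $S$ and paths $I, J \in \II$; I want to show $\minorn{S}{I}(v) \subseteq \maxorn{S}{J}(v)$ for every $v \in V$, splitting on whether $v = S(I)$.

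If $v \neq S(I)$, then $\minorn{S}{I}(v) = \{v\}$ by definition, so I only need $v \in \maxorn{S}{J}(v)$. The only nontrivial case is $S(J) < v \in J$, where $\maxorn{S}{J}(v) = \lessin{T}{v} \ssm (J \cap \lessin{T}{S(J)})$. Here $v$ and $S(J)$ both lie on the directed path $J$, and $T$ is increasing, so $v >_T S(J)$, hence $v \notin \lessin{T}{S(J)}$ and $v \in \maxorn{S}{J}(v)$.

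The substantive case is $v = S(I)$, where $\minorn{S}{I}(v) = I \cap \lessin{T}{v}$. If $v \notin J$ or $S(J) \geq v$, then $\maxorn{S}{J}(v) = \lessin{T}{v}$ and the inclusion is immediate. Otherwise $S(J) < v \in J$, which in the increasing tree gives $S(J) <_T v$ and therefore $\lessin{T}{S(J)} \subseteq \lessin{T}{v}$. The inclusion then reduces to showing
\[
I \cap J \cap \lessin{T}{S(J)} = \varnothing.
\]
This is where acyclicity of $S$ enters, and I expect this to be the only real obstacle. Assume for contradiction that some $w$ lies in the above intersection. Then $w \in I$ with $w \leq_T S(J) <_T v = S(I)$, so $w \in I \ssm \{S(I)\}$; symmetrically $v = S(I) \in J \ssm \{S(J)\}$ by assumption. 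Moreover $w \leq_T S(J) \leq_T v$ with $w, v \in I$, so since $I$ is a directed path, $S(J) \in I$, and $S(J) \neq S(I) = v$. This produces the cycle $I, J$ with $S(I) \in J \ssm \{S(J)\}$ and $S(J) \in I \ssm \{S(I)\}$, contradicting \cref{def:ASour}. Hence the intersection is empty and $\minorn{S}{I}(v) \subseteq \maxorn{S}{J}(v)$.

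For the second assertion, fix $J \in \II$: since $\minorn{S}{I} \leq \maxorn{S}{J}$ for every $I$, taking the join over $I$ yields $\minorn{S}{\II} = \bigJoin_{I \in \II} \minorn{S}{I} \leq \maxorn{S}{J}$. Taking the meet of the right-hand side over $J$ then gives $\minorn{S}{\II} \leq \bigMeet_{J \in \II} \maxorn{S}{J} = \maxorn{S}{\II}$, as required.
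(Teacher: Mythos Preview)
Your proof is correct and follows essentially the same approach as the paper's: both split on whether $v = S(I)$, and in the key case $v = S(I)$ with $S(J) < v \in J$ reduce to excluding a length-$2$ cycle $I,J$ via acyclicity. You are actually slightly more explicit than the paper in one spot: the paper shows $S(J) \notin I$ and concludes $(J \cap \lessin{T}{S(J)}) \cap I = \varnothing$ without spelling out the path-convexity step (that any $w \in I$ with $w \le_T S(J) <_T v = S(I) \in I$ forces $S(J) \in I$), whereas you make this explicit.
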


\begin{proof}
Let~$v \in [n]$.
If~$v \ne S(I)$, then~$\minorn{S}{I}(v) = \{v\} \subseteq \maxorn{S}{J}(v)$.
If~$v \notin J$ or~$S(J) \ge v$, then ${\minorn{S}{I}(v) \subseteq I \cap [v] \subseteq \lessin{T}{v} = \maxorn{S}{J}(v)}$.
Finally, assume that~$S(I) = v$ and~$S(J) < v \in J$.
If~$S(J) \in I$, then~$S(J) \in I \ssm \{S(I)\}$ while~$S(I) \in J \ssm \{S(J)\}$ contradicting the acyclicity of~$S$.
Hence, we obtain~$(J \cap [S(J)]) \cap I = \varnothing$ and thus~$\minorn{S}{I}(v) = I \cap [v] \subseteq \lessin{T}{v} \ssm (J \cap [S(J)])= \maxorn{S}{J}(v)$.
We conclude that~$\minorn{S}{I}(v) \subseteq \maxorn{S}{J}(v)$ for any~$v \in [n]$, hence that~$\minorn{S}{I} \le \maxorn{S}{J}$.
\end{proof}

\begin{lemma}
\label{lem:Sour2OrnIntreeval2}
For any acyclic sourcing~$S$ of~$\II$, the ornamentations~$O$ of~$T$ such that~$\sour{O}^\II = S$ form an interval of the ornamentation lattice~$\Orn(T)$ with minimal element~$\minorn{S}{\II}$ and maximal element~$\maxorn{S}{\II}$.
\end{lemma}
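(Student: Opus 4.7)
The plan is to split the claim into three pieces: (i) any $O$ with $\sour{O}^{\II} = S$ lies in the interval $[\minorn{S}{\II}, \maxorn{S}{\II}]$; (ii) both endpoints themselves satisfy $\sour{-}^{\II} = S$; (iii) the monotonicity of the map $O \mapsto \sour{O}^{\II}$ (\cref{lem:Orn2SourIntreeval1}) then forces every $O$ between them to have $\sour{O}^{\II} = S$, so the fiber coincides with that interval. For (i), fix any $O$ in the fiber and any $I \in \II$, and write $s = S(I)$, $u = \min(I)$. The pointwise inclusion $\minorn{S}{I} \le O$ reduces to the single nontrivial coordinate $v = s$: the hypothesis $u \in O(s)$ together with $O(s)$ being an ornament at $s$ forces $[u,s]_T = I \cap \lessin{T}{s} \subseteq O(s)$. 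The pointwise inclusion $O \le \maxorn{S}{I}$ is nontrivial only at $v \in I$ with $v > s$: any hypothetical $x \in O(v) \cap (I \cap \lessin{T}{s})$ would carry the tree-path from $x$ to $v$ inside $O(v)$, and that path passes through $s$, forcing $O(s) \subseteq O(v)$, hence $u \in O(v)$, contradicting $\sour{O}^{\II}(I) = s$. Taking the join (resp.\ meet) over $I \in \II$ delivers $\minorn{S}{\II} \le O$ (resp.\ $O \le \maxorn{S}{\II}$).

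For the minimal endpoint in (ii), the inequality $\sour{\minorn{S}{\II}}^{\II} \ge S$ is immediate since $u \in \minorn{S}{I}(s) \subseteq \minorn{S}{\II}(s)$; the content is the reverse inequality. I assume for contradiction that some $w \in I$ with $w >_T s$ satisfies $u \in \minorn{S}{\II}(w)$. The key observation is that $\minorn{S}{\II}(w)$ is itself an ornament at $w$, so the tree-path from $u$ to $w$ -- which passes through $s$ since $u$, $s$, $w$ all lie on $I$ -- is contained in it, yielding $s \in \minorn{S}{\II}(w)$. By the iterative description of joins in \cref{thm:OrnMeetJoin}, there is a chain of paths $J_1, \dots, J_\ell \in \II$ (with $\ell \ge 1$ since $w \ne s$) and vertices $w = w_0, w_1, \dots, w_\ell = s$ such that $S(J_i) = w_{i-1}$ and $w_i \in J_i$ for each $i$; I choose it of minimal length so that the $w_i$'s are pairwise distinct. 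Then in the sourcing graph of $S$, all of the arrows
\[
I \to J_\ell \to J_{\ell-1} \to \cdots \to J_1 \to I
\]
are present: $S(I) = s = w_\ell \in J_\ell$ with $s \ne w_{\ell-1} = S(J_\ell)$; for each $i \ge 2$, $S(J_i) = w_{i-1} \in J_{i-1}$ with $w_{i-1} \ne w_{i-2} = S(J_{i-1})$; and $S(J_1) = w \in I \ssm \{s\}$. The paths involved are pairwise distinct (the $J_i$'s by minimality of the chain, and $I \ne J_i$ because $S(I) = s \ne w_{i-1} = S(J_i)$), so the resulting $(\ell+1)$-cycle contradicts the acyclicity of $S$.

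For the maximal endpoint in (ii), the direct inequality is $\sour{\maxorn{S}{\II}}^{\II} \le S$: at $v \in I$ with $v > s$, \cref{coro:meetOrnT} gives $\maxorn{S}{\II}(v) \subseteq \maxorn{S}{I}(v) = \lessin{T}{v} \ssm (I \cap \lessin{T}{s})$, which excludes $u$. For the reverse I verify that $u \in \maxorn{S}{\II}(s) = \bigcap_{J \in \II} \maxorn{S}{J}(s)$; the only potential obstruction is a $J \in \II$ with $S(J) < s$, $s \in J$, $u \in J$, and $u \le_T S(J)$, but then $[u,s]_T \subseteq I \cap J$ forces $S(J) \in I \ssm \{s\}$, producing the $2$-cycle $I \to J \to I$ in $S$ and again contradicting acyclicity. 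Combining (i), (ii), and the monotonicity of $O \mapsto \sour{O}^{\II}$ closes the proof. The main obstacle is the lower-bound inequality in (ii): turning the abstract iterative closure description of $\minorn{S}{\II}(w)$ into an explicit cycle in $S$ is exactly what the ornament structure of $\minorn{S}{\II}(w)$ makes possible, since it pins down the intermediate vertex $s$ on the tree-path from $u$ to $w$ -- an argument that relies crucially on $T$ being a tree.
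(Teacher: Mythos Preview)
Your proof is correct, but it works harder than necessary compared to the paper's argument. The paper sidesteps your step (ii) entirely: instead of verifying directly that $\sour{\minorn{S}{\II}}^\II = S$ and $\sour{\maxorn{S}{\II}}^\II = S$, it shows that for any ornamentation $O$ and any single $I \in \II$, the condition $\sour{O}^\II(I) = S(I)$ is equivalent to $\minorn{S}{I} \le O \le \maxorn{S}{I}$. Since the fiber is then exactly $\bigcap_{I \in \II} \{O : \minorn{S}{I} \le O \le \maxorn{S}{I}\}$, it is by definition the interval $[\bigJoin_I \minorn{S}{I}, \bigMeet_I \maxorn{S}{I}] = [\minorn{S}{\II}, \maxorn{S}{\II}]$ (nonempty by \cref{lem:Sour2OrnIntreeval1}). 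In particular the paper's converse direction uses \emph{only} the two bounds for the same $I$, and never invokes the acyclicity of $S$ nor the iterative description of the join.

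What your approach buys is an explicit, self-contained verification that the endpoints land in the fiber, and your cycle argument for the lower endpoint is a nice illustration of how the iterative closure in \cref{thm:OrnMeetJoin} interacts with acyclicity. Two small remarks: you are tacitly using the $n$-ary version of the join description in \cref{thm:OrnMeetJoin} (stated there only for binary joins), which is routine but worth a word; and for the upper endpoint you could equally well argue without \cref{coro:meetOrnT} by noting that $\maxorn{S}{\II} \le \maxorn{S}{I}$ already gives $\maxorn{S}{\II}(v) \subseteq \maxorn{S}{I}(v)$. The paper's route is shorter and conceptually cleaner for this lemma, but your argument is sound.
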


\begin{proof}
Consider first an ornamentation~$O$ of~$T$ such that~$\sour{O}^\II = S$ and let~$v \in [n]$ and~$I \in \II$. Then
\begin{itemize}
\item If~$S(I) = v$, we have~$\min(I) \in O(v)$ by definition of~$\sour{O}^\II = S$, and thus~$I \cap [v] \subseteq O(v)$ since~$T$ is a tree.
We thus obtain that~$\minorn{S}{I}(v) \subseteq O(v)$.
\item If~$S(I) < v \in I$, we have~$\min(I) \in O(S(I)) \ssm O(v)$ by definition of~$\sour{O}^\II = S$, hence~$(I \cap [S(I)]) \cap O(v) = \varnothing$ since~$O$ is an ornamentation.
We thus obtain that~$O(v) \subseteq \maxorn{S}{I}(v)$.
\end{itemize}
In all cases, $\minorn{S}{I}(v) \subseteq O(v) \subseteq \maxorn{S}{I}(v)$ for any~$v \in [n]$ and~$I \in \II$.
We conclude that~$\minorn{S}{I} \le O \le \maxorn{S}{I}$ for any~$I \in \II$.

Conversely, consider an ornamentation~$O$ of~$T$ such that~$\minorn{S}{I} \le O \le \maxorn{S}{I}$ and let~$I \in \II$ and~$v \eqdef S(I)$. Then
\begin{itemize}
\item As~$S(I) = v$, we have~$I \cap [v] \subseteq \minorn{S}{I}(v) \subseteq \minorn{S}{I} \subseteq O(v)$, hence~$v \le \sour{O}^\II(I)$ by definition~of~$\sour{O}^\II$.
\item Conversely, for any~$v < w \in I$, we have~$S(I) = v < w \in I$ and so~$O(w) \subseteq \maxorn{S}{\II}(w) \subseteq \maxorn{S}{I}(w) \subseteq \lessin{T}{w} \ssm (I \cap [v])$. Hence, $\min(I) \notin O(w)$, and so~$\sour{O}^\II(I) \ne w$. We thus obtain that~$\sour{O}^\II(I) \le v$
\end{itemize}
We conclude that~$\sour{O}^\II(I) = S(I)$ for all~$I \in \II$, hence that~$\sour{O}^\II = S$.
\end{proof}

\begin{lemma}
\label{lem:Sour2OrnIntreeval3}
Let~$\II$ be a path intersection closed (\cref{def:pathIntersectionClose}) intreeval hypergraph of an increasing tree~$T$.
For any acyclic sourcings~$S_1$ and~$S_2$ of~$\II$, if~$S_1 \le S_2$ then~$\minorn{S_1}{\II} \le \maxorn{S_2}{\II}$.
\end{lemma}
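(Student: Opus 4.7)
\textbf{Reduction to a Key Claim.} The inequality $\minorn{S_1}{\II} \le \maxorn{S_2}{\II}$ amounts to the pointwise inclusion $\minorn{S_1}{\II}(v) \subseteq \maxorn{S_2}{\II}(v)$ for every $v \in V$. Since $\maxorn{S_2}{\II}$ is an ornamentation (\cref{coro:meetOrnT}), $u \in \maxorn{S_2}{\II}(v)$ implies $\maxorn{S_2}{\II}(u) \subseteq \maxorn{S_2}{\II}(v)$. Combined with the description of $\minorn{S_1}{\II}(v)$ as the minimal ornament at $v$ closed under adjoining $\minorn{S_1}{I}(u)$ for each $u$ in it (\cref{thm:OrnMeetJoin}), this reduces the goal to showing that whenever $u = S_1(I)$ with $I \in \II$, $I \cap \lessin{T}{u} \subseteq \maxorn{S_2}{\II}(u) = \bigcap_{J \in \II} \maxorn{S_2}{J}(u)$. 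The only non-trivial case of this containment occurs when $S_2(J) <_T u \in J$, yielding the \emph{Key Claim}: for all $I, J \in \II$ with $u := S_1(I) \in J$ and $S_2(J) <_T u$, we have $I \cap J \cap \lessin{T}{S_2(J)} = \varnothing$.

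\textbf{Setup and degenerate cases.} Suppose for contradiction $w \in I \cap J \cap \lessin{T}{S_2(J)}$, so $w \le_T S_2(J) <_T u$ with $w, u \in I \cap J = [\alpha, \beta]_T$; in particular $\alpha \le_T S_2(J) \le_T u \le_T \beta$ and $S_2(J) \in I$. Writing $I = [a, b]_T$ and $J = [c, d]_T$, direct $2$-cycle arguments handle the degenerate configurations: $I = J$ contradicts $S_1 \le S_2$ via $u = S_1(I) \le_T S_2(I) = S_2(J) <_T u$; $I \subsetneq J$ gives a $2$-cycle in $S_2$ from $S_2(I) \ge_T u$ lying in $I \subseteq J$; symmetrically, $J \subsetneq I$ gives a $2$-cycle in $S_1$. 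Refining these $2$-cycle arguments in the remaining generic case yields $S_1(J) \in [c, \alpha)_T$ (hence $c <_T \alpha$) and $S_2(I) \in (\beta, b]_T$ (hence $b >_T \beta$).

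\textbf{Applying path intersection closedness and the four-case contradiction.} Since $c = \min(J) <_T \alpha = \min(I \cap J)$ and $b = \max(I) >_T \beta = \max(I \cap J)$, applying path intersection closedness (\cref{def:pathIntersectionClose}) with the roles of $I$ and $J$ interchanged produces $K \in \II$ with $I \cap J \subseteq K \subseteq [\min(I), \max(J)]_T = [a, d]_T$; the path $[a, d]_T$ is well-defined via the $T$-chain $a \le_T \alpha \le_T \beta \le_T d$. The branching structure of $T$ at $\alpha$ and $\beta$ then identifies $K \cap I = [\min(K), \beta]_T$ and $K \cap J = [\alpha, \max(K)]_T$, so $K \setminus I = (\beta, \max(K)]_T \subseteq J$ and $K \setminus J = [\min(K), \alpha)_T \subseteq I$. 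Since $u \in K \cap I$, acyclicity of $S_1$ on $\{I, K\}$ forces $S_1(K) \in \{u\} \cup (\beta, \max(K)]_T$; since $S_2(J) \in K \cap J$, acyclicity of $S_2$ on $\{J, K\}$ forces $S_2(K) \in \{S_2(J)\} \cup [\min(K), \alpha)_T$. In each of the four resulting cases, $S_1(K) \le_T S_2(K)$ fails: the matched case $(u, S_2(J))$ because $S_2(J) <_T u$; in each mismatched case because the $T$-chain $\min(K) \le_T \alpha \le_T \beta \le_T \max(K)$ along $K$ forces $S_2(K) <_T S_1(K)$.

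\textbf{Main obstacle.} The crux is choosing the correct invocation of path intersection closedness: applying it with the roles of $I$ and $J$ interchanged (yielding $K \subseteq [a, d]_T$) is essential, since with this choice the $K$-prefix before $\alpha$ lies in $I \setminus J$ and the $K$-suffix after $\beta$ lies in $J \setminus I$, placing $S_1(K)$ and $S_2(K)$ on geometrically opposed branches of $K$ in every mismatched case. The ``natural'' invocation would produce $K \subseteq [c, b]_T$ with the two wings reversed, leaving sub-cases consistent with $S_1 \le S_2$ where no immediate contradiction arises. Recognizing this swap, identifying the precise acyclicity-forced ranges for $S_1(K)$ and $S_2(K)$, and verifying that all four sub-cases collapse to a contradiction via the $T$-chain along $K$ is the central technical step.
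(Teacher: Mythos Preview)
Your proof is correct and follows essentially the same route as the paper's: reduce to a pair $I,J$ witnessing the failure, show $\min(J)<_T\min(I\cap J)$ and $\max(I)>_T\max(I\cap J)$ via acyclicity, apply path intersection closedness with the roles of $I$ and $J$ swapped to obtain $K\subseteq[\min(I),\max(J)]_T$, and conclude $S_1(K)>S_2(K)$. Your ``main obstacle'' paragraph correctly identifies the key point (the swap), which the paper also performs silently.

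The only notable difference is in the reduction step. You unfold the join formula from \cref{thm:OrnMeetJoin} to extract the Key Claim by hand. The paper instead uses the lattice identity $\bigJoin_I \minorn{S_1}{I}\le\bigMeet_J \maxorn{S_2}{J}\iff\forall I,J:\ \minorn{S_1}{I}\le\maxorn{S_2}{J}$, immediately producing a single bad pair $(I,J)$; unpacking $\minorn{S_1}{I}\not\le\maxorn{S_2}{J}$ then gives $\min(I)\le_T S_2(J)<_T S_1(I)\le_T\max(J)$ in one line. This lets the paper skip your degenerate cases and your explicit four-case analysis at the end, compressing the whole argument to a short paragraph, but the underlying logic is the same as yours.
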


\begin{proof}
Assume by means of contradiction that~$S_1 \le S_2$ and~$\minorn{S_1}{\II} \not\le \maxorn{S_2}{\II}$.
Since~$\minorn{S_1}{\II} \not\le \maxorn{S_2}{\II}$, there exist~$I, J \in \II$ such that~$\minorn{S_1}{I} \not\le \maxorn{S_2}{J}$.
This implies~$\min(I) \le S_2(J) < S_1(I) \le \max(J)$.
Hence, $S_1(I)$ and~$S_2(J)$ are distinct and both belong to~$I \cap J$.
Moreover, as~$S_1 \le S_2$, we have~$S_1(J) \le S_2(J) < S_1(I) \le S_2(I)$.
Since~$S_1$ and~$S_2$ are both acyclic, we deduce that~$\min(J) \le S_1(J) < \min(I \cap J)$ while~$\max(I) \ge S_2(I) > \max(I \cap J)$.
As~$\II$ is path intersection closed, there exists~$K \in \II$ such that~$I \cap J \subseteq K \subseteq [\min(I) , \max(J)]_T$.
As~$S_1(I)$ and~$S_2(J)$ both belong to~$I \cap J \subseteq K$, and~$S_1$ and~$S_2$ are both acyclic, we obtain that~$S_2(K) \le S_2(J) < S_1(I) \le S_1(K)$, which contradicts~$S_1 \le S_2$.
\end{proof}

\begin{proposition}
\label{prop:quasiLatticeMapIntreevalLattices}
Let~$\II$ be a path intersection closed (\cref{def:pathIntersectionClose}) and star sparse (\cref{def:pathStarSparse}) intreeval hypergraph of an increasing tree~$T$.
Then the map~$O \mapsto \sour{O}^\II$ of \cref{def:Orn2SourIntreeval} is a quasi-lattice map.
Hence, the acyclic sourcing poset~$\ASour(\II)$ is a lattice.
\end{proposition}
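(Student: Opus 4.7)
The plan is to verify each of the four conditions of \cref{def:quasiLatticeMap} for the map~$O \mapsto \sour{O}^\II$ from the ornamentation lattice~$\Orn(T)$ (a lattice by \cref{thm:OrnMeetJoin}) to the acyclic sourcing poset~$\ASour(\II)$, after which \cref{prop:quasiLatticeMap} immediately yields that~$\ASour(\II)$ is a lattice. Conveniently, all four conditions have been pre-packaged into the four preceding lemmas, each exactly matched to one of the two hypotheses on~$\II$.

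First, I would check that the map is well-defined as a map into~$\ASour(\II)$ and order-preserving. That~$\sour{O}^\II$ is indeed acyclic for every ornamentation~$O$ is the content of \cref{coro:Orn2SourIntreeval3}, which is where the star sparsity hypothesis enters (via \cref{lem:Orn2SourIntreeval2} combined with the structural bound on cycle lengths in \cref{prop:pathStarSparse}). Order-preservation is \cref{lem:Orn2SourIntreeval1}.

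Next, for each acyclic sourcing~$S$ of~$\II$, I would identify the fiber~$f^{-1}(S)$ with the interval~$[\minorn{S}{\II}, \maxorn{S}{\II}]$ of~$\Orn(T)$. Non-emptiness of this interval is \cref{lem:Sour2OrnIntreeval1}, and the equality with the fiber is \cref{lem:Sour2OrnIntreeval2}; together these give the interval condition as well as surjectivity of the map (so we may write~$\projDown{S} = \minorn{S}{\II}$ and~$\projUp{S} = \maxorn{S}{\II}$). Finally, the remaining quasi-lattice requirement~$\projDown{S_1} \le \projUp{S_2}$ for any~$S_1 \le S_2$ in~$\ASour(\II)$ translates precisely to~$\minorn{S_1}{\II} \le \maxorn{S_2}{\II}$, which is \cref{lem:Sour2OrnIntreeval3}; this is the only place where the path intersection closed hypothesis is used.

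At this point, all four conditions of \cref{def:quasiLatticeMap} are verified, so~$O \mapsto \sour{O}^\II$ is a quasi-lattice map, and \cref{prop:quasiLatticeMap} concludes that~$\ASour(\II)$ is a lattice. The main obstacle is not really in this proposition itself, since all the substantive content sits in the lemmas above; rather, it is to observe that the two hypotheses on~$\II$ play cleanly separated roles, namely star sparsity controls acyclicity of the image of the map, and path intersection closedness controls the compatibility of lower and upper projections across comparable fibers.
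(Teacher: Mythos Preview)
Your proposal is correct and follows exactly the same approach as the paper's proof, which simply cites \cref{lem:Orn2SourIntreeval1}, \cref{coro:Orn2SourIntreeval3}, and \cref{lem:Sour2OrnIntreeval1,lem:Sour2OrnIntreeval2,lem:Sour2OrnIntreeval3} to verify the conditions of \cref{def:quasiLatticeMap}, and then invokes \cref{prop:quasiLatticeMap}. Your write-up is in fact more explicit than the paper's, as it spells out which hypothesis on~$\II$ is needed for which lemma.
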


\begin{proof}
The map~$O \mapsto \sour{O}^\II$ satisfies the conditions of \cref{def:quasiLatticeMap} by \cref{lem:Orn2SourIntreeval1}, \cref{coro:Orn2SourIntreeval3} and~\cref{lem:Sour2OrnIntreeval1,,lem:Sour2OrnIntreeval2,,lem:Sour2OrnIntreeval3}.
The last statement thus follows from~\cref{prop:quasiLatticeMap}.
\end{proof}


\subsection{Expression for the meet and join}
\label{subsec:formulaMeetJoin}

Finally, when~$\ASour(\II)$ is a lattice, we can obtain the following formula to compute its joins, mimicking the formula of~\cite[Prop.~4.15]{BergeronPilaud} when~$T$ is a path.
A similar formula holds for meets by symmetry.

\begin{proposition}
\label{prop:expressionJoin}
For a path intersection closed and star sparse intreeval hypergraph~$\II$ of an increasing tree~$T$, and for any acyclic sourcings~$S_1, \dots, S_q$ of~$\II$ and any~$I \in \II$, we have
\[
\bigg( \bigJoin_{p \in [q]} S_p \bigg)(I) = \min \bigg( I \ssm \Big( \bigcup_{p \in [q]} \bigcup_{\substack{J \in \II : \\ S_p(J) \in I}} \set{v \in J}{v < S_p(J)} \Big) \bigg).
\]
\end{proposition}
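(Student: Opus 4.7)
The plan is to lift the computation to the ornamentation lattice via the quasi-lattice map $O \mapsto \sour{O}^\II$ of \cref{prop:quasiLatticeMapIntreevalLattices}. By \cref{prop:quasiLatticeMap}, the join equals $\sour{O}^\II$ where $O = \bigJoin_{p \in [q],\, J \in \II} \minorn{S_p}{J}$. Iterating the join description of \cref{thm:OrnMeetJoin} and unpacking $\minorn{S_p}{J}$, one checks that $O(w) = \set{v \in V}{w \to^* v}$, with $\to^*$ the reflexive-transitive closure of the relation
\[
u \to v \;\iff\; \exists\,(p, J) \in [q] \times \II : u = S_p(J),\ v \in J,\ v \leq_T u.
\]
By \cref{def:Orn2SourIntreeval}, this yields $\bigl( \bigJoin_p S_p \bigr)(I) = \max \set{w \in I}{w \to^* \min(I)}$.

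Write $N := \min B_I$ for the right-hand side of the statement. The key observation for both directions is that whenever $(p, J)$ satisfies $S_p(J) \in I$, $v \in J$ and $v <_T S_p(J)$, and $N$ belongs to the $T$-subpath from $v$ to $S_p(J)$, then $N \in I \cap J$ and $(p, J)$ also witnesses an exclusion of $N$, contradicting $N \in B_I$. For the inequality $\sour{O}^\II(I) \geq N$, I would use this to build a $\to$-chain from $N$ down to $\min(I)$: any $v \in I$ with $v < N$ comes with an exclusion witness $(p, J)$, which necessarily has $S_p(J) \leq N$ (else the observation applies), so $S_p(J) \in I \cap (v, N]$ and $S_p(J) \to v$; iterating from $v = \min(I)$ yields the desired chain.

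For the reverse inequality $\sour{O}^\II(I) \leq N$, I would proceed by induction on the chain length $k$ of a hypothetical counterexample $w \in I$ with $w > N$ and $w \to^* \min(I)$. When $k = 1$ one has $w = S_p(J)$ with $\min(I) \in J$, and the key observation applied to $v = \min(I)$ gives a direct contradiction. For $k \geq 2$, writing $u_1$ for the first intermediate vertex, if $u_1 \in I$ then induction forces $u_1 \leq N$ and the key observation excludes $N$ along $(p_1, J_1)$; the remaining case $u_1 \notin I$ uses path intersection closedness (\cref{def:pathIntersectionClose}) applied to the pair $(I, J_1)$ to produce a hyperedge $K \in \II$ whose source in some $S_p$ lets us reroute the chain through a vertex of $I$, reducing to the previous case. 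The main obstacle is precisely this $u_1 \notin I$ subcase: the hypotheses $\min(I) <_T \min(I \cap J_1)$ and $\max(J_1) >_T \max(I \cap J_1)$ of \cref{def:pathIntersectionClose} have to be verified using the acyclicity of the $S_p$'s to rule out degenerate configurations, and the supplied $K$ must be selected so that its source in one of the $S_p$'s delivers a genuine chain-replacement step landing in $I$. Star sparsity plays no further direct role here beyond \cref{prop:pathStarSparse}, already invoked in \cref{coro:Orn2SourIntreeval3} to ensure that $\sour{O}^\II$ is acyclic.
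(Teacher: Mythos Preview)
Your approach is genuinely different from the paper's. The paper works directly with sourcings: it defines $S$ by the right-hand side, shows $S$ is acyclic via \cref{prop:pathStarSparse}, shows $S_p \le S$ for all~$p$, and then shows that any acyclic upper bound $S'$ satisfies $S \le S'$ by a short direct argument invoking path intersection closedness once (on the pair $I,J$ witnessing $S'(I) < S(I)$). Your route through the quasi-lattice map of \cref{prop:quasiLatticeMapIntreevalLattices} is conceptually appealing because it front-loads both hypotheses into that proposition; your description of $O$ via the relation~$\to^*$ and the direction $\sour{O}^\II(I) \ge N$ are correct.

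The genuine gap is the $u_1 \notin I$ subcase of the reverse inequality. Your proposed use of path intersection closedness on $(I, J_1)$ does not go through as stated: from $u_1 \in J_1 \ssm I$ with $u_1 <_T w$ one derives $\min(J_1) <_T \min(I \cap J_1)$, which is the hypothesis of \cref{def:pathIntersectionClose} with the roles of $I$ and $J_1$ swapped; and even after swapping, there is no mechanism by which the produced $K$ lets you ``reroute the chain through a vertex of $I$'', since nothing forces any $S_p(K)$ to land where you need it.

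There is, however, a one-line fix that avoids this subcase entirely. Since $O(w)$ is an ornament at~$w$ containing $\min(I)$, it contains the whole $T$-path $[\min(I), w]_T \subseteq I$, and in particular $N$. So take a $\to$-chain from $w$ to $N$ rather than to $\min(I)$: writing $w = u_0 \to u_1 \to \dots \to u_k = N$ with each step strict, transitivity of $\le_T$ gives $N \le_T u_i \le_T w$, hence $u_i \in [N, w]_T \subseteq I$ for every~$i$. The last step $u_{k-1} \to N$ then exhibits $(p, J)$ with $S_p(J) = u_{k-1} \in I$, $N \in J$, and $N < S_p(J)$, contradicting $N \in B_I$. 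This completes your argument with no induction and no further appeal to path intersection closedness beyond its role inside \cref{prop:quasiLatticeMapIntreevalLattices}.
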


\begin{proof}
As~$\II$ is path intersection closed and star sparse, the acyclic sourcing poset~$\ASour(\II)$ is a lattice by~\cref{prop:quasiLatticeMapIntreevalLattices}, hence the join~$\bigJoin_{p \in [q]} S_p$ exists and we just need to prove that it coincides with the sourcing~$S$ defined by the right-hand-side of the formula of the statement.
We first prove that $S$ is acyclic and then that it is indeed the least upper bound of $\{S_p\}_{p \in [q]}$.

Assume by means of contradiction that~$S$ is cyclic.
By \cref{prop:pathStarSparse}, there is~$I,J \in \II$ such that~$S(I) \in J \ssm \{S(J)\}$ and~$S(J) \in I \ssm \{S(I)\}$.
Assume by symmetry that~$S(I) < S(J)$.
As~${S(I) \in J}$ and~$S(I) < S(J)$, applying the formula for~$S(J)$, we obtain that there exists~${p \in [q]}$ and~$K \in \II$ such that~$S_p(K) \in J$ and~$S(I) \in K$ and~$S(I) < S_p(K) \le S(J)$.
As~$I$ is a path containing~$S(I)$ and~$S(J)$, it contains also~$S_p(K)$.
We obtain that~$S_p(K) \in I$, $S(I) \in K$ and~${S(I) < S_p(K)}$ contradicting our definition of~$S$.
We conclude that~$S$ is acyclic.

For any~$p \in [q]$, we have~$S_p(I) \in I$ and so~$S(I) \ge S_p(I)$ for all~$I \in \II$, hence~$S \ge S_p$.
Consider an acyclic sourcing~$S'$ of~$\II$ such that~$S' \ge S_p$ for all~$p \in [q]$,
and assume by means of contradiction that $S'(I) < S(I)$ for some $I \in \II$.
Then, there is $p \in [q]$ and $J \in \II$ such that~$S_p(J) \in I$ and $S'(I) \in \set{v \in I \cap J}{v < S_p(J)}$.
In particular, $S_p(J)$ and $S'(I)$ are two different elements of~$I \cap J$, so $|I \cap J| > 1$.
The acyclicity of $S_p$ implies that $S_p(I) \notin J$, thus~${\min(I) < \min(I \cap J)}$~as~${S_p(I) \leq S'(I) \in J}$.
Similarly, the acyclicity of $S'$ implies that~${S'(J) \notin I}$, thus~${\max(I \cap J) < \max(J)}$ as $S'(J) \geq S_p(J) \in I$.
By path intersection closedness, there is~$K \in \II$ such that $I \cap J \subseteq K \subseteq [\min(J),\max(I)]_T$.
The acyclicity of $S_p$ now implies that $S_p(J) \leq S_p(K)$, thus $S'(I) < S_p(J) \leq S_p(K) \leq S'(K)$.
This contradicts the acyclicity of $S'$ as~${S'(I) \in I \cap J \subseteq K}$.
\end{proof}


\section*{Acknowledgments}

This work was initiated at the Banff workshop ``Lattice Theory'' in January 2025.
We are grateful to the organizers (Emily Barnard, Cesar Ceballos, Colin Defant, Osamu Iyama, and Nathan Williams) and to all participants for the friendly and inspiring atmosphere.
We particularly thank Eleni Tzanaki for several conversations on related topics on hypergraphical polytopes.


\bibliographystyle{alpha}
\bibliography{intreevalLattices}
\label{sec:biblio}


\end{document}